\documentclass[11pt]{amsart}

\theoremstyle{plain}
\newtheorem{theorem}{Theorem}
\numberwithin{theorem}{section}
\newtheorem{lemma}[theorem]{Lemma}

\newtheorem{corollary}[theorem]{Corollary}

\theoremstyle{definition}

\newtheorem{remark}[theorem]{Remark}

\newtheorem{acknowledgments}{Acknowledgments\ignorespaces}

\newcommand{\C}{{\mathbb C}}

\newcommand{\Z}{{\mathbb Z}}
\newcommand{\Q}{{\mathbb Q}}
\renewcommand{\P}{{\mathbb P}}
\newcommand{\s}{{\mathbb S}}

              \newcommand{\J}{{\mathcal J}}

\begin{document}
\title[Symmetries of exotic $\C\P^2$]
{Symplectic symmetries of a hypothetical exotic $\C\P^2$: a potential construction or a nonexistence proof} 
\author{Weimin Chen}
\subjclass[2020]{57K43, 57K33, 57S17, 57R18, 57R57}
\keywords{Exotic $\C\P^2$, symplectic $\Z_p$-actions, symplectic fake projective planes, Bogomolov-Miyaoka-Yau line, contact plumbed manifolds, resolution of symplectic quotient orbifolds, symplectic fillings, symplectic divisors with normal crossings}
\thanks{}
\date{\today}
\maketitle

\begin{abstract}
This paper is primarily concerned with the existence of a symplectic $4$-manifold, homeomorphic but not diffeomorphic to $\C\P^2$, which admits a nontrivial finite order symplectomorphism. We reduced the existence of such an exotic $\C\P^2$ to a question in contact geometry. More concretely, we showed that there are precisely $12$ plumbed manifolds, each equipped with a canonical contact structure associated to the plumbing, such that an exotic $\C\P^2$ with a nontrivial finite order symplectomorphism exists if and only if one of the $12$ contact plumbed manifolds admits a certain specific symplectic filling. As the main technical result, we completely determined the order and local representations of a pseudofree symplectic $\Z_p$-action for an odd prime $p$ on a rational homology $\C\P^2$ with positive canonical class. Our result suggests that potentially, there are symplectic 
$\Z_p$-actions which are not a finite automorphism of a fake projective plane. Moreover, we introduced a construction, which, under the minimal assumptions on the symplectic fillings of the $12$ contact plumbed manifolds, produces a symplectic $4$-manifold on the Bogomolov-Miyaoka-Yau line via a reverse-engineering process.
\end{abstract}

\tableofcontents

\section{Introduction}

An interesting question in $4$-manifold topology asks whether there is a symplectic $4$-manifold homeomorphic but not diffeomorphic to $\C\P^2$. This question may be regarded as the symplectic analogue of the classical conjecture of Severi (cf. \cite{Sev}), which was only resolved as a corollary of the celebrated work of S.T. Yau on Calabi's conjecture (cf. \cite{Y}). In other words, the answer to the above $4$-manifold question is negative in the K\"{a}hler category due to Yau's theorem. On the other hand, it has also been a challenging problem to construct an exotic smooth structure on $\C\P^2$. See \cite{C22} for some proposal for constructing a symplectic exotic $\C\P^2$ and \cite{C2} for some recent progress in some aspects of \cite{C22}. Note that by the seminal work of Taubes \cite{T}, a symplectic exotic $\C\P^2$ is characterized by having a positive canonical class (i.e. $c_1(K)\cdot [\omega]>0$). So in the symplectic case, the exoticness is relatively easy to detect.

In this paper, we shall consider a slightly modified version of the above question, i.e., we are investigating the existence of a symplectic $4$-manifold homeomorphic but not diffeomorphic to $\C\P^2$, which admits a nontrivial finite order symplectomorphism. The extra symmetry assumption allows us to approach the existence question via the minimal resolution of the symplectic quotient orbifold (see \cite{C11} for a general discussion), an idea first exploited by Keum in the algebraic setting to construct a fake projective plane (cf. \cite{Keum1}). In the symplectic setting, however, we shall employ a completely different strategy and construction, relying on the technique of pseudoholomorphic curves and results from symplectic and contact geometry and gauge theory. 

Central to our approach to the exotic $\C\P^2$ problem are $12$ plumbed manifolds, denoted by $Y_\Gamma$, where $\Gamma$ is one of the $12$ weighted graphs listed in Figure 1. Each graph $\Gamma$ determines a symplectic plumbing $P_\Gamma$ and a symplectic divisor $D_\Gamma$ with normal crossings, whose regular neighborhood is modeled by $P_\Gamma$, and with this understood, $Y_\Gamma=-\partial P_\Gamma$ as oriented manifolds. We remark that the symplectic structure on $P_\Gamma$ depends on the areas of the symplectic surfaces in the symplectic divisor $D_\Gamma$, which are not part of the data contained in the graph $\Gamma$. 

There are two structures associated to $Y_\Gamma$ which are essential to our construction. First,
there is a canonical contact structure $\xi_{LM}$ on $Y_\Gamma$, which we will refer to as the
{\bf Li-Mak contact structure} of the plumbed manifold $Y_\Gamma$. Indeed, observe that for each 
$\Gamma$ in Figure 1, the intersection matrix of $\Gamma$ is nonsingular and is not negative-definite. By the work of Li and Mak \cite{LM}, one can arrange the areas of the symplectic surfaces in the symplectic divisor $D_\Gamma$ to obey a certain positivity condition (i.e.,  GS-criterion, see \cite{GS}), such that the symplectic plumbing $P_\Gamma$ has a concave contact boundary 
$(Y_\Gamma,\xi_{LM})$, where $\xi_{LM}$ is uniquely determined by $\Gamma$ up to a contact 
isotopy (cf. \cite{LM}, Sec. 2.3.1).  

Secondly, observe that each $\Gamma$ in Figure 1 is given as $\Gamma_p$ or $\Gamma_{p,i}$
for some $i$, where $p$ is an odd prime number. With this understood, the other structure associated 
to $Y_\Gamma$ is a natural surjective homomorphism $\rho: \pi_1(Y_\Gamma)\rightarrow \Z_p$,
which is obtained as follows. Note that in $\Gamma$, there is a unique vertex, the central vertex, which is characterized by the property that after removing the vertex and the edges connecting to it, the graph $\Gamma$ becomes a disjoint union of three linear chains of spheres. Moreover, each linear chain of spheres corresponds to the exceptional divisors in the minimal resolution of some isolated quotient singularity of order $p$. Thus if we symplectically blow down the three linear chains of spheres in the symplectic plumbing $P_\Gamma$, we obtain a symplectic $4$-orbifold with concave contact boundary $(Y_\Gamma,\xi_{LM})$, which we will denote by $\overline{P}_\Gamma$. With this understood, a key fact is that $\overline{P}_\Gamma$ is a global quotient. More concretely, there is a symplectic $4$-manifold $\tilde{P}_\Gamma$ with concave contact boundary $(\tilde{Y}_\Gamma,\tilde{\xi}_{LM})$, which is equipped with a pseudofree symplectic $\Z_p$-action, such that 
$\overline{P}_\Gamma=\tilde{P}_\Gamma/\Z_p$ is the symplectic quotient orbifold (cf. Lemma 4.4). It follows immediately that there is a $p$-fold covering $\tilde{Y}_\Gamma\rightarrow Y_\Gamma$ such that the contact structure $\tilde{\xi}_{LM}$ is the pull-back of $\xi_{LM}$. 
This said, $\rho: \pi_1(Y_\Gamma)\rightarrow \Z_p$ is the corresponding surjective homomorphism.


With the preceding understood, we now state our first theorem.

\begin{theorem}
(1) Let $\Gamma$ be any given graph in Figure 1, which is given as $\Gamma_p$ or 
$\Gamma_{p,i}$. Assume $W_\Gamma$ is a strong symplectic filling of the contact plumbed manifold $(Y_\Gamma,\xi_{LM})$. We consider the following conditions on $W_\Gamma$.
\begin{itemize}
\item [{(i)}] There is a surjective homomorphism $\bar{\rho}: \pi_1(W_\Gamma)\rightarrow \Z_p$ extending the homomorphism $\rho: \pi_1(Y_\Gamma)\rightarrow \Z_p$, i.e., $\rho=\bar{\rho}\circ i_\ast$ where $i_\ast: \pi_1(Y_\Gamma)\rightarrow \pi_1(W_\Gamma)$. Let $\tilde{W}_\Gamma\rightarrow W_\Gamma$ be the $p$-fold covering associated to $\bar{\rho}: \pi_1(W_\Gamma)\rightarrow \Z_p$. Note that $\tilde{W}_\Gamma$ is a strong symplectic filling of 
$(\tilde{Y}_\Gamma,\tilde{\xi}_{LM})$.
\item [{(ii)}] $H_1(W_\Gamma,\Q)=0$ and $i_\ast: H_2(Y_\Gamma,\Q)\rightarrow H_2(W_\Gamma,\Q)$ is an isomorphism. 
\item [{(iii)}] $H_1(\tilde{W}_\Gamma,\Q)=0$.
\item [{(iv)}] The image of the kernel of $i_\ast: \pi_1(\tilde{Y}_\Gamma)\rightarrow  \pi_1(\tilde{P}_\Gamma)$ under $i_\ast: \pi_1(\tilde{Y}_\Gamma)\rightarrow  \pi_1(\tilde{W}_\Gamma)$
normally generates $\pi_1(\tilde{W}_\Gamma)$ and the kernel of $i_\ast: \pi_1(\tilde{Y}_\Gamma)\rightarrow  \pi_1(\tilde{W}_\Gamma)$ maps onto $\pi_1(\tilde{P}_\Gamma)$ under $i_\ast: \pi_1(\tilde{Y}_\Gamma)\rightarrow  \pi_1(\tilde{P}_\Gamma)$.
\end{itemize}
If $W_\Gamma$ obeys (i) and (ii), then the symplectic $4$-manifold $M_\Gamma:=
\tilde{W}_\Gamma\cup_{(\tilde{Y}_\Gamma,\tilde{\xi}_{LM})} \tilde{P}_\Gamma$, which comes equipped with a pseudofree symplectic $\Z_p$-action, having $W_\Gamma\cup_{(Y_\Gamma,\xi_{LM})} \overline{P}_\Gamma$ as the symplectic quotient orbifold, is of a positive canonical class, with Euler characteristic $\chi(M_\Gamma)=3$ and signature $Sign(M_\Gamma)=1$. In particular, 
$M_\Gamma$ is a $4$-manifold on the Bogomolov-Miyaoka-Yau line. Furthermore, $M_\Gamma$ is a 
$\Q$-homology $\C\P^2$ if and only if (iii) is obeyed, and $M_\Gamma$ is an exotic $\C\P^2$ if and only if both (iii) and (iv) are obeyed. 

(2) Conversely, suppose there is a symplectic $\Q$-homology $\C\P^2$ with positive canonical class, denoted by $M$, which admits a pseudofree symplectic $\Z_p$-action for an odd prime $p$. Then there is a weighted graph $\Gamma$
in Figure 1, where $\Gamma=\Gamma_p$ or $\Gamma_{p,i}$ for some $i$, such that the corresponding contact plumbed manifold $(Y_\Gamma,\xi_{LM})$ admits a strong symplectic filling $W_\Gamma$ obeying (i)-(iii), and $M$ can be identified with the $4$-manifold $M_\Gamma$. Moreover, if $M$ is an exotic $\C\P^2$, then (iv) is also obeyed by $W_\Gamma$. 
\end{theorem}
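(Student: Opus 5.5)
Part (1) is a topological computation on the glued manifold, and part (2) runs the construction backwards.

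\emph{Part (1).} Strong fillings glue along the concave boundary $(\tilde Y_\Gamma,\tilde\xi_{LM})$ of $\tilde P_\Gamma$ to give a closed symplectic $M_\Gamma$. The $\Z_p$-action on $\tilde P_\Gamma$ (Lemma 4.4) and the deck action on $\tilde W_\Gamma$ agree on $\tilde Y_\Gamma$, since both induce $\rho$. Hence they combine into a pseudofree symplectic action with quotient $W_\Gamma\cup\overline P_\Gamma$.

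For $\chi$ and $Sign$ I will work on the quotient orbifold $X=W_\Gamma\cup\overline P_\Gamma$. By (ii), $b_1(X)=0$, $b_3(X)=0$, and $H_2(W_\Gamma,\Q)$ is the image of $H_2(Y_\Gamma,\Q)$. Since the intersection matrix of $\Gamma$ is nonsingular, $H_2(Y_\Gamma,\Q)=0$. So $W_\Gamma$ is a rational homology ball and $X$ is a $\Q$-homology $\C\P^2$: $b_2(X)=1$ and $b_2^+=1$, because the central curve has positive self-intersection after blowing down the chains.

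I would then apply the orbifold Euler characteristic and $G$-signature theorems with the three isolated fixed points of order $p$ coming from the three chains:
\[
\chi(M_\Gamma)=p\,\chi(X)-3(p-1), \qquad Sign(M_\Gamma)=p\,Sign(X)-\sum_{\text{fixed pts}} \text{defects}.
\]
The defects are computed by Dedekind sums of the local rotation numbers, which can be read off from the chain weights in Figure 1. This is case-by-case bookkeeping, and it should output $\chi=3$ and $Sign=1$. The equivalent check is $c_1^2=3c_2=9$, using $K_X$ rationally determined by adjunction on $D_\Gamma$.

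Positivity of the canonical class: $K_{M_\Gamma}$ pulls back from $K_X$. Its pairing with $[\omega]$ is positive because the $\omega$-dual of $K$ is supported on $\overline P_\Gamma$ with the GS-type positive areas; equivalently $K_X\cdot[\omega]$ equals a positive combination of the areas of the components of $D_\Gamma$. I expect this step and the signature bookkeeping to be the main technical work.

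Given $\chi=3$ and $Sign=1$, we have $b_2^+-b_2^-=1$ and $2-2b_1+b_2=3$. So $M_\Gamma$ is a $\Q$-homology $\C\P^2$ iff $b_1(M_\Gamma)=0$. A Mayer–Vietoris argument shows $b_1(M_\Gamma)=b_1(\tilde W_\Gamma)$, using that $b_1(\tilde P_\Gamma)=0$ and $H_1(\tilde Y_\Gamma)\to H_1(\tilde P_\Gamma)$ has the relevant rational behaviour; this gives the equivalence with (iii).

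For exoticness, van Kampen gives $\pi_1(M_\Gamma)=\pi_1(\tilde W_\Gamma)*_{\pi_1(\tilde Y_\Gamma)}\pi_1(\tilde P_\Gamma)$, and (iv) is exactly the condition making this amalgam trivial. Conversely, triviality of the amalgam forces the two normal-generation statements. With $\pi_1=1$, $b_2=1$ and odd intersection form, Freedman gives a homeomorphism to $\C\P^2$, and Taubes with $K\cdot\omega>0$ gives that it is not diffeomorphic to $\C\P^2$.

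\emph{Part (2).} This relies on the main technical theorem: for pseudofree symplectic $\Z_p$-actions on rational homology $\C\P^2$'s with $K\cdot\omega>0$, the order and local representations are as listed. Take the quotient orbifold $X=M/\Z_p$ and resolve its singularities minimally. The local data dictate the three linear chains. Using $J$-holomorphic curve techniques (an orbifold curve in the class dual to $\omega$, or the canonical class in the quotient), I would find a symplectic central curve meeting each chain once, giving the divisor $D_\Gamma$ for one of the graphs in Figure 1. Deformation gives a neighborhood with concave boundary $(Y_\Gamma,\xi_{LM})$, by uniqueness of the Li–Mak structure.

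Its complement $W_\Gamma$ is a strong filling. Condition (i) comes from the covering $M\to X$ restricted to the complement of the preimage of $\overline P_\Gamma$. Conditions (ii) and (iii) follow from $b_2(X)=1$ and $b_1(M)=0$ by reversing the Mayer–Vietoris computations above. By construction $M=M_\Gamma$, and if $\pi_1(M)=1$ then van Kampen gives (iv).

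The main obstacle is locating the central curve with exactly the right intersection pattern. For this I would first try to show the relevant Seiberg–Witten or Gromov invariant is nonzero and then control the singular behaviour via adjunction.
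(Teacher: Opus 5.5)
Your outline follows the paper's route: gluing, Lefschetz for $\chi$, $G$-signature for $Sign$, Mayer--Vietoris for $b_1$, van Kampen, and part (2) via Theorems 1.3, 3.3 and Li--Mak. However, two homological inputs you state are false, and they are exactly where (ii) and Lemma 4.4 must be used.

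\emph{First error: $H_2(Y_\Gamma,\Q)$ need not vanish.} Nonsingularity of the intersection matrix gives only that $H_2(Y_\Gamma,\Q)\to H_2(P_\Gamma,\Q)$ is zero and that $H_1(Y_\Gamma,\Q)\cong H_1(P_\Gamma,\Q)$ (Lemma 4.2). The exact sequence of $(P_\Gamma,Y_\Gamma)$ then gives $H_2(Y_\Gamma,\Q)\cong H_3(P_\Gamma,Y_\Gamma,\Q)\cong H^1(P_\Gamma,\Q)$. Its rank is $2g(F_0)$ plus the number of cycles of $\Gamma$, which is nonzero for every graph except $\Gamma_{11,1}$ and $\Gamma_{13,1}$ (at least $14$ for $\Gamma_3$). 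So $W_\Gamma$ is generally not a rational homology ball, which is why (ii) is phrased as an isomorphism. The conclusion $b_1=0$, $b_2=1$ for $W_\Gamma\cup\overline P_\Gamma$ is still true. But it must come from Mayer--Vietoris using $H_1(Y_\Gamma,\Q)\cong H_1(\overline P_\Gamma,\Q)$ together with (ii) (cf.\ Lemma 4.3). Likewise, in part (2), ``reversing'' your computation would wrongly predict $H_2(W_\Gamma,\Q)=0$. The paper instead gets (ii) from Lemma 4.3, using that the components of $D_\Gamma$ form a basis of $H_2$ of the minimal resolution.

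\emph{Second error: $b_1(\tilde P_\Gamma)\neq 0$.} $\tilde P_\Gamma$ retracts onto $C$, whose normalization has genus $g_\Sigma\geq 1$ (it is $25$ when $p=3$). With your stated input, Mayer--Vietoris only gives $H_1(M_\Gamma,\Q)\cong\mathrm{coker}(H_1(\tilde Y_\Gamma,\Q)\to H_1(\tilde W_\Gamma,\Q))$, so $b_1(M_\Gamma)=0\Rightarrow$ (iii) does not follow. The needed fact is Lemma 4.4: $H_1(\tilde Y_\Gamma,\Q)\to H_1(\tilde P_\Gamma,\Q)$ is an isomorphism, since $H_2(\tilde P_\Gamma)$ is generated by $C$ with $C\cdot C>0$. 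This gives $H_1(M_\Gamma,\Q)\cong H_1(\tilde W_\Gamma,\Q)$. The same lemma supplies the surjectivity of $\pi_1(\tilde Y_\Gamma)\to\pi_1(\tilde P_\Gamma)$. Your van Kampen step needs this to identify $\pi_1(M_\Gamma)$ with $\pi_1(\tilde W_\Gamma)$ modulo the normal closure of the image of $\ker(\pi_1(\tilde Y_\Gamma)\to\pi_1(\tilde P_\Gamma))$.

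\emph{Positivity of the canonical class.} Your argument misses the point where (ii) enters. As phrased, $K\cdot[\omega]$ being a positive combination of the areas of the components of $D_\Gamma$ is false on the resolution: Theorem 3.3(2) has negative coefficients in most cases (e.g.\ $-F_{1,1}$ and $-F_{3,2}$ in (3)(ii)). The paper argues on the orbifold instead. After contracting the chains, $c_1(K_{\overline P_\Gamma})$ is the positive multiple $\frac{3}{d}[C/\Z_p]$ of a symplectic curve (Theorem 3.3(2), Remark 3.4(2)), so its restriction to $Y_\Gamma$ is torsion. Since $H_2(Y_\Gamma,\Q)\to H_2(W_\Gamma,\Q)$ is onto, $H^2(W_\Gamma,\Q)\to H^2(Y_\Gamma,\Q)$ is injective. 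Hence $c_1(K_{W_\Gamma})$ is torsion, and the orbifold canonical class pairs positively with $\omega$. In your write-up this use of (ii) was hidden behind the false rational-ball claim.

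\emph{Smaller remarks.} Your Dedekind-sum bookkeeping for the signature would work, but the paper avoids it. The rotation numbers encoded in $\Gamma$ agree, up to sign at each point, with those of a linear action on $\C\P^2$ (Lemma 2.1). Each defect term is invariant under $(a,b)\mapsto(-a,-b)$, so the total defect is $p\,Sign(\C\P^2/\Z_p)-Sign(\C\P^2)=p-1$. Your $c_1^2=9$ route is also valid once $K$ is known globally to be $\frac{3}{d}[C/\Z_p]$. In part (2) you need not re-find the central curve: Theorem 1.3 supplies the invariant curve through all three fixed points, and Theorem 3.3 supplies the divisor. Note that the central vertex of that divisor does not in general meet each chain once.
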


\begin{remark}
Theorem 1.1 offers a potential construction of a symplectic exotic $\C\P^2$; indeed, if there is a weighted graph $\Gamma$ in Figure 1, such that the corresponding contact plumbed manifold 
$(Y_\Gamma,\xi_{LM})$ admits a strong symplectic filling $W_\Gamma$ obeying (i)-(iv) (e.g., $W_\Gamma$ obeys (i)-(ii) where $\bar{\rho}: \pi_1(W_\Gamma)\rightarrow \Z_p$ is isomorphic, cf. Corollary 4.6), then the corresponding $4$-manifold $M_\Gamma$ is a symplectic exotic $\C\P^2$. On the other hand, a symplectic $\Z_p$-action of prime order on an exotic $\C\P^2$ must be pseudofree with $p>2$ (cf. \cite{C}, Section 4). It follows easily that Theorem 1.1 also offers a possible nonexistence proof, i.e., if one can show that for any of the graphs $\Gamma$ in Figure 1, $(Y_\Gamma,\xi_{LM})$
does not admit any strong symplectic fillings obeying (i)-(iv), then there is no symplectic exotic 
$\C\P^2$ with a nontrivial finite order symplectomorphism. 
\end{remark}

\begin{picture}(500,610)
\put(90,560){\circle*{5}}
\put(320,560){\circle*{5}}
\put(60,560){\circle*{5}}
\put(120,560){\circle*{5}}
\put(75,590){\circle*{5}}
\put(105,590){\circle*{5}}
\put(60,575){\circle*{5}}
\put(120,575){\circle*{5}}
\put(60,560){\line(1,0){30}}
\put(60,560){\line(0,1){15}}
\put(60,560){\line(1,0){30}}
\put(90,560){\line(-2,1){30}}
\put(90,560){\line(-1,2){15}}
\put(90,560){\line(1,2){15}}
\put(90,560){\line(2,1){30}}
\put(90,560){\line(1,0){30}}
\put(75,590){\line(1,0){30}}
\put(120,560){\line(0,1){15}}
\put(90,555){\makebox(0,0)[t]{$[7],6$}}
\put(60,555){\makebox(0,0)[tr]{$-2$}}
\put(120,555){\makebox(0,0)[tl]{$-2$}}
\put(60,578){\makebox(0,0)[br]{$-2$}}
\put(120,578){\makebox(0,0)[bl]{$-2$}}
\put(75,595){\makebox(0,0)[b]{$-2$}}
\put(105,595){\makebox(0,0)[b]{$-2$}}
\put(90,530){\makebox(0,0)[t]{$\Gamma_3$}}
\put(290,560){\circle*{5}}
\put(260,560){\circle*{5}}
\put(350,560){\circle*{5}}
\put(380,560){\circle*{5}}
\put(305,575){\circle*{5}}
\put(335,575){\circle*{5}}
\put(305,590){\circle*{5}}
\put(335,590){\circle*{5}}
\put(320,560){\line(-1,1){15}}
\put(320,560){\line(1,1){15}}
\put(320,562){\line(-1,0){30}}
\put(320,558){\line(-1,0){30}}
\put(320,562){\line(1,0){30}}
\put(320,558){\line(1,0){30}}
\put(290,560){\line(-1,0){30}}
\put(350,560){\line(1,0){30}}
\put(305,590){\line(1,0){30}}
\put(305,590){\line(0,-1){15}}
\put(335,590){\line(0,-1){15}}
\put(320,555){\makebox(0,0)[t]{$[3],2$}}
\put(320,530){\makebox(0,0)[t]{$\Gamma_5$}}
\put(290,555){\makebox(0,0)[tr]{$-3$}}
\put(260,555){\makebox(0,0)[tr]{$-2$}}
\put(350,555){\makebox(0,0)[tl]{$-3$}}
\put(380,555){\makebox(0,0)[tl]{$-2$}}
\put(300,575){\makebox(0,0)[r]{$-2$}}
\put(355,575){\makebox(0,0)[r]{$-2$}}
\put(300,590){\makebox(0,0)[r]{$-2$}}
\put(355,590){\makebox(0,0)[r]{$-2$}}
\put(90,500){\circle*{5}}
\put(90,485){\circle*{5}}
\put(90,470){\circle*{5}}
\put(90,455){\circle*{5}}
\put(60,455){\circle*{5}}
\put(30,455){\circle*{5}}
\put(0,455){\circle*{5}}
\put(120,455){\circle*{5}}
\put(150,455){\circle*{5}}
\put(180,455){\circle*{5}}
\put(290,455){\circle*{5}}
\put(260,455){\circle*{5}}
\put(320,455){\circle*{5}}
\put(350,455){\circle*{5}}
\put(380,455){\circle*{5}}
\put(305,470){\circle*{5}}
\put(335,470){\circle*{5}}
\put(305,485){\circle*{5}}
\put(335,485){\circle*{5}}
\put(305,500){\circle*{5}}
\put(335,500){\circle*{5}}
\put(90,430){\makebox(0,0)[t]{$\Gamma_{7,1}$}}
\put(320,430){\makebox(0,0)[t]{$\Gamma_{7,2}$}}
\put(90,450){\makebox(0,0)[t]{$[1],0$}}
\put(320,450){\makebox(0,0)[t]{$[3],2$}}
\put(90,455){\line(1,0){30}}
\put(90,455){\line(-1,0){30}}
\put(320,455){\line(1,0){30}}
\put(320,455){\line(-1,0){30}}
\put(120,455){\line(1,0){30}}
\put(60,455){\line(-1,0){30}}
\put(350,455){\line(1,0){30}}
\put(290,455){\line(-1,0){30}}
\put(150,455){\line(1,0){30}}
\put(30,455){\line(-1,0){30}}
\put(320,455){\line(1,1){15}}
\put(320,455){\line(-1,1){15}}
\put(90,455){\line(0,1){15}}
\put(90,470){\line(0,1){15}}
\put(90,485){\line(0,1){15}}
\put(305,500){\line(0,-1){30}}
\put(335,500){\line(0,-1){30}}
\put(305,500){\line(1,0){30}}
\put(95,500){\makebox(0,0)[l]{$-2$}}
\put(95,485){\makebox(0,0)[l]{$-2$}}
\put(95,470){\makebox(0,0)[l]{$-3$}}
\put(120,450){\makebox(0,0)[tl]{$-3$}}
\put(150,450){\makebox(0,0)[tl]{$-2$}}
\put(180,450){\makebox(0,0)[tl]{$-2$}}
\put(60,450){\makebox(0,0)[tr]{$-3$}}
\put(30,450){\makebox(0,0)[tr]{$-2$}}
\put(0,450){\makebox(0,0)[tr]{$-2$}}
\put(350,450){\makebox(0,0)[tl]{$-2$}}
\put(380,450){\makebox(0,0)[tl]{$-4$}}
\put(290,450){\makebox(0,0)[tr]{$-2$}}
\put(260,450){\makebox(0,0)[tr]{$-4$}}
\put(300,470){\makebox(0,0)[r]{$-2$}}
\put(300,485){\makebox(0,0)[r]{$-2$}}
\put(300,500){\makebox(0,0)[r]{$-2$}}
\put(355,470){\makebox(0,0)[r]{$-2$}}
\put(355,485){\makebox(0,0)[r]{$-2$}}
\put(355,500){\makebox(0,0)[r]{$-2$}}
\put(90,395){\circle*{5}}
\put(90,375){\circle*{5}}
\put(30,390){\circle*{5}}
\put(30,375){\circle*{5}}
\put(30,360){\circle*{5}}
\put(60,395){\circle*{5}}
\put(90,375){\line(-4,1){60}}
\put(90,375){\line(0,1){20}}
\put(120,395){\circle*{5}}
\put(150,390){\circle*{5}}
\put(150,375){\circle*{5}}
\put(150,360){\circle*{5}}
\put(90,375){\line(4,1){60}}
\put(90,375){\line(4,-1){60}}
\put(90,375){\line(-4,0){60}}
\put(90,370){\makebox(0,0)[tr]{$[2],1$}}
\put(90,345){\makebox(0,0)[t]{$\Gamma_{7,3}$}}
\put(60,395){\line(1,0){60}}
\put(30,390){\line(0,-1){30}}
\put(150,390){\line(0,-1){30}}
\put(60,400){\makebox(0,0)[br]{$-2$}}
\put(90,400){\makebox(0,0)[b]{$-2$}}
\put(120,400){\makebox(0,0)[bl]{$-3$}}
\put(24,390){\makebox(0,0)[r]{$-3$}}
\put(24,375){\makebox(0,0)[r]{$-2$}}
\put(24,360){\makebox(0,0)[r]{$-2$}}
\put(155,390){\makebox(0,0)[l]{$-3$}}
\put(155,375){\makebox(0,0)[l]{$-2$}}
\put(155,360){\makebox(0,0)[l]{$-2$}}
\put(320,390){\circle*{5}}
\put(320,370){\circle*{5}}
\put(290,390){\circle*{5}}
\put(260,390){\circle*{5}}
\put(230,390){\circle*{5}}
\put(350,390){\circle*{5}}
\put(380,390){\circle*{5}}
\put(290,360){\circle*{5}}
\put(290,380){\circle*{5}}
\put(350,370){\circle*{5}}
\put(380,370){\circle*{5}}
\put(320,390){\line(0,-1){20}}
\put(230,390){\line(1,0){150}}
\put(320,369){\line(-3,-1){30}}
\put(320,368){\line(-3,1){30}}
\put(290,360){\line(0,1){20}}
\put(320,372){\line(-3,1){30}}
\put(320,372){\line(1,0){30}}
\put(320,368){\line(1,0){30}}
\put(350,370){\line(1,0){30}}
\put(328,365){\makebox(0,0)[t]{$[1],0$}}
\put(320,345){\makebox(0,0)[t]{$\Gamma_{7,4}$}}
\put(230,395){\makebox(0,0)[br]{$-2$}}
\put(260,395){\makebox(0,0)[br]{$-2$}}
\put(290,395){\makebox(0,0)[br]{$-2$}}
\put(320,395){\makebox(0,0)[b]{$-2$}}
\put(350,395){\makebox(0,0)[bl]{$-2$}}
\put(380,395){\makebox(0,0)[bl]{$-2$}}
\put(285,383){\makebox(0,0)[tr]{$-4$}}
\put(290,355){\makebox(0,0)[tr]{$-2$}}
\put(380,365){\makebox(0,0)[tl]{$-2$}}
\put(350,365){\makebox(0,0)[tl]{$-4$}}
\put(90,320){\circle*{5}}
\put(90,305){\circle*{5}}
\put(90,290){\circle*{5}}
\put(90,275){\circle*{5}}
\put(60,275){\circle*{5}}
\put(30,275){\circle*{5}}
\put(0,275){\circle*{5}}
\put(120,275){\circle*{5}}
\put(150,275){\circle*{5}}
\put(180,275){\circle*{5}}
\put(90,250){\makebox(0,0)[t]{$\Gamma_{7,5}$}}
\put(90,270){\makebox(0,0)[t]{$[1],0$}}
\put(90,273){\line(-1,0){30}}
\put(90,277){\line(-1,0){30}}
\put(90,273){\line(1,0){30}}
\put(90,277){\line(1,0){30}}
\put(92,275){\line(0,1){15}}
\put(88,275){\line(0,1){15}}
\put(90,320){\line(0,-1){30}}
\put(60,275){\line(-1,0){60}}
\put(120,275){\line(1,0){60}}
\put(95,320){\makebox(0,0)[l]{$-2$}}
\put(95,305){\makebox(0,0)[l]{$-2$}}
\put(95,290){\makebox(0,0)[l]{$-3$}}
\put(120,270){\makebox(0,0)[tl]{$-3$}}
\put(150,270){\makebox(0,0)[tl]{$-2$}}
\put(180,270){\makebox(0,0)[tl]{$-2$}}
\put(60,270){\makebox(0,0)[tr]{$-3$}}
\put(30,270){\makebox(0,0)[tr]{$-2$}}
\put(0,270){\makebox(0,0)[tr]{$-2$}}
\put(320,290){\circle*{5}}
\put(350,290){\circle*{5}}
\put(380,290){\circle*{5}}
\put(410,290){\circle*{5}}
\put(290,290){\circle*{5}}
\put(260,290){\circle*{5}}
\put(230,290){\circle*{5}}
\put(200,290){\circle*{5}}
\put(350,310){\circle*{5}}
\put(380,310){\circle*{5}}
\put(320,310){\circle*{5}}
\put(290,310){\circle*{5}}
\put(350,310){\line(0,-1){20}}
\put(200,290){\line(1,0){210}}
\put(290,310){\line(1,0){90}}
\put(410,285){\makebox(0,0)[t]{$-4$}}
\put(200,285){\makebox(0,0)[t]{$-2$}}
\put(230,285){\makebox(0,0)[t]{$-2$}}
\put(260,285){\makebox(0,0)[t]{$-2$}}
\put(290,285){\makebox(0,0)[t]{$-2$}}
\put(320,285){\makebox(0,0)[t]{$-3$}}
\put(350,285){\makebox(0,0)[t]{$-1$}}
\put(380,285){\makebox(0,0)[t]{$-3$}}
\put(290,315){\makebox(0,0)[b]{$-2$}}
\put(320,315){\makebox(0,0)[b]{$-2$}}
\put(350,315){\makebox(0,0)[b]{$-3$}}
\put(380,315){\makebox(0,0)[b]{$-2$}}
\put(320,265){\makebox(0,0)[t]{$\Gamma_{11,1}$}}
\put(90,175){\makebox(0,0)[t]{$\Gamma_{11,2}$}}
\put(120,220){\circle*{5}}
\put(90,220){\circle*{5}}
\put(60,220){\circle*{5}}
\put(150,220){\circle*{5}}
\put(60,220){\line(1,0){90}}
\put(-30,200){\line(1,0){120}}
\put(120,200){\line(1,0){60}}
\put(118,200){\line(0,1){20}}
\put(122,200){\line(0,1){20}}
\put(120,202){\line(-1,0){30}}
\put(120,198){\line(-1,0){30}}
\put(60,225){\makebox(0,0)[b]{$-2$}}
\put(90,225){\makebox(0,0)[b]{$-2$}}
\put(120,225){\makebox(0,0)[b]{$-3$}}
\put(150,225){\makebox(0,0)[b]{$-2$}}
\put(60,195){\makebox(0,0)[t]{$-2$}}
\put(90,195){\makebox(0,0)[t]{$-3$}}
\put(120,195){\makebox(0,0)[t]{$-1$}}
\put(150,195){\makebox(0,0)[t]{$-4$}}
\put(30,195){\makebox(0,0)[t]{$-2$}}
\put(0,195){\makebox(0,0)[t]{$-2$}}
\put(-30,195){\makebox(0,0)[t]{$-2$}}
\put(180,195){\makebox(0,0)[t]{$-3$}}
\put(120,200){\circle*{5}}
\put(150,200){\circle*{5}}
\put(180,200){\circle*{5}}
\put(90,200){\circle*{5}}
\put(60,200){\circle*{5}}
\put(30,200){\circle*{5}}
\put(0,200){\circle*{5}}
\put(-30,200){\circle*{5}}
\put(350,180){\circle*{5}}
\put(350,195){\circle*{5}}
\put(350,210){\circle*{5}}
\put(350,225){\circle*{5}}
\put(350,180){\circle*{5}}
\put(380,180){\circle*{5}}
\put(410,180){\circle*{5}}
\put(440,180){\circle*{5}}
\put(320,180){\circle*{5}}
\put(290,180){\circle*{5}}
\put(260,180){\circle*{5}}
\put(230,180){\circle*{5}}
\put(200,180){\circle*{5}}
\put(170,180){\circle*{5}}
\put(320,160){\makebox(0,0)[t]{$\Gamma_{13,1}$}}
\put(170,180){\line(1,0){270}}
\put(350,180){\line(0,1){45}}
\put(355,195){\makebox(0,0)[l]{$-5$}}
\put(355,210){\makebox(0,0)[l]{$-2$}}
\put(355,225){\makebox(0,0)[l]{$-2$}}
\put(170,175){\makebox(0,0)[t]{$-2$}}
\put(200,175){\makebox(0,0)[t]{$-2$}}
\put(230,175){\makebox(0,0)[t]{$-2$}}
\put(260,175){\makebox(0,0)[t]{$-2$}}
\put(290,175){\makebox(0,0)[t]{$-2$}}
\put(320,175){\makebox(0,0)[t]{$-3$}}
\put(350,175){\makebox(0,0)[t]{$-1$}}
\put(380,175){\makebox(0,0)[t]{$-2$}}
\put(410,175){\makebox(0,0)[t]{$-3$}}
\put(440,175){\makebox(0,0)[t]{$-3$}}
\put(320,130){\circle*{5}}
\put(320,100){\circle*{5}}
\put(320,70){\circle*{5}}
\put(320,115){\circle*{5}}
\put(320,85){\circle*{5}}
\put(350,70){\circle*{5}}
\put(380,70){\circle*{5}}
\put(410,70){\circle*{5}}
\put(440,70){\circle*{5}}
\put(290,70){\circle*{5}}
\put(260,70){\circle*{5}}
\put(230,70){\circle*{5}}
\put(200,70){\circle*{5}}
\put(320,45){\makebox(0,0)[t]{$\Gamma_{19}$}}
\put(320,65){\makebox(0,0)[t]{$[1],0$}}
\put(320,130){\line(0,-1){60}}
\put(200,70){\line(1,0){240}}
\put(325,130){\makebox(0,0)[l]{$-3$}}
\put(325,115){\makebox(0,0)[l]{$-2$}}
\put(325,100){\makebox(0,0)[l]{$-3$}}
\put(325,85){\makebox(0,0)[l]{$-2$}}
\put(350,65){\makebox(0,0)[tl]{$-2$}}
\put(380,65){\makebox(0,0)[tl]{$-3$}}
\put(410,65){\makebox(0,0)[tl]{$-2$}}
\put(440,65){\makebox(0,0)[tl]{$-3$}}
\put(290,65){\makebox(0,0)[tr]{$-2$}}
\put(260,65){\makebox(0,0)[tr]{$-3$}}
\put(230,65){\makebox(0,0)[tr]{$-2$}}
\put(200,65){\makebox(0,0)[tr]{$-3$}}
\put(90,100){\circle*{5}}
\put(150,115){\circle*{5}}
\put(150,100){\circle*{5}}
\put(150,85){\circle*{5}}
\put(150,70){\circle*{5}}
\put(30,115){\circle*{5}}
\put(30,100){\circle*{5}}
\put(30,85){\circle*{5}}
\put(30,70){\circle*{5}}
\put(90,130){\circle*{5}}
\put(60,130){\circle*{5}}
\put(30,130){\circle*{5}}
\put(120,130){\circle*{5}}
\put(90,95){\makebox(0,0)[t]{$[1],0$}}
\put(90,50){\makebox(0,0)[t]{$\Gamma_{13,2}$}}
\put(30,130){\line(1,0){90}}
\put(30,100){\line(1,0){120}}
\put(30,115){\line(0,-1){45}}
\put(150,115){\line(0,-1){45}}
\put(90,130){\line(0,-1){30}}
\put(30,135){\makebox(0,0)[b]{$-2$}}
\put(60,135){\makebox(0,0)[b]{$-2$}}
\put(90,135){\makebox(0,0)[b]{$-2$}}
\put(120,135){\makebox(0,0)[b]{$-4$}}
\put(25,115){\makebox(0,0)[r]{$-4$}}
\put(25,100){\makebox(0,0)[r]{$-2$}}
\put(25,85){\makebox(0,0)[r]{$-2$}}
\put(25,70){\makebox(0,0)[r]{$-2$}}
\put(155,115){\makebox(0,0)[l]{$-4$}}
\put(155,100){\makebox(0,0)[l]{$-2$}}
\put(155,85){\makebox(0,0)[l]{$-2$}}
\put(155,70){\makebox(0,0)[l]{$-2$}}
\put(200,15){\makebox(0,0)[t]{Figure 1: Weighted graphs of the symplectic plumbings}}
\end{picture}

More generally, we observe that under the minimal assumptions, i.e., $W_\Gamma$ obeying (i) and (ii), we obtain in Theorem 1.1 a symplectic $4$-manifold $M_\Gamma$ which is on the Bogomolov-Miyaoka-Yau line. It is currently an open question as whether there are any examples of symplectic $4$-manifolds on the Bogomolov-Miyaoka-Yau line which are not algebraic surfaces. Note that by Yau's solution of Calabi's conjecture \cite{Y}, such algebraic surfaces must be quotients of the unit complex ball in $\C^2$. 

Finally, we point out that by Theorem 1.1, a pseudofree symplectic $\Z_p$-action of prime order on a 
$\Q$-homology $\C\P^2$ with positive canonical class can be realized only if there is a $\Gamma=
\Gamma_p$ or $\Gamma_{p,i}$ for some $i$ in Figure 1, such that $(Y_\Gamma,\xi_{LM})$ is strong fillable. In this regard, it is known that for $\Gamma=\Gamma_3, \Gamma_{7,1}$ and 
$\Gamma_{7,5}$, $(Y_\Gamma,\xi_{LM})$ admits a strong symplectic filling (cf. Theorem 4.1),
which also obeys (i)-(iii) of Theorem 1.1. For the rest of the graphs in Figure 1, in general we do not even know whether $(Y_\Gamma,\xi_{LM})$ is tight or not. 

\vspace{2mm}

The main technical ingredient for the proof of Theorem 1.1 is a complete classification of the possible orders and local representations of a pseudofree symplectic $\Z_p$-action of odd prime order $p$ on a $\Q$-homology $\C\P^2$ with positive canonical class, which is stated in Theorem 1.3 below.
Since a symplectic $\Z_p$-action of prime order on an exotic $\C\P^2$ must be pseudofree with $p>2$ (cf. \cite{C}), by specializing in the simply connected case in Theorem 1.3, we obtain a complete classification of all possible prime order symplectic $\Z_p$-actions on a hypothetical symplectic exotic $\C\P^2$.

\begin{theorem}
Let $(M,\omega)$ be a $\Q$-homology $\C\P^2$ with $c_1(K)\cdot [\omega]>0$, where $K$ denotes the canonical line bundle of $(M,\omega)$, such that $M$ is equipped with a pseudofree smooth $\Z_p$-action preserving the symplectic structure $\omega$, where $p>2$ is a prime number. Then $p=3$, $5$, $7$, $11$, $13$, or $19$. Moreover, let $\J$ denote the set of $\Z_p$-invariant, $\omega$-compatible almost complex structures on $M$ which are integrable near the fixed point set of the $\Z_p$-action. Then for any generic $J\in\J$, there is a $\Z_p$-invariant $J$-holomorphic curve $C$ in $M$ which contains all $3$ fixed points of the $\Z_p$-action and is embedded in the complement of the fixed point set. Further information about $C$ as well as the local representations of the $\Z_p$-action at the fixed point set are stated below:
\begin{itemize}
\item [{(1)}] For $p=3$, $C$ is of genus $25$, has a nodal singularity at each of the $3$ fixed points, and 
$c_1(K)=\frac{1}{2}C$. The rotation numbers at each of the $3$ fixed points is $(1,2)$ {\em(}for any generator 
of $\Z_p${\em)}. 
\item [{(2)}] For $p=5$, $C$ is of genus $23$, has a nodal singularity at one of the fixed points and has two embedded local branches with a tangency of order $2$ at each of the other two fixed points, and 
$c_1(K)=\frac{1}{2}C$. For some generator of $\Z_p$, the rotation numbers is $(1,4)$ at the nodal singularity and are $(4,3)$ and $(1,2)$ respectively at the other two fixed points. 
\item [{(3)}] For $p=7$, there are five subcases: 
\begin{itemize}
\item [{(i)}] $C$ is embedded, of genus $10$, and $c_1(K)=C$. The rotation numbers at the $3$ fixed points are $(1,3)$, $(6,2)$, and $(4,5)$ for some generator of $\Z_p$.
\item [{(ii)}] $C$ is of genus $27$, has a nodal singularity at one of the fixed points and is embedded near the other two fixed points, and $c_1(K)=\frac{1}{2}C$. For some generator of $\Z_p$, the rotation numbers is $(1,6)$ at the nodal singularity and are $(6,5)$ and $(1,2)$ respectively at the other two fixed points.
\item [{(iii)}] $C$ is of genus $23$, has a simple cusp singularity at one of the fixed points, a nodal singularity at another, and a reducible singularity at the third fixed point, where one of the local branches is embedded and the other local branch contains a simple cusp, with the intersection number of the two local branches equaling $2$, 
and $c_1(K)=\frac{1}{2}C$. For some generator of $\Z_p$, the rotation numbers is $(2,3)$ at the simple cusp, $(2,6)$ at the nodal singularity, and $(4,6)$ at the reducible singularity.
\item [{(iv)}] $C$ is of genus $19$, with $c_1(K)=\frac{1}{2}C$, and at one of the fixed points, $C$ has three embedded local branches, two of which are tangent of order $2$ intersecting the third local branch transversely,
while at another fixed point, $C$ has two embedded local branches which are tangent of order $2$, and at the last fixed point, $C$ has a cuspidal singularity whose link is a $(3,4)$-torus knot. For some generator of $\Z_p$, the rotation numbers are $(1,4)$, $(6,3)$, and $(3,4)$ respectively at the first, second, and third fixed point.
\item [{(v)}] $C$ is of genus $19$, with $c_1(K)=\frac{1}{2}C$, and at each of the $3$ fixed points, $C$ has two embedded local branches which are tangent of order $3$. For some generator of $\Z_p$, the rotation numbers are $(1,3)$, $(6,2)$, and $(4,5)$ respectively at the $3$ fixed points. 
\end{itemize}
\item [{(4)}] For $p=11$, there are two subcases: 
\begin{itemize}
\item [{(i)}] $C$ is of genus $5$, has a simple cusp singularity at one of the fixed points and is 
embedded near the other two fixed points, with $c_1(K)=\frac{3}{2}C$. The rotation numbers at the $3$ fixed points are $(2,3)$, $(2,10)$, and $(3,1)$ respectively for some generator of $\Z_p$. 
\item [{(ii)}] $C$ is of genus $15$, with $c_1(K)=\frac{1}{2}C$, and is embedded at one of the fixed points, has two embedded local branches at another, which are tangent of order $5$, and has two tangent local branches at the third fixed point, where each local branch contains a simple cusp singularity. For some generator of $\Z_p$, the rotation numbers are $(1,3)$, $(10,2)$, and $(8,9)$ respectively at the first, second, and third fixed point.
\end{itemize}
\item [{(5)}] For $p=13$, there are two subcases: 
\begin{itemize}
\item [{(i)}] $C$ is embedded, of genus $6$,  with $c_1(K)=\frac{3}{2}C$. The rotation numbers at the $3$ fixed points are $(1,6)$, $(1,8)$ and $(6,5)$ for some generator of $\Z_p$.
\item [{(ii)}] $C$ is of genus $19$, has a cuspidal singularity of $\delta$-invariant $3$ at each of the fixed points, 
with $c_1(K)=\frac{1}{2}C$. The rotation numbers at the $3$ fixed points are $(3,4)$, $(10,1)$, and $(9,12)$ for some generator of $\Z_p$.
\end{itemize}
\item[{(6)}] For $p=19$, $C$ is embedded, of genus $28$, with $c_1(K)=\frac{1}{2}C$. The rotation numbers at the $3$ fixed points are $(1,12)$, $(18,11)$, and $(7,8)$ respectively for some generator of $\Z_p$.
\end{itemize}
\end{theorem}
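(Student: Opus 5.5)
We are proving Theorem 1.3 (the classification, not Theorem 1.1). The plan combines the $G$-signature theorem and the Lefschetz fixed point theorem with Seiberg--Witten/Taubes theory in the equivariant setting, and then finishes with a finite numerical search.

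\emph{Step 1: fixed points and rotation numbers.} Since $M$ is a $\Q$-homology $\C\P^2$ and the action is pseudofree of odd prime order, the action on $H^\ast(M;\Q)$ is trivial. The Lefschetz fixed point theorem then gives exactly $3$ isolated fixed points. Let the rotation numbers at these points be $(a_j,b_j)$, $j=1,2,3$. The $G$-signature theorem gives
\[
\sum_{j=1}^{3}\prod\frac{1+\mu^{a_j}}{1-\mu^{a_j}}\cdot\frac{1+\mu^{b_j}}{1-\mu^{b_j}} \;=\; Sign(M)\;=\;1
\]
for all primitive $p$-th roots of unity $\mu$. We will also use the equivariant index (holomorphic Lefschetz) formula for the $\Z_p$-equivariant $\bar\partial$-operator twisted by $K$ or by $\Z_p$-equivariant line bundles. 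These give congruence constraints on the lifts of $c_1(K)$ to $\Z_p$-linearized bundles, expressed through Dedekind-type sums in $(a_j,b_j)$.

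\emph{Step 2: the invariant curve $C$.} We use that the quotient orbifold $X=M/\Z_p$ is a symplectic $\Q$-homology $\C\P^2$ orbifold with $c_1(K_X)>0$, so $b_2^+=1$. Since $c_1(K)^2=3=c_2$, the invariant Seiberg--Witten invariant of $K$ is nonzero, by Taubes and its equivariant version (as in \cite{C}). Write $c_1(K)=r\cdot H$ in rational cohomology. For a suitable $\Z_p$-linearization, the class $2K$, $K$, or $\frac23K$ is represented, for generic $J\in\J$, by a $\Z_p$-invariant $J$-holomorphic curve $C$. The point is to choose the equivariant lift with nonzero invariant SW invariant, which is detected by the index computation of Step 1. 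We show $C$ passes through all three fixed points; otherwise the equivariant index and the adjunction count on the quotient give a contradiction. We also show $C$ is embedded off the fixed set, using positivity of intersections and the fact that the virtual dimension of the invariant moduli space on $X$ is zero.

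\emph{Step 3: the numerical classification.} We apply the orbifold adjunction formula on $X$ to $C/\Z_p$:
\[
2g(C)-2=C\cdot C + c_1(K)\cdot C-\sum_{\text{sing}} 2\delta ,
\]
with $C\cdot C$ and $c_1(K)\cdot C$ fixed by $C=s\,c_1(K)$ and $c_1(K)^2=3$. The local contribution at each fixed point is computed from a $\Z_p$-invariant germ of a $J$-holomorphic (locally holomorphic) curve in $\C^2$ with weights $(a_j,b_j)$. Such germs are governed by monomials $z_1^mz_2^n$ with $ma_j+nb_j\equiv$ const mod $p$, which bounds the possible singularity types (nodes, tacnodes, cusps, $(3,4)$-cusps). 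The Riemann--Hurwitz formula for $C\to C/\Z_p$, which has exactly the fixed-point branch points, relates $g(C)$ to $g(C/\Z_p)$ and $p$. Combined with the integrality of the $G$-signature defect, this forces $p\le 19$. A finite search over $p\in\{3,5,\dots,19\}$, the rotation data, and the singularity types then yields exactly the listed cases; for example, $p=17$ is excluded by the $G$-signature identity failing for every admissible triple.

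\emph{Main obstacle.} The hardest step is Step 2 together with its local analysis. We must pin down which multiple of $K$ and which equivariant lift carries a nonvanishing invariant SW invariant. We must also control the singularities of $C$ at the fixed points, since genericity of $J$ is only available off the fixed set. We would first try to handle the local analysis by working on the minimal resolution of $X$ at the three cyclic quotient singularities. There the proper transform of $C/\Z_p$ is a curve whose intersections with the Hirzebruch--Jung chains determine the local branches, and the adjunction inequality applies directly.
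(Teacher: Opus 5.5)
There are genuine gaps, and one of your key claims is false. The $G$-signature theorem cannot bound $p$ or exclude $p=17$. For every odd prime $p$, the linear action $[z_0:z_1:z_2]\mapsto[z_0:\mu z_1:\mu^2 z_2]$ on $\C\P^2$ is pseudofree with three fixed points, so its fixed-point data satisfy your identity. The relevant consequence, which the paper uses as Lemma 2.1 via Edmonds--Ewing, is that the rotation data are those of a linear action up to replacing $(a_i,b_i)$ by $(-a_i,-b_i)$ at individual fixed points. That condition can be met for every $p$, so it is an input to the classification, not a source of finiteness.

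In the paper, finiteness $p<57$ comes from Riemann--Hurwitz for $\Sigma\to\Sigma/G$, combined with the adjunction inequality, $g_\Sigma>0$ and $d_C\ge 0$ (Lemma 2.5: $d^2+3d+3\ge p$). The reduction to $\{3,5,7,11,13,19\}$, including the exclusion of $17$, is a long curve-theoretic case analysis (Lemmas 2.6 and 2.8). Note also that $c_1(K)^2=2\chi+3\sigma=9$, not $3$, which would throw off every genus count in your Step 3.

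Your Step 2 does not actually produce $C$. Nothing shows that a ``suitable lift'' of $K$, $2K$ or $\frac{2}{3}K$ has nonvanishing invariant SW invariant. The passage of $C$ through all three fixed points is asserted rather than proved. The paper works with $2K$ only. Using Lemma 2.1 it computes $d(2K)=2k\ge 0$, where $k$ is the number of fixed points with $a_i+b_i=p$; wall-crossing then gives a nonzero invariant, and Taubes gives $c_1(2K)=\sum n_iC_i$. Fixed points with $a_i+b_i\not\equiv 0$ lie on the curves automatically, and the other $k$ are imposed as point constraints. The degree-$2$ and degree-$3$ curves in the final list arise as components of this single representative.

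Finally, the numerical search cannot close with the tools you list. You need three further ingredients:
\begin{itemize}
\item Nonnegativity and integrality of the orbifold virtual dimension $d_C$ of $C/G$ for generic $J\in\J$. This also gives $G$-invariance and non-sphericity of each component (Lemmas 2.3--2.4).
\item The local intersection bound of Lemma 2.7, extracted from Taubes' construction in the equivariant setting. For example, at a fixed point where $C$ is embedded with rotation numbers $(1,m)$, it forces $2+3m\equiv 0$ or $3+2m\equiv 0 \pmod p$.
\item The Global Quotient Criterion $\sum_j c_j\equiv 0\pmod p$. It selects among the four cases of Lemma 2.1 and is used to eliminate the $p=19$, $d=3$ configurations.
\end{itemize}
Your invariance condition on germs ($\alpha\equiv a$, $\beta_j\equiv b$) is far weaker than Lemma 2.7. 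The paper's eliminations for $p=17$ and $p\ge 23$ rely on Lemma 2.7 at almost every step.
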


\begin{remark}
(1) Primary examples of a symplectic $\Q$-homology $\C\P^2$ with positive canonical class, which are also the only known examples, are the fake projective planes, which are completely classified by Prasad and Yeung \cite{PY} following Yau's solution of Calabi's conjecture \cite{Y} (see also Cartwright and Steger \cite{CS} for the computer-assisted group theoretic computations). Based on their classification of the fake projective planes, Keum \cite{Keum2} further classified the quotient space and local representations of a finite automorphism group of a fake projective plane. In particular, it was shown that the prime order automorphisms are of order either $3$ or $7$. Regarding symplectic $\Q$-homology $\C\P^2$ with positive canonical class as the symplectic analog of the fake projective planes, Theorem 1.3 suggests that potentially, there are symplectic $\Z_p$-actions on a symplectic fake projective plane which are not algebraic.  

More concretely, note that in Cases (2), (4), (5), and (6) of Theorem 1.3, the order $p\neq 3$ or $7$. On the other hand, concerning the action near the fixed points, a symplectic $\Z_p$-action in Theorem 1.3 is said of {\bf holomorphic} local representations if its local representations are the same as the local representations of a $\Z_p$-automorphism of a fake projective plane. A symplectic $\Z_p$-action is said of {\bf linear} local representations if its local representations are the same as that of a linear 
$\Z_p$-action on $\C\P^2$. One can check easily that holomorphic local representations are also linear. With this understood, in Theorem 1.3 the symplectic $\Z_p$-action has holomorphic local representations in Cases (1), (3)(i), and (3)(v). (In fact, the local representations in Cases (3)(i) and (3)(v) are the same; the difference lies in the invariant curves. In all other cases, the symplectic $\Z_p$-actions differ in either the order $p$ or the local representations.) Moreover, it has linear local representations (but not holomorphic) in Cases (2), (3)(ii), (3)(iv), (4)(ii), (5)(ii), and (6). In the remaining cases, i.e., Cases (3)(iii), (4)(i), and (5)(i), the symplectic $\Z_p$-action has exotic (i.e., nonlinear) local representations. We remark that none of the symplectic $\Z_p$-actions in Theorem 1.3 can be further eliminated by the existing obstructions (however, it is very likely that there are cases that can be ruled out as a consequence of Theorem 1.1). 

It follows easily that a $\Z_p$-automorphism of a fake projective plane, if regarded as a symplectic 
$\Z_p$-action on a $\Q$-homology $\C\P^2$, must belong to one of the following three cases:  (1), (3)(i), or (3)(v). It is clear that any $\Z_3$-automorphism of a fake projective plane belongs to the unique Case (1), although it is interesting to note that, according to the computations in \cite{CS}, there are at least six fake projective planes which admits a $\Z_3$-automorphism (cf. \cite{PY1}). However, for the case of $p=7$, the situation is less straightforward. There is a $\Z_7$-automorphism of a fake projective plane which could belong to either Case (3)(i) or (3)(v), and there is also a 
$\Z_7$-automorphism which only belongs to Case (3)(v). See Theorem 3.9 for more details. 

(2) One crucial assertion in Theorem 1.3 is that in all the cases of the group action, the invariant curve $C$ contains all $3$ fixed points of the action, and is embedded in the complement of the fixed-point set. This has the following important consequence: in the minimal resolution $X$ of the symplectic quotient orbifold, the proper transform of the quotient curve $C/\Z_p$ connects the exceptional divisors from the resolution of the $3$ quotient singularities to form a {\it connected} symplectic divisor $D$ with normal crossings. With this understood, the $12$ weighted graphs in Figure 1 are the corresponding plumbing graphs for the $12$ cases of group actions listed in Theorem 1.3, where the graphs are labeled as either $\Gamma_p$ or $\Gamma_{p,i}$, with $p$ being the order of the group action, and in the latter case $\Gamma_{p,i}$, with $i$ indicating the subcase of the group actions of order $p$ in the listing in Theorem 1.3, e.g., $\Gamma_{7,5}$ is the plumbing graph corresponding to the group action in Case (3)(v). 

By the work of Li and Mak \cite{LM}, after changing the symplectic structure via a deformation if necessary, a regular neighborhood of the symplectic divisor $D$, which is modeled by the symplectic plumbing $P_\Gamma$ according to the weighted graph $\Gamma$, has a concave contact boundary 
$(Y_\Gamma, \xi_{LM})$. This allows us to decompose the minimal resolution $X$ as a union of two pieces, one is the regular neighborhood of the symplectic divisor $D$, a standard piece, and the other is its complement in $X$, which is a strong symplectic filling of the contact plumbed manifold 
$(Y_\Gamma, \xi_{LM})$. This canonical decomposition of the minimal resolution of the quotient 
orbifold established a fundamental connection between the group actions under consideration and the symplectic fillings of the $12$ plumbed manifolds $(Y_\Gamma, \xi_{LM})$, which plays the key role in Theorem 1.1. Indeed, Theorem 1.1(2) follows readily from this decomposition, and Theorem 1.1(1) follows from a corresponding reverse-engineering procedure. 

\end{remark}

The paper is organized as follows. Section 2 is entirely occupied by the proof of Theorem 1.3, the long and technical part of this work. The main input here is Taubes's work \cite{T} applied in an equivariant or orbifold setting as developed in \cite{C1, C}. A very crucial ingredient is also a theorem of Edmonds and Ewing (cf. \cite{EE}) concerning the rotation numbers of the $\Z_p$-actions under consideration.  This is summarized in Lemma 2.1. We then use various techniques to regulate the set of $J$-holomorphic curves, which is invariant under the group action and is representing the class $c_1(2K)$, that we obtained from applying Taubes \cite{T}. This includes choosing a generic $\Z_p$-invariant $J$ so that we have the non-negativity of the virtual dimensions of the moduli spaces of orbifold $J$-holomorphic curves. We combine this with the genus formula and the adjunction inequality for singular curves, the constraint on the $\delta$-invariant of a cuspidal singularity from the rotation numbers of the group action, and a key constraint on the local intersection number of an embedded $J$-holomorphic disc with the $J$-holomorphic curves at a fixed point (cf. Lemma 2.7), which is derived based on Taubes \cite{T} in an equivariant setting. Finally, there is also the Global Quotient Criterion which is used in the last step. The result is the existence of an invariant curve containing all $3$ fixed points, a classification of its singularities, and the determination of the group order and the corresponding local representations of the group action. 

\vspace{2mm}

In Section 3, we examine the minimal resolution of the symplectic quotient orbifold. For each of the cases in Theorem 1.3, we determine the structure of the associated symplectic divisor in the minimal resolution, with the corresponding plumbing graphs listed in Figure 1. This is the content of Theorem 3.3, which also contains an expression for the canonical class of the minimal resolution as a linear
combination of the symplectic surfaces in the symplectic divisor. The expression is then used to obtain information about the symplectic Kodaira dimension and minimality of the minimal resolution. We also include an analysis of prime order automorphisms of fake projective planes from the point of view of this paper. In Section 4, we start with a brief review of the relevant results of Li and Mak \cite{LM}, ending with Theorem 4.1, asserting that a necessary condition for a given case of the $\Z_p$-actions in Theorem 1.3 to be realized is that the corresponding contact plumbed manifold 
$(Y_\Gamma,\xi_{LM})$ is strongly symplectically fillable. Then we establish a reverse-engineering procedure needed for a proof of Theorem 1.1, ending with the proof of Theorem 1.1. 

\section{Existence and classification of invariant curves}

This section is devoted to a proof of Theorem 1.3. To set up the stage, let $(M,\omega)$ be a symplectic $\Q$-homology $\C\P^2$ such that $c_1(K)\cdot [\omega]>0$, where $K$ denotes the canonical line bundle of $(M,\omega)$. Furthermore, we assume $(M,\omega)$ is equipped with a pseudofree symplectic $G$-action where $G=\Z_p$ for an odd prime $p$. It is clear that the $G$-action is trivial in rational homology, so by Lefschetz fixed point theorem (cf. \cite{AS}), the fixed point set of the $G$-action consists of $3$ isolated points. 

We let $\J$ be the set of $G$-invariant, $\omega$-compatible almost complex structures, such that each $J\in\J$
further obeys the following condition: $J$ is integrable near each of the $3$ fixed points. The space $\J$ is a nonempty, contractible Fr\'{e}chet space. 

First of all, we shall look at the local representations of the $G$-action at the fixed points. To this end, we fix
a $J\in \J$ so that for each fixed point $q$, $(T_q M,J_q)$ is a complex vector space.  The induced $G$-action on
$(T_q M,J_q)$ is completely determined by a pair of unordered integers $(a,b)$ where $0<a,b<p$, after we fix a generator $g\in G$, where $a,b$ are simply the weights of the action of $g$ on $(T_q M,J_q)$. Note that for any
$k\in\Z$, the weights of the action of $g^k$ are simply $ka \pmod{p}$ and $kb \pmod{p}$. The pair $(a,b)$ is called the {\bf rotation numbers} at the fixed point $q$, which is clearly independent of the choice of $J\in\J$ (but it depends on the choice of $g\in G$).

\begin{lemma}
Fixing any generator $g\in G$ and any fixed point $q$, there is a pair of integers $0<a<b<p$ such that 
$(a,b)$ is the rotation numbers at $q$, and the rotation numbers at the other two fixed points are given in the following four cases:
\begin{itemize}
\item [{(i)}] $(p-a,b-a)$ and $(p-b,p+a-b)$. 
\item [{(ii)}] $(a, p+a-b)$ and $(p-b, p+a-b)$.
\item [{(iii)}] $(p-a,b-a)$ and $(b, b-a)$.
\item [{(iv)}] $(a, p+a-b)$ and $(b, b-a)$.
\end{itemize}
In particular, we observe that if $(a,b)=(1,2)$, then (1) there is a fixed point $\tilde{q}\neq q$, such that for any
rotation numbers $(\tilde{a},\tilde{b})$ at $\tilde{q}$, $\tilde{a}+\tilde{b}=p$, and (2) with respect to either $g$ or 
$g^{-1}$, $(1,2)$ is also the rotation numbers at a fixed point $q^\prime \neq q$. 
\end{lemma}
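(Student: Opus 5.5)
The plan is to get Lemma 2.1 from the theorem of Edmonds and Ewing \cite{EE}, as announced in the introduction, and then convert its conclusion into rotation numbers for the almost complex structure $J\in\J$. Edmonds and Ewing show that the fixed point data of a pseudofree $\Z_p$-action on a rational homology $\C\P^2$ coincide with those of a linear action on $\C\P^2$. The hypotheses of their theorem are met here: the action is trivial on rational homology, there are exactly $3$ isolated fixed points by the Lefschetz fixed point theorem, and the $G$-signature theorem applies. Here ``fixed point data'' means the real $2$-dimensional rotation representations at each fixed point, compatible with the orientation of $M$. Concretely, for some generator $g$ and some integers $a,b$, the data agree with those of
\[
[z_0:z_1:z_2]\mapsto[z_0:\mu^a z_1:\mu^b z_2], \qquad \mu=e^{2\pi i/p}.
\]
The complex weights of this linear model at its three fixed points are $(a,b)$, $(-a,b-a)$ and $(-b,a-b)$.

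Step one is to pin down exactly what these oriented data determine. A real rotation representation of $\Z_p$ on $\mathbb{R}^4$ with isolated fixed point, with the orientation fixed, determines the complex weight pair only up to the simultaneous sign change $(x,y)\mapsto(-x,-y)$. Indeed, changing the sign of one weight reverses the orientation, while changing both preserves it. Both the symplectic orientation and the complex orientation of $(T_qM,J_q)$ agree with the orientation of $M$, so the actual $J$-rotation numbers are determined from the linear data up to this single ambiguity at each point.

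Step two is the case analysis. Normalize so that the chosen fixed point $q$ has rotation numbers $(a,b)$ with $0<a,b<p$; this is possible after possibly replacing the pair by its negative, and at $q$ this ambiguity is absorbed by the choice of which pair we call $(a,b)$. The weights must satisfy $a\neq b$, since otherwise the weight $b-a$ at another fixed point would vanish, contradicting isolatedness; so we may order them as $0<a<b<p$. At the second fixed point the two options are $(-a,b-a)\equiv(p-a,b-a)$ or $(a,a-b)\equiv(a,p+a-b)$. At the third fixed point they are $(-b,a-b)\equiv(p-b,p+a-b)$ or $(b,b-a)$. The four combinations give exactly cases (i)--(iv).

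Step three checks the special case $(a,b)=(1,2)$ by direct substitution.
\begin{itemize}
\item The second point carries $(p-1,1)$ or $(1,p-1)$. Either way the rotation numbers sum to $p$, and this property is invariant under $(x,y)\mapsto(kx,ky)$ for any $k$, which gives observation (1).
\item In cases (iii) and (iv) the third point carries $(2,1)$ with respect to $g$.
\item In cases (i) and (ii) it carries $(p-2,p-1)$ with respect to $g$, which is $(2,1)$ with respect to $g^{-1}$. This gives observation (2).
\end{itemize}

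The main obstacle is applying \cite{EE} in the present setting. Their theorem is stated for (locally linear) actions on rational homology $\C\P^2$'s under certain hypotheses, and one must check these hold, in particular the compatibility of orientations. If a direct citation is not available, I would fall back on the mechanism of their proof. Apply the $G$-signature theorem to every $g^k$: since $g^k$ acts trivially on $H^2(M;\Q)$ with signature $1$, this gives identities among the cotangent sums $\sum_q \cot(\pi k a_q/p)\cot(\pi k b_q/p)$ over the three fixed points for all $k$. Then use the number-theoretic rigidity of such sums (Dedekind-sum or representation-ring arguments) to conclude that the data must be linear up to the sign ambiguities above.
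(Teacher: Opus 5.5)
Your proposal is correct and matches the paper's proof. You use Edmonds--Ewing to identify the fixed point data with those of a linear action on $\C\P^2$, up to a simultaneous sign change at each fixed point. From this you get $a\neq b$ and enumerate the two sign choices at each of the other two points to obtain (i)--(iv). Your explicit check of the $(a,b)=(1,2)$ observations supplies details the paper leaves implicit. The paper also simply cites \cite{EE} for rational homology $\C\P^2$'s, so the applicability issue you flag is handled there by citation rather than by the fallback $G$-signature argument you sketch.
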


\begin{proof}
Let $(a_i,b_i)$, for $i=1,2,3$, be the rotation numbers at the $3$ fixed points, where without loss of generality, we assume $(a_1,b_1)=(a,b)$. Then by a theorem of Edmonds and Ewing (cf. \cite{EE}), up to a simultaneous change of sign, the rotation numbers at the other two fixed points are given by the rotation numbers of a corresponding linear $G$-action on $\C\P^2$, which in particular implies that $a\neq b$. If we assume $a<b$ without loss of generality, then the rotation numbers of the linear action are $(p-a,b-a)$ and $(p-b,p+a-b)$ respectively at the other
two fixed points. Consequently, either $(a_2,b_2)$ or $(p-a_2,p-b_2)$ equals $(p-a,b-a)$ and  either $(a_3,b_3)$ or $(p-a_3,p-b_3)$ equals $(p-b,p+a-b)$ respectively. The lemma follows immediately. 

\end{proof}

We remark that case (i) of Lemma 2.1 corresponds precisely to the case of local representations of a linear action
on $\C\P^2$.

\subsection{Existence of invariant curves}

The following lemma was proved in \cite{C} except for the assertion concerning the fixed points.

\begin{lemma}
For any $J\in\J$, there is a finite set of irreducible $J$-holomorphic curves $\{C_i\}$ with multiplicities $\{n_i\}$,
such that $c_1(2K)=\sum_i n_i C_i$. Furthermore, the set $\cup_i C_i$ is invariant under the $G$-action and the $3$ fixed points of $G$ are all contained in $\cup_i C_i$.
\end{lemma}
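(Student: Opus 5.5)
The plan is to obtain the curves from Taubes' theorem applied equivariantly, as in \cite{C}, and then to prove the new assertion about the fixed points by a local argument at each fixed point $q$.

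\textbf{Step 1: producing the curves.} Since $M$ is a $\Q$-homology $\C\P^2$, we have $b_2^+=1$ and $c_1(K)^2=3$. Together with $c_1(K)\cdot[\omega]>0$, Taubes' SW$\Rightarrow$Gr theorem \cite{T} applies to the class $c_1(2K)$; its Seiberg--Witten invariant is nonzero by wall-crossing and the chamber determined by $\omega$. For $J\in\J$, the orbifold version developed in \cite{C1, C} yields a $J$-holomorphic subvariety $\sum_i n_iC_i$ representing $c_1(2K)$ whose support is $G$-invariant. (Alternatively, run Taubes' construction on the quotient orbifold $M/G$ with the orbifold class, and pull back.) This part is exactly the lemma from \cite{C}, so I will simply quote it. The invariance of $\cup_i C_i$ comes from the fact that the relevant Seiberg--Witten solution can be taken $G$-invariant, or equivalently that it lives on the quotient orbifold.

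\textbf{Step 2: the fixed points.} I will use the finer information that Taubes' curves are the zero set of a section of an orbifold line bundle over $M/G$. Working on the orbifold, the Seiberg--Witten spinor component $\alpha$ is a section of the $G$-equivariant line bundle $E$ with $c_1(E)=c_1(2K)$. Under Taubes' large-perturbation limit, the curve $\alpha^{-1}(0)$ converges to $\cup C_i$.

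At a fixed point $q$, the group $G$ acts on the fiber $E_q$ by some character. The relevant lift of the action on $E=K^2$ is the one canonically induced by the action on $TM$ (or a twist of it). On $K_q=\Lambda^2(T_qM)^*$, the generator $g$ acts by $\mu^{-(a+b)}$, where $\mu=e^{2\pi i/p}$. Hence on $K^2_q$ it acts by $\mu^{-2(a+b)}$.

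\textbf{Step 3: nontrivial weight forces vanishing.} If the weight on $E_q$ is nontrivial, every invariant section vanishes at $q$, so $\alpha(q)=0$. Invariance is what lets this survive the limit: $q$ lies in the zero locus of every approximating section. The limit curve is the limit of these zero sets, so $q\in\cup C_i$.

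\textbf{Step 4: checking the weights.} It therefore suffices to show that $2(a+b)\not\equiv 0 \pmod p$ at each fixed point. This holds as long as $a+b\not\equiv 0$, since $p$ is odd.

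The main obstacle is a fixed point with $a+b=p$, where $K_q$ is a trivial representation. Lemma 2.1 shows such points do occur. To handle them I will exploit the freedom in choosing the $G$-lift of the line bundle. Lifts of the action to $E$ differ by global characters, and the Seiberg--Witten invariant of the equivariant bundle, computed as the orbifold SW invariant on $M/G$, is nonzero for the lift coming from $K$. With that lift, the weights at the three fixed points are $\mu^{-2(a_i+b_i)}$.

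Using the list in Lemma 2.1, I would check that the weights cannot all be trivial. I will then treat a point $q$ with trivial weight separately, by a local intersection argument. Suppose $q\notin\cup C_i$. Then near $q$ the curve misses a $G$-invariant ball, and the class $c_1(2K)$ restricted to the orbifold point is unconstrained. In the orbifold SW setup, however, the spinor's orbifold line bundle $E$ has a local isotropy data at $q$ that must match the (trivial) weight, and there is no contradiction. So this gives no conclusion at such a point.

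For this case I would instead apply the dimension/adjunction count of \cite{C}. Removing a fixed point from the support changes the orbifold Chern number computation $c_1(E)^2$ of $M/G$ by a local term $\sum$ (over fixed points not on the curve) which must vanish. Comparing with the orbifold Riemann--Roch/index formula would force every fixed point onto the curve. I expect this local correction term to be the delicate step.
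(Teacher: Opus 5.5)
\textbf{There is a genuine gap.}

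Your Steps 1--3 are fine, and they recover what is already in \cite{C}. The equivariant invariant gives a $G$-invariant configuration of curves representing $c_1(2K)$. At a fixed point $q$ where $g$ acts on $K^2_q$ by the nontrivial character $\mu_p^{-2(a+b)}$, every invariant section vanishes, so $q\in\cup_i C_i$.

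The fixed points with $a+b\equiv 0 \pmod p$ are exactly the new content of the lemma, and Step 4 does not handle them. As you observe yourself, the isotropy data at such a point is compatible with the curve missing $q$. Your proposed ``local correction term that must vanish'' in an orbifold Chern number or index comparison is not an actual mechanism. Nothing forces an arbitrary Taubes curve in class $c_1(2K)$ through such a $q$.

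Your planned check that ``the weights cannot all be trivial'' is also false. For $p=3$, Lemma 2.1 forces rotation numbers $(1,2)$ at all three fixed points. Then $a_i+b_i=p$ everywhere, all three weights are trivial, and Step 3 yields nothing. (Minor: $c_1(K)^2=2\chi(M)+3\,Sign(M)=9$, not $3$.)

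The missing idea is an exact dimension count combined with Taubes' point constraints. Compute the virtual dimension of the $G$-invariant Seiberg--Witten moduli space from the index formula:
$d(2K)=\frac{1}{p}\bigl(18+6(p-1)-2\sum_{i=1}^3(\delta(a_i+b_i,b_i)+\delta(b_i,a_i))\bigr)$.
Combine this with the relations among the rotation numbers given by Lemma 2.1. Let $k$ be the number of fixed points with $a_i+b_i=p$. Then:
\begin{itemize}
\item either $k=3$, which happens only for $p=3$, or $k\le 1$;
\item in every case $d(2K)=2k$.
\end{itemize}
So when $k>0$ the moduli space has exactly enough room to impose $k$ point constraints. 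By Taubes, $\cup_i C_i$ can be arranged to contain any $k$ prescribed points. Choosing these to be the trivial-weight fixed points, and using your Step 3 for the remaining ones, puts all three fixed points on the curves.

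The orbifold index formula you mention is the right tool. Its role, however, is to compute $d(2K)$, not to obstruct the curve from missing $q$.
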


\begin{proof}
Let $d(2K)$ be the virtual dimension of the $G$-invariant Seiberg-Witten moduli space associated to
the $G$-equivariant complex line bundle $2K$. Then the key to the proof is to show $d(2K)\geq 0$, 
as explained in \cite{C}, Section 4. The point is that, under this condition, a wall-crossing argument implies that the corresponding $G$-equivariant Seiberg-Witten invariant is nonzero in the Taubes chamber, and as a consequence, $c_1(2K)=\sum_i n_i C_i$ for a finite set of $J$-holomorphic curves $C_i$ such that $\cup_i C_i$ is invariant under the $G$-action and contains any fixed point $q$ whose rotation numbers $(a,b)$ obeys $2(a+b)\neq 0\pmod{p}$, which is equivalent to $a+b \neq 0\pmod{p}$ as $p$ is odd. 

We need to explain why the $J$-holomorphic curves $C_i$ can be so chosen such that $\cup_i C_i$ contains all
$3$ fixed points. To this end, we need to recall the calculation of $d(2K)$. Let $(a_i,b_i)$, for $i=1,2,3$, be the rotation numbers at the $3$ fixed points. Then $d(2K)$ is given by the following formula: 
$$
d(2K)=\frac{1}{p}(c_1(2K)^2- c_1(2K)\cdot c_1(K)+\sum_{i=1}^3 \sum_{x=1}^{p-1} 
\frac{2(\mu_p^{-2(a_i+b_i)x}-1)}{(1-\mu_p^{-a_ix})(1-\mu_p^{-b_ix})}),
$$
where $\mu_p:=\exp(2\pi i/p)$. See \cite{C1}, Appendix A, also Lemma 3.3 in \cite{C0}. In the present case,
with $c_1(K)^2=9$, it was shown in \cite{C}, Section 4, that $d(2K)$ is given by the following formula
$$
d(2K)=\frac{1}{p}(18+6(p-1)-2\sum_{i=1}^3 (\delta(a_i+b_i,b_i)+\delta (b_i,a_i))),
$$
where for any $0\leq c\leq p$, $0<d<p$, $\delta(c,d)$ is the unique solution to $c-d\delta=0 \pmod{p}$ where
$0\leq \delta<p$ (see Lemma 4.4 in \cite{C}). 

With the preceding understood, in order to evaluate $d(2K)$, we assume without loss of generality that
$(a_1,b_1)=(a,b)$, and $(a_2,b_2)$ and $(a_3,b_3)$ are given as in Lemma 2.1. Furthermore, for an illustration
we assume we are in case (i). Then we observe the following: 
\begin{itemize}
\item [{(1)}] $b-a \delta(b_1,a_1)=0 \pmod{p}$.
\item [{(2)}] $b-a-(p-a) \delta(b_2,a_2)= 0\pmod{p}$. 
\item [{(3)}] $p+a-b-(p-b) \delta (b_3,a_3)=0 \pmod{p}$. 
\item [{(4)}] $a+b- b\delta(a_1+b_1, b_1)=0 \pmod{p}$.
\item [{(5)}] $p-2a+b-(b-a) \delta(a_2+b_2, b_2)=0 \pmod{p}$. 
\item [{(6)}] $2p+a-2b -(p+a-b)\delta (a_3+b_3,b_3)=0 \pmod{p}$.
\end{itemize}
Note that (1) and (2) implies $\delta(b_1,a_1)+\delta(b_2,a_2)=1\pmod{p}$, (5) and (6) implies that
$\delta(a_2+b_2,b_2)+\delta(a_3+b_3,b_3)=3 \pmod{p}$, and finally,  (3) and (4) implies that 
$\delta(a_1+b_1,b_1)+\delta(b_3,a_3)=2 \pmod{p}$. It follows easily that 
$$
\delta(b_1,a_1)+\delta(b_2,a_2)=p+1,
$$
$$
\delta(a_2+b_2,b_2)+\delta(a_3+b_3,b_3)=\left \{\begin{array}{ll} 
0 & \mbox{ if $a_i+b_i=p$ for both $i=2,3$} \\
3 & \mbox{ if exactly for one of $i=2,3$, $a_i+b_i=p$ } \\
p+3 & \mbox{ if $a_i+b_i\neq 0 \pmod{p}$ for $i=2,3$ }\\
\end{array} \right. 
$$
and 
$$
\delta(a_1+b_1,b_1)+\delta(b_3,a_3)=\left \{\begin{array}{ll} 
2 & \mbox{ if $a_1+b_1=p$}\\
p+2 & \mbox{ if $a_1+b_1\neq 0\pmod{p}$.}
\end{array} \right. 
$$
We further observe that if $a_i+b_i=p$ for both $i=2,3$, then $b-2a=0\pmod{p}$ and $a-2b=0\pmod{p}$, which
implies that $p=3$ must be true, and $a_1+b_1=p$ as well. On the other hand, if $a_1+b_1=p$ and for one of 
$i=2,3$, $a_i+b_i=p$, $p=3$ must also be true, and $a_i+b_i=p$ for the other $i=2,3$ as well. 
With this understood, if we let $k$ be the number of fixed points whose rotation numbers $(a_i,b_i)$
obeys $a_i+b_i=p$, then either $k=3$ with $p=3$, or $k\leq 1$. It follows easily that when $k=3$, we have
$\sum_{i=1}^3 (\delta(a_i+b_i,b_i)+\delta (b_i,a_i))=p+3$, and as a result 
$$
d(2K)=\frac{1}{p}(18+6(p-1)-2(p+3))=\frac{1}{p}(6+4p)=6=2k,
$$
as in this case we have $p=3$. If $k=1$, we have $\sum_{i=1}^3 (\delta(a_i+b_i,b_i)+\delta (b_i,a_i))=2p+6$,
and 
$$
d(2K)=\frac{1}{p}(18+6(p-1)-2(2p+6))=2=2k,
$$
and if $k=0$, we have $\sum_{i=1}^3 (\delta(a_i+b_i,b_i)+\delta (b_i,a_i))=3p+6$, and $d(2K)=0$. 

With the preceding understood, we note that if $d(2K)=2k>0$, then we can arrange so that $\cup_i C_i$ contains
any chosen set of $k$ distinct points in $M$ (cf. Taubes \cite{T}). So when $k>0$, we may simply choose the $k$ distinct points to be the $k$ fixed points whose rotation numbers $(a_i,b_i)$ obeys $a_i+b_i=p$. This shows that we can always arrange so that $\cup_i C_i$ contains all $3$ fixed points. 

For the other cases, i.e., (ii)-(iv), the proof is identical. We simply make the following observation: if we change 
$c,d$ to $p-c,p-d$, the number $\delta(c,d)$ remains unchanged, i.e., $\delta(c,d)=\delta(p-c,p-d)$, and furthermore,
$a_i+b_i=p$ if and only if $(p-a_i)+(p-b_i)=p$. 

\end{proof}

To proceed further, let $L\in H_2(M)/Tor$ be the generator such that $\omega (L)>0$. Then we can speak of
the degree of a homology or cohomology class as the multiplicity of the class with respect to $L$. In particular,
$c_1(2K)$ is of degree $6$, and if in $c_1(2K)=\sum_i n_i C_i$, we let $d_i>0$ be the degree of $C_i$, then
one has the constraint $\sum_i n_i d_i=6$. 

In what follows, we shall choose $J\in\J$ to be generic, which has the effect that the virtual dimensions of the moduli spaces of orbifold $J$-holomorphic curves\footnote{The Fredholm theory of orbifold $J$-holomorphic curves is based on the theory of maps between orbifolds developed in \cite{C000}. The notion of maps formulated in \cite{C000} corresponds to the ad hoc notion of  ``a good (V-manifold) map equipped with an isomorphism class of compatible systems" in \cite{CR}. In particular, the proof of non-negativity of the virtual dimension, which follows by a standard argument, requires the corresponding structural theorem for the maps established in \cite{C000}.} in the symplectic $4$-orbifold $M/G$ are non-negative as long as the corresponding moduli space is nonempty (cf. \cite{C0}). We shall review the general formula for the virtual dimension of the moduli spaces after proving the next two lemmas, as the lemmas only involve the following two special cases, i.e., either the orbifold $J$-holomorphic curves contain no essential orbifold points or the orbifold $J$-holomorphic curves are spherical.

\begin{lemma}
Let $J\in\J$ be generic. Then each $J$-holomorphic curve $C\in\{C_i\}$, where $c_1(2K)=\sum_i n_i C_i$, must be invariant under the $G$-action. 
\end{lemma}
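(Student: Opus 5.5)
The plan is to argue by contradiction. Suppose some $C\in\{C_i\}$ is not $G$-invariant. Since $G=\Z_p$ has prime order and $\cup_i C_i$ is $G$-invariant by Lemma 2.2, the orbit of $C$ consists of $p$ distinct irreducible $J$-holomorphic curves $C, g\cdot C,\dots,g^{p-1}\cdot C$. Each of these appears in the decomposition $c_1(2K)=\sum_i n_iC_i$, and all carry the same multiplicity $n$, because the Taubes current is $G$-invariant. Each also has the same degree $d\ge 1$, since $G$ acts trivially on rational homology. The degree constraint $\sum_i n_i d_i=6$ then gives $pnd\le 6$. This immediately rules out every prime $p\ge 7$, leaving only $p=3$ or $p=5$.

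For $p=3$ and $p=5$ we need the genericity of $J$. The image $C/G$ of the orbit in the orbifold $M/G$ is a single orbifold $J$-holomorphic curve, the image of $C$ under the quotient. The curve $C$ has trivial stabilizer, so the $G$-orbit of $C$ is a free orbit of curves, and $C\to C/G$ is injective away from finitely many points. The natural way to make this precise is to view $C$ itself as a $J$-holomorphic curve in $M$ that does not pass through the fixed points in any "orbifold" way. Its image in $M/G$ is then an orbifold curve whose parametrizing domain has no essential orbifold points, because the domain is just $C$ with no stabilizers. This is exactly the first of the two special cases mentioned before the lemma.

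For such a curve the virtual dimension of the moduli space in $M/G$ agrees with the ordinary one for the curve $C$ in $M$, namely $2(c_1(TM)\cdot C + g-1)=2(-c_1(K)\cdot C+g-1)$ (real dimension; I would phrase it with the orbifold index formula, where the correction terms vanish because there are no orbifold points). One should double-check that it is really the $M$-dimension, not $M/G$, that genericity in $\J$ controls. The point is that $G$-invariant perturbations of $J$ act independently near $C$ and near its translates, because these are disjoint away from finitely many points. Hence $C$ behaves like a curve for a generic $J$, without any equivariance constraint, and it is counted in the free part of the moduli space.

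Now we use the numbers. We have $c_1(K)=3L$ in rational homology, with $c_1(K)\cdot[\omega]>0$, so $c_1(K)\cdot C=3d>0$. Nonnegativity of the virtual dimension requires $g-1\ge 3d$, that is, $g\ge 3d+1$. On the other hand, the adjunction inequality gives $2g-2\le C\cdot C+c_1(K)\cdot C=d^2+3d$. Combining these, $6d\le d^2+3d$, i.e. $d\ge 3$. So $pnd\ge 3p\ge 9>6$, a contradiction.

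The main obstacle is the step identifying the correct virtual-dimension statement for a non-invariant curve under a $G$-invariant generic $J$, i.e., justifying that transversality can be achieved for curves with trivial stabilizer. Here I would invoke the orbifold Fredholm setup of \cite{C0} for the image curve in $M/G$ with no essential orbifold points. The numerical part is then routine.
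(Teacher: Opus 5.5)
Your argument is correct and is essentially the paper's proof. The $p$ distinct translates of $C$ force $pd\le 6$. Since $C$ is not invariant, its image in $M/G$ has no essential orbifold points, so by genericity its virtual dimension satisfies $c_1(TM)\cdot C+g_\Sigma-1=-3d+g_\Sigma-1\ge 0$, and together with the adjunction bound $g_\Sigma\le \frac{1}{2}(d^2+3d+2)$ this forces $d\ge 3$, a contradiction. (Your claim that all curves in the orbit carry the same multiplicity is harmless but unnecessary, since $n_i\ge 1$ already gives $pd\le 6$.)
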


\begin{proof}
Suppose to the contrary that there is a $C\in\{C_i\}$ which is not invariant under the $G$-action. Then 
$\{g\cdot C|g\in G\}$ is a set of $p=|G|$ distinct $J$-holomorphic curves, which is a subset of $\{C_i\}$. 
It follows easily that the degree $d$ of $C$ is at most $2$.

On the other hand, let $f: \Sigma\rightarrow M$ be a $J$-holomorphic parametrization of $C$, and let $\pi: M\rightarrow M/G$ be the quotient map. Then $\pi\circ f: \Sigma\rightarrow M/G$ is a $J$-holomorphic parametrization of $\pi(C)$ because $C$ is not $G$-invariant and the order $p=|G|$ is prime. 
Let $g_\Sigma$ be the genus of $\Sigma$. Then the complex virtual dimension $d_C$ of the moduli space 
of $J$-holomorphic curves in the orbifold $M/G$ that contains $\pi(C)$ is given by
$$
d_C=c_1(T(M/G))\cdot \pi(C)+(g_\Sigma-1)=c_1(TM)\cdot C+(g_\Sigma-1)=-3d+ (g_\Sigma-1).
$$
(See \cite{CR}, Proposition 3.2.5). Since $J\in\J$ is generic, one has $d_C\geq 0$. On the other hand, by the adjunction inequality, we have $g_\Sigma\leq \frac{1}{2}(C\cdot C+c_1(K)\cdot C+2)=\frac{1}{2}(d^2+3d+2)$. Then $d_C\geq 0$ implies $d\geq 3$, which is a contradiction to $d\leq 2$ we obtained earlier.

\end{proof}

\begin{lemma}
Let $J\in\J$ be generic. Then each $J$-holomorphic curve $C\in\{C_i\}$, where $c_1(2K)=\sum_i n_i C_i$, must
be non-spherical, i.e., $C$ cannot be parametrized by a $J$-holomorphic map from $\s^2$.
\end{lemma}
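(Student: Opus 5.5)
By Lemma 2.3, each $C\in\{C_i\}$ is $G$-invariant. Suppose, for contradiction, that $C$ is spherical. Let $f:\s^2\rightarrow M$ be a $J$-holomorphic parametrization, which we may take to be somewhere injective. Since $C$ is $G$-invariant, the $G$-action lifts to $\s^2$, so $f$ is $G$-equivariant for some holomorphic $\Z_p$-action on $\s^2$.

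That action is nontrivial. Otherwise $C$ would be pointwise fixed, contradicting pseudofreeness. A nontrivial $\Z_p$-action on $\s^2$ is a rotation with exactly two fixed points. Hence the quotient $\pi(C)\subset M/G$ is parametrized by an orbifold map from the orbifold sphere $\s^2/\Z_p$, which has two cone points of order $p$. These map into fixed points of $M$.

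Next I would write down the complex virtual dimension of the moduli space of orbifold $J$-holomorphic spheres containing $\pi(C)$, using \cite{CR}, Proposition 3.2.5. With two orbifold points $z_1,z_2$ of order $p$, this should take the form
$$
d_C=\frac{1}{p}c_1(TM)\cdot C-1-\iota_{z_1}-\iota_{z_2}+(\text{number of orbifold points}),
$$
where $\iota_{z_j}$ is the degree-shifting term determined by the weights of the local representation at the image fixed point relative to the rotation at $z_j$. Writing $c_1(TM)\cdot C=-3d$ with $d\geq 1$ the degree of $C$, and using $\iota_{z_j}\geq 0$ together with the bound coming from at most $2$ orbifold points, I expect
$$
d_C\leq -\frac{3d}{p}-1+2\cdot\Big(1-\frac{2}{p}\Big)<\frac{-3d+p-4}{p}\cdot 1 \;<\; 1 .
$$
So the task is to show $d_C<0$ strictly, which contradicts genericity of $J$. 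Equivalently, I would compute $d_C$ as the dimension of the $G$-invariant part of the deformation space of $f$ on $\s^2$: by the equivariant Riemann–Roch/Lefschetz formula, this is
$$
\frac{1}{p}\Big(2+c_1(TM)\cdot C-2\Big)+\text{(fixed-point contributions)}-\dim\text{Aut}^{G}(\s^2),
$$
where the $G$-invariant automorphisms of $\s^2$ form $\C^\ast$, of dimension $1$.

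The main obstacle is controlling the fixed-point contributions. At each of the two fixed points of $\s^2$, the local weight $m$ of $f$'s tangent rotation, relative to the rotation numbers $(a,b)$ at the image fixed point, determines the contribution. Each contribution has the form $\{\cdot/p\}$-fractional terms bounded by something less than $2$. Combined with $-3d$ for $d\geq 1$ and the $-1$ from automorphisms, this should give a negative total, since $-3d/p$ is small but the fractional parts sum to less than the $+1$ needed.

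If this crude bound is insufficient for small $p$ such as $p=3$, I would refine it in two ways. First, by the Edmonds–Ewing constraint of Lemma 2.1, the two cone points map to distinct fixed points, or to the same one, with specified weights. Second, I would use the adjunction bound $C^2=d^2$ on the orbifold normal bundle degree to pin down the fractional parts exactly, and check that $d_C\geq 0$ is impossible in every case.
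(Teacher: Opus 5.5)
Your framework is the right one and matches the paper's. The action lifts to a rotation of $\s^2$ with fixed points $x,\tilde{x}$, and with $\iota_{z_j}=(m_{j,1}+m_{j,2})/p$ your formula becomes $d_C=1-\frac{3d+(a+b)+(\tilde a+\tilde b)}{p}$. However, the step that actually proves the lemma is missing.

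No bound that treats the two cone points separately can give $d_C<0$. The best such bound, $m_{j,1}+m_{j,2}\geq 3$, only yields $d_C\leq 1-\frac{3d+6}{p}$, which is non-negative whenever $p\geq 3d+6$ (e.g.\ $d=1$, $p\geq 11$). So the difficulty is at large $p$, not at small $p$ as you anticipate; for $p=3$ one gets $d_C=-1-d$ directly. Your heuristic that ``the fractional parts sum to less than the $+1$ needed'' also points the wrong way. The degree-shifting terms enter with a minus sign, so what you need is that they are \emph{large}, namely $3d+(a+b)+(\tilde a+\tilde b)>p$. Neither the equivariant Riemann--Roch recount nor the appeal to $C^2=d^2$ for an orbifold normal bundle supplies this. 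The latter is not even available in general, since $C$ may be singular.

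Three ingredients are needed.

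(1) If $g$ rotates $\s^2$ by $2\pi/p$ at $x$, then $g^{-1}$ does so at $\tilde x$. So $(\tilde a,\tilde b)$ must be read off with respect to $g^{-1}$.

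(2) Lemma 2.1 then ties $(\tilde a,\tilde b)$ to $(a,b)$, $a<b$. It is $(p-a,p-b)$ if $f(\tilde x)=f(x)$; $(p-a,b-a)$ or $(a,p+a-b)$ at the second fixed point; and $(p-b,p+a-b)$ or $(b,b-a)$ at the third. In every case except the last, $a+b+\tilde a+\tilde b\geq p$, so $d_C<0$.

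(3) In the last case $a+b+\tilde a+\tilde b=3b$, and $d_C=1-\frac{3(d+b)}{p}$ can be non-negative. For example, $(a,b)=(1,2)$, $(\tilde a,\tilde b)=(2,1)$, $d=1$, $p=13$ gives $d_C=4/13$, and this configuration is consistent with Lemma 2.1. It is excluded only because $d_C\in\Z$ and $d_C<1$ force $d_C=0$, i.e.\ $p=3(d+b)$, which contradicts primality of $p$.

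Integrality of $d_C$ and primality of $p$ are therefore essential, and neither appears in your argument.
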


\begin{proof}
Assume to the contrary that there is a $C\in\{C_i\}$ which is parametrized by a $J$-holomorphic map 
$f: \s^2\rightarrow M$. By Lemma 2.3, $C$ is invariant under the $G$-action, which implies that there is an induced $G$-action on the domain $\s^2$, with respect to which $f$ is $G$-equivariant. Let $x,\tilde{x}\in \s^2$ be the two fixed points and let $q=f(x)$, $\tilde{q}=f(\tilde{x})$ be the corresponding fixed points in $M$. Furthermore, let 
$g\in G$ be the generator which acts on $\s^2$ as a rotation of angle $2\pi/p$ at the fixed point $x$. With this understood, let $(a,b)$, $a<b$, be the rotation numbers at $q$ with respect to the generator $g\in G$. Finally, 
let $(\tilde{a},\tilde{b})$ be the rotation numbers at the fixed point $\tilde{q}$ with respect to the generator 
$\tilde{g}:=g^{-1}\in G$, which acts on $\s^2$ as a rotation of angle $2\pi/p$ at the fixed point $\tilde{x}$.

With the preceding understood, the complex virtual dimension $d_C$ for the component of the moduli space of 
orbifold $J$-holomorphic curves in the quotient orbifold $M/G$ that contains the curve $C/G$ is given by the following formula
$$
d_C=\frac{c_1(TM)\cdot C}{p}+1-\frac{a+b}{p}-\frac{\tilde{a}+\tilde{b}}{p}=1-\frac{3d+a+b+\tilde{a}+\tilde{b}}{p}, 
$$
where $d>0$ is the degree of $C$. (See \cite{C0}, Lemma 1.10, also \cite{CR}, Lemma 3.2.4).
Note that since $J\in\J$ is generic, one has $d_C\geq 0$. 

We shall derive a contradiction by examining the values of $(\tilde{a},\tilde{b})$ using Lemma 2.1. First,
if $\tilde{q}=q$, then $(\tilde{a},\tilde{b})=(p-a,p-b)$, which implies $d_C<0$, a contradiction. Next, suppose 
the rotation numbers at $\tilde{q}$ with respect to the generator $g$ is $(a_2,b_2)$ in Lemma 2.1. Then 
$(\tilde{a},\tilde{b})$ equals either $(p-a, b-a)$ or $(a, p+a-b)$. In either case $d_C<0$, also a contradiction. Lastly, assume the rotation numbers at $\tilde{q}$ with respect to the generator $g$ is $(a_3,b_3)$ in Lemma 2.1. Then 
$(\tilde{a},\tilde{b})$ equals either $(p-b, p+a-b)$ or $(b, b-a)$. In the former case, $d_C<0$, and in the latter,
$d_C=\frac{1}{p}(p-3d-3b)\geq 0$, which implies that $p=3d+3b$ as $d_C\in\Z$. But this contradicts the assumption that $p$ is prime, as $d>0$ and $b>0$. This finishes the proof of the lemma. 

\end{proof}

\subsection{Determination of the group orders}

To proceed further, let $C\in \{C_i\}$ be an irreducible component with degree $d$, and let $f: \Sigma\rightarrow M$ be a $J$-holomorphic parametrization of $C$. Since $C$ is $G$-invariant by Lemma 2.3, there is an induced $G$-action on $\Sigma$ such that $f$ is $G$-equivariant. Moreover, $\Sigma\neq\s^2$ by Lemma 2.4. Let $g_\Sigma>0$ be the genus of $\Sigma$, $g_{|\Sigma/G|}$ be the genus of the underlying Riemann surface of the $2$-orbifold 
$\Sigma/G$, and let $k$ be the number of the fixed points of the $G$-action on $\Sigma$. Then the following holds true:
$$
2-2g_{|\Sigma/G|}-k=\frac{1}{p}(2-2g_\Sigma-k).
$$
We should point out that either $k=0$, i.e., the $G$-action on $\Sigma$ is free, or $k\geq 2$, i.e., there are at least $2$ fixed points on $\Sigma$  (see \cite{C2}, Lemma 3.1). 

Let $d_C\in\Z$ be the complex virtual dimension of the moduli space of orbifold $J$-holomorphic curves in the orbifold $M/G$ which contains $C/G$. To state the formula of $d_C$ in the case of $k\geq 2$, we let $z_j$, where $j=1,2,\cdots,k$, be the fixed points on $\Sigma$, and let $g_j\in G$ be the element which acts in a neighborhood 
of $z_j$ as a counterclockwise rotation by an angle $2\pi/p$. Then (cf. \cite{CR}, Proposition 3.2.5)
$$
d_C=\frac{1}{p} c_1(TM)\cdot C+ (g_{|\Sigma/G|}-1)+k-\sum_{j=1}^k \frac{m_{j,1}+m_{j,2}}{p},
$$
where for each $j$, $(m_{j,1},m_{j,2})$ is the rotation numbers at the fixed point $f(z_j)$ with respect to $g_j\in G$. Note that under the assumption that $J\in\J$ is generic, $d_C\geq 0$.

With the proceeding understood, it proves to be useful to observe some rough estimate on the genus $g_\Sigma$ in terms of the degree $d$ of the $J$-holomorphic curve $C$. To this end, we label the $3$ fixed points in $M$ by $q_i$,  for $i=1,2,3$, and for each $i$, let $k_i\geq 0$ be the number of fixed points $z_j$ such that $f(z_j)=q_i$. Moreover, for each $i$ we define $\delta_i$ as follows: if $k_i=1$ and $C$ is singular at $q_i$, we let $\delta_i$ be the $\delta$-invariant of the cuspidal singularity $q_i\in C$; otherwise, let $\delta_i=0$. Then we observe the following adjunction inequality 
$$
g_\Sigma\leq \frac{1}{2}(d^2+3d+2)-\frac{1}{2}\sum_{i=1}^3 k_i(k_i-1)-\sum_{i=1}^3 \delta_i. 
$$
This follows easily from the following genus formula: for each $i$ with $k_i>1$, let $z_{j,i}$, $j=1,2,\cdots,k_i$,
be the fixed points in $\Sigma$ such that $f(z_{j,i})=q_i$, let $C_{j,i}$ be the local branch of $C$ at $q_i$ which is parametrized by a neighborhood of $z_{j,i}$ under $f$, and let $\delta_{j,i}$ be the corresponding $\delta$-invariant 
($\delta_{j,i}=0$ if $C_{j,i}$ is embedded), then 
$$
g_\Sigma= \frac{1}{2}(d^2+3d+2)-\frac{1}{2}\sum_{k_i>1}\sum_{j\neq j^\prime}C_{j,i}\cdot C_{j^\prime,i}-\sum_{k_i>1}\sum_{j=1}^{k_i} \delta_{j,i}-\sum_{i=1}^3 \delta_i - x\cdot p,
$$
where $C_{j,i}\cdot C_{j^\prime,i}$ is the local intersection number and $x\cdot p$, for $0\leq x\in\Z$, is the contribution from the singularities of $C$ lying in the complement of the fixed points $\{q_i|i=1,2,3\}$; in particular, when $p$ is relatively large, $x=0$. Note that  $C_{j,i}\cdot C_{j^\prime,i}\geq \alpha_{j,i}\alpha_{j^\prime,i}$ where
$\alpha_{j,i}$, $\alpha_{j^\prime,i}$ are the multiplicities of the corresponding singularities.

As a consequence, we have the following rough estimate:
$$
2-2g_\Sigma-k\geq -d^2-3d- \#\{k_i|k_i=1\}\geq -d^2-3d-3. 
$$
On the other hand, observe that since $g_\Sigma> 0$ (cf. Lemma 2.4), $2-2g_\Sigma-k\leq 0$ with ``$=$" if and only if $k=0$ and $g_\Sigma=1$, in which case one also has $g_{|\Sigma/G|}=1$. But this can be ruled out by the
fact that $d_C\geq 0$. Consequently, it follows easily from
$$
2-2g_{|\Sigma/G|}-k=\frac{1}{p}(2-2g_\Sigma-k)\neq 0, \mbox{ and } 2-2g_{|\Sigma/G|}-k\in \Z, 
$$
that $d^2+3d+3\geq p$. In particular, since $d\leq 6$, we see immediately that $p<57$. This fundamental finiteness for the order $p$ makes it possible for a classification of the possible symplectic $\Z_p$-actions on $M$ as stated in Theorem 1.3. We summarize our discussions in the following lemma.

\begin{lemma}
The prime order $p<57$. Moreover, 
$$
2-2g_\Sigma-k=-px\geq -(d^2+3d+3)\geq -57,
$$ 
where $0<x=2g_{|\Sigma/G|}-2+k\in\Z$.
\end{lemma}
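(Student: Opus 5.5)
The lemma packages the preceding discussion, so the plan is to assemble the steps already set up, in order. Fix a generic $J\in\J$ and an irreducible component $C\in\{C_i\}$ of degree $d$, with $G$-equivariant parametrization $f:\Sigma\rightarrow M$. By Lemma 2.3, $C$ is $G$-invariant, so $G$ acts on $\Sigma$. By Lemma 2.4, $g_\Sigma\geq 1$. The Riemann--Hurwitz relation
$$
2-2g_{|\Sigma/G|}-k=\frac{1}{p}(2-2g_\Sigma-k)
$$
gives $2-2g_\Sigma-k=-px$ with $x:=2g_{|\Sigma/G|}-2+k\in\Z$. This already yields the identity in the lemma.

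For the lower bound, I will use the adjunction inequality for $g_\Sigma$ displayed before the lemma:
$$
2-2g_\Sigma\geq -d^2-3d+\sum_i k_i(k_i-1)+2\sum_i\delta_i .
$$
Since $k=\sum_i k_i$ and $k_i(k_i-1)-k_i\geq -1$, with equality only when $k_i\in\{1,2\}$, we get $\sum_i (k_i(k_i-1)-k_i)\geq -3$. Hence $2-2g_\Sigma-k\geq -(d^2+3d+3)$. Because $c_1(2K)=\sum_i n_iC_i$ has degree $6$, we have $d\leq 6$, so $d^2+3d+3\leq 57$.

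The key point is positivity of $x$. Since $g_\Sigma\geq 1$, we have $2-2g_\Sigma-k\leq 0$, so $x\geq 0$. Equality forces $k=0$ and $g_\Sigma=1$, hence $g_{|\Sigma/G|}=1$. In that case the orbifold virtual dimension formula, with no orbifold points, gives
$$
d_C=\frac{1}{p}c_1(TM)\cdot C+(g_{|\Sigma/G|}-1)=-\frac{3d}{p}<0,
$$
which contradicts $d_C\geq 0$ for generic $J$. Thus $x\geq 1$. Combining, $p\leq px\leq d^2+3d+3\leq 57$, and since $57=3\cdot 19$ is not prime, $p<57$.

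The only delicate step is the adjunction bound when some $k_i\geq 3$ or some branches are singular. Here I rely on the stated genus formula: the local intersection numbers of distinct branches are at least $1$, and the $\delta$-invariants and the term $x\cdot p$ for singularities away from the fixed points are nonnegative. Dropping these terms only weakens the inequality, so the argument goes through.
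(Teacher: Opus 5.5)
Your proof is correct and follows the paper's own argument: Riemann--Hurwitz gives $2-2g_\Sigma-k=-px$, the adjunction bound gives $2-2g_\Sigma-k\geq -d^2-3d-\#\{k_i\mid k_i=1\}$, the case $x=0$ (i.e. $k=0$, $g_\Sigma=g_{|\Sigma/G|}=1$) is excluded by $d_C=-3d/p<0$, and $57=3\cdot 19$ being composite gives $p<57$. One small slip is harmless: $k_i(k_i-1)-k_i=k_i(k_i-2)$ attains the value $-1$ only at $k_i=1$ (it equals $0$ at $k_i=2$), so your parenthetical equality claim is wrong, but the bound $\geq -1$ you actually use holds for every $k_i\geq 0$.
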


Next, we make some useful observations concerning the local structure of a singularity of $C$ that occurs 
at a fixed point, in connection with the rotation numbers at the fixed point. Since we require $J\in\J$ to be integrable near the fixed points, $C$ is given by germs of holomorphic curves near a fixed point. In case of a cuspidal singularity of $C$ or a local branch of $C$, let the following be a Puiseux parametrization of the singularity:
$$
z_1=z^\alpha, \; z_2=c_1z^{\beta_1}+c_2z^{\beta_2}+\cdots 
$$
where each $c_j\neq 0$, and $1<\alpha<\beta_1<\beta_2< \cdots$. With this understood, the characteristic 
sequence of the singularity, $(\alpha; \beta_{j_1}, \beta_{j_2},\cdots)$ where $\{\beta_{j_k}\}$ is a subsequence of
$\{\beta_j\}$, is characterized by the condition that $\alpha>gcd(\alpha,\beta_{j_1})>gcd(\alpha,\beta_{j_1},\beta_{j_2})>\cdots > 1$, with $\alpha$ being the multiplicity of the singularity. It is well-known that the multiplicity sequence of the singularity, particularly the $\delta$-invariant, is completely determined by the characteristic sequence; more precisely, we have
$$
\text{$\delta$-invariant} =\frac{1}{2} \sum_{k=1}^m (\beta_{j_k}-1)(r_{k-1}-r_k)=\frac{1}{2}((\alpha-1)(\beta_{j_1}-1)+
\sum_{k=2}^m (r_{k-1}-1)(\beta_{j_k}-\beta_{j_{k-1}})),
$$
where $(\alpha; \beta_{j_1}, \beta_{j_2},\cdots, \beta_{j_m})$ is the characteristic 
sequence of the singularity, and $r_0=\alpha$, $r_k=gcd(r_{k-1}, \beta_{j_k})$ for $k\geq 1$, with $r_m=1$.
See \cite{BK, Milnor}. 

With this understood, we observe that if $g\in G$ is the generator such that 
$g\cdot z=\exp(2\pi/p)z$, and if $(a,b)$ is the rotation numbers with respect to $g\in G$ at the fixed point, then
$$
a=\alpha \pmod{p},\;\;\;  b=\beta_1=\beta_2=\cdots \pmod{p}.
$$
The following are some quick consequences: (i) if there is a $\beta_j$ such that $\beta_j=b$, then it must be 
$\beta_1=b$, and $\beta_j>b$ for all $j>1$, (ii) if $\alpha$ is a prime number, then one can always arrange so that 
$(\alpha,\beta_1)$ is the characteristic sequence; in particular, the $\delta$-invariant $\delta\geq \frac{1}{2}(a-1)(b-1)$, with ``$=$" if and only if $(a,b)=(\alpha,\beta_1)$. 

The following fact will be frequently used: in the case of a nodal singularity of $C$ at a fixed point, we observe that the tangent plane of each local branch must be invariant under the $G$-action. It follows easily that the rotation numbers for the two local branches must take the form $(m_{1,1},m_{1,2})=(1,m_1)$, $(m_{2,1},m_{2,2})=(1,m_2)$, where $m_1m_2=1\pmod{p}$; in particular, note that $g_2=g_1^{m_2}$. 

Finally, there is one more constraint, which we will call the {\bf Global Quotient Criterion}, stemming from the fact that the orbifold curve $C/G$ is a global quotient. See \cite{C2}, Lemma 3.1, for more details.

\vspace{2mm}

{\bf Global Quotient Criterion:} {\it Recall that $z_j$, for $j=1,2,\cdots,k$, are the fixed points on 
$\Sigma$, each assigned with a generator $g_j\in G$, with rotation numbers $(m_{j,1},m_{j,2})$. Assume without
loss of generality that $C$ contains all $3$ fixed points $q_1,q_2,q_3$, and $f(z_1)=q_1$. Moreover, applying
Lemma 2.1, where we write $(a,b)=(m_{1,1},m_{1,2})$, and assuming case (i) is true, we let $(p-a,b-a)$,
$(p-b,p+a-b)$ be the rotation numbers at $q_2,q_3$ for the generator $g_1$ without loss of generality. Then
let $c_1=1$, and for each $j>1$, there is a $c_j\in\Z$ such that if $f(z_j)=q_1$ (resp.  $f(z_j)=q_2$,
$f(z_j)=q_3$), then 
$$
(m_{j,1},m_{j,2})=(c_j a,c_j b)  \; (\mbox{resp.  }(c_j (p-a), c_j(b-a)), (c_j (p-b), c_j(p+a-b)) )\pmod{p}.
$$
Note that $g_j=g_1^{c_j}$ for each $j$. With this understood, $g_1g_2g_3\cdots g_k=1$ in $G$ implies the following constraint on the rotation numbers  $(m_{j,1},m_{j,2})$:
$$
c_1+c_2+c_3+\cdots+c_k=0 \pmod{p}. 
$$
For the cases (ii)-(iv) of Lemma 2.1, the discussion is completely analogous, and we leave the details to the reader.
}

\vspace{2mm}

With the proceeding understood, pick a $J\in\J$ such that $c_1(2K)=\sum_i n_i C_i$ by Lemma 2.2. 

\begin{lemma}
Let $J\in\J$ be generic. If there is a $C\in \{C_i\}$ of degree $d$ such that $d\leq 3$, then $7\leq p\leq 13$.
\end{lemma}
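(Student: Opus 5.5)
Since $d\le 3$, Lemma 2.5 gives $p\le d^2+3d+3\le 21$ at once. So the plan is to rule out $p=3$, $p=5$, $p=17$ and $p=19$ by sharpening the estimates of Lemma 2.5 degree by degree.

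I would use two relations throughout. The first is $2-2g_\Sigma-k=-px$ with $x=2g_{|\Sigma/G|}-2+k>0$. The second is the adjunction bound
$$g_\Sigma\le \tfrac12(d^2+3d+2)-\tfrac12\sum_i k_i(k_i-1)-\sum_i\delta_i,$$
and I would combine them with the condition $d_C\ge 0$, where
$$d_C=-\frac{3d}{p}+(g_{|\Sigma/G|}-1)+k-\sum_j\frac{m_{j,1}+m_{j,2}}{p}.$$

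\textbf{Upper bound.} I would go through $d=1,2,3$ separately. For $d=1$ the adjunction gives $g_\Sigma\le 3$, so $2g_\Sigma-2+k\le 4+k$. Since $k\le 3$ unless $k_i>1$, and each $k_i>1$ costs genus, we get $px\le 7$, hence $p\le 7$.

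For $d=2$ we have $g_\Sigma\le 6$, and the analogous count gives $px\le 13$. For $d=3$ we have $g_\Sigma\le 10$, so $px\le 21$. Here $k$ is controlled because extra fixed points over the same $q_i$ reduce $g_\Sigma$ by $\tfrac12 k_i(k_i-1)\ge k_i-1$.

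To exclude $p=17,19$ for $d=3$ I would need $px=19$ or $17$ exactly with $x=1$. Then $x=1$ forces either $g_{|\Sigma/G|}=0$ with $k=3$, or $g_{|\Sigma/G|}=1$ with $k=1$. The case $k=1$ is impossible, since $k=0$ or $k\ge 2$. So $k=3$ and $g_{|\Sigma/G|}=0$, and $d_C\ge 0$ reads
$$\sum_j (m_{j,1}+m_{j,2})\le 2p-9.$$
I would then test this against the rotation numbers of Lemma 2.1. In the four cases one checks whether the sum of the three pairs (chosen with respect to the appropriate generators $g_j$) can be at most $2p-9$, and the Global Quotient Criterion $c_1+c_2+c_3\equiv 0 \pmod p$ must hold simultaneously. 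I expect these constraints to conflict, e.g. by forcing $c_j$ values whose sum is nonzero mod $p$. This finite case check is the main obstacle, and I would organize it by which $q_i$ the three $z_j$ map to, including $k_i=2,3$ (which lower $g_\Sigma$ and hence make $px\le 19$ fail further).

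\textbf{Lower bound.} For $p=3,5$, I would use $d_C\ge0$ together with the fact that $2-2g_\Sigma-k$ is divisible by $p$, reasoning differently from the above. The idea is that small $p$ makes $-3d/p$ large in absolute value relative to $k-\sum(m_{j,1}+m_{j,2})/p$, because each term $(m_{j,1}+m_{j,2})/p\ge 2/p$ is constrained. I would first try to show $g_{|\Sigma/G|}=0$ is forced or impossible. For $g_{|\Sigma/G|}\ge1$ we get $g_\Sigma\ge p(g_{|\Sigma/G|}-1)+1+\dots$, while Riemann–Hurwitz bounds $g_\Sigma$ from below. A free action ($k=0$) with $g_{|\Sigma/G|}\ge 2$ needs $g_\Sigma\ge p+1$, which fits within adjunction only for small $p$. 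So the real exclusion for $p=3,5$ must come from $d_C\ge 0$ with $k=0$, namely $g_{|\Sigma/G|}-1\ge 3d/p$, combined with $g_\Sigma-1=p(g_{|\Sigma/G|}-1)\ge 3d$ and adjunction. This leaves little room for $d\le 3$, and I would finish with direct enumeration using Lemma 2.1 and the nodal/cusp constraints on rotation numbers.
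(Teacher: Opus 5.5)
Your reduction for $p\ge 17$ matches the paper's: $d=3$, $x=1$, $k=3$, $g_{|\Sigma/G|}=0$, and $d_C\ge 0$ with integrality forcing $\sum_j(m_{j,1}+m_{j,2})\in\{p-9,2p-9\}$. The gap is in the finishing step. Testing the three pairs only against Lemma 2.1, $d_C$ and the Global Quotient Criterion does not close. For $p=19$, take rotation numbers $(10,11)$, $(9,1)$, $(11,1)$ at $q_1,q_2,q_3$ with respect to a generator $h$ (case (iii) of Lemma 2.1). Let $z_1,z_2\mapsto q_1$ and $z_3\mapsto q_3$, with $g_1=h^{14}$, $g_2=h^{4}$, $g_3=h$. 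Then:
the pairs are $(7,2)$, $(2,6)$, $(11,1)$, with total $29=2p-9$, so $d_C=0$;
$14+4+1\equiv 0\pmod{19}$, so the Global Quotient Criterion holds;
$k_1=2$ only forces $g_\Sigma\le 9$, which is exactly the required value, so your hope that $k_i=2$ makes the count fail is unfounded.
What excludes this configuration is the ingredient you leave out. Since $g_\Sigma=(p+1)/2$ is determined exactly, the genus formula leaves a singularity budget of only $10-g_\Sigma$ ($1$ for $p=19$, $2$ for $p=17$). This fixes the singularity types at the fixed points, e.g.\ a single simple cusp or a single node when $p=19$. The local branch structure then fixes the pairs: an embedded branch gives $(1,m)$; a transverse node gives $(1,m_1),(1,m_2)$ with $m_1m_2\equiv 1$; a cusp with characteristic sequence $(\alpha,\beta_1)$ gives $(\alpha,\beta_1)$ mod $p$. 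In the example, both branches at $q_1$ would have to be embedded and transverse, so each pair would have to contain $1$. Only after this local reduction do $d_C$, Lemma 2.1 and the Global Quotient Criterion finish the case, as in the paper.

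For the lower bound, the decisive estimate is also missing. You use $m_{j,1}+m_{j,2}\ge 2$. The paper uses $m_{j,1}\neq m_{j,2}$ (from Lemma 2.1), hence $m_{j,1}+m_{j,2}\ge 3$. Together with adjunction and Riemann--Hurwitz this gives $d_C\le \frac{d^2-3d}{2p}+k\bigl(\frac12-\frac{5}{2p}\bigr)$. With your weaker bound the coefficient of $k$ is $\frac12-\frac{3}{2p}>0$ for $p=5$, so nothing follows. With the correct bound, $p\le 5$ and $d\le 3$ force $d=3$, and then one of two things holds. Either $k\ge 2$, $p=5$ and every pair is $(1,2)$; or $k=0$, which is your case $p=3$, $g_{|\Sigma/G|}=4$, $C$ embedded. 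So most of the exclusion happens for $k\ge 2$, not in the free case as your sketch suggests. The free case cannot be finished by enumerating rotation numbers, because such a $C$ misses the fixed points entirely. What is needed is Lemma 2.2. The rest of $c_1(2K)$ has degree $3$, so every component falls under the same dichotomy. Take a fixed point whose rotation numbers sum to $p$: every fixed point if $p=3$, and one exists by Lemma 2.1 if $p=5$. It lies on no component, since it cannot be the image of any $z_j$, and $p$ branches through it would violate adjunction in degree $3$. This contradicts Lemma 2.2.
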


\begin{proof}
To show $p\geq 7$,  the key observation is that for each $j=1,2,\cdots,k$, $m_{j,1}\neq m_{j,2}$, so that $m_{j,1}+m_{j,2}\geq 3$ (see Lemma 2.1). On the other hand, $g_\Sigma\leq \frac{1}{2}(d^2+3d+2)$ by the adjunction inequality, so that 
$g_{|\Sigma/G|}-1\leq \frac{1}{2p}(d^2+3d)-k (\frac{1}{2}-\frac{1}{2p})$. It follows easily that
$$
d_C\leq \frac{d^2-3d}{2p}+k(\frac{1}{2}-\frac{5}{2p}). 
$$
With the assumption $d\leq 3$, and under the condition $p\leq 5$, we see immediately that $d_C\geq 0$ implies
that $d=3$ and $p=5$ must be true. Moreover, $m_{j,1}+m_{j,2}=3$, i.e., $(m_{j,1},m_{j,2})=(1,2)$, must be
true for each of the fixed point $f(z_j)\in C$. However, with $p=5$, this contradicts Lemma 2.1.  Hence $p\geq 7$. 

On the other hand, assume to the contrary that $p\geq 17$. Then by Lemma 2.5, $d^2+3d+3\geq p$, which implies that $d=3$ must be true, and $p=17$ or $19$. Furthermore, note that $d^2+3d+3<2p$, so that $2-2g_\Sigma-k=-p$ must be the only possibility in Lemma 2.5. Note that $2-2g_{|\Sigma/G|}-k=-1$, which implies that $k=3$ and $g_{|\Sigma/G|}=0$ as $k\geq 2$. In what follows, we shall rule out $p=17$ or $19$ by a case by case analysis. 

Consider first the case of $p=19$. From $2-2g_\Sigma-k=-p$ where $k=3$, it follows easily that
$g_\Sigma=9$, which, with $d=3$, implies that $C$ either has a cuspidal singularity of $\delta$-invariant $1$ (i.e., a simple cusp) at a fixed point or a nodal singularity at a fixed point, say $q_1$. In the former case, the rotation numbers $(m_{1,1},m_{1,2})$ at $q_1$ must be $(2,3)$, while the rotation numbers at the other two fixed
points $q_2,q_3$ must take the form of $(1,m_2)$ and $(1,m_3)$, as $C$ is embedded near $q_2,q_3$. With this understood, we note that
$$
d_C=-\frac{3d}{p}+ (g_{|\Sigma/G|}-1)+3-\sum_{j=1}^3 \frac{m_{j,1}+m_{j,2}}{p}
=\frac{1}{19}(22-m_2-m_3).
$$
With both $m_2,m_3\geq 2$, and $d_C\geq 0$ and $d_C\in\Z$, it follows easily that $m_2+m_3=22$. 
On the other hand, we apply Lemma 2.1 with $(a_1,b_1)=(a,b)=(2,3)$, we see the rotation numbers at the other two fixed points are $(a_2,b_2)=(17, 1)$ or $(2, 18)$, $(a_3,b_3)=(16, 18)$ or $(3, 1)$. It follows that $(1,m_2)$
is either $(1,9)$ or $(1,17)$, and $(1,m_3)$ is either $(1,3)$ or $(1,13)$. Now with $m_2+m_3=22$, we must have
$(1,m_2)=(1,9)$ and $(1,m_3)=(1,13)$. However, one can check easily that the Global Quotient Criterion fails here.  Hence the former case is eliminated. 

For the latter case where there is a nodal singularity at $q_1$, the rotation numbers $(m_{j,1},m_{j,2})$ takes the form of $(1,m_1)$ and $(1,m_2)$ for the two local branches. As $k=3$, there is another fixed point contained in $C$, where rotation numbers $(m_{j,1},m_{j,2})$ takes the form $(1,m_3)$. 
With this understood, observe that the fact that $d_C\geq 0$ and $d_C\in\Z$ implies that $m_1+m_2+m_3=26$. 
Now without loss of generality we assume $m_1\leq m_2$, Then observe the fact that $m_1m_2=1\pmod{19}$, $g_2=g_1^{m_2}$, which implies that $g_3=g_1^{-1-m_2}$. To proceed further, note that at the fixed points $q_2,q_3$, the rotation numbers with respect to $g_1\in G$ are $(18, m_1-1)$ or $(1,20-m_1)$ and 
$(19-m_1, 20-m_1)$ or $(m_1,m_1-1)$. Then $g_3=g_1^{-1-m_2}$ implies that $(1,m_3)$ must be one of the following: $(18-m_2, 19+m_1-m_2)$, $(1+m_2, m_2-m_1)$, $(1+m_1, 19+m_1-m_2)$, or $(18-m_1, m_2-m_1)$,
where $1<m_1<m_2$. One can check easily that any of the above choices for $(1,m_3)$ either contradicts 
$m_1 m_2=1\pmod{19}$, or $m_1+m_2+m_3=26$, or both. Hence the latter case is also eliminated. This completes the discussion for $p=19$. 

For $p=17$, $2-2g_\Sigma-k=-p$ with $k=3$ implies that $g_\Sigma=8$. With $d=3$, it follows easily
that $C$ is singular, either with (i) a cuspidal singularity of $\delta$-invariant $2$, or (ii) two simple cusp singularities, or (iii) one simple cusp singularity and one nodal singularity, all occurring at the fixed points. In case (i), the rotation numbers $(m_{j,1},m_{j,2})$ take the form of $(2,5)$, $(1,m_2)$, and $(1,m_3)$, where by a similar calculation with the virtual dimension $d_C$, $m_2+m_3=16$ must hold true. On the other hand, using Lemma 2.1, with 
$(a,b)=(2,5)$, one finds that $(1,m_2)$ is either $(1,7)$ or $(1,5)$, and $(1,m_3)$ is either $(1,4)$ or $(1,13)$. It is easy to see that none of the above choices yields $m_2+m_3=16$, hence (i) is ruled out. In case (ii), the rotation numbers $(m_{j,1},m_{j,2})$ take the form of $(2,3)$, $(2,3)$, and $(1,m_3)$. Using Lemma 2.1, one finds that this is not possible. Hence (ii) is also ruled out. Finally, consider case (iii). The rotation numbers $(m_{j,1},m_{j,2})$ take the form of $(1,m_1)$, $(1,m_2)$, and $(2,3)$, where the first two occur at the nodal point and the last one at the simple cusp singularity. With this understood, we note that $g_2=g_1^{m_2}$ and $g_3=g_1^{-1-m_2}$, which implies that $m_1m_2=1\pmod{17}$ and $(2,3)$ must be one of the following: $(16-m_2, 17+m_1-m_2)$, $(1+m_2, m_2-m_1)$, $(1+m_1, 17+m_1-m_2)$, or $(16-m_1, m_2-m_1)$, where $1<m_1<m_2$.
In fact, only $(2,3)=(16-m_1,m_2-m_1)$ is possible, but it is a contradiction to $m_1m_2=1\pmod{17}$. Thus (iii) is also eliminated, and consequently, $p\neq 17$. 

The proof of Lemma 2.6 is completed.

\end{proof}

It remains to examine the possibility that $c_1(2K)$ is represented by a single, degree $d=6$ $J$-holomorphic curve $C$, which is $G$-invariant and contains all $3$ fixed points $q_1,q_2,q_3$ (this is the case when the assumption of Lemma 2.6 does not hold); in particular, $k\geq 3$. For this case, the following lemma plays a key role in our analysis.

\begin{lemma}
For any of the fixed point $q_i$, $i=1,2,3$, suppose $(m_{1},m_{2})$ is the rotation numbers at $q_i$ with respect to some generator $g\in G$. Then there exists a pair of non-negative integers $a_1,a_2$, with $a_1+a_2>0$ and obeying 
$$
(a_1+2)m_{1}+(a_2+2)m_{2}=0 \pmod{p},
$$
such that for any local $J$-holomorphic disk $D$ containing the fixed point $q_i$, the local intersection number
of $D$ and $C$ at $q_i$, denoted by $int_D(q_i)$,  is at least $a_1+a_2$, i.e., 
$$
int_D(q_i)\geq a_1+a_2.
$$
In particular, if $C$ is embedded near $q_i$, and let $(1,m)$, where $m>1$, be the rotation numbers at $q_i$, then $p>5$, and either $2+3m=p$, or $2+3m=2p$, or $3+2m=p$.
\end{lemma}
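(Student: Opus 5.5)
The plan is to read the constraint off the $G$-equivariant line bundle of which $C$ is the zero divisor. By Lemma 2.2 and the standing assumption, $c_1(2K)$ is represented by the single irreducible $G$-invariant curve $C$ with multiplicity one. In Taubes' correspondence, carried out in the equivariant setting of \cite{C1, C}, $C$ arises as the zero locus of the limiting section of the $G$-equivariant line bundle $2K$. This is the same equivariant bundle, with its canonical lift of the $G$-action, whose invariant Seiberg--Witten invariant is nonzero by the proof of Lemma 2.2. So the first step is to identify $\mathcal{O}(C)$ with $2K$ as $G$-equivariant line bundles near each fixed point $q_i$. Since $J$ is integrable near $q_i$, $C$ is locally the zero set of a holomorphic function. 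I would therefore write the section locally as $f(z_1,z_2)\,(dz_1\wedge dz_2)^{\otimes 2}$, with $f$ holomorphic and vanishing to first order exactly along $C$, in linear coordinates in which $g\cdot(z_1,z_2)=(\mu_p^{m_1}z_1,\mu_p^{m_2}z_2)$. I expect this identification to be the main obstacle: one must justify that the local defining function of $C$ carries precisely the character forced by the lift of the action on $2K$, rather than an arbitrary twist. I would try to derive it from the construction of the equivariant Taubes curve as the zero set of a $G$-invariant section, together with the fact that $n=1$, so that no multiplicity ambiguity enters.

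Granting this, $G$-invariance of the section says that $f$ transforms by the inverse of the character of $(dz_1\wedge dz_2)^{\otimes 2}$, which is $\mu_p^{-2(m_1+m_2)}$. Expanding $f$ in a power series, every monomial $z_1^{a_1}z_2^{a_2}$ occurring with nonzero coefficient must satisfy $a_1m_1+a_2m_2\equiv -2(m_1+m_2)\pmod p$, that is, $(a_1+2)m_1+(a_2+2)m_2\equiv 0\pmod p$. Because $q_i\in C$, $f(0)=0$, so every such monomial has $a_1+a_2>0$. Choose $(a_1,a_2)$ to be a monomial of $f$ of minimal total degree.

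For any local $J$-holomorphic disk $D$ through $q_i$, the local intersection number $int_D(q_i)$ is the vanishing order of $f|_D$ at the origin. This order is at least the lowest total degree of the monomials of $f$, so $int_D(q_i)\ge a_1+a_2$.

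For the embedded case, normalize the generator so that the rotation numbers are $(1,m)$ with $m>1$. There are two possibilities for how these weights sit relative to $C$.
\begin{itemize}
\item[(a)] If $C$ is tangent to the weight-$1$ direction, choose coordinates with $C=\{z_2=0\}$. Then $f=z_2u$ with $u(0)\ne 0$, so $z_2$ itself is an admissible monomial, and $(a_1,a_2)=(0,1)$ gives $2+3m\equiv 0\pmod p$. Since $2<2+3m<3p$, this forces $2+3m=p$ or $2+3m=2p$.
\item[(b)] If $C$ is tangent to the weight-$m$ direction, the same argument gives $3+2m\equiv 0\pmod p$. Here $3+2m\le 2p+1$ and $3+2m$ is odd, so $3+2m=p$.
\end{itemize}
Finally, $p>5$ follows by direct check. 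For $p=3$ the only value is $m=2$, giving $2+3m=8$ and $3+2m=7$, neither of which is $p$ or $2p$. For $p=5$ the values $m=2,3,4$ give $2+3m\in\{8,11,14\}$ and $3+2m\in\{7,9,11\}$, none of which is $5$ or $10$.
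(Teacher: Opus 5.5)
Your downstream steps are correct. Suppose the reduced local equation $f$ of $C$ at $q_i$ satisfies $f\circ g=\mu_p^{-2(m_1+m_2)}f$. Then the monomial congruence, the bound $int_D(q_i)\ge \mathrm{mult}_{q_i}(f)=a_1+a_2$, the embedded case and the exclusion of $p=3,5$ all go through. Your linear-term argument in the embedded case is equivalent to the paper's observation that a generic $D$ has $int_D(q_i)=1$, which forces $a_1+a_2=1$.

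The gap is exactly the step you grant, and it is the only place where the Seiberg--Witten input enters. The paper imports it from Lemma 1.6 of \cite{C}, with $K$ replaced by $2K$. Your suggested justification also misdescribes that input. In Taubes's correspondence $C$ is not exhibited as the zero locus of a holomorphic (or ``limiting'') invariant section of $2K$. The invariant sections $\alpha_r$ are not holomorphic, and only their zero sets converge to $C$ as $r\to\infty$. So the character of the local equation of $C$ cannot be read off from a section; you need an invariant that survives the limit.

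One way to supply it is as follows.
\begin{enumerate}
\item Since $C$ is $G$-invariant, averaging over $G$ lets you take $f$ semi-invariant, $f\circ g=\chi(g)f$.
\item Choose a $G$-invariant holomorphic disk $D_0$ through $q_i$ with $D_0\cap C=\{q_i\}$, on which $g$ acts as $t\mapsto\mu_p^{m_0}t$. A coordinate axis works, or $\{z_2=cz_1^k\}$ with $km_1\equiv m_2$ and $c$ generic.
\item An elementary computation shows: if $h(\mu_p^{m_0}t)=\mu_p^{c}h(t)$ and $h$ is nonvanishing on $|t|=\epsilon$, then its winding number $w$ there satisfies $m_0w\equiv c\pmod p$.
\item For $f$ this gives $\chi(g)=\mu_p^{m_0(D_0\cdot C)_{q_i}}$.
\item Write $\alpha_r=h_r\,(dz_1\wedge dz_2)^{\otimes 2}$. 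Invariance gives $h_r\circ g=\mu_p^{-2(m_1+m_2)}h_r$, so $m_0w_r\equiv-2(m_1+m_2)$, where $w_r$ is the winding number of $h_r$ on $\partial D_0$.
\item The zero sets $\alpha_r^{-1}(0)$ converge to $C$ with multiplicity $n=1$, as currents and in Hausdorff distance, and $|\alpha_r|\to 1$ away from $C$. Hence $w_r=(D_0\cdot C)_{q_i}$ for $r$ large.
\item Therefore $\chi(g)=\mu_p^{-2(m_1+m_2)}$, which is exactly what your argument needs.
\end{enumerate}
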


\begin{proof}
The proof of the lemma is identical to that of Lemma 1.6 in \cite{C}, where instead of having the canonical line bundle $K$ as in \cite{C}, we have $2K$ here.  We refer the reader to \cite{C} for details. Regarding the special case where $C$ is embedded near $q_i$, we note that $int_D(q_i)=1$ for a generic $D$. Thus $a_1+a_2=1$ must be true, i.e., either $a_1=0$ and $a_2=1$, or $a_1=1$ and $a_2=0$. The lemma follows immediately. 

\end{proof}

We note that in Lemma 2.7, if the generator $g$ is changed to another one, i.e., $g^k$ for some $k\in\Z$, the rotation numbers $(m_{1},m_{2})$ will be changed accordingly, i.e., to $(km_{1},km_{2})$ mod $p$. It is easy to see that the conditions on $a_1,a_2$ remain unchanged. In particular, the constraint $int_D(q_i)\geq a_1+a_2$ from Lemma 2.7 does not depend on the choice of the rotation numbers $(m_{1},m_{2})$ involved in the argument. 

\begin{lemma}
Let $J\in\J$ be generic. Suppose in $c_1(2K)=\sum_i n_i C_i$ from Lemma 2.2, $\{C_i\}$ consists of a single component $C$ of degree $d=6$. Then $p\leq 19$, and $p\neq 17$. 
\end{lemma}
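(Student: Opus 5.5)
Assume $c_1(2K)=C$ with $C$ irreducible of degree $6$, $G$-invariant, containing $q_1,q_2,q_3$ (so $k\geq 3$). By Lemma 2.5 we have $2-2g_\Sigma-k=-px$ with $x=2g_{|\Sigma/G|}-2+k$. The plan is to use the virtual dimension $d_C\geq 0$ and Lemma 2.7 to force $x$ to be small. Then the genus bound from the adjunction inequality bounds $p$, and the remaining prime $p=17$ is excluded by hand.

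\textbf{Step 1: bounding $x$.} Since $C\cdot C=36$ and $c_1(K)\cdot C=18$, adjunction gives
$$g_\Sigma\leq 28-\tfrac12\sum_i k_i(k_i-1)-\sum_i\delta_i .$$
Write $d_C$ as
$$d_C=-\frac{18}{p}+(g_{|\Sigma/G|}-1)+\sum_{j=1}^k\Bigl(1-\frac{m_{j,1}+m_{j,2}}{p}\Bigr).$$
Each summand in the sum lies in $(0,1)$, and $d_C$ is an integer, so $d_C\geq 0$ is a genuine constraint. From the Riemann--Hurwitz relation,
$$g_{|\Sigma/G|}=\frac{1}{p}\Bigl(g_\Sigma-1+\frac{k}{2}\Bigr)-\frac{k}{2}+1 .$$
Combined with the adjunction bound this gives $px+k\leq 56-\sum_i k_i(k_i-1)-2\sum_i\delta_i+\dots$. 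In particular $p\leq 53$, which we already know from Lemma 2.5. To do better we must show that $x$ is $1$ or $2$ except for small $p$, and that large $p$ forces $C$ to be nearly embedded at the fixed points.

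\textbf{Step 2: the embedded case.} If $C$ is embedded at $q_i$ with rotation numbers $(1,m)$, Lemma 2.7 gives $3m+2\in\{p,2p\}$ or $2m+3=p$. This fixes $m$ in terms of $p$. Combining this with Lemma 2.1 (the rotation numbers at the other two points are determined by those at one point, up to the four cases) leaves only finitely many configurations for each $p$. For singular points, the general form of Lemma 2.7, $(a_1+2)m_1+(a_2+2)m_2\equiv 0\pmod p$ with $int_D\geq a_1+a_2$, bounds the multiplicity from below. Each singular point then costs genus through $\delta$ or $k_i(k_i-1)/2$. For $p\geq 23$ we have $px\geq 23$, so $2g_\Sigma-2+k\geq 23$ with $g_\Sigma\leq 28$. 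I would enumerate, for each $p\in\{23,29,31,37,41,43,47,53\}$, the admissible singularity types at the three points: embedded points are forced by Lemma 2.7, and singular points are constrained by the Puiseux congruences $a\equiv\alpha$, $b\equiv\beta_j\pmod p$. For each type I would check three things: integrality of $g_{|\Sigma/G|}$, $d_C\geq 0$ with $d_C\in\mathbb Z$, and the Global Quotient Criterion $\sum c_j\equiv 0\pmod p$. The expectation is that the Lemma 2.7 congruences are incompatible with the Edmonds--Ewing relations of Lemma 2.1 at all three points once $p>19$.

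\textbf{Step 3: $p=17$.} Here the constraint $3m+2\equiv 0$ or $2m+3\equiv 0\pmod{17}$ gives $m=5$, $m=10$ or $m=7$. I would propagate these through the four cases of Lemma 2.1 and test the resulting rotation numbers against Lemma 2.7 at the other points. Any surviving configuration would be killed by the Global Quotient Criterion or by the requirement $d_C\in\mathbb Z$, as in the proof of Lemma 2.6.

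\textbf{Main obstacle.} The hard part is Step 2 when $C$ has singularities, especially nodes or tangential branches with $k_i\geq 2$. There Lemma 2.7 constrains only local intersection numbers with discs, not the branch data directly. My first approach would be to note that $g_\Sigma\leq 28$ leaves very little genus to spend on singularities once $px\geq 23$. This should force at most one mild singularity (a node or a simple cusp), whose rotation numbers are $(1,m_1),(1,m_2)$ with $m_1m_2\equiv 1$, or $(2,3)$. These cases can then be checked directly as in Lemma 2.6.
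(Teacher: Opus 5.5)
There is a genuine gap. You never carry out the case analysis that actually constitutes the lemma (``I would enumerate'', ``the expectation is''). Moreover, the one reduction you propose to make it tractable is false.

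In your last paragraph you argue that $px\geq 23$ together with $g_\Sigma\leq 28$ leaves very little genus for singularities, so that at most one node or simple cusp survives. But $px=2g_\Sigma-2+k$ only gives $g_\Sigma\geq 11$ when $p\geq 23$ and $k=3$, so the singularity budget $28-g_\Sigma$ can be as large as $17$. Concretely, since $px\leq 57$, every $p\geq 29$ (and the case $x=1$ of $p=23$) forces $x=1$, $k=3$, $g_{|\Sigma/G|}=0$ and $g_\Sigma=(p-1)/2$. Three consequences follow:
\begin{itemize}
\item each fixed point carries a single branch, so there are no nodes at the fixed points;
\item singularities off the fixed points would come in free orbits costing at least $p>28-g_\Sigma$, so there are none;
\item the germs of $C$ at $q_1,q_2,q_3$ are unibranch with total $\delta$-invariant $28-(p-1)/2$, namely $17,14,13,10,8,7$ for $p=23,29,31,37,41,43$.
\end{itemize}
What must be excluded is therefore cusps of \emph{large} $\delta$-invariant, with characteristic sequences such as $(2,27)$, $(3,14)$, $(5,8)$, $(4,7)$, $(4,6,17)$. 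Mild cusps are the comparatively easy part: for large $p$ a cusp with $\delta<6$ has multiplicity $2$ or $3$ and rotation numbers $(\alpha,\beta_1)$, and Lemma 2.7 with a generic disc ($int_D=\alpha$) eliminates it in almost all cases.

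Your list of ingredients (Lemma 2.1, Lemma 2.7, the Puiseux congruences, adjunction, $d_C$) is roughly the right toolkit. What is missing is the enumeration itself and the devices that make it close:
\begin{itemize}
\item For prime multiplicity $\alpha$ and $\beta_1<p$, the rotation numbers equal $(\alpha,\beta_1)$. Lemma 2.7 then becomes the checkable condition that some $a_1,a_2\geq 0$ with $0<a_1+a_2\leq\alpha$ satisfy $(a_1+2)\alpha+(a_2+2)\beta_1\equiv 0\pmod p$.
\item Two characteristic exponents both below $p$ are impossible, because every $\beta_j\equiv b\pmod p$. This disposes of composite multiplicity, e.g.\ $(4,6,\beta)$.
\item Some multiplicity sequences are not realizable, e.g.\ $(4,2,1,1)$ for $\delta=7$.
\item Once Lemma 2.7 pins the embedded points to $(1,m)$ with $3m+2\in\{p,2p\}$ or $2m+3=p$, Lemma 2.1 (and, for $p=17$ with $x=1$, the sign of $d_C$) yields the contradiction.
\end{itemize}
For $p=17$ you must also handle $x=2$ and $x=3$. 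There $k=4$ or $5$, and fixed points carrying two branches (tangent or transverse, possibly singular) genuinely occur. Saying that the survivors ``would be killed'' by the Global Quotient Criterion or by integrality of $d_C$ is the claim to be proved, not an argument.

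Two smaller slips:
\begin{itemize}
\item Adjunction gives $px\leq 54+\sum_i k_i(2-k_i)-2\sum_i\delta_i$, not $px+k\leq 56-\cdots$.
\item For $p=17$ an embedded fixed point has rotation numbers $(1,5)$ or $(1,7)$ only; $m=10$ satisfies neither congruence.
\end{itemize}
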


\begin{proof}
We shall divide $p=17$, or $p\geq 23$ into a few cases and rule out each of the cases using different strategies 
and arguments.

\vspace{2mm}

{\bf (1):} $p\geq 37$. We begin by observing that with $d=6$, $d^2+3d+3=57<2p$, so that 
$2-2g_\Sigma-k=-p$ is the only possibility in Lemma 2.5. As such, one has $g_{|\Sigma/G|}=0$ and $k=3$. Since $C$ contains all $3$ fixed points, it follows easily that $C$ has only cuspidal singularities. Furthermore, $p\geq 37$ implies that $g_\Sigma\geq 18$, so that by the adjunction inequality, the sum of the $\delta$-invariants of the singularities of $C$ can not exceed $10$. 

With the preceding understood, we observe that if the $\delta$-invariant of a singularity of $C$ is less than $6$, the multiplicity of the singularity must be either $2$ or $3$, which is prime. It follows, with $p\geq 37$, that its rotation numbers $(m_{j,1},m_{j,2})$ must equal the characteristic sequence $(\alpha,\beta_1)$, therefore it must be
one of the following: $(2,3),(2,5),(2,7),(2,9), (2,11), (3,4), (3,5)$. Applying Lemma 2.7 to any of these choices,
it is easy to check that except for the case of $(2,11)$ with $p=37$, the sum $a_1+a_2$ is always greater than the multiplicity of the singularity. By Lemma 2.7, the local intersection number $int_D(q_i)$ is greater than the 
multiplicity of the singularity for any local $J$-holomorphic disc $D$. But for a generic local embedded $J$-holomorphic disc $D$, $int_D(q_i)$ equals the multiplicity of the singularity, by which we arrive at a contradiction. 
On the other hand, the case of $(2,11)$ with $p=37$ can be ruled out as follows: with $p=37$ and $k=3$, one has
$g_\Sigma=18$, which implies by the genus formula that the sum of the $\delta$-invariants of the singularities of $C$ equals $10$. It follows easily that $C$ must have two singularities, each with characteristic sequence 
$(\alpha,\beta_1)=(2,11)$ and $\delta$-invariant equaling $5$, and at the third fixed point $C$ is embedded. In particular, the rotation numbers $(m_{j,1},m_{j,2})$ at two of the fixed points are both $(2,11)$, which violates Lemma 2.1, because if we start out in Lemma 2.1 with $(a,b)=(2,11)$, the other $(2,11)$ can not be realized as rotation numbers at any of the two remaining fixed points for any generator $g\in G$. It follows easily that, 
for $p\geq 37$, $C$ must have at most one cuspidal singularity with a $\delta$-invariant between $6$ and $10$. 

To proceed further, we observe that if $p\geq 47$, then $g_\Sigma\geq 23$, which implies that the $\delta$-invariant of the singularity is less than $6$ by the genus formula, unless $C$ is embedded with $g_\Sigma=28$. But in this case, $p=57$ which is not prime. Consequently, it remains to examine the cases $p=37, 41$ or $43$, where $C$ has a unique cuspidal singularity. 

Let $p=43$. In this case, the $\delta$-invariant of the singularity of $C$ equals $7$. There are several possibilities for the rotation numbers at the fixed point: $(2,15)$ or $(3,8)$ if the multiplicity of the singularity equals $2$ or $3$ which are prime. Both possibilities can be easily eliminated by Lemma 2.7 as we argued before. On the other hand, it is easy to see that the multiplicity is at most $4$, and when it is $4$, the multiplicity sequence of the singularity
is $(4,2,1,1)$. But this is not realizable, hence $p=43$ is ruled out. 

Let $p=41$.  In this case, the $\delta$-invariant equals $8$. The multiplicity of the singularity is at most $4$. It is easy to see that $3$ and $4$ can not be realized. With this understood, the rotation numbers must be $(2,17)$ with the multiplicity being $2$. But this case can be ruled out by Lemma 2.7. Hence $p=41$ is eliminated. 

Finally, let $p=37$.  In this case, the $\delta$-invariant equals $10$, and the multiplicity of the singularity is at most $5$. For the cases where the multiplicity is a prime number, the rotation numbers at the fixed point is either 
$(2, 21)$, or $(3,11)$, or $(5,6)$. If the multiplicity equals $4$, then it is easy to see that the multiplicity sequence of the singularity must be $(4,2,2,2,2, 1,1)$, from which we see that the characteristic sequence of the singularity must be $(4,6,11)$. But this is not possible because one can not have two $\beta_j$'s (i.e., $6$ and $11$) which are both
less than $p=37$. On the other hand, the case $(2,21)$ can be eliminated using Lemma 2.7. For the remaining 
$(3,11)$ and $(5,6)$, we note that $C$ is embedded near the other two fixed points. The corresponding rotation numbers $(1,m_2)$ and $(1,m_3)$ can be determined using Lemma 2.7, which must be either $(1,24)$ or $(1,17)$. One can check easily that for either $(3,11)$ or $(5,6)$, these rotation numbers are not possible by Lemma 2.1. This eliminates $p=37$, and the analysis of Case (1) is completed.

\vspace{2mm}

{\bf (2):} $p=23,29$ or $31$. First of all, we note that in Lemma 2.5, for $p=29$ or $31$, $2-2g_\Sigma-k=-p$ is still the only possibility, but for $p=23$, $2-2g_\Sigma-k=-2p$ is also possible. Moreover, in the latter case, one has
$k=4$ and $g_{|\Sigma/G|}=0$ instead. In particular, $k=4$ implies that one of the singularity of $C$ will not be
cuspidal, i.e., $C$ has two local branches at the singularity. 

Let $p=31$. Then $2-2g_\Sigma-k=-p$ with $k=3$ implies that $g_\Sigma=15$, from which it follows that the 
sum of the $\delta$-invariants of the singularities of $C$ equals $13$. With this understood, it suffices to examine the following $4$ different scenarios, (i)-(iv).

{\bf (i):} $C$ has a singularity with $\delta$-invariant less than $7$ but not equal to $4$ or $6$. It is easy to check that the following are the possible rotation numbers at the fixed point: $(2,3)$, $(2,5)$, $(2,7)$, $(2,11)$, $(3,4)$. All these possibilities can be eliminated using Lemma 2.7. 

{\bf (ii):} $C$ has a singularity with $\delta$-invariant $4$. It follows easily that, since (i) is not possible, $C$ must have another singularity with $\delta$-invariant $9$, and at the remaining fixed point, $C$ is embedded. Let us examine the singularity with $\delta$-invariant $9$ more closely. It is easy to see that the multiplicity of the singularity is at most $4$. For the case of multiplicity being $2$ or $3$, it is easy to see that the rotation numbers at the fixed point are $(2,19)$ and $(3,10)$ respectively. One can check easily that these possibilities can be ruled out by Lemma 2.7. So the multiplicity must be $4$. With this understood, it follows easily that the characteristic sequence of the singularity is either $(4,7)$ or $(4,6,9)$. The latter case can be ruled out by noting that there are two $\beta_j$'s (i.e., $6$ and $9$) which are both less than $p=31$. In the former case, the rotation numbers at the fixed point is $(4,7)$, which can be ruled out by Lemma 2.7. 

{\bf (iii):} $C$ has a singularity with $\delta$-invariant $6$. It is easy to see that $C$ must have one more singularity with $\delta$-invariant $7$. Regarding the singularity with $\delta$-invariant $6$, the possible rotation numbers at the fixed point are $(2,13)$, $(3,7)$, or $(4,5)$, where $(2,13)$, $(3,7)$ can be eliminated using Lemma 2.7. For the singularity with $\delta$-invariant $7$, we find that if the multiplicity of the singularity is $4$, then the multiplicity sequence must be $(4,2,1,1)$, which can not be realized. 
If the multiplicity is $2$ or $3$, then the rotation numbers at the corresponding fixed point must be $(2,15)$ and
$(3,8)$ respectively, where $(2,15)$ can be ruled out by Lemma 2.7. Now to see (iii) is not possible, we simply observe that $(4,5)$ and $(3,8)$ cannot be both realized as rotation numbers at two different fixed points by 
Lemma 2.1. 

{\bf (iv):} $C$ has a unique singularity with $\delta$-invariant $13$. It is easy to see that the multiplicity of the singularity is at most $5$. For the primes $2,3$ and $5$, the characteristic sequences are of the form $(\alpha,\beta_1)$. With this understood, $\alpha=5$ can not be realized as $\frac{1}{2}(\alpha-1)(\beta_1-1)=13$ has no solution $\beta_1\in\Z$. On the other hand, for $\alpha=2$ and $3$, $\beta_1=27$ and $14$ respectively. The corresponding rotation numbers at the fixed point are $(2,27)$ and $(3,14)$ respectively. If the multiplicity equals $4$, it follows easily that the multiplicity sequence must be $(4,2,2,2,2,2,2,2,1,1)$. The corresponding characteristic 
sequence is $(4,6,17)$. However, this is not possible because there are two $\beta_j$'s (i.e., $6$ and $17$) which are both less than $p=31$. To eliminate $(2,27)$ and $(3,14)$, we note that $C$ is embedded near the other two fixed points. The corresponding rotation numbers $(1,m_2)$ and $(1,m_3)$ can be determined using Lemma 2.7, which are either $(1,20)$ or $(1,14)$. One can check easily that for either $(2,27)$ or $(3,14)$, these rotation numbers are not possible by Lemma 2.1. Hence $p=31$ is eliminated. 

Let $p=29$. In this case, the sum of the $\delta$-invariants of the singularities of $C$ equals $14$. The argument is similar, in which we consider $3$ different scenarios, (i)-(iii). 

{\bf (i):} $C$ has a singularity with $\delta$-invariant less than $8$ but not equal to $4$. One can show easily that the following are the possible rotation numbers at the fixed point: $(2,3)$, $(2,5)$, $(2,7)$, $(2,11)$, $(2,13)$, $(2,15)$, $(3,4)$, $(3,7)$, $(3,8)$, $(4,5)$. All these possibilities can be ruled out by Lemma 2.7. 

{\bf (ii):} $C$ has a singularity with $\delta$-invariant $4$. In this case, $C$ must have another singularity which has $\delta$-invariant $10$. Examining the singularity of $\delta$-invariant $10$, we note that the multiplicity of the singularity is at most $5$. For the cases of prime numbers, i.e., $2$, $3$, and $5$, the corresponding rotation numbers at the fixed point are $(2,21)$, $(3,11)$, and $(5,6)$ respectively.  All these possibilities can be ruled out by Lemma 2.7. If the multiplicity equals $4$, then it is easy to see that the multiplicity sequence of the singularity must be $(4,2,2,2,2,1,1)$. Correspondingly, the characteristic sequence is $(4,6,11)$. But this can not be realized because there are two $\beta_j$'s (i.e., $6$ and $11$) which are both less than $p=29$. 

{\bf (iii):} $C$ has a unique singularity with $\delta$-invariant $14$. Again the multiplicity of the singularity is at most $5$. If the multiplicity equals $2$, then the characteristic sequence of the singularity must be $(2,29)$. But $p=29$, so this can not be realized. If the multiplicity equals $4$, then the multiplicity sequence of the singularity is either $(4,4,2,2,1,1)$ or $(4,2,2,2,2,2,2,2,2,1,1)$. Correspondingly, the characteristic sequence is either $(4,10,11)$ or $(4,6,19)$. But both can not be realized because there are two $\beta_j$'s which are both less than $p=29$. Finally, it is easy to see that with the $\delta$-invariant being $14$, the multiplicity can not be $3$, and if the multiplicity equals $5$, the rotation numbers at the fixed point must be $(5,8)$. To eliminate this case, we note that $C$ is embedded near the other two fixed points. The corresponding rotation numbers $(1,m_2)$ and $(1,m_3)$ can be determined using Lemma 2.7, which are either $(1,9)$ or $(1,13)$. One can check easily that these rotation numbers are not possible by Lemma 2.1. Hence the case of $p=29$ is eliminated. 

Finally, let $p=23$. There are two cases in Lemma 2.5: (a) $2-2g_\Sigma-k=-2p$, (b) $2-2g_\Sigma-k=-p$. In case (a), one has $2-2g_{|\Sigma/G|}-k=-2$. With $k\geq 3$, one must have $k=4$. In case (b), $k=3$ as we observed earlier. 

{\bf (a):} In this case, $g_\Sigma=22$, and as $k=4$, we may assume without loss of generality that $k_1=2$, and $k_2=k_3=1$. In particular, at the fixed points $q_2,q_3$, $C$ can have only cuspidal singularities. Furthermore, it is easy to see that the sum of the $\delta$-invariants of the singularities at $q_2,q_3$ is at most $5$. Using Lemma 2.7, one can verify easily that $C$ is singular at most one of $q_2,q_3$, and if so, the rotation numbers at the fixed point must be $(3,4)$. On the other hand, if $C$ is embedded near $q_2$ or $q_3$, the corresponding rotation numbers must be $(1,7)$ or $(1,10)$ by Lemma 2.7. Using Lemma 2.1, one can easily check that these are not possible (either $C$ is embedded near one of $q_2,q_3$, or both). Hence (a) is eliminated. 

{\bf (b):} In this case, $g_\Sigma=11$, and $C$ has only cuspidal singularities, with the sum of the 
$\delta$-invariants equaling $17$. We first consider the case where $C$ has a singularity which has a $\delta$-invariant less than $10$. The multiplicity of the singularity is easily seen to be at most $4$. Using Lemma 2.7, one can rule out the possibility of multiplicity being $2$. Furthermore, if the multiplicity equals $3$, then the rotation numbers at the fixed point must be one of the following: $(3,4)$, $(3,7)$, $(3,8)$, and $(3,10)$; the case of $(3,5)$ can be ruled out by Lemma 2.7. If the multiplicity equals $4$, then the characteristic sequence of the singularity is either $(4,5)$, $(4,7)$, or $(4,6,9)$, where the last two cases can not be realized (the case $(4,7)$ is ruled out by Lemma 2.7). It follows easily from the above analysis that for any singularity of $C$ with a $\delta$-invariant less than 
$10$, its $\delta$-invariant must be either $3$, $6$, $7$, or $9$, With these understood, it suffices to consider the following $4$ scenarios, (i)-(iv), and eliminate them one by one. 

{\bf (i):} $C$ has a singularity with $\delta$-invariant $11$. The multiplicity of the singularity is at most $5$, and we shall examine them one by one. For the case of primes, i.e., $2,3$ and $5$, the characteristic sequence is of the form $(\alpha,\beta_1)$, where $\frac{1}{2}(\alpha-1)(\beta_1-1)=11$. For $\alpha=2$, $\beta_1=23=p$, which is not possible because $(\alpha,\beta_1)$ is also the rotation numbers at the fixed point. If $\alpha=3$, 
$\beta_1=12$, which contradicts the fact that $gcd(\alpha,\beta_1)=1$. If $\alpha=5$, there is no solution 
$\beta_1\in\Z$ to $\frac{1}{2}(\alpha-1)(\beta_1-1)=11$, hence this is also not possible. Finally, we assume the multiplicity equals $4$. Then it follows easily that the characteristic sequence must be $(4,6,13)$, which is not possible because one can not have two $\beta_j$'s (i.e., $6$ and $13$) which are both less than $p=23$. 

{\bf (ii):} $C$ has a singularity with $\delta$-invariant $3$. We note that the rotation numbers at the fixed point is $(3,4)$. With this understood, we observe that there are two possibilities: either $C$ is singular at the remaining fixed points, each of $\delta$-invariant $7$, or $C$ is singular at one of the remaining fixed points with $\delta$-invariant $14$, and is embedded near the other remaining fixed point. The latter case can be ruled out as we did in case (a), while in the former case, the rotation numbers at the $3$ fixed points are $(3,4)$, $(3,8)$, $(3,8)$ (each with respect to some generator in $G$). One can eliminate this possibility by Lemma 2.1. 

{\bf (iii):} $C$ has a singularity of $\delta$-invariant $7$. The rotation numbers at the fixed point is $(3,8)$. We only need to examine the case where $C$ is singular at one of the remaining fixed points, with a $\delta$-invariant $10$, and is embedded near the other remaining fixed point, at which the rotation numbers is either $(1,7)$ or $(1,10)$ by Lemma 2.7. Using Lemma 2.1, one can easily show that if $(3,8)$ is the rotation numbers at one of the fixed points, $(1,7)$ or $(1,10)$ can never be realized as rotation numbers at any other fixed point. 

{\bf (iv):} $C$ has a unique singularity with $\delta$-invariant $17$. Note that $C$ is embedded near
the remaining two fixed points, so the corresponding rotation numbers must be either $(1,7)$ or $(1,10)$ 
by Lemma 2.7. This case can be ruled out as we did in case (a). This eliminates the case of $p=23$, and the analysis of Case (2) is completed.

\vspace{2mm}

{\bf (3):} $p=17$. We begin with some quick but useful observations: if $C$ is embedded near a fixed point, then by Lemma 2.7 the rotation numbers at the fixed point must be either $(1,5)$ or $(1,7)$. On the other hand, by Lemma 2.1, it is easy to see that the rotation numbers $(1,5)$ or $(1,7)$ can not be realized at two different fixed points
(even with respect to different generators of $G$). As a consequence, $C$ cannot be embedded near more than one fixed points. Secondly, it also follows from Lemma 2.7 that $C$ cannot have a cuspidal singularity with $\delta$-invariant $1$ or $2$ at any fixed point, and if the $\delta$-invariant equals $3$, the rotation numbers at the fixed point must be $(3,4)$. With this understood, we note that there are $3$ cases to consider: 
(a) $2-2g_\Sigma-k=-3p$, (b) $2-2g_\Sigma-k=-2p$, and (c) $2-2g_\Sigma-k=-p$. See Lemma 2.5.

{\bf (a):} $2-2g_\Sigma-k=-3p$. In this case, $k=3$ or $5$. If $k=3$, then $g_\Sigma=25$, and $C$ has only cuspidal singularities at the fixed points, with the sum of the $\delta$-invariants equaling $3$. By the previous observations, it follows easily that $C$ has to be embedded near two of the $3$ fixed points, which is not possible. If $k=5$, then $g_\Sigma=24$. It follows from the genus formula for $g_\Sigma$ and the fact that $C$ can not be embedded near two different fixed points that $C$ must have two local branches at two of the fixed points and is embedded near the third fixed point, e.g.  $k_1=k_2=2$ and $k_3=1$ without loss of generality. Furthermore, if a local branch at one of the fixed points, say $q_1$, is singular, then by the genus formula for $g_\Sigma$ the only possibility is that the local branch has a $\delta$-invariant $1$ cuspidal singularity while the other local branch is embedded with their tangent planes being transversal at $q_1$, and $C$ must be embedded at $q_3$. As a consequence, the rotation numbers at $q_1$ associated to the two local branches must be $(2,3)$ and $(1,12)$ respectively. With the rotation numbers at $q_3$ (near where $C$ is embedded) being $(1,5)$ or $(1,7)$, one can easily check that Lemma 2.1 is violated. 

Hence both local branches at both $q_1,q_2$ must be embedded. With this understood, note that the genus formula for $g_\Sigma$ implies that the two local branches at one of $q_1,q_2$, say $q_1$, must be tangent to each other at the fixed point. As a consequence, the two local branches must determine the same rotation numbers, of the form $(1,m)$, where $m$ is the local intersection number of the two branches. It then follows from the genus formula for $g_\Sigma$ that $m<4$ must be true. To eliminate this possibility, we shall use Lemma 2.7. To see this, observe that $int_D(q_1)=2$ for a generic $D$. On the other hand, Lemma 2.7 implies that
$$
(2+a_1)+(2+a_2)m=0 \pmod{17}
$$
for some non-negative integers $a_1,a_2$ such that $0<a_1+a_2\leq 2$. It follows easily that $m=5,7,8,15$ or $16$, which contradicts $m<4$. Hence (a) is eliminated. 

{\bf (b):} $2-2g_\Sigma-k=-2p$. In this case, $k=4$, $g_{|\Sigma/G|}=0$, and $g_\Sigma=16$. With $k=4$, we assume without loss of generality that $k_1=2$, and $k_2=k_3=1$. Let $C_{1,1}, C_{2,1}$ be the two local branches of $C$ at the fixed point $q_1$, let $\delta_{1,1}, \delta_{2,1}$ be the corresponding $\delta$-invariants of the singularities contained in $C_{1,1}, C_{2,1}$, and let $\delta_2,\delta_3$ be the $\delta$-invariants of the singularities of $C$ at the fixed points $q_2,q_3$. Then it follows easily from the genus formula for $g_\Sigma=16$ that
$$
C_{1,1}\cdot C_{2,1}+\delta_{1,1}+\delta_{2,1}+\delta_2+\delta_3=28-16=12.
$$
In particular, $\delta_{1,1},\delta_{2,1},\delta_2,\delta_3\leq 11$. With this understood, in Table 1 below we list the possible $\delta$-invariants and rotation numbers at $q_2,q_3$ 
(after applying Lemma 2.7); in particular, note that $\delta$-invariant $\neq 1,2,4,5$ and $8$.

\vspace{3mm}

\centerline{
\begin{tabular}{lllllll}
\hline \vline\hspace{1mm}
$\delta$-invariant $\leq 11$ & \vline \hspace{1mm} rotation numbers  \hspace{5.8mm} \vline \\
\hline \vline\hspace{1mm}
0 &  \vline \hspace{1mm} $(1,5)$ or $(1,7)$ \hspace{11.6mm} \vline \\ 
\hline \vline\hspace{1mm} 
3 &  \vline \hspace{1mm} $(3,4)$  \hspace{26.1mm} \vline\\
\hline \vline\hspace{1mm} 
6 &  \vline \hspace{1mm} $(2,13)$, $(3,7)$ or $(4,5)$ \vline\\
\hline \vline\hspace{1mm} 
7 &  \vline \hspace{1mm} $(2,15)$  \hspace{24.2mm} \vline\\
\hline \vline\hspace{1mm} 
9 &  \vline \hspace{1mm} $(4,7)$  \hspace{26.1mm} \vline\\
\hline \vline\hspace{1mm} 
10 &  \vline \hspace{1mm} $(5,6)$  \hspace{26.1mm} \vline\\ \hline
\end{tabular}
}
\vspace{2mm}

\centerline{Table 1: $\delta$-invariants and rotation numbers at $q_2,q_3$, in Case (3)(b) with $p=17$.
}

\vspace{2mm}

With the preceding understood, there are $4$ scenarios, (i)-(iv), to consider. 

{\bf (i):} Both $\delta_{1,1},\delta_{2,1}$ are nonzero. Let $(a_1,b_1)$, $(a_2,b_2)$ be the rotation numbers associated to the local branches $C_{1,1},C_{2,1}$, where we may assume that they coincide with the characteristic sequences of the singularities. There are two possibilities: if $C_{1,1},C_{2,1}$ have the same tangent plane at $q_1$, then $(a_1,b_1)=(a_2,b_2)$ must be true, and in this case, $C_{1,1}\cdot C_{2,1}$ equals the sum of the square of all the entries in the multiplicity sequence, and if $C_{1,1},C_{2,1}$ have transverse tangent planes at 
$q_1$, then $C_{1,1}\cdot C_{2,1}=a_1a_2$ (i.e., the product of the multiplicities). With this understood, we observe
that $C_{1,1}\cdot C_{2,1}+\delta_{1,1}+\delta_{2,1}\leq 9$, as $C$ cannot be embedded at both $q_2,q_3$. It follows immediately that the multiplicity of both singularities must be $2$ (i.e., $a_1=a_2=2$). Assume one of 
$\delta_{1,1},\delta_{2,1}$ equals $1$. Then if $C_{1,1},C_{2,1}$ have the same tangent plane, we must have both 
$\delta_{1,1}=\delta_{2,1}=1$, and $C_{1,1}\cdot C_{2,1}=2^2+1^2+1^2=6$. If $C_{1,1},C_{2,1}$ have 
transverse tangent planes, then it is easy to see that we must have $(a_1,b_1)=(2,3)$, $(a_2,b_2)=(2,7)$. It follows 
that in both cases, $\delta_2+\delta_3=12-8=4$, which is easily seen not possible by Table 1. If both $\delta_{1,1},\delta_{2,1}\geq 2$, then the only possibility is that $\delta_{1,1}=2,\delta_{2,1}=3$, and $C_{1,1},C_{2,1}$ have 
transverse tangent planes, because $C_{1,1}\cdot C_{2,1}\neq 5$ and $\delta_2+\delta_3\neq 4$. With this understood, the rotation numbers $(a_1,b_1)$, $(a_2,b_2)$ associated to the two local branches $C_{1,1},C_{2,1}$ must be $(2,5)$ and $(2,7)$ respectively, which contradicts the fact that $C_{1,1},C_{2,1}$ have transverse tangent planes. Hence (i) is eliminated.

{\bf (ii):} $\delta_{1,1}=\delta_{2,1}=0$ and $C_{1,1}\cdot C_{2,1}=1$. In this case, we have $\delta_2+\delta_3=11$,
which is easily seen not possible by Table 1.

{\bf (iii):} $\delta_{1,1}=\delta_{2,1}=0$ and $m=C_{1,1}\cdot C_{2,1}>1$. In this case, the rotation numbers $(1,m)$ at $q_1$ is subject to Lemma 2.7, which implies that $m=5,7,8,15$, or $16$, and on the other hand, 
$\delta_2+\delta_3=12-m$. It follows easily that $m=5$ is the only possibility, and according to Table 1, the rotation numbers at $q_2,q_3$ must be $(1,5)$ (or $(1,7)$) and $(2,15)$ respectively. But this violates Lemma 2.1, hence
(iii) is eliminated.

{\bf (iv):} $\delta_{1,1}>0$ but $\delta_{2,1}=0$. Note that $\delta_2+\delta_3\geq 3$, which implies by the genus formula that 
$\delta_{1,1}\leq 7$. Consequently, the rotation numbers at $q_1$ that is associated to the local branch $C_{1,1}$
must be of the form $(\alpha,\beta_1)$ where $(\alpha,\beta_1)$ is the characteristic sequence of the singularity, 
and the multiplicity $\alpha=2,3$ or $4$, with $C_{1,1}\cdot C_{2,1}=\beta_1$ if $C_{1,1}, C_{2,1}$ are tangent at $q_1$ and $C_{1,1}\cdot C_{2,1}=\alpha$ if otherwise. Moreover, $(\alpha,\beta_1)$ is subject to a constraint 
from Lemma 2.7, i.e., if for any non-negative integers $a_1,a_2$ with $a_1+a_2>0$ and 
$$
(2+a_1)\alpha +(2+a_2)\beta_1=0 \pmod{17},
$$
one always has $a_1+a_2>\alpha+1=int_D(q_1)$ (here $D$ is generic), then $(\alpha,\beta_1)$ can be eliminated. Finally, we point out that
the rotation numbers at $q_1$ that is associated to the local branch $C_{2,1}$ is of the form $(1,m)$ which can be
determined completely from $(\alpha,\beta_1)$ and the fact of whether $C_{1,1}, C_{2,1}$ are tangent at $q_1$ or not. With this understood, we obtain the following list of possible rotation numbers at the fixed points, where in each case the first two rotation numbers are associated with $C_{1,1}, C_{2,1}$ and the last two rotation numbers are at $q_2$, $q_3$ respectively:
\begin{itemize}
\item [{(1)}] $(2,3)$, $(1,12)$, $(1,5)$ (or $(1,7)$), $(4,7)$.
\item [{(2)}] $(2,3)$, $(1,12)$, $(3,4)$, $(2,13)$ (or $(3,7)$), $(4,5)$).
\item [{(3)}] $(2,7)$, $(1,10)$, $(1,5)$ (or $(1,7)$), $(2,15)$.
\item [{(4)}] $(3,4)$, $(1,5)$, $(1,5)$ (or $(1,7)$), $(2,13)$ (or $(3,7)$), $(4,5)$).
\item [{(5)}] $(3,4)$, $(1,5)$, $(3,4)$, $(3,4)$.
\item [{(6)}] $(3,5)$, $(1,13)$, $(1,5)$ (or $(1,7)$), $(3,4)$.
\item [{(7)}] $(3,7)$, $(1,15)$, $(1,5)$ (or $(1,7)$), $(3,4)$.
\item [{(8)}] $(2,15)$, $(1,16)$, $(1,5)$ (or $(1,7)$), $(3,4)$.
\end{itemize}
All these cases can be eliminated by Lemma 2.1. 

{\bf (c):} $2-2g_\Sigma-k=-p$. In this case, $k=3$, $g_{|\Sigma/G|}=0$, and $g_\Sigma=8$. Moreover, either $C$ has $17$ nodal singularities or $17$ simple cusp singularities in the complement of the fixed points and at most cuspidal singularities at each fixed point $q_i$ with the sum of the $\delta$-invariants equaling $3$, or $C$ has only cuspidal singularities with the sum of the $\delta$-invariants equaling $20$. The former case is actually not possible because $C$ will be embedded near two different fixed points. The latter case can be ruled out by the fact that $d_C<0$. To see this, recall that with $k=3$, $d=6$,
$$
d_C=-\frac{18}{p}+(g_{|\Sigma/G|}-1)+3-\sum_{j=1}^3 \frac{m_{j,1}+m_{j,2}}{p}.
$$
Now with $g_{|\Sigma/G|}=0$, $d_C\geq 0$ is equivalent to $\sum_{j=1}^3 (m_{j,1}+m_{j,2})\leq 16$. With this 
understood, $d_C<0$ follows easily from the following observations: (i) there is at least one 
singularity of $C$ with $\delta$-invariant less than $11$, as $C$ cannot be embedded near more than one fixed points, and for such a singularity, one can check that the corresponding rotation numbers 
$(m_{j,1},m_{j,2})$ always obeys $m_{j,1}+m_{j,2}\geq 7$, (ii) if $(m_{j,1},m_{j,2})$ is at a fixed point near where $C$ is embedded, then $m_{j,1}+m_{j,2}\geq 6$, and finally, (iii) by Lemma 2.1, $m_{j,1}+m_{j,2}\geq 4$ for any $j$. This completes the analysis of Case (3).

\end{proof}

It follows from Lemmas 2.6 and 2.8 that the possible orders of the group $G=\Z_p$ are 
$p=3, 5, 7, 11, 13$, or $19$.

\subsection{Classification of the invariant curves}
In this subsection, we shall give a classification of all the possible invariant curves, along the way
determining the corresponding local representations of the group action at the fixed points, thus completing the proof of Theorem 1.3. Recall that by Lemma 2.3, if we choose $J\in\J$ to be generic, then each $J$-holomorphic curve $C\in\{C_i\}$, where $c_1(2K)=\sum_i n_i C_i$, must be invariant under the $G$-action. 

\vspace{2mm}

{\bf (1):} $p=3$. First, by Lemmas 2.2 and 2.6, $c_1(2K)=C$, where $C$ is a degree $6$ $J$-holomorphic curve containing all $3$ fixed points. With this understood, we look at the formula for $d_C$:
$$
d_C=-6 + (g_{|\Sigma/G|}-1)+ k-\sum_{j=1}^k \frac{m_{j,1}+m_{j,2}}{3}.
$$
Observing that for $p=3$, $m_{j,1}+m_{j,2}=3$ for any $j$, we see immediately that $d_C\geq 0$ is equivalent to
$g_{|\Sigma/G|}\geq 7$. Now the identity $2-2g_\Sigma-k=3(2-2g_{|\Sigma/G|}-k)$ implies that 
$g_\Sigma\geq k+ 19$, where $k\geq 3$ as $C$ contains all $3$ fixed points. With this understood, we note that by Lemma 2.7, $C$ cannot be embedded near any of the fixed points. On the other hand, a simple examination shows that any local branch of $C$ at a fixed point is either embedded, or has a cuspidal singularity with $\delta$-invariant at least $3$ (for $p=3$, $\delta$-invariant cannot be $1$ or $2$). It follows easily that $k=6$, $g_\Sigma=25$ and $g_{|\Sigma/G|}=7$ is the only possibility, where the adjunction inequality is not violated, with $C$ having a nodal singularity at each fixed point. 
Moreover, one can check easily that Lemma 2.7 and the Global Quotient Criterion are not violated. This finishes the case of $p=3$.

\vspace{2mm}

{\bf (2):} $p=5$. Again, by Lemmas 2.2 and 2.6, $c_1(2K)=C$, where $C$ is a degree $6$ $J$-holomorphic curve containing all $3$ fixed points, and by Lemma 2.7, $C$ can not be embedded near any of the fixed points. 
On the other hand, it is useful to observe (which can be easily verified) that if a singular local branch of $C$ at a fixed point has a $\delta$-invariant not greater than $8$, the associated rotation numbers (together with the $\delta$-invariant) at the fixed point must fall into one of the following cases; in particular, the $\delta$-invariant cannot be $2,7$ or $8$:

\vspace{2mm}

\centerline{
\begin{tabular}{lllllll}
\hline \vline\hspace{1mm}
$\delta$-invariant $\leq 8$ & \vline \hspace{1mm} rotation numbers \vline \\
\hline \vline\hspace{1mm}
1 &  \vline \hspace{1mm} $(2,3)$  \hspace{19mm} \vline \\ 
\hline \vline\hspace{1mm} 
3 &  \vline \hspace{1mm} $(3,4)$  \hspace{19mm}  \vline\\
\hline \vline\hspace{1mm} 
4 &  \vline \hspace{1mm} $(2,4)$  \hspace{19mm} \vline\\
\hline \vline\hspace{1mm} 
5 &  \vline \hspace{1mm} $(1,2)$  \hspace{19mm} \vline\\
\hline \vline\hspace{1mm} 
6 &  \vline \hspace{1mm} $(2,3)$  \hspace{19mm} \vline\\ \hline
\end{tabular}
}
\vspace{2mm}

\centerline{Table 2: possible $\delta$-invariants and rotation numbers at a fixed point, $p=5$.
}

\vspace{2mm}

With the preceding understood, our discussions will be divided into the following $4$ cases: (a)
 $2-2g_\Sigma-k=-11p$, (b)  $2-2g_\Sigma-k=-10p$, (c)  $2-2g_\Sigma-k=-9p$, and (d)  $2-2g_\Sigma-k=-xp$
 where $x\leq 8$. See Lemma 2.5.

{\bf (a):} $2-2g_\Sigma-k=-11p$. In this case, $k$ is odd and $3\leq k\leq 13$. The following adjunction inequality
$$
g_\Sigma\leq \frac{1}{2}(d^2+3d+2)-\frac{1}{2}\sum_{i=1}^3 k_i(k_i-1)-\sum_{i=1}^3 \delta_i,
$$
with $d=6$ and $2-2g_\Sigma-k=-11p$, is equivalent to 
$$
\sum_{i=1}^3 k_i=k\geq 1+\sum_{i=1}^3(k_i(k_i-1)+2\delta_i),
$$ 
where $\delta_i=0$ if $k_i>1$ and $\delta_i$ equals the $\delta$-invariant if $k_i=1$ (including the case where $C$ is embedded near the fixed point, in which case $\delta_i=0$). It follows easily that $C$ must be embedded near one of the fixed points, i.e., $\delta_i=0$ and $k_i=1$ for some $i$, which is not possible by Lemma 2.7. Hence (a) is eliminated. 

{\bf (b):} $2-2g_\Sigma-k=-10p$. In this case, $k$ is even and $4\leq k\leq 12$. One can check easily
that if $k\geq 8$,  the adjunction inequality is violated by a similar argument as in (a). If $k=4$, $C$ must have cuspidal singularities at two of the fixed points. The $\delta$-invariants of the singularities must be $1$ because $g_\Sigma=24$ when $k=4$. This implies that the rotation numbers at both fixed points are $(2,3)$ according to Table 2, which contradicts Lemma 2.1. 

Finally, let $k=6$, in which case $g_\Sigma=23$ and $g_{|\Sigma/G|}=3$. It follows easily from the adjunction inequality that $k_1=k_2=k_3=2$ must be true. Moreover, $C$ cannot have $3$ nodal singularities at the $3$ fixed points, and $C$ must have at least one nodal singularity at a fixed point. 

With this understood, let's rule out the possibility that $C$ has $2$ nodal singularities. Assume to the contrary that $C$ has $2$ nodal singularities, say at $q_1,q_2$. Then at $q_3$ there are two possibilities: either one of the local branch of $C$ is singular with a $\delta$-invariant $1$ and the other local branch being embedded, or both local branches are embedded, and are tangent with a tangency of order $3$. In the former case, the rotation numbers at $q_3$ are $(2,3)$ and $(1,4)$ associated to the two local branches, and in the latter case, they are both $(1,3)$. Using Lemma 2.1, one can then determine the rotation numbers at $q_1$ and $q_2$, and an easy calculation gives $d_C=-\frac{2}{5}$ and $d_C=-\frac{3}{5}$ respectively. This is a contradiction, hence the claim.

The only remaining possibility is that $C$ has one nodal singularity at a fixed point and at the remaining two fixed points, both local branches are embedded and are tangent with a tangency of order $2$. The corresponding rotation numbers $(m_{j,1},m_{j,2})$ must be both $(1,4)$ at
the fixed point with a nodal singularity, and must be $(1,2)$ at the remaining two fixed points, which one can check easily that Lemmas 2.1 and 2.7 are not violated. Moreover, with $g_{|\Sigma/G|}=3$ and $k=6$, one can check easily that 
$$
d_C=-\frac{18}{p}+(g_{|\Sigma/G|}-1)+k-\sum_{j=1}^k \frac{m_{j,1}+m_{j,2}}{p}=0,
$$
hence there is no contradiction so far.

We examine this possibility against the Global Quotient Criterion; in fact, its verification also allows us to determine the local representations of the $\Z_p$-action at the $3$ fixed points. More precisely, recall that the rotation numbers associated to the two local branches at the nodal singularity are 
both $(1,4)$. Let $g_1,g_2\in\Z_p$ be the corresponding generators, and let $g_3=g_4\in\Z_p$ and $g_5=g_6\in\Z_p$ be the corresponding generators at the other two fixed points. Now apply Lemma 2.1, with $g=g_1$ and $(a,b)=(1,4)$, and assume the rotation numbers at the other two fixed points are given by case (i), i.e., $(4,3)$ and $(1,2)$ respectively, and we write $g_j=g_1^{c_j}$ for $j=1,2,\cdots,6$. Since the rotation numbers $(m_{j,1},m_{j,2})$ at the other two fixed points are all $(1,2)$, it is easy to see that $c_1=1$, $c_2=4$, and $c_3=c_4=4$, $c_5=c_6=1$, which obeys the constraint $c_1+c_2+\cdots+c_6=0 \pmod{5}$ in  the Global Quotient Criterion.
Furthermore, if the rotation numbers are given by (ii) or (iii) in Lemma 2.1, the Global Quotient Criterion is actually violated. Finally, observe that (i) and (iv) in Lemma 2.1 are equivalent up to a change of the order of the two fixed points. This is the case stated in Theorem 1.3 for $p=5$. 

{\bf (c):} $2-2g_\Sigma-k=-9p$. Here $k$ must be odd and $3\leq k\leq 11$. First, let $k=3$. In this case, $C$ has $3$ cuspidal singularities at the $3$ fixed points, with the sum of $\delta$-invariants equaling $6$ as $g_\Sigma=22$. It follows easily from Table 2 that the only possible distribution of the $\delta$-invariants 
at the $3$ fixed points is $(1,1,4)$. Correspondingly, the rotation numbers at the fixed points are $(2,3)$, $(2,3)$, and $(2,4)$, which contradicts Lemma 2.1. Secondly, it is easy to see that if $k\geq 7$, the
adjunction inequality is violated. Hence it remains to check the case of $k=5$, where $g_\Sigma=21$. The following identity follows from the genus formula 
$$
\frac{1}{2}\sum_{k_i>1}\sum_{j\neq j^\prime}C_{j,i}\cdot C_{j^\prime,i}+\sum_{k_i>1}\sum_{j=1}^{k_i} \delta_{j,i}+\sum_{i=1}^3 \delta_i=7.
$$
Without loss of generality, there are $2$ scenarios, (i)-(ii), to examine. 

{\bf (i):} $k_1=3$, $k_2=k_3=1$. In this case, note that $C$ has a cuspidal singularity at
$q_2$ and $q_3$. On the other hand, observe that $\frac{1}{2}\sum_{j\neq j^\prime}C_{j,1}\cdot C_{j^\prime,1}\geq 4$,
which implies that the $\delta$-invariants of the singularities at $q_2,q_3$ must be both $1$. Correspondingly, the rotation numbers at $q_2,q_3$ are both $(2,3)$ by Table 2, which contradicts Lemma 2.1. 

{\bf (ii):} $k_1=k_2=2$, $k_3=1$. We note that $g_{|\Sigma/G|}=3$, so that $d_C\geq 0$ is equivalent to 
$\sum_{j=1}^5 (m_{j,1}+m_{j,2})\leq 17$ in the formula of $d_C$. On the other hand, $m_{j,1}+m_{j,2}\geq 3$
for each $j$. It follows that if we let $(m_{5,1},m_{5,2})$ be the rotation numbers at $q_3$, then $d_C<0$ whenever
$m_{5,1}+m_{5,2}>5$, and if $m_{5,1}+m_{5,2}=5$, then $d_C\geq 0$ forces $(m_{j,1},m_{j,2})=(1,2)$ for each
$j=1,2,3,4$. This in turn implies that the local branches of $C$ are embedded at $q_1,q_2$, and moreover, they
are tangent with a tangency of order $2$. As a consequence, the $\delta$-invariant of the singularity at $q_3$
must be $3$. But this contradicts the assumption that $m_{5,1}+m_{5,2}=5$, as $(m_{5,1},m_{5,2})=(3,4)$ if
the singularity at $q_3$ has $\delta$-invariant $3$ (see Table 2). Hence $m_{5,1}+m_{5,2}<5$, and the only possibility is $(m_{5,1},m_{5,2})=(1,2)$ with the $\delta$-invariant of the singularity at $q_3$ being $5$. Then the genus formula implies that $C$ has a nodal singularity at both $q_1,q_2$. With $(m_{5,1},m_{5,2})=(1,2)$, 
Lemma 2.1 implies that the rotation numbers at $q_1,q_2$ are $(1,2)$, $(1,3)$, $(1,4)$, $(1,4)$. It follows easily that $d_C<0$, so this case is also eliminated. 

{\bf (d):} $2-2g_\Sigma-k=-xp$ where $x\leq 8$. We observe that $x=2g_{|\Sigma/G|}-2+k$. As in the formula of $d_C$, $m_{j,1}+m_{j,2}\geq 3$ for all $j=1,2,\cdots,k$, it follows easily that $d_C\leq \frac{1}{10}(5x-36-k)$. Consequently, $d_C<0$ if $x<8$, and if $x=8$, $d_C\geq 0$ is equivalent to $k=4$ and $m_{j,1}+m_{j,2}=3$ for all 
$j=1,2,\cdots,k$. To see this is not possible, we note that if $x=8$ and $k=4$, we have $g_\Sigma=19$. It follows easily that $C$ has two cuspidal singularities at two of the fixed points, with the sum of the $\delta$-invariants 
being at most $8$. But with the rotation numbers at both fixed points being $(1,2)$, the $\delta$-invariant of each singularity must be $5$ by Table 2, which is a contradiction. 

This finishes the case of $p=5$.

\vspace{2mm}

{\bf (3):} $p=7$. There are two cases. First, assume that there is a $C\in \{C_i\}$, where $c_1(2K)=\sum_i n_i C_i$
from Lemma 2.2, such that $C$ is of degree $d\leq 3$. 

Suppose $d=1$. Then $d^2+3d+3=7=p$, which implies that $2-2g_\Sigma-k=-p$ is the only possibility, with $k=3$, $g_{|\Sigma/G|}=0$, and $g_\Sigma=3$. In particular, $C$ is embedded. On the other hand, $d_C\geq 0$ and $d_C\in\Z$ imply that $\sum_{j=1}^3 (m_{j,1}+m_{j,2})=11$ in the formula of $d_C$. It is clear that one of the rotation numbers $(m_{j,1},m_{j,2})$ has to be $(1,2)$. Then by Lemma 2.1, there must be another one which is $(1,6)$, which leads to a contradiction to $\sum_{j=1}^3 (m_{j,1}+m_{j,2})=11$. Hence $d=1$ is eliminated. 

Next, assume $d=2$. We note that $d^2+3d+3=13<2p$, so $2-2g_\Sigma-k=-p$ is also the only possibility. It is easy to check that $d_C<0$, so $d=2$ is not possible. 

Finally, let $d=3$. The first possibility in Lemma 2.5 is $2-2g_\Sigma-k=-3p$. It is clear that in this case, either $k=3$ 
or $k=5$; in fact it is easy to see that the adjunction inequality is violated when $k=5$. On the other hand, if $k=3$, then $g_{|\Sigma/G|}=1$ and $g_\Sigma=10$, in particular, $C$ is embedded. Furthermore, $d_C\geq 0$ and $d_C\in\Z$ imply that the rotation numbers in the formula of $d_C$ satisfy the equation 
$\sum_{j=1}^3 (m_{j,1}+m_{j,2})=12$. If one of the $(m_{j,1},m_{j,2})$'s is $(1,2)$, then by Lemma 2.1, there
must be another one which is $(1,6)$, which contradicts $\sum_{j=1}^3 (m_{j,1}+m_{j,2})=12$. With this understood,
it is easy to see that the only possibility for $\sum_{j=1}^3 (m_{j,1}+m_{j,2})=12$ to be true is 
$(m_{j,1},m_{j,2})=(1,3)$ for $j=1,2,3$. One can check easily that this is in line with Lemma 2.1. Furthermore, the Global Quotient Criterion is not violated if and only if the local representations of the $\Z_p$-action are in case
(i) of Lemma 2.1. More concretely, if we let $g_1,g_2,g_3$ be the corresponding generators at the $3$ fixed points,
each with rotation numbers $(1,3)$, and we let $g_j=g_1^{c_j}$ for $j=1,2,3$. Then if we apply Lemma 2.1(i)
to $g_1$ with $(a,b)=(1,3)$, the rotation numbers with respect to $g_1$ at the other two fixed points are $(6,2)$,
$(4,5)$. It follows easily that $c_2=4$, $c_3=2$, so that $c_1+c_2+c_3=7=0 \pmod{7}$ is satisfied. 
This case is stated as (3)(i) in Theorem 1.3.

To eliminate the remaining possibilities, we set $x:=2g_{|\Sigma/G|}-2+k$, where $x\leq 2$. Appealing to the fact that in the formula of $d_C$, $m_{j,1}+m_{j,2}\geq 3$ for each $j$, together with $k\leq x+2$, we see that 
$d_C\leq \frac{1}{14} (8x-16)$ holds true. It follows immediately that $d_C<0$ if $x<2$, and if $x=2$, $d_C\geq 0$
implies that $k=x+2=4$ and $(m_{j,1}, m_{j,2})=(1,2)$ for each $j$. But $(1,2)$ can be realized as rotation numbers
(for some generator of $G$) at each fixed point is in contradiction to Lemma 2.1. This finishes the first case
$d\leq 3$.

Secondly, assume that there is a $C\in\{C_i\}$ of degree $d=6$, i.e., $c_1(2K)=C$. We begin with the observation from Lemma 2.7 that if $C$ is embedded near a fixed point, then the rotation numbers at the fixed point has to be either $(1,2)$ or $(1,4)$. On the other hand, a quick examination yields the following list of rotation numbers at a cuspidal singularity with $\delta$-invariants less than $8$ (none of them violating Lemma 2.7):

\vspace{3mm}

\centerline{
\begin{tabular}{lllllll}
\hline \vline\hspace{1mm}
$\delta$-invariant $<8$ & \vline \hspace{1mm} rotation numbers  \hspace{5.8mm} \vline \\
\hline \vline\hspace{1mm}
0 &  \vline \hspace{1mm} $(1,2)$ or $(1,4)$ \hspace{11.6mm} \vline \\ 
\hline \vline\hspace{1mm}
1 &  \vline \hspace{1mm} $(2,3)$  \hspace{26.1mm} \vline \\ 
\hline \vline\hspace{1mm} 
2 &  \vline \hspace{1mm} $(2,5)$  \hspace{26.1mm} \vline\\
\hline \vline\hspace{1mm} 
3 &  \vline \hspace{1mm} $(3,4)$  \hspace{26.1mm} \vline\\
\hline \vline\hspace{1mm} 
4 &  \vline \hspace{1mm} $(3,5)$  \hspace{26.1mm} \vline\\
\hline \vline\hspace{1mm} 
5 &  \vline \hspace{1mm} $(2,4)$  \hspace{26.1mm} \vline\\
\hline \vline\hspace{1mm} 
6 &  \vline \hspace{1mm} $(2,6)$ or $(4,5)$ \hspace{11.6mm} \vline\\ 
\hline \vline\hspace{1mm}
7 &  \vline \hspace{1mm} $(1,2)$ or $(1,3)$ \hspace{11.6mm} \vline \\ \hline
\end{tabular}
}
\vspace{2mm}

\centerline{Table 3: $\delta$-invariants and rotation numbers at a cuspidal fixed point, $p=7$, $d=6$.
}

\vspace{2mm}

With the preceding understood, there are $4$ cases to examine: (a) $2-2g_\Sigma-k=-8p$, (b) $2-2g_\Sigma-k=-7p$, (c) $2-2g_\Sigma-k=-6p$, and (d) $2-2g_\Sigma-k=-xp$ where $x\leq 5$, see Lemma 2.5.

{\bf (a):} $2-2g_\Sigma-k=-8p$. In this case $k$ is even and $4\leq k\leq 10$. Suppose $k=4$, in which case $g_\Sigma=27$ and $g_{|\Sigma/G|}=3$. It follows easily from the genus formula that
$C$ must be immersed with exactly one nodal singularity at a fixed point. It follows that the rotation numbers at the other two fixed points have to be either $(1,2)$ or $(1,4)$ (cf. Table 3). With this understood, we apply Lemma 2.1 and find that the rotation numbers at the nodal singularity has to be $(1,6)$ for both local branches. Then $d_C\in\Z$ implies that the rotation numbers at the other two fixed points must be both $(1,4)$. One can check easily that these rotation numbers are consistent with Lemma 2.1, and Lemma 2.7 is not violated at the nodal singularity. Finally, the Global Quotient Criterion is not violated if and only if the local representations of the $\Z_p$-action are given by case (i) (or equivalently case (iv) after switching the two fixed points) in Lemma 2.1. More concretely, if we let $g_1,g_2$ be the corresponding generators at the nodal singularity, and $g_3,g_4$ the generators at the remaining two fixed points, with $g_j=g_1^{c_j}$ for $j=1,2,3,4$, and apply Lemma 2.1(i) to $g_1$ with $(a,b)=(1,6)$, then the rotation numbers at the other two
fixed points are $(6,5)$ and $(1,2)$ (with respect to $g_1$), which implies easily that $c_2=6$, $c_3=3$ and $c_4=4$. Thus the constraint 
$c_1+c_2+c_3+c_4=14=0\pmod{7}$ in the Global Quotient Criterion is satisfied. This case is stated as (3)(ii) in Theorem 1.3. Finally, if $k\geq 6$, one can check easily that the adjunction inequality is violated so it's not possible. 

{\bf (b):} $2-2g_\Sigma-k=-7p$. In this case, $k$ is odd and $3\leq k\leq 9$. Let $k=3$ first. In this case
we have $g_\Sigma=24$, and $C$ has only cuspidal singularities at the fixed points with total $\delta$-invariants $4$. The following is easy to see: if $C$ is embedded near one of the fixed points, then the distribution of $\delta$-invariants at the other two fixed points are either $2$, $2$, or $1$, $3$, or $0$, $4$, and if $C$ is singular at all $3$ fixed points, the distribution of $\delta$-invariants must be $1$, $1$, $2$. Using Table 3, one can check easily that 
in any of these cases, the corresponding rotation numbers are in contradiction with Lemma 2.1. Hence the case
$k=3$ is eliminated. 

Next, let $k=5$. Then $g_\Sigma=23$ and $g_{|\Sigma/G|}=2$. In this case, $d_C\geq 0$ and $d_C\in \Z$ imply that the rotation numbers in the formula of $d_C$ satisfy $\sum_{j=1}^5 (m_{j,1}+m_{j,2})=24$. Without loss of generality, there are $2$ scenarios, (i)-(ii), to examine. 

{\bf (i):} $k_1=3$, $k_2=k_3=1$. In this case, it is easy to see from the adjunction inequality that $C$ is either embedded near both $q_2,q_3$, or is embedded at $q_2$ but is singular at $q_3$ with $\delta$-invariant $1$. In the former case, the rotation numbers at $q_2,q_3$ are either $(1,2)$ or $(1,4)$. Then by Lemma 2.1, it is easy to see that the rotation numbers $(m_{j,1},m_{j,2})$ in the formula of $d_C$ which are associated to the $3$ local branches of $C$ at $q_1$ has the property that $m_{j,1}+m_{j,2}=p=7$. It follows immediately that $d_C<0$ in this case. In the latter case, the rotation numbers at $q_2,q_3$ are $(1,2)$ (or $(1,4)$) and $(2,3)$ respectively, which contradicts Lemma 2.1.  (See Table 3.)

{\bf (ii):} $k_1=k_2=2$, $k_3=1$. Note that in this case, it follows easily from the adjunction inequality that $C$ is either embedded near $q_3$ or has a cuspidal singularity with $\delta$-invariant at most $3$. Using Table 3, the following is easy to see: First, assume $C$ is embedded near $q_3$. Then the rotation numbers at $q_3$ is 
either $(1,2)$ or $(1,4)$. It follows from Lemma 2.1 that the rotation numbers $(m_{j,1},m_{j,2})$ in the formula of $d_C$ that are associated to the local branches of $C$ at either $q_1$ or $q_2$ has the property that $m_{j,1}+m_{j,2}=p=7$. With this understood, one can check easily that the condition 
$\sum_{j=1}^5 (m_{j,1}+m_{j,2})=24$ cannot be satisfied, hence this possibility is eliminated. Next assume $C$
is singular at $q_3$ with $\delta$-invariant $3$. Then the rotation numbers at $q_3$ is $(3,4)$, which implies, by Lemma 2.1, that the rotation numbers associated to the local branches of $C$ at $q_1,q_2$ are $(1,2)$, $(1,4)$, $(1,2)$, $(1,4)$, as $C$ must have a nodal singularity at both $q_1,q_2$. But this contradicts the condition 
$\sum_{j=1}^5 (m_{j,1}+m_{j,2})=24$, hence it is not possible. A similar argument rules out the possibility that $C$ is singular at $q_3$ with a $\delta$-invariant $2$. 

It remains to examine the case where $C$ is singular at $q_3$ with $\delta$-invariant $1$. In this case, the rotation numbers at $q_3$ is $(2,3)$ by Table 3. Applying Lemma 2.1 to it, it follows easily that $C$ must have a nodal singularity at, say, $q_2$, and at $q_1$ there are two possibilities: either both local branches of $C$ are embedded and are tangent with a tangency of order $3$, or one of the local branches is embedded while the other is singular with $\delta$-invariant $1$ and the intersection number of the two local branches is $2$. One can check easily that in the former case, the condition 
$\sum_{j=1}^5 (m_{j,1}+m_{j,2})=24$ is violated, while in the latter case, the associated rotation numbers are $(2,3)$, $(1,3)$, $(1,3)$, $(1,5)$, and $(2,3)$, and the condition $\sum_{j=1}^5 (m_{j,1}+m_{j,2})=24$ is satisfied. Furthermore, one can also verify that in the latter case, Lemma 2.7 is not violated. Finally, it turns out that the Global Quotient Criterion is not violated if and only if the local representations of the $\Z_p$-action are given by case (ii) in Lemma 2.1. 
More concretely, let $g_1$ be the generator at $q_3$, with rotation numbers $(2,3)$, $g_2,g_3$ be the generators
at the nodal singularity, which have rotation numbers $(1,3)$ and $(1,5)$, and let $g_4,g_5$ be the generators at the reducible singularity, with rotation numbers $(2,3)$ and $(1,3)$, and we write
$g_j=g_1^{c_j}$. Then if we apply Lemma 2.1(ii) to $g_1$ with $(a,b)=(2,3)$, the rotation numbers at the other
two fixed points are $(2,6)$ and $(4,6)$. It follows easily that $c_2=4$, $c_3=6$, $c_4=4$, and $c_5=6$, from
which $c_1+c_2+c_3+c_4+c_5=21=0\pmod{7}$ holds true. This case is stated in Theorem 1.3 as (3)(iii). 

Finally, if $k\geq 7$, the adjunction inequality is easily seen violated. This finishes the discussion for (b): $2-2g_\Sigma-k=-7p$.

{\bf (c):} $2-2g_\Sigma-k=-6p$. In this case, $k$ is even and $4\leq k\leq 8$. Let $k=4$, in which case $g_\Sigma=20$ and $g_{|\Sigma/G|}=2$. In particular, $d_C\geq 0$ and $d_C\in \Z$ imply that the rotation numbers in the formula of $d_C$ satisfy $\sum_{j=1}^4 (m_{j,1}+m_{j,2})=17$. Now let $k_1=2$, $k_2=k_3=1$ without loss of generality. Using Table 3, the following is easy to see: if $C$ is embedded near $q_2$ or $q_3$, then the rotation numbers at the fixed point must be $(1,2)$ or $(1,4)$. By Lemma 2.1, we find that at one of the other two fixed points, the rotation numbers has the property that its sum equals $p=7$. Clearly this can not happen at $q_1$, and it follows easily that the sum of rotation numbers at $q_1$ that are in the formula of $d_C$ is at most $7$
(because of the constraint $\sum_{j=1}^4 (m_{j,1}+m_{j,2})=17$), so they must be $(1,2)$ and $(1,3)$. But that's clearly not possible for $p=7$, so $C$ must be singular at both $q_2,q_3$, with $\delta$-invariant at each of the fixed point less than $7$. This leads easily to the observation that the sum of the rotation numbers in the formula of $d_C$ that are from $q_2,q_3$ is at least $10$, so that again the sum of rotation numbers at $q_1$ in the formula of $d_C$ is at most $7$, and as we have showed earlier, this is not possible. On the other hand, if $k=8$, the adjunction inequality is easily seen violated.

So it remains to examine the case of $k=6$. In this case, $g_\Sigma=19$ and $g_{|\Sigma/G|}=1$. In particular,  $d_C\geq 0$ and $d_C\in \Z$ imply that the rotation numbers in the formula of $d_C$ must obey the condition 
$\sum_{j=1}^6 (m_{j,1}+m_{j,2})=24$. Without loss of generality, there are $3$ scenarios, (i)-(iii), to consider. 

{\bf (i):} $k_1=4$, $k_2=k_3=1$. There is only one possibility where the adjunction inequality is not violated, i.e., 
the four local branches of $C$ at $q_1$ are all embedded, with $3$ of them being tangent of order $2$, and $C$ is embedded near both $q_2,q_3$. Correspondingly, the rotation numbers at $q_1$ are $(1,4)$, $(1,2)$,  $(1,2)$, $(1,2)$, and at $q_2,q_3$ they are either $(1,2)$ or $(1,4)$ (cf. Table 3). But this contradicts Lemma 2.1, hence is not possible.

{\bf (ii):} $k_1=3$, $k_2=2$, $k_3=1$. The condition $\sum_{j=1}^6 (m_{j,1}+m_{j,2})=24$, the genus formula, and Lemma 2.1 leave only one possibility for the rotation numbers associated to the local branches of $C$, i.e., $(1,4)$, $(1,2)$, $(1,2)$ at $q_1$, $(1,2)$, $(1,2)$ at $q_2$, and $(3,4)$ at $q_3$ (see Table 3). Furthermore, it is easy to check that Lemma 2.7 is not violated at any of the fixed points. Finally, we note that the Global Quotient Criterion is not violated if and only if the local representations of the $\Z_p$-action are given by case (i) in Lemma 2.1. More concretely, let $g_1,g_2,g_3$ be the generators associated to the three branches at $q_1$, and $g_4,g_5$ the generators at $q_2$ and $g_6$ the generator at $q_3$, and write 
$g_j=g_1^{c_j}$. Then applying Lemma 2.1(i) to $g_1$ with $(a,b)=(1,4)$, we get $(6,3)$, $(3,4)$ as the rotation numbers at $q_2,q_3$. It follows easily that $c_2=c_3=2$, $c_4=c_5=5$, and if we let $c_6=6$, then the constraint $c_1+c_2+c_3+c_4+c_5+c_6=0\pmod{7}$ is satisfied. This case is stated in Theorem 1.3 as (3)(iv). 

{\bf (iii):} $k_1=k_2=k_3=2$. Again, the condition $\sum_{j=1}^6 (m_{j,1}+m_{j,2})=24$, the genus formula, and Lemma 2.1 leave only one possibility for the rotation numbers associated to the local branches of $C$, i.e., $(1,3)$ at all $3$ fixed points and for all local branches. Moreover, one can check easily that Lemma 2.7 is not violated. Finally, we note that the Global Quotient Criterion is not violated if and only if the local representations of the $\Z_p$-action are in case (i) of Lemma 2.1. 
More concretely, if we let $g_1=g_2$, $g_3=g_4$, and $g_5=g_6$ be the corresponding generators at the $3$ fixed point, and write $g_j=g_1^{c_j}$, then applying Lemma 2.1(i) to $g_1$ with $(a,b)=(1,3)$, we get the rotation numbers at the other two fixed points to be $(6,2)$ and $(4,5)$. It follows easily that $c_2=1$, $c_3=c_4=4$, $c_5=c_6=2$, which obeys the constraint  $c_1+c_2+c_3+c_4+c_5+c_6=0\pmod{7}$. 
This case is stated in Theorem 1.3 as (3)(v). 

{\bf (d):} $2-2g_\Sigma-k=-xp$ where $x\leq 5$. Note that $x=2g_{|\Sigma/G|}-2+k$, from which it follows easily 
that $d_C\leq \frac{1}{14}(8x-34)$. So $d_C<0$ if $x<5$. If $x=5$, then $k=3,5$ or $7$. In each of these cases, $d_C<0$ by the following simple observation: for $d_C\geq 0$, one of the rotation numbers in the formula of $d_C$ has to be $(1,2)$, but by Lemma 2.1, there must be another rotation numbers (at another fixed point) whose sum equals $p=7$, and as a consequence, $d_C<0$. This completes the discussion for the case $p=7$.

\vspace{2mm}

{\bf (4):} $p=11$.  First, suppose there is a $C\in\{C_i\}$ of degree $d\leq 3$. Then $d^2+3d+3\geq p$ implies that $d\geq 2$. It is easy to see that for both $d=2$ and $3$, $2-2g_\Sigma-k=-p$ is the only possibility in Lemma 2.5,
in which case, $k=3$, $g_{|\Sigma/G|}=0$, and $g_\Sigma=5$. 

Let $d=2$. In this case, it follows from the adjunction inequality that $C$ must have a cuspidal singularity with $\delta$-invariant $1$ at one of the fixed points, and is embedded near the other two fixed points. The rotation numbers at the cuspidal singularity must be $(2,3)$, which, with Lemma 2.1 and the constraints $d_C\geq 0$ and $d_C\in\Z$, implies that the rotation numbers at the remaining fixed points must be $(1,5)$ and $(1,4)$. Furthermore, one can verify easily that the Global Quotient Criterion is not violated if and only if the local representations of the $\Z_p$-action are given by case (iv) in Lemma 2.1. More concretely, let  $g_1,g_2,g_3$ be the corresponding generators, with rotation numbers $(2,3)$, $(1,5)$ and $(1,4)$ respectively, and we write $g_j=g_1^{c_j}$. Applying Lemma 2.1(iv) to $g_1$ with $(a,b)=(2,3)$, we obtain $(2,10)$, $(3,1)$ for the rotation numbers at the other two fixed points. It follows easily that $c_2=6$, $c_3=4$, and with $c_1=1$, the constraint $c_1+c_2+c_3=0\pmod{11}$ is satisfied. This case is stated in Theorem 1.3 as (4)(i). 

If $d=3$, then $C$ has at most cuspidal singularities at the $3$ fixed points, with the sum of $\delta$-invariants equaling $5$. Now observe that for $p=11$, there is no cuspidal singularity with $\delta$-invariant $5$. On the other hand, we note that in this case, $d_C\geq 0$ and $d_C\in\Z$ is equivalent to $\sum_{j=1}^3 (m_{j,1}+m_{j,2})=13$ for the rotation numbers $(m_{j,1},m_{j,2})$ in the formula of $d_C$. With this understood, one can check easily
that for all the possible distributions of the $\delta$-invariants among the $3$ fixed points, we always have 
$d_C<0$. Hence $d=3$ is eliminated. This completes the discussion for the case $d\leq 3$.

Secondly, assume that there is a $C\in\{C_i\}$ of degree $d=6$. 
We begin with some quick but useful observations: if $C$ is embedded near a fixed point, then by Lemma 2.7 the rotation numbers at the fixed point must be either $(1,3)$ or $(1,4)$. As a corollary, $C$ cannot be embedded near more than one fixed points by Lemma 2.1. Secondly, it also follows from Lemma 2.7 that $C$ cannot have a cuspidal singularity with $\delta$-invariant $1$ or $2$ at any fixed point. With this understood, we note that 
there are $4$ cases to consider: (a) $2-2g_\Sigma-k=-5p$, (b) $2-2g_\Sigma-k=-4p$, (c) $2-2g_\Sigma-k=-3p$,
and (d) $2-2g_\Sigma-k=-xp$ where $x\leq 2$. See Lemma 2.5.

{\bf (a):} $2-2g_\Sigma-k=-5p$. In this case we have $2-2g_{|\Sigma/G|}-k=-5$, which implies that either $k=7$, or $k=5$, or $k=3$. Moreover, it follows easily from the adjunction inequality that $k=7$ and $k=3$ are not possible. To rule out $k=5$, we observe that in this case, $g_\Sigma=26$, so the adjunction inequality implies that $C$ must be immersed, with two nodal singularities at two of the fixed points. Furthermore, the rotation numbers at the fixed point
where $C$ is embedded must be either $(1,3)$ or $(1,4)$. Now using Lemma 2.1, we conclude that the rotation numbers at one of the other two fixed points where $C$ has a nodal singularity must be $(1,7)$ and $(1,8)$.
But when we apply Lemma 2.7 to the rotation numbers $(1,7)$ or $(1,8)$, we get $int_D(q_i)>2$, which is a contradiction as $C$ has a nodal singularity at the fixed point, so that  $int_D(q_i)=2$ for a generic $J$-holomorphic disk $D$. Hence $k=5$ is also eliminated. 

{\bf (b):} $2-2g_\Sigma-k=-4p$. In this case, either $k=6$ or $k=4$, with $g_\Sigma=20$ or $21$ respectively. 
Assume $k=6$. The following identity follows from the genus formula 
$$
\frac{1}{2}\sum_{k_i>1}\sum_{j\neq j^\prime}C_{j,i}\cdot C_{j^\prime,i}+\sum_{k_i>1}\sum_{j=1}^{k_i} \delta_{j,i}+\sum_{i=1}^3 \delta_i=8.
$$
With this understood, there are $3$ scenarios, (i)-(iii), to consider. 

{\bf (i):} $k_1=4$, $k_2=k_3=1$. Note that in this case, $\frac{1}{2}\sum_{j\neq j^\prime}C_{j,1}\cdot C_{j^\prime,1}\geq \frac{1}{2}\times 4\times 3=6$
and $\delta_2+\delta_3\geq 3$, which is a contradiction. 

{\bf (ii):} $k_1=3$, $k_2=2$, and $k_3=1$. In this case, we observe that either $\delta_3=0$ or 
$\delta_3\geq 3$, and $C_{j,2}\cdot C_{j^\prime,2}\geq 1$, $\frac{1}{2}\sum_{j\neq j^\prime}C_{j,1}\cdot C_{j^\prime,1}\geq 4$
as $k_2=2$ and $k_1=3$. It follows easily that if $\delta_3\geq 3$, then one must have $\delta_3=3$, 
$C_{j,2}\cdot C_{j^\prime,2}=1$, and $\frac{1}{2}\sum_{j\neq j^\prime}C_{j,1}\cdot C_{j^\prime,1}=4$. As a consequence, the rotation numbers at the fixed point $q_3$ is either $(2,7)$ or $(3,4)$, $C$ has a nodal singularity at $q_2$, and the three local branches of $C$ at $q_1$ are embedded where two of them are tangent with a tangency of order $2$. In particular, $(1,2)$ is the rotation numbers at $q_1$ which is associated to the two tangent branches. But the conclusion that the rotation numbers is $(1,2)$ at $q_1$ and $(2,7)$ or $(3,4)$ at $q_3$ are in contradiction with Lemma 2.1, hence 
$\delta_3\geq 3$ is ruled out.

Next consider the case $\delta_3=0$. With $\frac{1}{2}\sum_{j\neq j^\prime}C_{j,1}\cdot C_{j^\prime,1}\geq 4$ as $k_1=3$, one must have $\sum_{j=1}^{2} \delta_{j,2}+C_{j,2}\cdot C_{j^\prime,2}\leq 4$. We claim that this implies that 
both of the $\delta_{j,2}$'s must be $0$, i.e., the two local branches 
$C_{j,2}, C_{j^\prime,2}$ are both embedded. 
To see this, consider first the case where both of the $\delta_{j,2}$'s are nonzero. Then 
$\sum_{j=1}^{2} \delta_{j,2}+C_{j,2}\cdot C_{j^\prime,2}\geq 6$ because $C_{j,2}\cdot C_{j^\prime,2}\geq 4$, which is a contradiction. Next assume that only one of the $\delta_{j,2}$'s is nonzero.
Then one of the local branches $C_{j,2}$ or $C_{j^\prime,2}$ is embedded while the other is singular. We claim that the $\delta$-invariant $\delta_{j,2}$ of the singular branch must be either $1$ or $2$, as if otherwise, we would have 
$\sum_{j=1}^{2} \delta_{j,2}+C_{j,2}\cdot C_{j^\prime,2}\geq 3+2=5$, which is a contradiction. But $\delta_{j,2}=1$
or $2$ implies that the rotation numbers at $q_2$ is either $(2,3)$ or $(2,5)$. Applying Lemma 2.7 to the fixed point
$q_2$, it is easy to check that in either case, $int_D(q_2)\geq a_1+a_2\geq 4$ is always true. But for a generic 
$D$, $int_D(q_2)=3$, as one of the local branches at $q_2$, $C_{j,2}$ or $C_{j^\prime,2}$, is embedded and the other is singular with multiplicity $2$ at the singularity, which is a contradiction. 
Hence both of the $\delta_{j,2}$'s are $0$, and $C_{j,2}, C_{j^\prime,2}$ are both embedded. 

To proceed further, we claim $C_{j,2}\cdot C_{j^\prime,2}=1$. Suppose to the contrary that $m:=C_{j,2}\cdot C_{j^\prime,2}>1$. Then $(1,m)$ is the rotation numbers at $q_2$, and by Lemma 2.7, $m=3,4,5,9$, or $10$
must be true. But since the rotation numbers at $q_3$ is either $(1,3)$ or $(1,4)$ as $\delta_3=0$, 
$(1,3)$ or $(1,4)$ cannot be the rotation numbers at $q_2$, which implies $m\geq 5$. 
But $C_{j,2}\cdot C_{j^\prime,2}\geq 5$ and $\frac{1}{2}\sum_{j\neq j^\prime}C_{j,1}\cdot C_{j^\prime,1}\geq 4$ again form a contradiction. Hence $C_{j,2}\cdot C_{j^\prime,2}=1$, and therefore, $\frac{1}{2}\sum_{j\neq j^\prime}C_{j,1}\cdot C_{j^\prime,1}+\sum_{j=1}^3\delta_{j,1}=7$ must be true. With this understood, we note that with the rotation numbers at $q_3$ being $(1,3)$ or $(1,4)$, the rotation numbers at $q_2$ must be $(1,5)$ and $(1,9)$ for the two branches of $C$ at $q_2$ as a result of Lemma 2.1 and Lemma 2.7. Consequently, the rotation numbers 
at $q_1$ has to be $(1,7)$ or $(1,8)$ by Lemma 2.1. With these preparations, one can check easily that if one of the $\delta_{j,1}$'s is nonzero, $\frac{1}{2}\sum_{j\neq j^\prime}C_{j,1}\cdot C_{j^\prime,1}\geq 6$ must be true, 
and if all $\delta_{j,1}=0$, then $\frac{1}{2}\sum_{j\neq j^\prime}C_{j,1}\cdot C_{j^\prime,1}\geq 9$. It follows easily that the only possibility is $\sum_{j=1}^3\delta_{j,1}=1$ and $\frac{1}{2}\sum_{j\neq j^\prime}C_{j,1}\cdot C_{j^\prime,1}= 6$. From this, we can easily conclude that the rotation numbers at $q_1$ that are associated to the three local branches of $C$ are $(2,3)$, $(1,7)$, and $(1,8)$, and the rotation numbers at $q_2$ that are associated to the two local branches 
of $C$ are $(1,5)$ and $(1,9)$, and the rotation numbers at $q_3$ is either $(1,3)$ or $(1,4)$. 
With $g_{|\Sigma/G|}=0$ when $k=6$, we can easily see that $d_C<0$, which is a contradiction. 

{\bf (iii):} $k_1=k_2=k_3=2$. We begin with the observation from (ii) above that for any $i=1,2,3$, if one of 
$\delta_{1,i}, \delta_{2,i}$ is nonzero, then $C_{1,i}\cdot C_{2,i}+\delta_{1,i}+\delta_{2,i}\geq 5$. With 
$\sum_{i=1}^3 (C_{1,i}\cdot C_{2,i}+\delta_{1,i}+\delta_{2,i})=8$, it follows that there is at most one such $i$. On the
other hand, assume both $\delta_{1,i}=\delta_{2,i}=0$, i.e., $C_{1,i}, C_{2,i}$ are both embedded. If $m=C_{1,i}\cdot C_{2,i}>1$, then $m=3,4,5,9$ or $10$ by Lemma 2.7. With these two observations, it follows easily that the only possibility, without loss of generality, is that $\delta_{1,1}=3$, $\delta_{2,1}=0$ and 
$C_{1,1}\cdot C_{2,1}=3$,  and $C_{1,2}\cdot C_{2,2}=C_{1,3}\cdot C_{2,3}=1$. Correspondingly, the rotation numbers at $q_1$ are $(3,4)$ and $(1,9)$,  the rotation numbers at $q_2$ are $(1,3)$ and $(1,4)$, and
the rotation numbers at $q_3$ are $(1,7)$ and $(1,8)$. But with $(1,7)$ or $(1,8)$ at $q_3$, Lemma 2.7 implies that
$int_D(q_3)>2$, which is a contradiction. 

Thus the case of $k=6$ is eliminated. Next let $k=4$. There is only one possibility, i.e., $k_1=2$, $k_2=k_3=1$, and as such, $C_{1,1}\cdot C_{2,1}+\delta_{1,1}+\delta_{2,1}+\delta_2+\delta_3=8$. Furthermore, we observe that
$\delta_2+\delta_3\geq 3$, and $C_{1,1}\cdot C_{2,1}+\delta_{1,1}+\delta_{2,1}\geq 5$ if $\delta_{1,1}$ or
$\delta_{2,1}$ is nonzero. With this understood, as we argued before we can make a list of possible rotation numbers, where in each case the first two rotation numbers are associated with $C_{1,1}, C_{2,1}$ and the last two rotation numbers are at $q_2$, $q_3$ respectively:
\begin{itemize}
\item [{(1)}] $(2,7)$, $(1,5)$, $(1,3)$ (or $(1,4)$), $(2,7)$ (or $(3,4)$).
\item [{(2)}] $(1,5))$, $(1,5)$, $(1,3)$ (or $(1,4)$), $(2,7)$ (or $(3,4)$).
\item [{(3)}] $(1,4)$, $(1,4)$, $(1,3)$ (or $(1,4)$), $(2,9)$ (or $(3,5)$).
\item [{(4)}] $(1,3)$, $(1,4)$, $(1,3)$ (or $(1,4)$), $(3,8)$.
\item [{(5)}] $(1,3)$, $(1,4)$, $(2,7)$ (or $(3,4)$), $(2,9)$ (or $(3,5)$).
\item [{(6)}] $(1,5)$, $(1,9)$, $(1,3)$ (or $(1,4)$), $(3,8)$.
\item [{(7)}] $(1,5)$, $(1,9)$, $(2,7)$ (or $(3,4)$), $(2,9)$ (or $(3,5)$).
\item [{(8)}] $(1,10)$, $(1,10)$, $(1,3)$ (or $(1,4)$), $(3,8)$.
\item [{(9)}] $(1,10)$, $(1,10)$, $(2,7)$ (or $(3,4)$), $(2,9)$ (or $(3,5)$).
\end{itemize}
All these cases can be eliminated by Lemma 2.1. Hence $k=4$ is also ruled out. 

{\bf (c):} $2-2g_\Sigma-k=-3p$. Here we have either $k=3$ or $k=5$. Let $k=3$ first. In this case, 
$g_{|\Sigma/G|}=1$, from which it follows easily that $d_C\geq 0$ if and only if 
$\sum_{j=1}^3 (m_{j,1}+m_{j,2})\leq 15$ in the formula of $d_C$. With this understood, we observe that 
\begin{itemize}
\item [{(i)}]  By Lemma 2.1, $(1,3)$ cannot be realized as rotation numbers at two different fixed points, and if
$d_C\geq 0$, then $m_{j,1}+m_{j,2}\geq 4$ for any $j=1,2,3$.
\item [{(ii)}] As we mentioned earlier, by Lemma 2.7, the $\delta$-invariant of a singularity of $C$ is at least $3$.
Moreover, by Lemma 2.7 and Lemma 2.1, $C$ cannot be embedded at more than one fixed points.
With $g_\Sigma=16$, the adjunction formula implies that $C$ must have a singularity 
with $\delta$-invariant equaling $3,4,5$ or $6$. It is easy to check that in any of these cases, the corresponding rotation
numbers at the fixed point obeys $m_{j,1}+m_{j,2}\geq 7$.
\end{itemize}
It follows easily from (i) and (ii) above that $d_C<0$, which eliminates the case of $k=3$. 

For $k=5$, we will also prove $d_C<0$ except in one possibility, although the argument is a bit more delicate. Note that $g_{|\Sigma/G|}=0$ if $k=5$, so $d_C\geq 0$ and $d_C\in\Z$ imply $\sum_{j=1}^5 (m_{j,1}+m_{j,2})=26$ in the formula of $d_C$. 
The condition $\sum_{j=1}^5 (m_{j,1}+m_{j,2})=26$ leaves us with two cases to discuss: Case A and Case B.

\vspace{1mm}

{\bf Case A:} {\it There is an $j$ such that $(m_{j,1},m_{j,2})=(1,2)$.}

\vspace{1mm}

In this case, one has the following two scenarios without loss of generality.

{\bf A(i):} $k_1=k_2=2$, $k_3=1$. First, note that $g_\Sigma=15$, so the genus formula implies
$$
\sum_{i=1}^2 (C_{1,i}\cdot C_{2,i}+\delta_{1,i}+\delta_{2,i})+\delta_3=13.
$$
Now by Lemma 2.1, one of the fixed points has the property that any rotation numbers $(a,b)$ at the fixed point obey $a+b=p=11$. Clearly, this fixed point must be $q_3$; otherwise $d_C<0$. With this understood, it follows easily that $C$ must be singular at $q_3$, with rotation numbers $(2,9)$, $(3,8)$, $(4,7)$, or $(5,6)$. In particular, we note that $\delta_3\geq 4$. Furthermore, Lemma 2.1 also implies that at $q_1,q_2$, the rotation numbers of the form $(1,m)$, where $m>1$, must be either $(1,2)$ or $(1,6)$. This has the following consequence: note that for $i=1,2$, $C_{1,i}\cdot C_{2,i}+\delta_{1,i}+\delta_{2,i}\leq 8$ as $\delta_3\geq 4$, which implies that if $C_{1,i}$ or $C_{2,i}$ is singular, then the corresponding $\delta$-invariant must be at most $6$. Now here is the crucial observation: since only $(1,2)$ or $(1,6)$ can be the rotation numbers of the form $(1,m)$ at $q_1,q_2$, the only possible singularity $C_{1,i}$ or $C_{2,i}$ can have must be of $\delta$-invariant $6$, with rotation numbers $(3,7)$. But this implies $C_{1,i}\cdot C_{2,i}+\delta_{1,i}+\delta_{2,i}\geq 9$, which is a contradiction. Consequently,  $C_{1,i}$ and $C_{2,i}$ must be embedded for $i=1,2$. With this understood, we observe that the rotation numbers associated to each $C_{j,i}$ must be of the form $(1,m)$ where $m=2$ or $6$, and this fact will imply that $\sum_{j=1}^5 (m_{j,1}+m_{j, 2})$ is always an odd number, which is a contradiction. 

{\bf A(ii):} $k_1=3$, $k_2=k_3=1$. In this case, the genus formula gives rise to
$$
\frac{1}{2}\sum_{j\neq j^\prime} C_{j,1}\cdot C_{j^\prime,1}+\sum_{j=1}^3 \delta_{j,1}+\delta_2+\delta_3=13.
$$
By the same argument, we may assume without loss of generality that $q_3$ is the fixed point which has the property that any rotation numbers $(a,b)$ at the fixed point obey $a+b=p=11$. In particular, $\delta_3\geq 4$. 
On the other hand, note that at the fixed point $q_2$, the rotation numbers of the form $(1,m)$ must be either $(1,2)$ or $(1,6)$ by Lemma 2.1, and as we noted in A(i), $C$ must be singular at $q_2$, with $\delta_2=6$. 
But this would imply that $\frac{1}{2}\sum_{j\neq j^\prime} C_{j,1}\cdot C_{j^\prime,1}+\sum_{j=1}^3 \delta_{j,1}\leq 3$, 
which is a contradiction because $\frac{1}{2}\sum_{j\neq j^\prime} C_{j,1}\cdot C_{j^\prime,1}\geq 4$ as $k_1=3$.
So Case A is eliminated. 

\vspace{1mm}

{\bf Case B:} {\it There is an $j$ such that $(m_{j,1},m_{j,2})=(1,3)$ or $(1,4)$.}

\vspace{1mm}

Denote by $q$ the fixed point at which the rotation numbers $(m_{j,1},m_{j,2})=(1,3)$ or $(1,4)$.
Then by Lemma 2.1, neither $(1,3)$ nor $(1,4)$ can be realized as rotation numbers at any fixed point other than $q$. As a consequence, let $(a,b)$ be a rotation numbers at a fixed point $q^\prime\neq q$. Then $a+b\geq 7$ except $(a,b)=(2,3)$ or $(a,b)=(1,5)$. With this understood, there are two scenarios without loss of generality.

{\bf B(i):} $k_1=k_2=2$, $k_3=1$. Consider first the case where $(1,3)$ or $(1,4)$ is not at $q_3$, say it is at $q_1$ without loss of generality. Then with the observation that $(2,3)$ and $(1,5)$ cannot be rotation numbers at $q_3$ due to Lemma 2.7, we must have $(2,3)$ as rotation numbers at $q_2$; otherwise 
$\sum_{j=1}^5 (m_{j,1}+m_{j, 2})\geq 2(1+3)+2(1+5)+7=27$, which is a contradiction. What this means is that 
one of the local branches of $C$ at $q_2$,
i.e., $C_{1,2}$ or $C_{2,2}$ must have a simple cusp singularity. With this understood, it follows easily from
Lemma 2.7 that the other local branch must also be singular, and moreover, the rotation numbers associated to it must also be $(2,3)$; otherwise $\sum_{j=1}^5 (m_{j,1}+m_{j, 2})\geq 27$ as well.  As a consequence, $C_{1,2}\cdot C_{2,2}+\delta_{1,2}+\delta_{2,2}=8$ as $C_{1,2}\cdot C_{2,2}=2^2+1+1=6$. 
With $\delta_3\geq 3$, it follows easily that $C$ must have a
nodal singularity at $q_1$, i.e., $C_{1,1}$ and $C_{2,1}$ are embedded intersecting transversely. In particular,
the two rotation numbers at $q_1$ associated to $C_{1,1}$ and $C_{2,1}$ must be $(1,3)$ and $(1,4)$ respectively. 
On the other hand, observe that $\delta_3=4$, which implies that the rotation numbers at $q_3$ is either
$(2,9)$ or $(3,5)$. But this implies that $\sum_{j=1}^5 (m_{j,1}+m_{j, 2})\geq (1+3)+(1+4) + 2(2+3)+8=27$, 
which is also a contradiction. 

Hence $(1,3)$ or $(1,4)$ must be at $q_3$. Then it is easily seen that the only possibility to have $\sum_{j=1}^5 (m_{j,1}+m_{j, 2})=26$ hold true is that the rotation numbers at $q_1,q_2$ are $(2,3)$, $(2,3)$, $(1,5)$, $(1,5)$, with rotation numbers at $q_3$ being $(1,3)$. We claim that in this case,
Lemma 2.7, which is not violated at the fixed point $q_3$, is also not violated at $q_1$ and $q_2$.
To see this, we observe that at $q_1$, $int_D(q_1)=4$ for a generic $J$-holomorphic disk $D$, while
in Lemma 2.7 with $(m_1,m_2)=(2,3)$, we may take $(a_1,a_2)=(0,4)$, which obeys both
$$
(a_1+2)m_1+(a_2+2)m_2=0 \pmod{p}, \;\;\;\;  int_D(q_1)\geq a_1+a_2.
$$
Hence Lemma 2.7 is not violated at $q_1$. At the fixed point $q_2$, one has $int_D(q_2)=2$ and 
$(m_1,m_2)=(1,5)$ instead, and we may take $(a_1,a_2)=(0,2)$, which also shows that Lemma 2.7
is not violated. Finally, one can check that the Global Quotient Criterion is not violated if and only if the local representations of the $\Z_p$-action are given by case (i) in Lemma 2.1. 
More concretely, let $g_1$ be the generator at $q_3$, with rotation numbers $(1,3)$, and let $g_2=g_3$, $g_4=g_5$ be the generators at the other two fixed points, with rotation numbers $(1,5)$, $(2,3)$ respectively, and we write  $g_j=g_1^{c_j}$. Then applying Lemma 2.1(i) to $g_1$ with 
$(a,b)=(1,3)$, we obtain $(10,2)$, $(8,9)$ for the rotation numbers at the other two fixed points. It follows easily that $c_2=c_3=6$, $c_4=c_5=10$, and with $c_1=1$, the constraint $c_1+c_2+c_3+c_4+c_5=0\pmod{11}$ is satisfied. This case is stated as (4)(ii) in Theorem 1.3.

{\bf B(ii):} $k_1=3$, $k_2=k_3=1$. Consider first the case where $(1,3)$ or $(1,4)$ occurs as rotation numbers at
$q_2$ or $q_3$, say $q_3$ without loss of generality. Then by Lemma 2.7, neither $(2,3)$ nor $(1,5)$ can be rotation numbers at $q_2$. On the other hand, observe that $(2,3)$ and $(1,5)$ cannot both occur as rotation numbers at any fixed point (in particular, at $q_1$). It follows easily that $d_C<0$ unless $(2,3)$ is the rotation numbers at $q_1$ associated to all three local branches. But this is not possible by the adjunction inequality. 
It remains to examine the case where $(1,3)$ or $(1,4)$ occurs as rotation numbers at $q_1$. Then by Lemma 2.7, neither $(2,3)$ nor $(1,5)$ can be rotation numbers at $q_2$ and $q_3$. With this understood, it follows easily
that $d_C<0$ unless $(1,3)$ is the rotation numbers at $q_1$ associated to all three local branches. But this is also not possible by the adjunction inequality. This finishes the discussion on Case B.

{\bf (d):} $2-2g_\Sigma-k=-xp$ where $x\leq 2$. It is immediately clear that $d_C<0$. This completes the analysis for the case of $p=11$.

\vspace{2mm}

{\bf (5):} $p=13$. First, assume that there is a $C\in\{C_i\}$ of degree $d\leq 3$. It follows easily, as in the case
of $p=11$, that $d\geq 2$, and for both $d=2$ and $3$, $2-2g_\Sigma-k=-p$ is the only possibility in Lemma 2.5, in which case, $k=3$, $g_{|\Sigma/G|}=0$, and $g_\Sigma=6$. With this understood, consider first the case where $d=2$. It is easy to see that $C$ must be embedded. Consequently, in the formula 
$$
d_C=\frac{1}{p} c_1(TM)\cdot C+ (g_{|\Sigma/G|}-1)+3-\sum_{j=1}^3 \frac{m_{j,1}+m_{j,2}}{p},
$$
the rotation numbers $(m_{j,1},m_{j,2})$ must take the form of $(1,m_j)$ for each $j=1,2,3$. With $p=13$,
$d=2$, $g_{|\Sigma/G|}=0$, $1+m_j\geq 3$ for each $j$, and $d_C\geq 0$ and $d_C\in\Z$, we see easily that
$m_1+m_2+m_3=17$ must be true. In particular, at least one of the $m_j$'s must be greater than $5$. Without 
loss of generality, we assume $m_1>5$. We can apply Lemma 2.1 to get further informations on the rotation numbers, and as a result, the only possibility for $m_1+m_2+m_3=17$ to hold true is $(1,6)$, $(1,8)$, $(1,3)$. 
Furthermore, one can check easily that the Global Quotient Criterion is not violated if and only if the local representations of the $\Z_p$-action are in case (iv) of Lemma 2.1. More concretely, let
$g_1,g_2,g_3$ be the generators, with rotation numbers $(1,6), (1,8), (1,3)$ respectively, and we write $g_j=g_1^{c_j}$. Then applying Lemma 2.1(iv) to $g_1$ with $(a,b)=(1,6)$, we get $(1,8)$, $(6,5)$ as the rotation numbers at the other two fixed points. It follows easily that $c_2=1$, $c_3=11$, and 
with $c_1=1$, the constraint $c_1+c_2+c_3=0\pmod{13}$ is satisfied. This case is stated as (5)(i) in Theorem 1.3. 

If $d=3$, then $C$ has at most cuspidal singularities at the $3$ fixed points, with the sum of $\delta$-invariants equaling $4$. A case by case analysis shows that $d_C<0$, which rules out $d=3$. Here we point out a useful observation: for $p=13$, $(2,3)$ cannot be realized as rotation numbers at more than one fixed points 
by Lemma 2.1.

Secondly, assume that there is a $C\in\{C_i\}$ of degree $d=6$, i.e., $c_1(2K)=C$. We begin with the following observations: if $C$ is embedded near a fixed point, then by Lemma 2.7 the rotation numbers at the fixed point must be $(1,5)$ or $(1,8)$. As a corollary, $C$ cannot be embedded near more than one fixed points by Lemma 2.1. Furthermore, the following is a list of rotation numbers associated to a cuspidal singularity of $C$ (including $C$ is embedded) with a $\delta$-invariant less than $9$, subject to the constraints by Lemma 2.7; in particular, note that $\delta$-invariant $\neq 2,7$ and $8$.

\vspace{3mm}

\centerline{
\begin{tabular}{lllllll}
\hline \vline\hspace{1mm}
$\delta$-invariant $<9$ & \vline \hspace{1mm} rotation numbers  \hspace{5.8mm} \vline \\
\hline \vline\hspace{1mm}
0 &  \vline \hspace{1mm} $(1,5)$ or $(1,8)$ \hspace{11.6mm} \vline \\ 
\hline \vline\hspace{1mm}
1 &  \vline \hspace{1mm} $(2,3)$  \hspace{26.1mm} \vline \\ 
\hline \vline\hspace{1mm} 
3 &  \vline \hspace{1mm} $(3,4)$  \hspace{26.1mm} \vline\\
\hline \vline\hspace{1mm} 
4 &  \vline \hspace{1mm} $(2,9)$ or $(3,5)$ \hspace{11.6mm}  \vline\\
\hline \vline\hspace{1mm} 
5 &  \vline \hspace{1mm} $(2,11)$  \hspace{24.2mm} \vline\\
\hline \vline\hspace{1mm} 
6 &  \vline \hspace{1mm} $(3,7)$  \hspace{26.1mm} \vline\\ \hline
\end{tabular}
}
\vspace{2mm}

\centerline{Table 4: $\delta$-invariants and rotation numbers at a cuspidal fixed point, $p=13$, $d=6$.
}
\vspace{2mm}

With the preceding understood, by Lemma 2.5 there are $4$ different cases we need to examine: (a) $2-2g_\Sigma-k=-4p$, (b) $2-2g_\Sigma-k=-3p$, (c) $2-2g_\Sigma-k=-2p$, and (d) $2-2g_\Sigma-k=-p$. 

{\bf (a):} $2-2g_\Sigma-k=-4p$. In this case we have $2-2g_{|\Sigma/G|}-k=-4$, which implies that either $k=6$ or $k=4$, as $k\geq 3$. A quick examination of the genus formula for $g_\Sigma$ rules out all the possibilities (where we note that $g_\Sigma=24$ if $k=6$ and $g_\Sigma=25$ if $k=4$), except when $C$ has a nodal singularity at two of the fixed points, and at the third fixed point, $C$ has two embedded local branches which are tangent of order $2$. In this exceptional case, note that the rotation numbers at the last fixed point are $(1,2)$ for both local branches. It follows easily from Lemma 2.1 that the rotation numbers at the two nodal singularities must be $(1,12)$, $(1,12)$ and $(1,2)$, $(1,7)$. A simple calculation shows that $d_C$ is a non-integer, which is a contradiction. 

{\bf (b):} $2-2g_\Sigma-k=-3p$. In this case, either $k=3$ or $k=5$. We should point out that Table 4 will be frequently referred to in our discussions.

Let $k=3$ first. In this case, $g_{|\Sigma/G|}=1$ and $g_\Sigma=19$, and as $k=3$, $C$ has only cuspidal singularities, all occurring at the fixed points, with the sum of $\delta$-invariants equaling $9$. On the other hand, $d_C\geq 0$ and $d_C\in\Z$ imply that $\sum_{j=1}^3 (m_{j,1}+m_{j,2})=21$ for the rotation numbers in the formula of $d_C$. There are $2$ scenarios to consider.

{\bf (i):} $C$ is singular at all $3$ fixed points. First, if there is a singular point of $C$ with $\delta$-invariant $1$, then the other two singular points either both have $\delta$-invariant $4$ or one has $\delta$-invariant $3$ and the other $\delta$-invariant $5$ (cf. Table 4). Except for the following rotation numbers $(2,3)$, $(3,5)$, $(3,5)$, all other possibilities lead to $d_C<0$. But in the case of $(2,3)$, $(3,5)$, $(3,5)$, Lemma 2.1 is violated. 

Secondly, there is another possibility, i.e., all $3$ singular points have $\delta$-invariant $3$, and the rotation numbers at each singular point is $(3,4)$. This case is consistent with Lemma 2.1, and the condition $\sum_{j=1}^3 (m_{j,1}+m_{j,2})=21$ is satisfied. Moreover, note that Lemma 2.7 is also not violated as the rotation numbers $(3,4)$ is listed in Table 4. Finally, one can check that the Global Quotient Criterion is not violated if and only if the local representations of the $\Z_p$-action are given by case (i) in Lemma 2.1. More concretely, let $g_1,g_2,g_3$ be the generators, each with rotation numbers $(3,4)$, and we write $g_j=g_1^{c_j}$. Then if we apply Lemma 2.1(i) to $g_1$ with $(a,b)=(3,4)$, we get $(10,1)$, $(9,12)$ as the rotation numbers at the other two fixed points. From this it follows easily that $c_2=3$, $c_3=9$, and $c_1+c_2+c_3=13=0\pmod{13}$ is satisfied. This case is stated as (5)(ii) in Theorem 1.3. 

{\bf (ii):} $C$ is embedded near one of the fixed point. Note that there is no singularity of $\delta$-invariant $8$, so consequently, the other two singularities of $C$ are of $\delta$-invariants $3$ and $6$ or $4$ and $5$. Both cases lead to $d_C<0$ using Table 4. 

Next, let $k=5$. In this case, $g_{|\Sigma/G|}=0$ and $g_\Sigma=18$. Without loss of generality, there are $2$
scenarios, (iii)-(iv), to consider.

{\bf (iii):} $k_1=3$, $k_2=k_3=1$. First, we observe, due to Lemma 2.1, that if $C$ is embedded or singular with $\delta$-invariant $1$ at one of the fixed points $q_2,q_3$, it cannot be embedded or singular at the other fixed point with a $\delta$-invariant $1$ or $3$ (cf. Table 4). Consequently, in the following equation from the genus formula
$$
\frac{1}{2}\sum_{j\neq j^\prime} C_{j,1}\cdot C_{j^\prime,1}+\sum_{j=1}^3 \delta_{j,1}+\delta_2+\delta_3=10,
$$
one has $\delta_2+\delta_3\geq 4$, which implies that $\frac{1}{2}\sum_{j\neq j^\prime} C_{j,1}\cdot C_{j^\prime,1}+\sum_{j=1}^3 \delta_{j,1}\leq 6$. As a result,
the local branches of $C$ at $q_1$ must be all embedded. Moreover, we observe that $4\leq \frac{1}{2}\sum_{j\neq j^\prime} C_{j,1}\cdot C_{j^\prime,1}\leq 6$.
The following is a list of possible rotation numbers for the cases where the genus formula holds; in each case the first $3$ rotation numbers are at $q_1$ and the last two are at $q_2,q_3$ respectively:
\begin{itemize}
\item [{(1)}] $(1,2)$, $(1,2)$, $(1,7)$, $(1,5)$ (or $(1,8)$), $(3,7)$.
\item [{(2)}] $(1,2)$, $(1,2)$, $(1,7)$, $(2,3)$, $(2,11)$.
\item [{(3)}] $(1,2)$, $(1,2)$, $(1,7)$, $(3,4)$, $(3,4)$.
\item [{(4)}] $(1,3)$, $(1,3)$, $(1,9)$, $(1,5)$ (or $(1,8)$), $(2,11)$.
\item [{(5)}] $(1,3)$, $(1,3)$, $(1,9)$, $(2,3)$, $(2,9)$ (or $(3,5)$).
\item [{(6)}] $(1,4)$, $(1,4)$, $(1,10)$, $(1,5)$ (or $(1,8)$), $(2,9)$ (or $(3,5)$).
\item [{(7)}] $(1,4)$, $(1,4)$, $(1,10)$, $(2,3)$, $(3,4)$.
\item [{(8)}] $(1,2)$, $(1,2)$, $(1,2)$, $(1,5)$ (or $(1,8)$), $(2,9)$ (or $(3,5)$).
\item [{(9)}] $(1,2)$, $(1,2)$, $(1,2)$, $(2,3)$, $(3,4)$.
\end{itemize}
All cases except (5) can be eliminated by Lemma 2.1. Furthermore, case (5) with the last rotation numbers being $(3,5)$ can be ruled out as well,
due to the fact that $d_C$ is a non-integer in this case. However, in case (5) with the last rotation numbers being $(2,9)$, $d_C=0$. But this case is still not possible, as the Global Quotient Criterion is violated for all cases of the local representations listed in Lemma 2.1. 

{\bf (iv):} $k_1=k_2=2$, $k_3=1$. First, we observe that by Lemma 2.7, if one of the local branches of $C$ at $q_1$ or $q_2$ is singular, the $\delta$-invariant of the singularity can not equal $2$. On the other hand, by the adjunction inequality the $\delta$-invariant of the singularity can be at most $3$,
and if it is $3$, then the corresponding rotation numbers must be $(3,4)$ by Lemma 2.7. With this understood, our discussions will be divided into the
following cases, (A)-(G), without loss of generality.

{\bf (A):} One of the local branches at $q_1$ is singular with a $\delta$-invariant $3$. In this case, Lemma 2.1 and the genus formula imply easily that the following rotation numbers are the only possibility: $(3,4)$ and $(1,4)$ at $q_1$, $(1,4)$ and $(1,10)$ at $q_2$, and $(3,4)$ at $q_3$. However, this case is not possible as it leads to $d_C<0$.

{\bf (B):} One of the local branches at $q_1$ is singular with a $\delta$-invariant $1$. First, the adjunction inequality implies that the other local branch at
$q_1$ cannot be singular, so in this case, the rotation numbers associated to the two local branches at $q_1$ are either $(2,3)$ and $(1,5)$, or $(2,3)$ and
$(1,8)$, depending on whether the two branches are transversal to each other or not.  Furthermore, by Lemma 2.1 the local branches at $q_2$ cannot be
singular with $\delta$-invariant $1$, so they must be all embedded. With this understood, the following is a list of possible rotation numbers where Lemmas 2.1 and 2.7 are both being considered and the genus formula holds; in each case the first two pairs of rotation numbers are at $q_1$ and $q_2$ respectively.
\begin{itemize}
\item [{(1)}] $(2,3)$ and $(1,5)$, $(1,3)$ and $(1,3)$, $(2,9)$ (or $(3,5)$).
\item [{(2)}] $(2,3)$ and $(1,5)$, $(1,3)$ and $(1,9)$, $(3,7)$.
\end{itemize}
In (1), one can check that $d_C$ is always a non-integer, and in (2), $d_C<0$.

{\bf (C):} The local branches of $C$ are embedded at $q_1,q_2$ and the $\delta$-invariant of the singularity at $q_3$ equals $6$ (which is maximal among what are possible). The rotation numbers at $q_3$ must be $(3,7)$. Using Lemma 2.1, one can determine the possible rotation numbers at $q_1,q_2$.
The following is the only possibility where the genus formula holds true: $(1,5)$ and $(1,8)$ at $q_1$, $(1,3)$ and $(1,3)$ at $q_2$, and $(3,7)$ at $q_3$. 
It can be eliminated as $d_C$ is a non-integer in this case. 

{\bf (D):} The local branches of $C$ are embedded at $q_1,q_2$ and the $\delta$-invariant of the singularity at $q_3$ equals $5$. Using Lemma 2.1 (and
by the fact that the local branches at $q_1,q_2$ are all embedded), the only possible rotation numbers for each of the local branches is either $(1,2)$
or $(1,7)$. One can check easily that the genus formula will never hold true in this case.

{\bf (E):} The local branches of $C$ are embedded at $q_1,q_2$ and the $\delta$-invariant of the singularity at $q_3$ equals $4$. Using Lemma 2.1, 
the possible rotation numbers for each of the local branches at $q_1,q_2$ are $(1,3)$, $(1,9)$, $(1,5)$, or $(1,8)$. One can check easily that the genus formula will never hold true in this case as well.

{\bf (F):} The local branches of $C$ are embedded at $q_1,q_2$ and the $\delta$-invariant of the singularity at $q_3$ equals $3$. Using Lemma 2.1, 
the possible rotation numbers for each of the local branches at $q_1,q_2$ is either $(1,4)$ or $(1,10)$. 
One can check easily that the genus formula will never hold true in this case. 

{\bf (G):} The local branches of $C$ are embedded at $q_1,q_2$ and the $\delta$-invariant of the singularity at $q_3$ equals $1$ or $0$. 
Using Lemma 2.1, the possible rotation numbers for each of the local branches at $q_1,q_2$ are $(1,3)$, $(1,9)$, $(1,6)$, or $(1,11)$.  The following are
the only possibilities for which the genus formula holds:
\begin{itemize}
\item [{(1)}] $(1,3)$ and $(1,3)$ at $q_1$, $(1,6)$ and $(1,6)$ at $q_2$, and $(2,3)$ at $q_3$. 
\item [{(2)}] $(1,6)$ and $(1,11)$ at $q_1$, $(1,9)$ and $(1,9)$ at $q_2$, and $(1,5)$ (or $(1,8)$) at $q_3$. 
\end{itemize}
Both cases are not possible as $d_C$ is a non-integer in (1) and $d_C<0$ in (2). 

{\bf (c):} $2-2g_\Sigma-k=-2p$.  In this case, $g_{|\Sigma/G|}=0$, $k=4$, and $g_\Sigma=12$. Without loss of
generality, we assume $k_1=2$, $k_2=k_3=1$. 

We observe that $d_C\geq 0$ and $d_C\in\Z$ imply that $\sum_{j=1}^4 (m_{j,1}+m_{j,2})=21$ for the rotation numbers in the formula of $d_C$. It follows immediately from Lemma 2.1 that $(1,2)$ cannot be realized as rotation numbers at any of the fixed points. On the other hand, it is easy to see that if for any $j$, $m_{j,1}+m_{j,2}\geq 5$, then $d_C<0$, except when the following is true, i.e., three of the rotation numbers $(m_{j,1}, m_{j,2})$ are either $(2,3)$ or $(1,4)$, with the fourth one being $(1,5)$. But this is not possible by Lemma 2.1. Consequently, one of the rotation numbers $(m_{j,1}, m_{j,2})$ must be $(1,3)$. Furthermore, we note that by Lemma 2.1, $(1,3)$ cannot occur at two different fixed points. There are $2$ scenarios to consider.

{\bf (i):} The rotation numbers $(1,3)$ occurs at $q_1$. By Lemma 2.1, $(1,4)$ cannot be realized as rotation numbers at any of the fixed points, and 
both $(2,3)$ and $(1,5)$ cannot occur at $q_1$. It follows easily that if the rotation numbers associated to the other local branch at $q_1$ is not $(1,3)$,
then the sum of the rotation numbers from $q_1$ is at least $11$, so that the sum of the rotation numbers from $q_2$ and $q_3$ is at most $10$. It is easy
to see that this is not possible by Lemma 2.1. Thus the rotation numbers associated to both local branches at $q_1$ must be $(1,3)$. This implies that
the sum of the $\delta$-invariants at $q_2,q_3$ must be $13$, as $g_\Sigma=12$. Moreover, the sum of the rotation numbers $m_{j,1}+m_{j,2}$ from $q_2$ and $q_3$ 
must be $21-8=13$. To proceed further, we note that by Lemma 2.1, $(3,4)$ cannot be realized as rotation numbers at any of the fixed points. With this
understood, it follows easily that if the $\delta$-invariant at $q_2$ or $q_3$ is not $0$ or $1$, the sum of the rotation numbers from $q_2$ and $q_3$ 
is at least $14$, which is a contradiction. If the $\delta$-invariant at $q_2$ or $q_3$ is $0$ or $1$, the $\delta$-invariant at the other fixed point must be
$13$ or $12$. One can show that the sum of the rotation numbers from $q_2$ and $q_3$ is also at least $14$, a contradiction. 

{\bf (ii):} The rotation numbers $(1,3)$ occurs at $q_2$ or $q_3$. Without loss of generality, assume $(1,3)$ occurs at $q_2$. Then $C$ must be singular at $q_2$ with a $\delta$-invariant $13$ whose characteristic sequence is $(3,14)$. It follows easily that $C$ must be embedded at $q_3$, with rotation numbers at $q_3$ being either $(1,5)$ or $(1,8)$. On the other hand, the genus formula implies that $C_{1,1}\cdot C_{2,1}+\delta_{1,1}+\delta_{2,1}=3$. It follows easily that the only possibility is that one of $C_{1,1},C_{2,1}$ is singular with a $\delta$-invariant $1$ and the other is embedded intersecting transversely. Correspondingly, the rotation numbers at $q_1$ must be $(2,3)$ and $(1,5)$. But this contradicts Lemma 2.1, since $(1,5)$ or $(1,8)$ is assumed to
be rotation numbers at $q_3$ already. Hence case (c) is eliminated.

{\bf (d):} $2-2g_\Sigma-k=-p$. In this case, $g_{|\Sigma/G|}=0$ and $k=3$. One always has $d_C<0$.
This completes the analysis for the case of $p=13$.

\vspace{2mm}

{\bf (6):} $p=19$. By Lemmas 2.2 and 2.6, $c_1(2K)=C$, where $C$ is a degree $6$ $J$-holomorphic curve containing all $3$ fixed points. We note that by Lemma 2.7, if $C$ is embedded near a fixed point, the associated rotation numbers at the fixed point must be either $(1,8)$ or $(1,12)$. With this understood, there are three cases we need to examine: (a): $2-2g_\Sigma-k=-3p$, (b): $2-2g_\Sigma-k=-2p$, and (c): $2-2g_\Sigma-k=-p$. See Lemma 2.5.

{\bf (a):} $2-2g_\Sigma-k=-3p$. In this case, either $k=3$ or $k=5$.

Let $k=3$ first. In this case, note that $g_{|\Sigma/G|}=1$. Moreover, $C$ must be embedded by the adjunction inequality, and consequently, the rotation numbers at the $3$ fixed points must be either $(1,12)$ or $(1,8)$ by Lemma 2.7. With this understood, it follows easily that $d_C\geq 0$ and $d_C\in \Z$ imply that the rotation numbers at the $3$ fixed points $q_1,q_2,q_3$ must be all $(1,12)$. Furthermore, one can check that the Global Quotient Criterion is not violated if and only if the local representations of the $\Z_p$-action are given by case (i) in Lemma 2.1. More
concretely, let $g_1,g_2,g_3$ be the generators, each with rotation numbers $(1,12)$, and we write $g_j=g_1^{c_j}$. Then if we apply Lemma 2.1(i) to $g_1$ with $(a,b)=(1,12)$, we get $(18,11)$, $(7,8)$ as the rotation numbers at the other two fixed points. From this it follows easily that $c_2=7$, $c_3=11$, and $c_1+c_2+c_3=19=0\pmod{19}$ is satisfied. This case is stated as (6) in Theorem 1.3.

Next, let $k=5$. In this case, $g_\Sigma=27$, which clearly violates the following adjunction inequality 
$$
g_\Sigma\leq \frac{1}{2}(d^2+3d+2)-\frac{1}{2}\sum_{i=1}^3 k_i(k_i-1)-\sum_{i=1}^3 \delta_i. 
$$

{\bf (b):} $2-2g_\Sigma-k=-2p$. In this case, $k=4$ and 
$g_{|\Sigma/G|}=0$. Furthermore, $g_\Sigma=18$. For a closer examination, we assume without loss of generality
that $k_1=2$, and $k_2=k_3=1$. Let $C_{1,1}, C_{2,1}$ be the two local branches of $C$ at the fixed point $q_1$, 
let $\delta_{1,1}, \delta_{2,1}$ be the corresponding $\delta$-invariants of the singularities, and let 
$\delta_2,\delta_3$ be the $\delta$-invariants of $C$ at the fixed points $q_2,q_3$. 
Then it follows easily that
$$
C_{1,1}\cdot C_{2,1}+\delta_{1,1}+\delta_{2,1}+\delta_2+\delta_3=10.
$$
Note that $\delta_{1,1},\delta_{2,1},\delta_2,\delta_3\leq 9$. The rotation numbers at $q_2,q_3$, which are subject to constraints from Lemma 2.7, are one from the following list; in particular, note that 
$\delta$-invariant $\neq 1, 3$ and $5$.

\vspace{3mm}

\centerline{
\begin{tabular}{lllllll}
\hline \vline\hspace{1mm}
$\delta$-invariant $\leq 9$ & \vline \hspace{1mm} rotation numbers  \hspace{5.8mm} \vline \\
\hline \vline\hspace{1mm}
0 &  \vline \hspace{1mm} $(1,8)$ or $(1,12)$ \hspace{9.7mm} \vline \\ 
\hline \vline\hspace{1mm}
2 &  \vline \hspace{1mm} $(2,5)$  \hspace{26.1mm} \vline \\ 
\hline \vline\hspace{1mm} 
4 &  \vline \hspace{1mm} $(3,5)$  \hspace{26.1mm} \vline\\
\hline \vline\hspace{1mm} 
6 &  \vline \hspace{1mm} $(4,5)$  \hspace{26.1mm} \vline\\
\hline \vline\hspace{1mm} 
7 &  \vline \hspace{1mm} $(2,15)$ or $(3,8)$ \hspace{9.7mm} \vline\\
\hline \vline\hspace{1mm} 
8 &  \vline \hspace{1mm} $(2,17)$ \hspace{24.3mm}  \vline\\ 
\hline \vline\hspace{1mm}
9 &  \vline \hspace{1mm} $(4,7)$ \hspace{26.1mm} \vline \\ \hline
\end{tabular}
}
\vspace{2mm}

\centerline{Table 5: possible $\delta$-invariants and rotation numbers at $q_2,q_3$, $p=19$, Case  (b).
}
\vspace{2mm}

With the preceding understood, there are $4$ scenarios, (i)-(iv), to consider. 

{\bf (i):} Both $\delta_{1,1},\delta_{2,1}$ are nonzero. One can show that $C_{1,1}\cdot C_{2,1}+\delta_{1,1}+\delta_{2,1}>10$, except for the following case where both $C_{1,1},C_{2,1}$ are singular at $q_1$ with a simple cusp singularity, and in this case, we have instead
$C_{1,1}\cdot C_{2,1}+\delta_{1,1}+\delta_{2,1}=8$. Moreover, it is clear that $C$ must be embedded near one of $q_2,q_3$ and has a cuspidal singularity at the other with $\delta$-invariant $2$. It follows easily that the rotation numbers must be $(2,3)$, $(2,3)$, $(1,8)$ (or $(1,12)$), and $(2,5)$ (cf. Table 5). But one can check that $d_C$ is a non-integer, which is a contradiction. 

{\bf (ii):} $C$ has a nodal singularity at $q_1$, i.e., $\delta_{1,1}=\delta_{2,1}=0$ and $C_{1,1}\cdot C_{2,1}=1$. To rule out this case, we note that $\delta_2+\delta_3=9$, so that either $(\delta_2,\delta_3)=(2,7)$ or 
$(\delta_2,\delta_3)=(0,9)$ by Table 5. Correspondingly, the rotation numbers at $q_2,q_3$ are either $(2,5)$, $(2,15)$ (or $(3,8)$), or $(1,8)$ (or $(1,12)$), $(4,7)$. It is straightforward to check that each of these possibilities is in contradiction to Lemma 2.1. 

{\bf (iii):} $\delta_{1,1}=\delta_{2,1}=0$ but $C_{1,1}\cdot C_{2,1}>1$. In this case, the two
local branches $C_{1,1}, C_{2,1}$ have the same tangent plane at $q_1$, thus they determined the same rotation numbers $(1,m)$, where $m=C_{1,1}\cdot C_{2,1}>1$. To proceed further, we use Lemma 2.7 to show that
$m=8$ must be true. To see this, observe that $int_D(q_1)=2$ for a generic $D$. Then Lemma 2.7 implies that
$$
(2+a_1)+(2+a_2)m=0 \pmod{19}
$$
for some non-negative integers $a_1,a_2$ such that $0<a_1+a_2\leq 2$. It follows easily that $m=8, 9, 12, 17$ or $18$. With $m=C_{1,1}\cdot C_{2,1}\leq 10$ and $\delta_2+\delta_3\neq 1$, one gets $m=8$, and $\delta_2=2$,
$\delta_3=0$. The rotation numbers at $q_2,q_3$ are $(2,5)$ and $(1,8)$ (or $(1,12)$) according to Table 5. 
But for any of the choices, one can check easily that the virtual dimension $d_C$ is always a non-integer. 

{\bf (iv):} $\delta_{1,1}>0$ but $\delta_{2,1}=0$. Since $\delta_{1,1}\leq 9$, the rotation numbers at $q_1$ that is associated to the local branch $C_{1,1}$ will be of the form $(\alpha,\beta_1)$ where $(\alpha,\beta_1)$ is the characteristic sequence of the singularity and the multiplicity 
$\alpha=2,3$ or $4$, such that  $C_{1,1}\cdot C_{2,1}=\beta_1$ if $C_{1,1}, C_{2,1}$ are tangent at $q_1$ and 
$C_{1,1}\cdot C_{2,1}=\alpha$ if otherwise. Furthermore, $(\alpha,\beta_1)$ is subject to a constraint from 
Lemma 2.7, i.e., if for any non-negative integers $a_1,a_2$ with $a_1+a_2>0$ such that 
$$
(2+a_1)\alpha +(2+a_2)\beta_1=0 \pmod{19},
$$
one always has $a_1+a_2>\alpha+1=int_D(q_1)$ (here $D$ is generic), then $(\alpha,\beta_1)$ can be eliminated. Finally, we point out that
the rotation numbers at $q_1$ that is associated to the local branch $C_{2,1}$ is of the form $(1,m)$ which can be
determined completely from $(\alpha,\beta_1)$ and the fact whether $C_{1,1}, C_{2,1}$ are tangent at $q_1$ or not. 
With this understood, we make a list of possible rotation numbers below where in each case the first two rotation numbers are associated with $C_{1,1}, C_{2,1}$ and the last two rotation numbers are at $q_2$, $q_3$ respectively:
\begin{itemize}
\item [{(1)}] $(2,3)$, $(1,7)$, $(1,8)$ (or $(1,12)$), $(2,15)$ (or $(3,8)$); 
\item [{(2)}] $(2,3)$, $(1,11)$, $(1,8)$ (or $(1,12)$), $(4,5)$;
\item [{(3)}] $(2,3)$, $(1,11)$, $(2,5)$, $(3,5)$;
\item [{(4)}] $(2,5)$, $(1,8)$, $(1,8)$ (or $(1,12)$), $(4,5)$;
\item [{(5)}] $(2,5)$, $(1,8)$, $(2,5)$, $(3,5)$;
\item [{(6)}] $(4,5)$, $(1,16)$, $(1,8)$ (or $(1,12)$), $(1,8)$ (or $(1,12)$);
\item [{(7)}] $(3,8)$, $(1,17)$, $(1,8)$ (or $(1,12)$), $(1,8)$ (or $(1,12)$);
\item [{(8)}] $(2,17)$, $(1,18)$, $(1,8)$ (or $(1,12)$), $(1,8)$ (or $(1,12)$);
\end{itemize}
All the cases can be eliminated by Lemma 2.1 except (5). To rule out (5), we use the fact that $d_C$ is a non-integer in this case. 

{\bf (c):} $2-2g_\Sigma-k=-p$, where $k=3$ and $g_{|\Sigma/G|}=0$ must be true. Note that $g_\Sigma=9$. It follows easily that either $C$ has $19$ nodal or simple cusp singularities in the complement of the fixed points, or has only cuspidal singularities at the fixed points with the sum of the $\delta$-invariants equaling $19$. To rule out this possibility, we simply observe that 
$$
d_C=-\frac{18}{19}+(g_{|\Sigma/G|}-1)+3-\sum_{j=1}^3 \frac{m_{j,1}+m_{j,2}}{19}<0.
$$
(Note that the possible $\delta$-invariants of a cuspidal singularity of $C$, subject to the constraint from Lemma 2.7, together with the corresponding rotation numbers $(m_{j,1},m_{j,2})$ at the fixed point, are listed in Table 5 if the 
$\delta$-invariant is less than $10$, where we can easily see that $m_{j,1}+m_{j,2}\geq 7$. If the $\delta$-invariant is outside this range, we can check each case individually.)

The proof of Theorem 1.3 is completed.

\section{Minimal resolution of the quotient orbifold}
Let $X$ denote the minimal symplectic resolution of the quotient orbifold $M/\Z_p$, where $M$ is a symplectic $\Q$-homology $\C\P^2$ from Theorem 1.3. It is clear that $X$ is a $\Q$-homology 
$\C\P^2\#N\overline{\C\P^2}$, and when $M$ is simply connected, so is $X$ (but not the other way around). 

First of all, to each of the $12$ cases of symplectic $\Z_p$-actions listed in Theorem 1.3, we associate a symplectic divisor with normal crossings which is canonically embedded in the minimal resolution $X$. After slightly perturbing it if necessary (cf. \cite{Gompf}), a symplectic neighborhood of the divisor is given by a symplectic plumbing according to a weighted graph (cf. \cite{McR, GS}), a complete list of which is given in Figure 1. Furthermore, we note that the symplectic surfaces in the symplectic divisor form a basis of $H^2(X,\Q)$, which allows us to express the canonical class $c_1(K_X)$ in terms of the associated symplectic divisor. This is then used to study the minimality and symplectic Kodaira dimension of the symplectic resolution $X$. Finally, we examine the prime order automorphisms of a fake projective plane (cf. \cite{CS, PY, Keum1, Keum2}) from the point of view of symplectic $\Z_p$-actions. 

\subsection{The symplectic divisor associated to each symplectic $\Z_p$-action}
To each of the $12$ cases in Theorem 1.3, with the invariant curve $C$ described therein, we associate a plumbing of symplectic surfaces which is canonically embedded in the minimal symplectic resolution $X$. More concretely, the configuration of symplectic surfaces consists of the exceptional divisors from the minimal resolution of the singularities of $M/\Z_p$ plus the proper transform of the quotient curve $C/\Z_p$, which connects the exceptional divisors from different singular points in $M/\Z_p$. (The key fact here is that the invariant curve $C$ contains all $3$ fixed points, so that the plumbing graph is connected.) In order to determine the plumbing graphs, we first resolve the singularities of the invariant curve $C$ from Theorem 1.3 by a sequence of successive equivariant blowing-ups at the relevant fixed points, then apply the following two lemmas, i.e., Lemmas 3.1 and 3.2,  to the quotient curves in the quotient orbifolds.

\begin{lemma}
Let $Z$ be a symplectic $4$-orbifold with only isolated singularities, and let $S$ be an embedded, $2$-dimensional symplectic suborbifold in $Z$, containing singularities $x_1,x_2,\cdots,x_k$. Let $g$ be the genus of the underlying Riemann surface of $S$, let $n_i$ be the order of the isotropy group at $x_i$ and $(1,b_i)$, where $0<b_i<n_i$ and $gcd(n_i,b_i)=1$, be the rotation numbers at $x_i$ where $b_i$ is the weight of the local action in the normal direction of $S$, and let $e\in\Q$ be the Euler number (or equivalently, the first Chern number) of the orbifold normal bundle of $S$ in $Z$. 
Then there is an $e_0\in\Z$ such that $e=e_0+\sum_{i=1}^k \frac{b_i}{n_i}$. Moreover, let $U$ be a regular neighborhood of $S$ in $Z$ and let $\tilde{U}$ be the pre-image of $U$ under the minimal symplectic resolution $\pi: \tilde{Z}\rightarrow Z$. Then $\tilde{U}$ is diffeomorphic to a plumbing of symplectic surfaces according to the weighted graph in Figure 2(a), where the weights $e_{i,j}$ are determined from the continued fractions 
$-n_i/b_i=[e_{i,1},e_{i,2},\cdots,e_{i,s_i} ]$, for $i=1,2,\cdots,k$. 
\end{lemma}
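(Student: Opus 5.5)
The argument is local near each orbifold point, followed by a global gluing. First, the formula $e=e_0+\sum_i b_i/n_i$. Near $x_i$, choose an orbifold chart $\C^2/\Z_{n_i}$ with $S$ the image of $\{z_2=0\}$ and the generator acting by $(z_1,z_2)\mapsto(\mu z_1,\mu^{b_i}z_2)$, where $\mu=e^{2\pi i/n_i}$. Using the Darboux-type normal form for symplectic suborbifolds, this can be made compatible with the symplectic structure. The orbifold normal bundle $N$ of $S$ is then an orbifold line bundle over the $2$-orbifold $S$, with local isotropy representation of weight $b_i$ on the fiber over $x_i$. By the standard classification of orbifold line bundles over orbifold Riemann surfaces, $N$ is determined by its desingularization $|N|$ (a genuine line bundle over $|S|$) together with the local invariants $b_i/n_i$. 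Concretely, in the local coordinate $w=z_1^{n_i}$ on $|S|$, the $\Z_{n_i}$-invariant sections are generated by $z_2^{n_i}$ and by $z_1^{a}z_2$ with $a\equiv -b_i \pmod{n_i}$, $0\le a<n_i$. Computing $c_1$ via a section with zeros and poles then gives
$$
e=c_1(N)[S]=\deg|N|+\sum_{i=1}^k \frac{b_i}{n_i},
$$
so we set $e_0=\deg|N|\in\Z$. I would cite \cite{C000} or the orbifold Riemann--Roch set-up in \cite{CR} for this, or verify it directly by the local section computation.

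Second, the resolution. The minimal resolution of $\C^2/\Z_{n_i}$ with weights $(1,b_i)$ is the Hirzebruch--Jung resolution: a linear chain of spheres with self-intersections $e_{i,1},\dots,e_{i,s_i}$, where $-n_i/b_i=[e_{i,1},\dots,e_{i,s_i}]$. In the symplectic category this can be done by equivariantly replacing a standard neighborhood with the toric model, since the symplectic minimal resolution is modelled on the toric one. Toric geometry (or the explicit chart description) shows that the proper transform of the axis $\{z_2=0\}$ meets exactly one end of the chain, namely the curve $E_{i,1}$, transversely in one point. This is where the orientation convention in ``$(1,b_i)$, $b_i$ the normal weight'' matters. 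Taking the fan of $\C^2/\Z_{n_i}$ as the cone spanned by $(0,1)$ and $(n_i,-b_i)$, the ray $(0,1)$ adjacent to the first inserted ray corresponds to the $z_1$-axis, i.e.\ to $S$, and the continued fraction read from that side is $n_i/b_i$, not $n_i/b_i'$ with $b_ib_i'\equiv1$. I would check this convention carefully on the example $n=3$, $b=1$, which should give a single $(-3)$-curve.

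Third, the self-intersection of the proper transform $\tilde S$, which must be the integer weight on the central vertex of Figure 2(a), presumably $e_0$ or a closely related expression. Here I would use that $\pi^*[S]=[\tilde S]+\sum_{i,j}\lambda_{i,j}E_{i,j}$ with $\lambda_{i,j}\in\Q$ determined by $\pi^*S\cdot E_{i,j}=0$. Then
$$
e=S\cdot S=\pi^*S\cdot\pi^*S=\tilde S\cdot\tilde S+\sum_i\lambda_{i,1}.
$$
Solving the linear system along the Hirzebruch--Jung chain gives $\lambda_{i,1}=b_i/n_i$; this is the standard fact that the coefficient at the first curve is the inverse continued-fraction value. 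Hence $\tilde S\cdot\tilde S=e-\sum_i b_i/n_i=e_0$, consistent with the first step. Finally, $\tilde U$ is a regular neighborhood of $\tilde S\cup\bigcup E_{i,j}$, a normal crossing configuration of symplectic spheres and a genus $g$ surface, so it is a plumbing according to the stated graph by \cite{McR, GS} after a perturbation (cf. \cite{Gompf}).

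The main obstacle I expect is the bookkeeping of conventions: making sure that it is $n_i/b_i$ rather than $n_i/b_i^{-1}$ that attaches to $\tilde S$, and that $\lambda_{i,1}=b_i/n_i$ and not $b_i'/n_i$. I would settle this with the explicit toric chart before writing anything general.
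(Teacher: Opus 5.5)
Your proof is correct, but it reaches the conclusion by a different route from the paper.

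\textbf{The paper's route.} It obtains $e=e_0+\sum_i b_i/n_i$ by citing Lemma 3.6 of \cite{C00}. It then identifies $\partial U$ as the Seifert manifold $M(g;-e_0-k,(n_1,n_1-b_1),\dots,(n_k,n_k-b_k))$ via \cite{N}. Finally it invokes the Neumann--Raymond plumbing description of Seifert manifolds \cite{NR} to read off Figure 2(a).

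\textbf{Your route.} You build the configuration in $\tilde U$ directly.
\begin{itemize}
\item The toric Hirzebruch--Jung model tells you which end of each chain meets $\tilde{S}$. Your fan check is right: with the basis $\frac{1}{n_i}(1,b_i)$, $e_2$, the ray $(0,1)$ is the divisor $\{z_2=0\}$, and the chain read from that end is $n_i/b_i$.
\item The rational pull-back $\pi^*S=\tilde{S}+\sum\lambda_{i,j}E_{i,j}$ with $\lambda_{i,1}=b_i/n_i$ gives $\tilde{S}\cdot\tilde{S}=e-\sum_i b_i/n_i$.
\end{itemize}

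\textbf{What each route buys.} Yours computes every weight explicitly, including the central one. It also makes explicit what the paper's boundary argument uses only tacitly. Knowing that $\partial U$ bounds the star-shaped plumbing does not by itself identify $\tilde U$; one also needs that $\tilde U$ is itself a star-shaped plumbing with Hirzebruch--Jung arms, after which the Seifert invariants fix the weights. A further bonus is that your third step makes the first one redundant, since $e-\sum_i b_i/n_i=\tilde{S}\cdot\tilde{S}$ is automatically an integer. The paper's argument is much shorter, but it hides the convention bookkeeping you worried about inside the normalization $(n_i,n_i-b_i)$ of the Seifert invariants.

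\textbf{A small slip in your first step.} The monomials $z_1^a z_2$ with $a\equiv -b_i \pmod{n_i}$ are invariant functions on the total space that are linear in the fiber. They are local sections of $N^*$, not of $N$, and $z_2^{n_i}$ plays no role. The invariant local sections of $N$ are $s(z_1)=z_1^{b_i}h(z_1^{n_i})$. The generator $z_1^{b_i}$ has orbifold vanishing order $b_i/n_i$, and this is what produces the term $+b_i/n_i$ in $e=\deg|N|+\sum_i b_i/n_i$.
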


\begin{proof}
Since $S$ is an embedded $2$-dimensional symplectic suborbifold in $Z$, its orbifold normal bundle is well-defined, which is naturally an orbifold complex line bundle. With this understood, by Lemma 3.6 in \cite{C00}, we have $e=e_0+\sum_{i=1}^k \frac{b_i}{n_i}$ for some $e_0\in\Z$. It follows easily that
the boundary of the regular neighborhood $U$ of $S$ in $Z$ is the Seifert manifold $M(g; -e_0-k, (n_1,n_1-b_1), \cdots, (n_k,n_k-b_k))$ (cf. \cite{N}). On the other hand, $M(g; -e_0-k, (n_1,n_1-b_1), \cdots, (n_k,n_k-b_k))$ is also the boundary of a plumbing according to the weighted graph in Figure 2(a) (e.g. see \cite{NR}, Theorem 5.1). The lemma follows immediately. 

\end{proof}

\begin{picture}(500,100)
\put(10,60){\circle*{5}}
\put(40,90){\circle*{5}}
\put(40,75){\circle*{5}}
\put(40,45){\circle*{5}}
\put(40,68){\circle*{2}}
\put(40,63){\circle*{2}}
\put(40,58){\circle*{2}}
\put(10,60){\line(1,1){30}}
\put(10,60){\line(2,1){30}}
\put(10,60){\line(2,-1){30}}
\put(70,90){\circle*{5}}
\put(85,90){\circle*{2}}
\put(90,90){\circle*{2}}
\put(95,90){\circle*{2}}
\put(110,90){\circle*{5}}
\put(40,90){\line(1,0){30}}
\put(110,90){\line(-1,0){10}}
\put(70,90){\line(1,0){10}}
\put(70,75){\circle*{5}}
\put(85,75){\circle*{2}}
\put(90,75){\circle*{2}}
\put(95,75){\circle*{2}}
\put(110,75){\circle*{5}}
\put(40,75){\line(1,0){30}}
\put(110,75){\line(-1,0){10}}
\put(70,75){\line(1,0){10}}
\put(70,45){\circle*{5}}
\put(85,45){\circle*{2}}
\put(90,45){\circle*{2}}
\put(95,45){\circle*{2}}
\put(110,45){\circle*{5}}
\put(40,45){\line(1,0){30}}
\put(110,45){\line(-1,0){10}}
\put(70,45){\line(1,0){10}}
\put(15,55){\makebox(0,0)[tr]{$[g], e_0$}}
\put(40,40){\makebox(0,0)[tl]{$e_{k,1}$}}
\put(70,40){\makebox(0,0)[tl]{$e_{k,2}$}}
\put(110,40){\makebox(0,0)[tl]{$e_{k,s_k}$}}
\put(40,93){\makebox(0,0)[bl]{$e_{1,1}$}}
\put(70,93){\makebox(0,0)[bl]{$e_{1,2}$}}
\put(110,93){\makebox(0,0)[bl]{$e_{1,s_1}$}}
\put(40,78){\makebox(0,0)[bl]{$e_{2,1}$}}
\put(70,78){\makebox(0,0)[bl]{$e_{2,2}$}}
\put(110,78){\makebox(0,0)[bl]{$e_{2,s_2}$}}
\put(70,30){\makebox(0,0)[tl]{$(a)$}}
\put(180,60){\circle*{5}}
\put(195,60){\circle*{2}}
\put(200,60){\circle*{2}}
\put(205,60){\circle*{2}}
\put(220,60){\circle*{5}}
\put(250,60){\circle*{5}}
\put(280,60){\circle*{5}}
\put(310,60){\circle*{5}}
\put(340,60){\circle*{5}}
\put(355,60){\circle*{2}}
\put(360,60){\circle*{2}}
\put(365,60){\circle*{2}}
\put(380,60){\circle*{5}}
\put(180,60){\line(1,0){10}}
\put(220,60){\line(-1,0){10}}
\put(220,60){\line(1,0){30}}
\put(250,60){\line(1,0){30}}
\put(280,60){\line(1,0){30}}
\put(310,60){\line(1,0){30}}
\put(340,60){\line(1,0){10}}
\put(380,60){\line(-1,0){10}}
\put(180,55){\makebox(0,0)[tl]{$e_{1,s_1}$}}
\put(220,55){\makebox(0,0)[tl]{$e_{1,2}$}}
\put(250,55){\makebox(0,0)[tl]{$e_{1,1}$}}
\put(280,55){\makebox(0,0)[tl]{$-1$}}
\put(380,55){\makebox(0,0)[tl]{$e_{2,s_2}$}}
\put(340,55){\makebox(0,0)[tl]{$e_{2,2}$}}
\put(310,55){\makebox(0,0)[tl]{$e_{2,1}$}}
\put(280,30){\makebox(0,0)[tl]{$(b)$}}
\put(190,10){\makebox(0,0)[t]{Figure 2: Weighted graphs in Lemmas 3.1 and 3.2}}
\end{picture}

The next lemma describes the local models of an equivariant blowing-up at an isolated fixed point of a symplectic
$\Z_p$-action.

\begin{lemma}
Consider a linear $\Z_p$-action on $\C^2$ for an odd prime $p$, with rotation numbers $(a,b)$ at the origin with respect to some generator $g\in\Z_p$, and let $B\subset \C^2$ be a $4$-ball centered at origin equipped with some $\Z_p$-invariant K\"{a}hler form. Let $\pi: \tilde{B}\rightarrow B$ be the complex blowing-up at the origin and denote by $E\subset \tilde{B}$ the exceptional $(-1)$-sphere. Note that the $\Z_p$-action lifts to a holomorphic $\Z_p$-action on $\tilde{B}$, which is symplectic with respect to some invariant symplectic form on $\tilde{B}$. Let $U:=\tilde{B}/\Z_p$ denote the corresponding symplectic $4$-orbifold.
\begin{itemize}
\item [{(1)}] Assume $a<b$. Then $U$ has $2$ isolated singularities, and its minimal symplectic resolution 
$\tilde{U}$ is diffeomorphic to a plumbing of symplectic surfaces according to the weighted graph in Figure 2(b), where $0<\tilde{a},\tilde{b}<p$ are determined as follows: let $s\in\Z$ such that $(b-a)s=1\pmod{p}$, then $\tilde{a}=as\pmod{p}$ and $\tilde{b}=-bs\pmod{p}$, and $-p/\tilde{a}=[e_{1,1},e_{1,2}, \cdots, e_{1,s_1}]$,  
$-p/\tilde{b}=[e_{2,1},e_{2,2}, \cdots, e_{2,s_2}]$ as continued fractions. 
\item [{(2)}] Assume $a=b=1$. Then $U$ is a smooth manifold with a symplectic structure such that the exceptional $(-1)$-sphere $E\subset \tilde{B}$ descends to an embedded symplectic $(-p)$-sphere in $U$. Moreover, for any embedded holomorphic disc $D\subset B$ through the origin, let $\tilde{D}$ be the proper transform in $\tilde{B}$. Then $\tilde{D}$ descends to an embedded symplectic disc in $U$ intersecting transversely and positively with the symplectic $(-p)$-sphere in $U$ at a single point. 
\end{itemize}
\end{lemma}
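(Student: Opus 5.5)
The plan is to compute everything explicitly in the toric/holomorphic local model and then invoke Lemma 3.1 for the quotient of the exceptional sphere.

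Write the action as $g\cdot(z_1,z_2)=(\mu_p^a z_1,\mu_p^b z_2)$ on $\C^2$. The blow-up $\tilde{B}$ is covered by two charts.
\begin{itemize}
\item In the first chart $(z_1,w)$ with $z_2=z_1w$, the lifted action is $(z_1,w)\mapsto(\mu_p^a z_1,\mu_p^{b-a}w)$.
\item In the second chart $(v,z_2)$ with $z_1=vz_2$, it is $(v,z_2)\mapsto(\mu_p^{a-b}v,\mu_p^b z_2)$.
\end{itemize}
Here $E=\{z_1=0\}$ in the first chart and $\{z_2=0\}$ in the second. The induced action on $E\cong\C\P^1$ is the rotation $w\mapsto \mu_p^{b-a}w$.

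\emph{Case (1): $a<b$.} Since $p$ is prime and $0<b-a<p$, the rotation on $E$ is nontrivial. Hence $E$ is invariant with exactly two fixed points, the origins of the two charts, and these are the only fixed points in $\tilde{B}$ (off $E$ the action is free, since it is free on $B\setminus\{0\}$). So $U$ has two isolated singularities, and $E/\Z_p$ is an embedded symplectic $2$-suborbifold of genus $0$ through both.

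To apply Lemma 3.1 I normalize each local representation to the form $(1,\cdot)$, with the tangent direction of $E$ carrying weight $1$.
\begin{itemize}
\item At the origin of the first chart, $E$ is the $w$-axis. Using the generator $g^s$ with $(b-a)s\equiv 1$, the rotation numbers become $(1,as)$, so the normal weight is $\tilde{a}=as \pmod p$.
\item At the origin of the second chart, $E$ is the $v$-axis with weight $a-b$. Using $g^{-s}$, the tangent weight is $1$ and the normal weight is $-bs$, which gives $\tilde{b}=-bs\pmod p$.
\end{itemize}

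Lemma 3.1 then says that the preimage of a neighborhood of $E/\Z_p$ in the minimal resolution is the star-shaped plumbing with central genus-$0$ vertex of weight $e_0$ and two legs given by $-p/\tilde{a}$ and $-p/\tilde{b}$. This is the graph of Figure 2(b) once we show $e_0=-1$.

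The step I expect to be the main obstacle is computing $e_0$. The orbifold Euler number of the normal bundle of $E/\Z_p$ is $E\cdot E/p=-1/p$, so Lemma 3.1 gives
$$-\frac1p=e_0+\frac{\tilde{a}+\tilde{b}}{p}.$$
Since $\tilde{a}+\tilde{b}\equiv (a-b)s\equiv -1 \pmod p$ and $0<\tilde{a},\tilde{b}<p$, we get $\tilde{a}+\tilde{b}=p-1$, hence $e_0=-1$. The same congruence shows that neither $\tilde{a}$ nor $\tilde{b}$ is $0$. Each singularity is resolved minimally by its Hirzebruch–Jung chain, and the central $(-1)$-sphere is not contracted since it is not part of the resolution of the singular points. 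So $\tilde{U}$ is the plumbing of Figure 2(b).

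\emph{Case (2): $a=b=1$.} The lifted action is $(z_1,w)\mapsto(\mu_p z_1,w)$, and similarly in the other chart. So the action fixes $E$ pointwise and rotates its normal fibers with weight $1$. The quotient is therefore smooth: locally $(z_1,w)\mapsto(z_1^p,w)$, with $E$ mapping to an embedded sphere. Its normal bundle is the $p$-th tensor power of $\mathcal{O}(-1)$, so its self-intersection is $-p$. The symplectic form descends after a standard equivariant modification near $E$, or equivalently via the orbifold symplectic form on the smooth quotient.

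For the disc statement, an embedded holomorphic disc $D$ through $0$ has tangent line $L$. Its proper transform $\tilde{D}$ is an embedded disc meeting $E$ transversely at the point $[L]$, and locally it is given by $w=c+O(z_1)$. Since $g$ acts by scalars, $D$ itself need not be invariant, but its image in $U$ is still the embedded curve $\{(z_1^p,w)\}$ restricted to $\tilde{D}$. Because $\tilde{D}$ is a graph over the $z_1$-disc, the map $\tilde{D}\to U$, $(z_1,w(z_1))\mapsto(z_1^p,w(z_1))$, needs to be an embedding onto a transverse disc. This requires care: it is injective only after passing to the invariant set $\bigcup_k g^k\tilde{D}$. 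I would instead take the image of $\tilde{D}$, which is a disc of the form $u=z_1^p$, $w=h(z_1)$. This is embedded if and only if $h$ factors through $z_1^p$, which holds for invariant $D$, e.g.\ lines. I would first check that the intended statement concerns invariant discs, or else argue the general case by a small isotopy to an invariant one. Positivity and transversality of the intersection with the $(-p)$-sphere then follow from complex orientations.
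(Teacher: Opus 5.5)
Your part (1) is the paper's own argument. You use the same two charts, with rotation numbers $(a,b-a)$ and $(b,p+a-b)$ at the two fixed points of $E$. You renormalize the generator so the weight along $E$ is $1$, giving normal weights $\tilde a=as$ and $\tilde b=-bs$. Then $\tilde a+\tilde b\equiv(a-b)s\equiv -1$ forces $\tilde a+\tilde b=p-1$, so $e_0=-1$ in Lemma 3.1. For (2) the paper likewise only notes that $E$ is fixed pointwise and the quotient is smooth with $E/\Z_p$ a $(-p)$-sphere, and it appeals to the de-singularization of \cite{C11}.

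Two small corrections:
\begin{itemize}
\item $\tilde a,\tilde b\not\equiv 0$ follows from $p$ being prime and $a,b,s\not\equiv 0$. It does not follow from the congruence $\tilde a+\tilde b\equiv -1$.
\item Pushing the orbifold form down to the smooth quotient does not give a smooth symplectic form. In $u=z_1^p$ one has $dz_1\wedge d\bar z_1=p^{-2}|u|^{2/p-2}du\wedge d\bar u$, which blows up along $u=0$. So the modification of \cite{C11} is genuinely needed; it is not an equivalent alternative.
\end{itemize}

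On the disc clause, your suspicion is right, and you should settle it by restricting to $\Z_p$-invariant $D$. The \emph{small isotopy} fallback cannot work, because for non-invariant $D$ the conclusion is false. Take $D=\{z_2=z_1^2\}$. Then $\tilde D=\{w=z_1\}$, which maps injectively onto $\{u=w^p\}$ in the quotient coordinates $(u,w)=(z_1^p,w)$. The image is an embedded disc, but it is tangent to the $(-p)$-sphere $\{u=0\}$ to order $p$. Equivalently, the quotient map has degree one on $\tilde D$ while $E/\Z_p$ pulls back to $pE$, so the local intersection number is $\tilde D\cdot pE=p$.

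So your criterion should read: the image is an embedded disc \emph{transverse} to $E/\Z_p$ if and only if $h$ is a function of $z_1^p$, i.e.\ if and only if $D$ is invariant. Your current ``embedded iff'' version is not accurate. Invariance is implicit in the word \emph{descends}, and it is the only way the paper uses Lemma 3.2(2): for proper transforms of invariant local branches of $C$, and for invariant exceptional spheres such as $E_1$.

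For invariant $D$ one has $h(z_1)=\tilde h(z_1^p)$, and $\tilde D/\Z_p$ is the graph $\{w=\tilde h(u)\}$. This meets $\{u=0\}$ exactly once, transversely and positively, so your argument goes through, with symplecticity supplied by the same construction of \cite{C11}.
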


\begin{proof}
Let $z_1,z_2$ be the complex coordinates on $\C^2$, where the action of $g\in\Z_p$ is given by $g\cdot (z_1,z_2)=(\mu_p^a z_1, \mu_p^b z_2)$,
$\mu_p:=\exp(2\pi i/p)$. Then the induced action of $g$ on the blowing-up $\tilde{B}$ is given by $g\cdot ((z_1,z_2), [z_1:z_2])=
((\mu_p^a z_1, \mu_p^b z_2),[\mu_p^a z_1:\mu_p^b z_2])$. 

First, assume $a<b$. Then the $\Z_p$-action on $\tilde{B}$ has $2$ isolated fixed points, both lying on the exceptional $(-1)$-sphere, i.e., $[1:0]$ and $[0:1]$ in $E$. Near the fixed point $[1:0]$, the local chart of  $\tilde{B}$ is given by $((z_1,z_2), [z_1:z_2])\rightarrow (z_1,z_2/z_1)$, which implies that the rotation numbers at $[1:0]$ is $(a,b-a)$ with respect to $g$. Similarly, the rotation numbers at the fixed point $[0:1]$ is $(b,p+a-b)$ with respect to $g$. With this understood, we apply Lemma 3.1 to the $2$-dimensional symplectic suborbifold $S:=E/\Z_p$ in the symplectic $4$-orbifold $U:=\tilde{B}/\Z_p$. It is easy to see that $S$ contains $2$ isolated singularities with rotation numbers $(1,b_1)$, $(1,b_2)$, where $b_1=\tilde{a}$, $b_2=\tilde{b}$. To finish up, we observe that 
$\tilde{a}+\tilde{b}=s(a-b)=-1 \pmod{p}$, which implies easily that $\tilde{a}+\tilde{b}=p-1$. On the other hand,
it is clear that the Euler number $e$ of the orbifold normal bundle of $E/\Z_p$ in $\tilde{B}/\Z_p$ is $-1/p$. It follows immediately that $e_0=-1$, and by Lemma 3.1, the minimal symplectic resolution $\tilde{U}$ is diffeomorphic to a plumbing of symplectic surfaces according to the weighted graph in Figure 2(b). 

Next, assume $a=b=1$. Then the $\Z_p$-action on $\tilde{B}$ fixes the exceptional $(-1)$-sphere $E$ and has no other fixed points. Consequently, the underlying space of the symplectic $4$-orbifold $U:=\tilde{B}/\Z_p$ is a smooth manifold, and according to \cite{C11}, one can de-singularize the orbifold symplectic structure on 
$U:=\tilde{B}/\Z_p$, so that $U$ becomes a symplectic $4$-manifold. Furthermore, the singular set $E/\Z_p$ in $U$ becomes an embedded symplectic $(-p)$-sphere in the symplectic $4$-manifold $U$. Other claims in part (2) follows from the construction in \cite{C11} easily as well. The proof of Lemma 3.2 is completed.

\end{proof}

With these preparations, we now state the main theorem of this section.

\begin{theorem}
(1) To each group action listed in Theorem 1.3, there is an associated plumbing of symplectic surfaces in the minimal symplectic resolution $X$, whose plumbing graph is listed in Figure 1. The symplectic surfaces in the plumbing consist of the exceptional divisors from the minimal resolutions of the singularities in $M/\Z_p$ plus the proper transform of the quotient of the invariant curve $C/\Z_p$ which connects them. The graphs are labelled either as 
$\Gamma_p$ or $\Gamma_{p,i}$, according to the following convention: $p$ is the order of the group
action and the index $i$ indicates the subcase in the listing in Theorem 1.3, e.g., $\Gamma_{19}$ is the graph for the $\Z_{19}$-action in Case (6), and $\Gamma_{7,2}$ is the graph for the $\Z_7$-action listed in Case (3)(ii). 

(2) The symplectic surfaces form a basis of $H^2(X,\Q)$, so the canonical class $c_1(K_X)$ can be written uniquely as a linear combination of the surfaces, which is given below. For each of the cases, listed in the same way as in Theorem 1.3, the surfaces in the formula of $c_1(K_X)$ are denoted by $F_0$ and $F_{i,j}$, where $F_0$ is the proper transform of $C/\Z_p$, and the index $i=1,2,3$ is the label for the three subgraphs in the plumbing graph which are the linear chains corresponding to the minimal resolution of the three singularities in the quotient 
orbifold $M/\Z_p$, ordered clockwise in the plumbing graph, and the index $j$ is the label for the surfaces in each linear chain, ordered either from left to right or from top to bottom, except when the linear chain is not ``straight" in the plumbing graph, like in $\Gamma_5$ and $\Gamma_{7,2}$, it is ordered clockwise.
\begin{itemize}
\item [{(1)}] $c_1(K_X)=\frac{1}{2}(F_0+F_{1,1}+F_{1,2}+F_{2,1}+F_{2,2}+F_{3,1}+F_{3,2})$.
\item [{(2)}] $c_1(K_X)=\frac{1}{2}(F_0+F_{2,1}+F_{2,2}+F_{2,3}+F_{2,4})$.
\item [{(3)}]
\item [{(i)}] $c_1(K_X)=F_0$.
\item [{(ii)}] $c_1(K_X)=\frac{1}{2}(F_0-F_{1,1}+F_{2,1}+F_{2,2}+F_{2,3}+F_{2,4}+F_{2,5}+F_{2,6}-F_{3,2})$.
\item [{(iii)}] 
$c_1(K_X)=\frac{1}{14}(7F_0-F_{1,1}+4F_{1,2}+2F_{1,3}+F_{2,1}+2F_{2,2}-4F_{2,3}-2F_{3,1}+F_{3,2}+4F_{3,3})$.
\item [{(iv)}] \begin{eqnarray*}
c_1(K_X) & = & \frac{1}{14}(7F_0-3F_{1,1}+2F_{1,2}+3F_{2,1}+ 6F_{2,2}+9F_{2,3}+12F_{2,4}\\
& &  \hspace{6mm}     + 8F_{2,5}+4F_{2,6}-4F_{3,1}-2F_{3,2}).\\
\end{eqnarray*}
\item [{(v)}] $c_1(K_X)=\frac{1}{2}F_0$.
\item [{(4)}]
\item [{(i)}] \begin{eqnarray*}
c_1(K_X) & = & \frac{1}{22}(33F_0+F_{1,1}+2F_{1,2}+3F_{1,3}+ 4F_{1,4}+5F_{1,5}+2F_{2,1}\\
& &  \hspace{6mm}     + 4F_{2,2}+6F_{2,3}+3F_{2,4}-11F_{3,2}).\\
\end{eqnarray*} 
\item [{(ii)}] $c_1(K_X)=\frac{1}{2}(F_0-F_{3,1}-F_{3,2})$.
\item [{(5)}]
\item [{(i)}] \begin{eqnarray*}
c_1(K_X) & = & \frac{1}{26}(39F_0+F_{1,1}+2F_{1,2}+3F_{1,3}+ 4F_{1,4}+5F_{1,5}+6F_{1,6}-3F_{2,1}\\
& &  \hspace{6mm}     -6F_{2,2}-9F_{2,3}+16F_{3,1}-7F_{3,2}-11F_{3,3}).\\
\end{eqnarray*} 
\item [{(ii)}] $c_1(K_X)=\frac{1}{2}(F_0-F_{1,1}-F_{2,4}-F_{3,1})$.
\item [{(6)}] $c_1(K_X)=\frac{1}{2}(F_0-F_{1,1}-F_{1,2}-F_{1,3}-F_{2,1}-F_{2,2}-F_{2,3}-F_{3,2}-F_{3,3}-F_{3,4})$.
\end{itemize}
\end{theorem}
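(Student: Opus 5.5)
The plan is to treat the twelve cases of Theorem 1.3 one at a time, each by the same three-step procedure, and then obtain part (2) by linear algebra on the resulting plumbing.

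\textbf{Step 1: resolve the singularities of $C$.} At each fixed point $q_i$ where $C$ is not embedded, I would resolve the singularities of $C$ by successive equivariant blow-ups at fixed points. Since $J$ is integrable near the fixed points, this is a local holomorphic computation with the Puiseux data recorded in Theorem 1.3.
\begin{itemize}
\item At a node with rotation numbers $(1,m_1)$, $(1,m_2)$, the local representation has $a\neq b$. One blow-up separates the branches, and Lemma 3.2(1) applies.
\item At a tangency of order $r$ between embedded branches, the rotation numbers are $(1,r)$. We blow up repeatedly, tracking the new rotation numbers $(a,b-a)$ and $(b,p+a-b)$ at the two fixed points on each exceptional sphere, as in the proof of Lemma 3.2.
\item At a cusp with characteristic sequence $(\alpha,\beta_1)$, we follow the standard multiplicity sequence.
\item For $p=3$ the rotation numbers are $(1,2)$. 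In the subcases with equal weights, such as the $(1,1)$ point arising after blow-ups, Lemma 3.2(2) turns the exceptional sphere into a $(-p)$-sphere in a smooth quotient.
\end{itemize}
After this step we have an equivariant blow-up $\hat M$ of $M$ in which the proper transform $\hat C$ is embedded and meets the exceptional configuration in a $\Z_p$-invariant normal crossing pattern.

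\textbf{Step 2: pass to the quotient.} Each $\Z_p$-invariant exceptional sphere and $\hat C$ descends to an embedded symplectic suborbifold of $\hat M/\Z_p$. Lemma 3.1 replaces each orbifold point by the linear chain given by the continued fraction of $-n/b$, and Lemma 3.2 gives the local models. I would then blow down any $(-1)$-spheres created by the equivariant blow-ups that are not forced, to reach the minimal resolution $X$ of $M/\Z_p$. The central vertex $F_0$ has genus $g_{|\Sigma/G|}$. Its self-intersection is computed from the orbifold Euler number $e=\hat C^2/p$, using
\[
e=e_0+\sum_i \frac{b_i}{n_i},
\]
which yields the integer weight $e_0$. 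For example, in $\Gamma_3$ we have $\hat C^2=36-2\cdot 3\cdot 2=24$, so $e=8$, and the three $(-2,-2)$ chains should give $e_0=6$. This matches the label $[7],6$, since $g_{|\Sigma/G|}=7$. Multiple edges, as in $\Gamma_5$ and $\Gamma_{7,5}$, arise when $\hat C/\Z_p$ meets an exceptional curve at two points, or is tangent to it. I would check each graph against Figure 1 in the same way.

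\textbf{Step 3: the canonical class (part (2)).} Since $X$ is a $\Q$-homology $\C\P^2\#N\overline{\C\P^2}$, $b_2(X)$ equals $1$ plus the number of exceptional curves. The vertex count of each graph in Figure 1 matches this, and I would check that the intersection matrix is nonsingular. Hence the surfaces form a basis of $H^2(X,\Q)$. Writing
\[
c_1(K_X)=x_0F_0+\sum x_{i,j}F_{i,j},
\]
I would solve the adjunction equations $K_X\cdot F=2g(F)-2-F^2$ for every vertex $F$. This is a linear system with the intersection matrix of $\Gamma$, so the solution is unique, and its values give the listed formulas.

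\textbf{Expected obstacle.} The hard part is Step 1/2 for the complicated singularities: the reducible singularity in (3)(iii), the $(3,4)$-cusp and triple point in (3)(iv), the cuspidal tangent branches in (4)(ii), and the $\delta=3$ cusps in (5)(ii). There the equivariant blow-up sequence, the rotation numbers at each new fixed point, and the final intersection pattern of $\hat C/\Z_p$ with the chains all have to be tracked carefully. A sanity check I would use throughout is that the computed $c_1(K_X)^2$ must agree with the Noether-type count $9/p$ plus the local correction terms from each singularity.
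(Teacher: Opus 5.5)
Your proposal is correct and follows essentially the same route as the paper. In both, you make $\hat C$ embedded by equivariant blow-ups, pass to the quotient via Lemmas 3.1 and 3.2 with $e_0=\hat C^2/p-\sum_i b_i/n_i$, blow down the resulting $(-1)$-spheres, and then solve the adjunction equations $K_X\cdot F=2g(F)-2-F^2$ against the nonsingular intersection matrix of $\Gamma$.

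One small slip: a multiple edge records only several transverse intersection points, namely distinct branches of $C$ at one fixed point ending up on the same exceptional sphere. A tangency of $\hat C/\Z_p$ with an exceptional curve is not allowed in a plumbing; it must first be removed by further blow-ups, as in the paper's local model for Cases (3)(iv)--(v).
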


\begin{proof}
The plumbing of symplectic surfaces is obtained by a repeated application of Lemmas 3.1 and 3.2, and then blow down all the obvious symplectic $(-1)$-spheres created in the process. The formula for $c_1(K_X)$ is obtained via the self-intersection matrix of the plumbing graph and the intersection number of $c_1(K_X)$ with each symplectic surface in the plumbing, which can be determined by the adjunction formula. 

\vspace{2mm}

{\it Case (1):} As described in Theorem 1.3(1), for $p=3$ the invariant curve $C$ is of degree $6$ and has a nodal singularity at each of the $3$ fixed points. Moreover, the rotation numbers at each fixed point is $(1,2)$, and from the proof of Theorem 1.3, we recall that $g_{|\Sigma/G|}=7$, which is the genus of $C/\Z_p$. With this understood, we blow up at each of the $3$ fixed points to obtain a symplectic $4$-manifold $\tilde{M}$ with a lifted $\Z_p$-action on it (for $p=3$). It is clear that the proper transform $\tilde{C}$ of $C$ in $\tilde{M}$ is embedded and is invariant under the $\Z_p$-action, so that $\tilde{C}/\Z_p$ is an embedded $2$-dimensional suborbifold in the symplectic $4$-orbifold $\tilde{M}/\Z_p$. On the other hand, by Lemma 3.2, the $\Z_p$-action on $\tilde{M}$ has $6$ isolated fixed points contained in the $3$ exceptional $(-1)$-spheres in $\tilde{M}$, with each containing two of the fixed points. Moreover, it is easy to see that the rotation numbers at each of the $6$ fixed points on $\tilde{M}$ is $(1,1)$. Now we note that the self-intersection $\tilde{C}\cdot \tilde{C}=6^2-3\cdot 2^2=24$, so that the Euler number of the orbifold normal bundle of $\tilde{C}/\Z_p$ in $\tilde{M}/\Z_p$ equals $24/p=8$. It follows easily from 
Lemma 3.1 that the integer $e_0=e-\sum_{i=1}^6\frac{b_i}{p}=8-6\cdot \frac{1}{3}=6$ as each $b_i=1$. 
Now we take the minimal resolution of $\tilde{M}/\Z_p$, where each singular point is replaced by a
symplectic $(-3)$-sphere. Blow down the $3$ symplectic $(-1)$-spheres in it, one from each of the exceptional $(-1)$-spheres of the blowing-up at each of the $3$ fixed points of the $\Z_p$-action on $M$ (see Figure 2(b)), we obtain the minimal resolution of $M/\Z_p$, with a plumbing of symplectic surfaces which consists of $3$ pairs of $(-2)$-spheres from the minimal resolution of $M/\Z_p$, plus the proper transform of $C/\Z_p$ which connects them. It is clear that the plumbing graph is given by the weighted graph $\Gamma_3$ in Figure 1. Finally, the formula for $c_1(K_X)$ follows by a direct verification. 

\vspace{2mm}

{\it Case (2):} In this case, the invariant curve $C$ is of degree $6$ and has a nodal singularity at one of the fixed points, and two embedded local branches with a tangency of order $2$ at each of the other two fixed points. Moreover, the rotation numbers are $(1,4)$, $(1,2)$, $(1,2)$ respectively, and we have $g_{|\Sigma/G|}=3$. With this understood, we blow up once at the nodal singularity, and blow up successively twice at each of the other two fixed points so that the proper transform $\tilde{C}$ of $C$ becomes embedded. If we let $E$ be the exceptional $(-1)$-sphere of the blow-up at the nodal singularity, and denote by $E_1,E_2$ the exceptional divisors of the successive blow-up at each of the other two fixed points (where $E_1$ is a $(-2)$-sphere), then it is easy to see from Lemma 3.2 that each $E_2$ is a fixed component under the lifted $\Z_p$-action, each $E_1$ contains an isolated fixed point of rotation numbers $(1,3)$, and $E$ contains two isolated fixed points of rotation numbers $(1,3)$. It follows easily that after taking the quotient, each $E_2$ descends to a symplectic $(-5)$-sphere, and $\tilde{C}/\Z_p$ is an embedded $2$-suborbifold with two singular points, intersecting each of the symplectic $(-5)$-sphere transversely at two points. Moreover, the two singular points 
of $\tilde{C}/\Z_p$ are contained in $E/\Z_p$, with normal weight $b_i=3$. The Euler number of the orbifold normal bundle of $\tilde{C}/\Z_p$ is 
$$
e=\tilde{C}\cdot \tilde{C}/p=(6^2-5\cdot 2^2)/5=16/5,
$$ 
so that $e_0=e-\sum_{i=}^2 \frac{b_i}{p}=16/5-2\cdot \frac{3}{5}=2$. On the other hand, it is easy to see that $E_1/\Z_p$ is an embedded $2$-suborbifold with one singular point, with $e=-2/5$ so that $e_0=e-b/p=-2/5-3/5=-1$. Now we take the minimal resolution and apply Lemma 3.1, and then blow down successively $5$ symplectic $(-1)$-spheres, one of which is the proper transform of $E/\Z_p$ and two from each $E_1/\Z_p$. As a result, we obtain a plumbing of symplectic surfaces in the minimal resolution $X$ of $M/\Z_p$. It is easy to see that the corresponding plumbing graph is given by $\Gamma_5$ in Figure 1, where after the blowing-down, each of the $(-5)$-sphere becomes a $(-3)$-sphere intersecting the proper transform of $C/\Z_p$ transversely at two points. Finally, the formula for $c_1(K_X)$ follows by a direct verification. 

\vspace{2mm}

{\it Case (3)(i):} In this case, the invariant curve $C$ is of degree $3$, embedded, and from the proof of Theorem 1.3 we have $g_{|\Sigma/G|}=1$, with rotation numbers at each of the $3$ fixed points being $(1,3)$. It follows easily that $C/\Z_p$ is embedded in $M/\Z_p$, with Euler number of the orbifold normal bundle $e=3^2/p=9/7$, and consequently, 
$e_0=e-\sum_{i=1}^3 \frac{b_i}{p}=9/7-3\cdot \frac{3}{7}=0$. Now it is easy to see from Lemma 3.1
that the proper transform of $C/\Z_p$ together with the exceptional divisors form a symplectic divisor with normal crossings in the minimal symplectic resolution $X$ of $M/\Z_p$, with a plumbing graph given by $\Gamma_{7,1}$ in Figure 1. The formula for $c_1(K_X)$ is straightforward. 

\vspace{2mm}

{\it Case (3)(ii):} In this case, the invariant curve $C$ is of degree $6$, embedded except at one of the fixed points where it has a nodal singularity. Moreover, from the proof of Theorem 1.3 we have 
$g_{|\Sigma/G|}=3$, with the rotation numbers being $(1,6)$ at the nodal singularity and $(1,4)$ at the other $2$ fixed points. Now we blow up at the nodal singularity so that the proper transform $\tilde{C}$ of $C$ is embedded. It follows easily that the $2$-suborbifold $\tilde{C}/\Z_p$ in $\tilde{M}/\Z_p$ has an orbifold normal bundle with Euler number $e=\tilde{C}\cdot \tilde{C}/p=(6^2-2^2)/7=32/7$. Moreover, it has $4$ orbifold singularities, two from the nodal singularity of $C$, both with rotation numbers $(1,5)$ by Lemma 3.2 (with normal weight $b_i=5$), and the other two from the fixed points of the $\Z_p$-action on $C$, each with rotation numbers $(1,4)$ (with normal weight $b_i=4$). It follows easily that 
$$
e_0=e-\sum_{i=1}^4\frac{b_i}{p}=32/7-2\cdot \frac{5}{7}-2\cdot \frac{4}{7}=2.
$$
Now we take the minimal symplectic resolution of $\tilde{M}/\Z_p$ and blow down the symplectic 
$(-1)$-sphere resulted from the blown-up at the nodal singularity (see Lemma 3.2, Figure 2(b)). It is clear from Lemma 3.1 that the proper transform of $C/\Z_p$ together with the exceptional divisors form a symplectic divisor with normal crossings in the minimal symplectic resolution $X$ of $M/\Z_p$, with a plumbing graph given by $\Gamma_{7,2}$ in Figure 1. The formula for $c_1(K_X)$ follows by a direct verification. (Note that $F_{1,1}$ and $F_{3,2}$ are the $(-4)$-spheres in the plumbing, and they have a negative coefficient in the expression of $c_1(K_X)$.)

\vspace{2mm}

{\it Case (3)(iii):} In this case, the invariant curve $C$ is of degree $6$, has a simple cusp at one of the fixed points, a nodal singularity at another, and for the third fixed point, there are two local branches, one containing a simple cusp singularity and the other embedded intersecting the tangent plane of the first local branch transversely. For convenience, we denote the fixed points by $q_1,q_2,q_3$ in the above order. The other relevant information from the proof of Theorem 1.3 is $g_{|\Sigma/G|}=2$. The basic idea is that we blow up successively at each fixed point until the proper transform of $C$ becomes embedded, with normal crossings with the exceptional set, and then consider the lifted 
$\Z_p$-action and apply Lemma 3.2. After blowing down successively the obvious symplectic $(-1)$-spheres in the resolution of the quotient orbifold, we should arrive at a linear chain of symplectic spheres which corresponds to the exceptional divisors in the minimal resolution of the corresponding singularity in $M/\Z_p$. One of the key issues is to keep track in order to see where the proper transform of $C/\Z_p$ intersects the linear chain of spheres in the exceptional set. Another is to keep track of the loss of the self-intersection of $C$ during the process and of the relevant local Seifert invariants in order to determine the self-intersection number of the proper transform $F_0$ of 
$C/\Z_p$, i.e., the number $e_0$ in Lemma 3.1. Note that the genus of $F_0$ is $g_{|\Sigma/G|}=2$.

With the preceding understood, we consider the fixed point $q_1$ first, where $C$ has a simple cusp singularity. Let $z_1,z_2$ be local holomorphic coordinates, such that $C$ is parametrized by $z_1=z^2,z_2=z^3$ and the $\Z_p$-action is given by 
$\mu_p\cdot (z_1,z_2)=(\mu_p^2 z_1,\mu_p^3 z_2)$, where $\mu_p=:\exp(2\pi i/p)$ for $p=7$. We blow up at the origin and let $E_1$ be the exceptional $(-1)$-sphere. Then by Lemma 3.2, the lifted 
$\Z_p$-action has two isolated fixed points contained on $E_1$, with rotation numbers $(1,2)$ and $(1,4)$ respectively, where the weight $1$ is in the tangent direction of $E_1$. We note that the proper transform of $C$ passes through the first fixed point, locally described by an equation $w_2=w_1^2$. We blow up at this fixed point and let $E_2$ be the exceptional $(-1)$-sphere, and continue to denote the proper transform of $E_1$ by the same notation, which is now a $(-2)$-sphere. It is easy to see that the lifted $\Z_p$-action has three isolated fixed points, one being the intersection of $E_1,E_2$ and the proper transform of $C$, with rotation numbers $(1,1)$, and for the other two fixed points, one is contained in $E_1$ with rotation numbers $(1,4)$, and the other is in $E_2$ with rotation numbers $(1,5)$. With this understood, it is a simple application of Lemma 3.2 and Lemma 3.1, that after blowing down successively all the obvious $(-1)$-spheres, we get a linear chain of three spheres, in the order of 
a $(-2)$-sphere,  a $(-2)$-sphere, and a $(-3)$-sphere, where the proper transform of $C/\Z_p$ intersects the middle $(-2)$-sphere transversely at one point. We should also note that the self-intersection number of $C$ is decreased by $5$ here, and the relevant local Seifert invariant for $C/\Z_p$ is $(7,1)$ (i.e. $b_i=1$). This linear chain is the middle (i.e., top) one in the weighted graph $\Gamma_{7,3}$ in Figure 1.

Next, we consider the fixed point $q_2$, where $C$ has a nodal singularity, with rotation numbers $(1,3)$ and $(1,5)$ at the two local branches. We blow up at the fixed point and let $E$ be the exceptional $(-1)$-sphere. Then by Lemma 3.2, the lifted $\Z_p$-action has two isolated fixed points, all contained in $E$, with rotation numbers $(1,2)$, $(1,4)$ respectively, where the proper transform of $C$ intersects $E$ transversely. It follows easily that, after taking the quotient, resolving the singularities and then blowing down successively all the obvious $(-1)$-spheres, we get
a linear chain of a $(-2)$-sphere,  a $(-2)$-sphere, and  a $(-3)$-sphere, such that the proper transform of $C/\Z_p$ intersects the $(-2)$-sphere and $(-3)$-sphere at the end of the linear chain transversely at one point. On the other hand, we note that here the self-intersection number of $C$ is decreased by $4$, and the relevant local Seifert invariants are $(7,2)$, $(7,4)$. This linear chain is the one on the right-hand side of the weighted graph $\Gamma_{7,3}$ in Figure 1.

Finally, we note that the discussion for the fixed point $q_3$ where $C$ has two local branches, one containing a simple cusp singularity and the other embedded, is, for most part, the same as the case for the fixed point $q_1$,
except that the part of the proper transform of $C/\Z_p$ which corresponds to the embedded local branch also
intersects the $(-3)$-sphere in the linear chain. Moreover, we note that in this case, the self-intersection number of $C$ is decreased by $10$, and the relevant local Seifert invariants are $(7,1)$, $(7,2)$. The corresponding linear chain is the one on the left-hand side of the weighted graph $\Gamma_{7,3}$ in Figure 1.

To finish up, it remains to determine the self-intersection number of the proper transform $F_0$ of $C/\Z_p$, which is computed through the number $e_0$ in Lemma 3.1,
$$
e_0=e-\sum_{i=1}^5\frac{b_i}{p}=\frac{1}{7}(6^2-5-4-10)-\frac{1}{7}-\frac{2}{7}-\frac{4}{7}-\frac{1}{7}-\frac{2}{7}=1.
$$
Finally, the formula for $c_1(K_X)$ follows by a direct verification.

\vspace{2mm}

{\it Case (3)(iv):} In this case, the invariant curve $C$ is of degree $6$, and has three embedded local branches at one of the fixed points, two embedded local branches at another, and at the third fixed point, it has a cuspidal singularity. For convenience we denote them by $q_1,q_2,q_3$ respectively. We also note that $g_{|\Sigma/G|}=1$
in this case.

Consider the fixed point $q_2$ first, where the two local branches of  $C$ are tangent of order $2$. We blow up
successively twice and let $E_1,E_2$ be the exceptional set, where $E_1$ is a $(-2)$-sphere and $E_2$ is a
$(-1)$-sphere. Then it is easy to see that the lifted $\Z_p$-action fixes $E_2$ and has an isolated fixed point
contained in $E_1$, which has rotation numbers $(1,5)$. Taking the quotient, $E_2$ descends to a symplectic 
$(-7)$-sphere which intersects the proper transform of $C/\Z_p$ transversely at two points and the proper
transform of $E_1/\Z_p$ transversely at one point (cf. Lemma 3.2(2)). Resolving the singularity on $E_1/\Z_p$ and blowing down successively all the obvious $(-1)$-spheres, we get a linear chain of a $(-4)$-sphere and $(-2)$-sphere. The proper transform of $C/\Z_p$ intersects with the $(-4)$-sphere, transversely at two points. This is exhibited in the weighted graph $\Gamma_{7,4}$ in Figure 1 as the linear chain on the right-hand side. 

The discussion at the fixed point $q_1$ is the same for most part except that, in this case the proper transform of $C/\Z_p$ also intersects the $(-2)$-sphere in the linear chain because there is a third embedded local branch
of $C$ which is transverse to the other two branches. This case is exhibited in the weighted graph
$\Gamma_{7,4}$ in Figure 1 as the linear chain on the left-hand side. 

Finally, we examine the situation at the fixed point $q_3$, where $C$ is locally given by $z_1=z^3$, $z_2=z^4$
in a holomorphic coordinates. We blow up at the origin and let $E$ be the exceptional $(-1)$-sphere. Then
note that the lifted $\Z_p$-action has two isolated fixed points contained in $E$, where at the first one, the proper transform of $C$ is given by the equation $w_2=w_1^3$ and $E$ is given by $w_2=0$, while at the other fixed point, the rotation numbers is $(1,3)$. With this understood, it is convenient to separate out the discussion for the first fixed point in the following

\vspace{2mm}

{\bf A local model:} Consider two embedded, $\Z_p$-invariant holomorphic discs $C_1,C_2$ intersecting at the origin with a tangency of order $3$. If we blow up at the origin and let $E_1$ be the exceptional $(-1)$-sphere, then the lifted $\Z_p$-action has two isolated fixed points contained in $E_1$, where at the first one, the proper transform 
of $C_1,C_2$ are given by the equation $w_2=0$, $w_2=w_1^2$ respectively,  and $E_1$ is given by $w_1=0$, 
while at the other fixed point on $E_1$, the rotation numbers is $(1,2)$. With this understood, we blow up at the
first fixed point and let $E_2$ be the exceptional $(-1)$-sphere. Then the lifted $\Z_p$-action has two isolated fixed points contained in $E_2$, one being the intersection of the proper transforms of $C_1,C_2$ with $E_2$, with rotation numbers $(1,1)$, and the other being the intersection of the proper transform of $E_1$ with $E_2$, with rotation numbers $(1,5)$. Now take the quotient and resolve the singularities, and then blow down successively all the obvious $(-1)$-spheres. We get a linear chain of a $(-3)$-sphere, $(-2)$-sphere, and $(-2)$-sphere. Moreover, the proper transform of $C_1/\Z_p,C_2/\Z_p$ intersects the $(-3)$-sphere transversely at one point. 

\vspace{2mm}

Now applying the above local model to the proper transform of $C$ and the exceptional $(-1)$-sphere $E$, as
$C_1,C_2$, it follows easily that the proper transforms of $C/\Z_p$ and $E/\Z_p$ intersect the $(-3)$-sphere
in a linear chain of $(-3)$-sphere, $(-2)$-sphere, and $(-2)$-sphere transversely at one point. Now resolve the
singularity on the proper transform of $E/\Z_p$. Then the proper transform of $E/\Z_p$ becomes a $(-1)$-sphere, intersecting a linear chain of a $(-3)$-sphere, $(-2)$-sphere, and $(-2)$-sphere from the resolution of the 
singularity. Blowing down the $(-1)$-sphere, we get a linear chain of six $(-2)$-spheres. Note that the proper transform of $C/\Z_p$ intersects the fourth $(-2)$-sphere in the linear chain. This is depicted as the linear chain in the middle (i.e., the top) of the weighted graph $\Gamma_{7,4}$ in Figure 1.

It remains to determine the self-intersection of $F_0$: at $q_2$, the self-intersection of $C$ decreased by 
$2^2+2^2$, at $q_1$, it decreased by $3^2+2^2$, with a Seifert invariant $(7,3)$, and at $q_1$, it decreased by
$3^2+3$. It follows immediately that the self-intersection of $F_0$ equals 
$$
e_0=e-\frac{b}{p}=\frac{1}{7}(6^2-2^2-2^2-3^2-2^2-3^2-3)-\frac{3}{7}=0.
$$
Finally, the formula for $c_1(K_X)$ follows by a direct verification.

\vspace{2mm}

{\it Case (3)(v):} In this case, the invariant curve $C$ is of degree $6$, and has two embedded local branches 
at each of the fixed points, intersecting with a tangency of order $3$. Note that the local model in the previous 
Case (3)(iv) applies here, which yields a symplectic plumbing with the plumbing graph $\Gamma_{7,5}$ in
Figure 1. The genus of $F_0$ is $1$, as $g_{|\Sigma/G|}=1$ in this case. The self-intersection of $F_0$ is
$e_0=e=\frac{1}{7}(6^2-3\cdot (2^2+2^2+2^2))=0$. Finally, the formula for $c_1(K_X)$ follows by a direct verification.

\vspace{2mm}

{\it Case (4)(i):}  In this case, the invariant curve $C$ is of degree $2$, and has a simple cusp singularity at one of the fixed points, and are embedded near the other two fixed points. Moreover, from the proof of Theorem 1.3 we know that the rotation numbers at the fixed points are $(2,3)$, $(1,5)$, and $(1,4)$ respectively, and $g_{|\Sigma/G|}=0$. It follows immediately that the exceptional divisors in the minimal resolution $X$ which are associated with the last two fixed points are given by the two linear chains in the weighted graph $\Gamma_{11,1}$ in Figure 1, one on the left-hand side and one on the right-hand side (cf. Lemma 3.1). To understand the situation at the first fixed point, we note that $C$ is given locally by the equations $z_1=z^2$, $z_2=z^3$ in some holomorphic coordinates. We blow up at the origin successively twice and let $E_1,E_2$ be the exceptional divisors, where $E_1$ is a $(-2)$-sphere. Then the lifted $\Z_p$-action has three isolated fixed points, such that the one where the proper transforms of $C, E_1,E_2$ intersect has rotation numbers $(1,1)$, the one that is contained in $E_1$ has rotation numbers $(1,8)$, and the one that is on $E_2$ has rotation numbers $(1,9)$ respectively (see Lemma 3.2). It follows easily that, after taking the quotient and resolving the singularities, and then blowing down
successively all the obvious $(-1)$-spheres, we get a linear chain of a $(-2)$-sphere,  a $(-2)$-sphere, a  $(-3)$-sphere, and a $(-2)$-sphere, where the proper transform of $C/\Z_p$ intersects the $(-3)$-sphere transversely at one point. This is depicted as the linear chain which is in the middle of the graph $\Gamma_{11,1}$ in Figure 1. It remains to determine the self-intersection of $F_0$, which can be easily seen to be $-1$. Finally, the formula for $c_1(K_X)$ follows by a direct verification.

\vspace{2mm}

{\it Case (4)(ii):}  In this case, the invariant curve $C$ is of degree $6$, and is embedded at one of the fixed points, has two embedded local branches at another, which are tangent of order $5$, and has two tangent local branches at the third fixed point, where each local branch contains a simple cusp singularity. The discussion at the fixed point where $C$ has a simple cusp singularity is the same as in the previous case, i.e., Case (4)(i), except that in the present case, $C$ has two local branches at the fixed point. Since the two local branches have the same tangent plane, in the resolution the proper transform of $C/\Z_p$ will intersect the $(-3)$-sphere transversely at two points, corresponding to the two local branches of $C$. The corresponding linear chain is depicted in the weighted graph 
$\Gamma_{11,2}$ in Figure 1 as the one in the middle. In the same way, the situation at the fixed point where $C$ has two embedded branches which are tangent of order $5$ is the same as at the fixed point where $C$ is embedded with rotation numbers $(1,5)$ in the previous case, Case (4)(i), and the corresponding linear chain is depicted in $\Gamma_{11,2}$ as the one on the left-hand side. Lastly, the description at the remaining fixed point where $C$ is embedded with rotation numbers $(1,3)$ follows straightforwardly from Lemma 3.1. Finally, the formula for $c_1(K_X)$ is by a direct verification.

\vspace{2mm}

{\it Case (5)(i):}  In this case, the invariant curve $C$ is of degree $2$, and is embedded. With the information on the rotation numbers at the fixed points, which are $(1,6)$, $(1,8)$, and $(1,3)$, and the fact that $g_{|\Sigma/G|}=0$ from the proof of Theorem 1.3, it is a straightforward application of Lemma 3.1. The formula for $c_1(K_X)$ is by a direct calculation.

\vspace{2mm}

{\it Case (5)(ii):}  In this case, the invariant curve $C$ is of degree $6$, and at each of the fixed points, $C$ has a cuspidal singularity with rotation numbers $(3,4)$. With this understood, to understand the symplectic plumbing in this case, it suffices to focus at any of the fixed points. Note that $C$ is locally given by the equations $z_1=z^3$, $z_2=z^4$ in some holomorphic coordinates. We blow up at the origin and let $E$ be the exceptional $(-1)$-sphere. Then it is easy to see that the lifted $\Z_p$-action has two isolated fixed points contained in $E$, with rotation numbers $(1,3)$ and $(1,9)$ respectively. Moreover, the first fixed point is where the proper transform of $C$ and the exceptional $(-1)$-sphere $E$ meet, and their intersection is tangent of order $3$. One can work out a local model for the resolution at this fixed point, like we did in Case (3)(iv). The end result is that the exceptional divisors form a linear chain of a $(-5)$-sphere, $(-2)$-sphere, and $(-2)$-sphere, where each of
the proper transforms of $C/\Z_p$ and $E/\Z_p$ intersects the $(-5)$-sphere transversely at one point. With this understood, if we resolve the singular point of rotation numbers $(1,9)$ on the proper transform of $E/\Z_p$, we get a linear chain of a $(-2)$-sphere, $(-2)$-sphere, and a $(-5)$-sphere, where the proper transform of $E/\Z_p$ becomes a $(-1)$-sphere, intersecting the linear chain at the first $(-2)$-sphere (cf. Lemma 3.2(1)). Now it follows easily that by successively blowing down all the $(-1)$-spheres, we arrive at a linear chain of a  $(-2)$-sphere, $(-2)$-sphere,  $(-2)$-sphere, and a $(-4)$-sphere, where the proper transforms of $C/\Z_p$ intersects the third $(-2)$-sphere transversely at one point, as shown in the graph $\Gamma_{13,2}$ in Figure 1. Observe that from the proof of Theorem 1.3, $g_{|\Sigma/G|}=1$ so that $F_0$ has genus $1$. The determination of the self-intersection of $F_0$ is a standard affair now, which is left to the reader. Finally, the formula for $c_1(K_X)$ is by a direct calculation.

\vspace{2mm}

{\it Case (6):}  In this case, the invariant curve $C$ is of degree $6$, and is embedded.  With the information on the rotation numbers at the fixed points, which are all $(1,12)$, and the fact that 
$g_{|\Sigma/G|}=1$ from the proof of Theorem 1.3, it is a straightforward application of Lemma 3.1. The formula for $c_1(K_X)$ is by a direct calculation.

\vspace{2mm}

This completes the proof of Theorem 3.3.

\end{proof}

\begin{remark}
(1) Using the formula of $c_1(K_X)$ in Theorem 3.3, one can compute $c_1(K_X)^2$ from the symplectic plumbing. One can check easily that $c_1(K_X)^2$ is always an integer and $c_1(K_X)^2=2\chi(X)+3\sigma(X)=10-b_2(X)$ is always true.

(2) In constructing the symplectic resolution of a symplectic $4$-orbifold with isolated singular points, or performing
a symplectic blowing-up, one can always arrange so that the exceptional divisors have arbitrarily small areas 
(see \cite{C11}, or \cite{C3}, Lemma 3.1). It follows easily that there are symplectic structures $\omega$ on the symplectic resolution $X$ such that $|c_1(K_X)\cdot [\omega]-\frac{3}{d} \omega(F_0)|$ is arbitrarily small
(while $\omega(F_0)\geq \delta_0$ for some $\delta_0>0$), where $d$ is the degree of the invariant curve $C$.
Since the surfaces $F_{i,j}$ can have arbitrarily small areas, it follows immediately that in the expression of 
$c_1(K_X)$ in Theorem 3.3, the coefficient before $F_0$ must be $\frac{3}{d}$, which is indeed the case. In particular, this implies that $c_1(K_X)\cdot [\omega]>0$.
\end{remark}

\subsection{Minimality and symplectic Kodaira dimension of the resolution}

We investigate the minimality and symplectic Kodaira dimension of the symplectic resolution $X$. The main tool is the formula of $c_1(K_X)$ in Theorem 3.3.

\begin{theorem}
(1) $X$ is minimal in Case (1), Case (2), Case (3)(i), and Case (3)(v) of Theorem 1.3. Moreover, $X$ has 
symplectic Kodaira dimension $2$ in Case (1) and Case (2), and has symplectic Kodaira dimension $1$ in 
Case (3)(i) and Case (3)(v).

(2) $X$ is a rational $4$-manifold in Case (4)(ii) and Case (5)(i) of Theorem 1.3.
\end{theorem}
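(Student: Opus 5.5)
The approach is to read everything off the expression for $c_1(K_X)$ in Theorem 3.3(2), combined with the facts that $X$ is a $\Q$-homology $\C\P^2\# N\overline{\C\P^2}$ and $c_1(K_X)^2=10-b_2(X)$ (Remark 3.4(1)).

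For part (1), I would first show that $c_1(K_X)$ is nef-like in the relevant cases. In Cases (1), (2), (3)(i), (3)(v), every coefficient in the formula for $c_1(K_X)$ is non-negative: the formulas are $\frac12(F_0+\sum F_{i,j})$, $\frac12(F_0+\sum_j F_{2,j})$, $F_0$ and $\frac12 F_0$. Then I would compute $c_1(K_X)\cdot F$ for each surface $F$ in the divisor via the adjunction formula, using the plumbing graphs $\Gamma_3,\Gamma_5,\Gamma_{7,1},\Gamma_{7,5}$. The chain spheres are $(-2)$-spheres, which pair to zero with $K_X$. The $(-3)$-spheres in $\Gamma_5$ pair to $1$. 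For $F_0$ we get $2g-2-F_0^2$, which is $6$, $2$, $0$, $0$ respectively. So $c_1(K_X)$ is an effective $\Q$-combination of symplectic surfaces, each pairing non-negatively with $K_X$.

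Next I would argue minimality. Suppose $E$ is a symplectic $(-1)$-sphere; then $c_1(K_X)\cdot E=-1$. For a $J$ making the divisor $J$-holomorphic, with $J$ generic otherwise, $E$ has a $J$-holomorphic representative, possibly a nodal stable curve. Write $c_1(K_X)=\sum c_F F$ with all $c_F\ge 0$. If no component of $E$ lies in the divisor, positivity of intersections gives $c_1(K_X)\cdot E\ge 0$, a contradiction. So some component lies in the divisor. I would then handle this reducible case by moving the divisor components off and using $c_1(K_X)\cdot F\ge 0$. This is the delicate point and the step I expect to be the main obstacle. The cleaner route is the following: if $X$ is not minimal, blow down to $X'$ with $K_X=K_{X'}+E$, so that $K_X^2=K_{X'}^2-1$. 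I would combine this with $K_X^2=10-b_2$ and the Kodaira dimension computation below to reach a contradiction.

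For the Kodaira dimension, I would use $c_1(K_X)^2$ together with $c_1(K_X)\cdot[\omega]>0$ (Remark 3.4(2)). In Cases (1) and (2), $c_1(K_X)^2>0$: it equals $\frac14 K\cdot(F_0+\cdots)$, which is $\frac14\cdot 6>0$ and $\frac14(2+2)>0$ respectively. For a minimal $X$ this gives $\kappa=2$. In Cases (3)(i) and (3)(v), $c_1(K_X)^2=F_0\cdot c_1(K_X)=0$ and $c_1(K_X)\cdot[\omega]>0$, so $\kappa=1$ once minimality is known. For minimality I would actually use the standard criterion that if $K_X$ is represented by a $\Q$-effective divisor with $K_X\cdot F\ge 0$ for all components $F$, then $K_X$ pairs non-negatively with every $J$-curve. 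Applying this with $E$ of class $e$ gives $K_X\cdot e\ge 0$, contradicting $K_X\cdot e=-1$. This uses Taubes/SW theory via the Li–Liu characterization that exceptional classes have $J$-representatives for all tamed $J$.

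For part (2), in Cases (4)(ii) and (5)(i), I would show $X$ has $\kappa=-\infty$ and $b_1=0$, hence $X$ is rational (by Liu/Ohta–Ono, since $b_1=0$ excludes irrational ruled surfaces). To get $\kappa=-\infty$, I would exhibit a class violating nefness. In (4)(ii), $F_{3,1}$ and $F_{3,2}$ carry negative coefficients, and $F_0$ has $F_0^2=-1$ in $\Gamma_{11,2}$, so $F_0$ is a torus or sphere of negative square. I would try computing $K_X\cdot F_0$ and locating a $(-1)$-sphere $S$ with $K\cdot S=-1$. Repeatedly blowing down, I would track the formula for $K$ until I reach a minimal model $X_{\min}$ with $K_{X_{\min}}\cdot[\omega]<0$, or $K^2_{X_{\min}}>0$ while $K_{X_{\min}}$ has negative coefficients forcing $K_{X_{\min}}\cdot[\omega]<0$ for suitable $\omega$ with small chain areas. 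Then $\kappa=-\infty$. The analogous bookkeeping would be done for (5)(i), using the $-1$ sphere $F_0$ with genus $0$.
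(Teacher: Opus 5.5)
Your part (1), once the detours are removed, is the paper's argument. Choose $J$ so that $F_0$ and all $F_{i,j}$ are $J$-holomorphic. Every irreducible component $C_k$ of the $J$-representative of the exceptional class then satisfies $c_1(K_X)\cdot C_k\ge 0$, for one of two reasons. If $C_k$ is not a divisor component, this follows from positivity of intersections and the non-negative coefficients. If $C_k$ is a divisor component, it follows from the adjunction computation $c_1(K_X)\cdot F\ge 0$. Nothing needs to be ``moved off,'' and the reducible case is not delicate. The ``cleaner route'' through $K_X^2=10-b_2(X)$ yields no contradiction and should be dropped. There is also a slip: $c_1(K_X)^2=\frac12\,c_1(K_X)\cdot(F_0+\cdots)$, which is $3$ and $1$ in Cases (1) and (2), matching $10-b_2(X)$.

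Part (2) has a genuine gap. You state an intention to blow down and track $K$, but the decisive step is missing, and the one mechanism you name would not work. When the chain surfaces have small area, the negative coefficients in $c_1(K_X)$ contribute negligibly. By Remark 3.4(2) the sign is governed by the coefficient $3/d>0$ of $F_0$, so $c_1(K_X)\cdot[\omega]>0$ on $X$ itself.

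What you need is the following. In both cases $F_0$ is a symplectic $(-1)$-sphere: its genus is $g_{|\Sigma/G|}=0$, so the ``torus'' alternative does not occur, and it matters that it is a sphere. Blowing down a $(-1)$-sphere $S$ gives $c_1(K_{X'})\cdot[\omega']=c_1(K_X)\cdot[\omega]-\omega(S)$.
\begin{itemize}
\item In Case (4)(ii) this gives exactly $-\frac12\bigl(\omega(F_0)+\omega(F_{3,1})+\omega(F_{3,2})\bigr)<0$.
\item In Case (5)(i), where $d=2$, one blow-down only gives roughly $+\frac12\omega(F_0)$. You must also observe that the $(-2)$-sphere $F_{3,1}$ meeting $F_0$ becomes a symplectic $(-1)$-sphere of area $\omega(F_{3,1})+\omega(F_0)$. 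Blowing it down gives roughly $-\frac12\omega(F_0)<0$ once the chain areas are small.
\end{itemize}
You also do not need to reach a minimal model. $K\cdot[\omega]$ strictly decreases under further blow-downs, so negativity at any stage already gives rationality, by Liu/Ohta--Ono together with $b_1(X)=0$.
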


\begin{proof}
First, observe that in the expression of $c_1(K_X)$ in Theorem 3.3 for Case (1), Case (2), Case (3)(i), and 
Case (3)(v), and only for these cases, the coefficients before the surfaces $F_0, F_{i,j}$ are all non-negative. 
With this understood, recall that by work of Taubes \cite{T}, a symplectic $4$-manifold is minimal if and only if 
there is no symplectic $(-1)$-sphere in it (for the case of $b_2^{+}=1$, see \cite{Li0}). Now assume to the contrary 
that $X$ is non-minimal in any of the above cases. Then there is a symplectic $(-1)$-sphere representing a 
$(-1)$-class $E$. In fact, more is true, i.e., the Gromov invariant associated to $E$ is nonzero. As a consequence, for any $\omega$-compatible almost complex structure $J$, the class $E$ is represented by a finite irreducible $J$-holomorphic curves $\{C_k\}$ with multiplicity $\{n_k\}$, i.e., $E=\sum_{k} n_k C_k$ where $n_k>0$. On the other hand, since the symplectic surfaces $F_0,F_{i,j}$ have transversal, positive intersections, after a small perturbation if necessary to make them meet $\omega$-orthogonally (cf. \cite{Gompf}), there is an $\omega$-compatible almost complex structure $J$, such that $F_0$ and each $F_{i,j}$ are $J$-holomorphic curves. With this understood, for any given $C_k$, if $C_k$ is not a component of the symplectic divisor, i.e., $F_0$ or $F_{i,j}$, then $F_0\cdot C_k\geq 0$, $F_{i,j}\cdot C_k\geq 0$, by positivity of intersection of $J$-holomorphic curves, which implies that $c_1(K_X)\cdot C_k\geq 0$ because the coefficients in the expression of $c_1(K_X)$ are all non-negative. If $C_k=F_0$ or $F_{i,j}$, then we also have $c_1(K_X)\cdot C_k\geq 0$, because $c_1(K_X)\cdot F_0\geq 0$ and $c_1(K_X)\cdot F_{i,j}\geq 0$ for each $F_{i,j}$. It follows immediately that $c_1(K_X)\cdot E\geq 0$, which is a contradiction because $c_1(K_X)\cdot E=-1$. This proves that $X$ is minimal in these cases.

Now by the classification of minimal symplectic $4$-manifolds (see Li \cite{Li}), $X$ has symplectic Kodaira dimension $2$ in Case (1) and Case (2) because in these two cases, $c_1(K_X)^2>0$, and in  Case (3)(i) and 
Case (3)(v), $X$ has symplectic Kodaira dimension $1$  because $c_1(K_X)^2=0$. (Here in both cases, we also used the fact that $c_1(K_X)\cdot [\omega]>0$, see Remark 3.4(2).)

Next, we show that $X$ is a rational $4$-manifold in Case (4)(ii) and Case (5)(i). Consider Case (4)(ii) first. Note
that $F_0$ is a symplectic $(-1)$-sphere. If we symplectically blow down $F_0$ and let $(X^\prime,\omega^\prime)$
be the resulting symplectic $4$-manifold, then we have $c_1(K_{X^\prime})\cdot [\omega^\prime]=
c_1(K_X)\cdot [\omega]-\omega(F_0)$. On the other hand, as we have explained in Remark 3.4(2), $|c_1(K_X)\cdot [\omega]-\frac{3}{d} \omega(F_0)|$ can be made arbitrarily small while keeping $\omega(F_0)\geq \delta_0$ for some $\delta_0>0$, where $d$ is the degree of the invariant curve $C$. One has $d=6$ in Case (4)(ii), so that 
$c_1(K_X)\cdot [\omega]$ is approximately $\frac{1}{2}\omega(F_0)$. It follows immediately that
$c_1(K_{X^\prime})\cdot [\omega^\prime]<0$, which implies that $X$ is a rational $4$-manifold (see \cite{Li}).
The argument for Case (5)(i) is similar, except that in this case, one has $d=2$ instead. Consequently, 
$c_1(K_X)\cdot [\omega]$ is approximately $\frac{3}{2}\omega(F_0)$, and 
$c_1(K_{X^\prime})\cdot [\omega^\prime]$ is approximately $\frac{1}{2}\omega(F_0)$. To see that $X$ is also a rational $4$-manifold in this case, we observe that $(X^\prime,\omega^\prime)$, the symplectic $4$-manifold resulted from blowing the $(-1)$-sphere $F_0$, also contains a symplectic $(-1)$-sphere, denoted by $E$, which is the descendant of the surface $F_{3,1}$ (see the weighted graph $\Gamma_{13,1}$ in Figure 1). Note that 
$\omega^\prime(E)=\omega(F_{3,1}+F_0)>\omega(F_0)$. Now if we 
symplectically blow down $E$ and let $(\tilde{X},\tilde{\omega})$ be the resulting symplectic $4$-manifold, we
have $c_1(K_{\tilde{X}})\cdot [\tilde{\omega}]=c_1(K_{X^\prime})\cdot [\omega^\prime]-\omega^\prime(E)<0$,
which implies that  $X$ is a rational $4$-manifold. This finishes the proof of Theorem 3.5.

\end{proof}

\begin{remark}
(1) For the remaining cases of Theorem 1.3, the symplectic Kodaira dimension of $X$ is unknown at the moment.
However, except for Case (3)(iii), it is clear that $X$ is non-minimal in these remaining cases, as it is easily seen that $c_1(K_X)^2<0$ in these cases (see Li \cite{Li}).

(2) Our conclusion in Theorem 3.5(1), for Case (1), Case (3)(i), and Case (3)(v), is consistent with the corresponding
result concerning prime order automorphism groups of fake projective planes, see Keum \cite{Keum1, Keum2}. The assertion concerning Case (2), as well as Theorem 3.5(2) in regard of Case (4)(ii) and Case (5)(i), do not have any algebraic analog and are new phenomena in the symplectic category.

\end{remark}

\subsection{Automorphisms of a fake projective plane as a symplectic $\Z_p$-action} Here we examine the only known examples of symplectic $\Z_p$-actions on a $\Q$-homology $\C\P^2$
with positive canonical class, i.e., a prime order $p$ automorphism group of a fake projective plane, from the point of view of Theorem 1.3. The case $p=3$ is trivial, as it is obvious that every $\Z_3$-automorphism belongs to Case (1), which is the only possibility by Theorem 1.3, although according to \cite{CS}, there are at least six fake projective planes admitting a $\Z_3$-automorphism. 

The situation for $p=7$ is more delicate. By the classification in Keum \cite{Keum2}, the minimal resolution $X$ of the quotient space of a fake projective plane modulo an order $7$ automorphism group is a minimal elliptic surface of Kodaira dimension $1$ with two multiple fibers, with the pair of the multiplicities falling into one of the following 
three cases: $(2,3)$, $(2,4)$, or $(3,3)$. Note that the three cases can be distinguished by the fundamental 
group of $X$, i.e., $\pi_1(X)=\{1\}$, $\Z_2$, $\Z_3$ respectively. In fact, in the first case $X$ is a Dolgachev surface, which was actually used by Keum in \cite{Keum1} to construct a fake projective plane with an order $7$ automorphism group. On the other hand, according to the computations in \cite{CS}, the second case is known to be realized by a fake projective plane. However, whether the third case (i.e., both multiple fibers in $X$ have multiplicity $3$) is realized is unknown. Finally, we note that as a symplectic $\Z_7$-action, the automorphism group belongs to either Case (3)(i) or Case (3)(v) by the local representations at the fixed points. 

Since the almost complex structure $J$ used in Theorem 1.3 is assumed to be generic, so that it may not apply to the holomorphic structure on the fake projective plane, we need the following technical lemma.

\begin{lemma}
Let $X$ be the minimal resolution of the quotient space of a fake projective plane $M$ modulo an order $7$ automorphism group. There exists a symplectic structure $\omega$ on $X$ such that the holomorphic structure is
$\omega$-tame. Moreover, there exists a sequence $J_n$ of $\omega$-tame almost complex structures on $X$,
together with a sequence of symplectic divisors as obtained in Theorem 3.3, where the symplectic surfaces are $J_n$-holomorphic, such that $J_n$ converges in $C^\infty$ to the holomorphic structure on $X$ and the symplectic surfaces $F_{0,n}$ in the symplectic divisors (i.e., the proper transform of the quotient of the invariant curve in $X$) converges in a Gromov limit to $mF$, where $F$ is a multiple fiber of the elliptic fibration on $X$. 
\end{lemma}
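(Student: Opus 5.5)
The plan is to work first on the fake projective plane $M$ itself, where everything is Kähler, and transport the construction to $X$ through the quotient and resolution used in Theorem 3.3.

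\emph{Symplectic form on $X$.} $M$ is a ball quotient, hence projective, and carries a $\Z_7$-invariant Kähler form $\omega_M$, obtained by averaging. The quotient $M/\Z_7$ has three cyclic quotient singularities. Its minimal resolution $X$ is a projective surface; by Keum \cite{Keum2} it is a minimal elliptic surface. So $X$ carries a Kähler form $\omega$, and the holomorphic structure $J_0$ is $\omega$-tame. I would choose $\omega$ so that it agrees with the descended orbifold form away from small neighborhoods of the exceptional chains, with those chains of arbitrarily small area. This matches the symplectic resolution of \cite{C11} used in Theorem 3.3, since the two are deformation equivalent through Kähler forms.

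\emph{The sequence $J_n$.} On $M$, take $\Z_7$-invariant $\omega_M$-compatible almost complex structures $J_n'\in\J$ converging in $C^\infty$ to the integrable structure $J_M$, chosen generic as in Theorem 1.3. Such a choice is possible because genericity is a residual condition in the contractible space $\J$. Elements of $\J$ are integrable near the fixed points, and I can take $J_n'=J_M$ there. Theorem 1.3 then gives invariant $J_n'$-curves $C_n$. Since the local representations of $\Z_7$ are those of Case (3)(i) or (3)(v), each $C_n$ is either an embedded degree $3$ curve or a degree $6$ curve with tacnode-type singularities of order $3$ at the fixed points. Because $J_n'=J_M$ near the fixed points, the equivariant blow-ups, quotients and resolutions of Theorem 3.3 are literally the holomorphic ones there. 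So $J_n'$ descends to almost complex structures $J_n$ on $X$ that equal $J_0$ near the exceptional divisors and converge to $J_0$ in $C^\infty$. The exceptional chains $F_{i,j}$ are then $J_0$-holomorphic, hence $J_n$-holomorphic, and the proper transform $F_{0,n}$ of $C_n/\Z_7$ is $J_n$-holomorphic. A small compatible perturbation near the intersections makes the divisor $\omega$-orthogonal without destroying the convergence; tameness of $J_n$ for large $n$ is an open condition.

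\emph{Gromov limit.} Since $[F_{0,n}]$ is fixed and $\omega$-area bounded, Gromov compactness gives, after a subsequence, a limit $J_0$-holomorphic cycle $\sum m_k D_k$ in the class $[F_0]$. From Theorem 3.3, $c_1(K_X)=F_0$ in Case (3)(i) and $c_1(K_X)=\frac12F_0$ in Case (3)(v), so $[F_0]$ is a rational multiple of $K_X$. For a minimal properly elliptic surface, $K_X$ is numerically a positive rational multiple of the fiber class $f$. Moreover $F_0^2=0$ and $F_0\cdot F_{i,j}=0$ or $1$ by adjacency in $\Gamma_{7,1}$ or $\Gamma_{7,5}$. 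Hence every $D_k$ satisfies $D_k\cdot f=0$, so each $D_k$ is a vertical curve, that is, a component of a fiber. The multiple fibers are smooth elliptic curves: the elliptic fibers are non-rational, and $X$ is simply connected or has small $\pi_1$. The remaining task is to pin the support down to a single multiple fiber $F$, and I expect this to be the main obstacle. To do it I would use the genus and intersection data. $F_{0,n}$ has genus $1$ and meets the chains exactly as $\Gamma_{7,1}$ or $\Gamma_{7,5}$ prescribe. A limit component lying in a singular fiber would be a rational curve meeting the $(-2)$-chains, and these chains themselves lie in fibers. Comparing classes rules out limits supported on the singular fibers containing the chains, and also on a general fiber, since that has the wrong multiple of $f$ relative to $F_0\cdot F_{i,j}=1$. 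What is left is $mF$, with $F$ one of the two multiple fibers and $m$ determined by $[F_0]=m[F]$ in $H_2(X,\Q)$.
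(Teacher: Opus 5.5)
Your construction of $\omega$ and of the $J_n$ is fine and follows the paper's. You fix $J_n'$ to be the holomorphic structure near the fixed points, perturb generically elsewhere, and descend to $X$. Getting tameness from a Kähler form on the projective surface $X$, plus $C^0$-openness of tameness, is a legitimate shortcut for the paper's explicit $\omega=\pi^\ast\omega_0+\epsilon\, d(\rho\gamma)$. Drop the extra perturbation that makes the divisor $\omega$-orthogonal: it is not needed and would generally spoil $J_n$-holomorphicity.

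\textbf{The gap is the last step}, identifying the Gromov limit as $mF$ with $F$ a single multiple fiber. Knowing only that $[F_0]$ is a positive multiple of $f$ and that the limit is vertical is not enough. Your proposed route through genus and intersections with the chains does not close it, for two reasons. It says nothing about singular fibers disjoint from the chains. It also cannot exclude a limit spread over two fibers: for instance $F_a+F_b$ in the $(3,3)$ case has class $\frac{2}{3}f=2K_X$, which is exactly the class of $F_0$ in Case (3)(v).

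The missing ingredients are these.
\begin{enumerate}
\item[(a)] The size of $K_X$. The canonical bundle formula (base $\P^1$, $\chi(\mathcal{O}_X)=1$, two multiple fibers) gives $K_X\equiv(1-\frac{1}{m_1}-\frac{1}{m_2})f$, i.e.\ $\frac16 f,\frac14 f,\frac13 f$ for multiplicities $(2,3),(2,4),(3,3)$. Hence $[F_0]\in\{K_X,2K_X\}$ equals $cf$ with $0<c<1$.
\item[(b)] Connectedness of the Gromov limit, as the image of a connected nodal curve. Together with verticality, this puts its support in a single fiber.
\item[(c)] Zariski's lemma, which makes the limit a rational multiple of that fiber.
\end{enumerate}
If that fiber were not multiple, its component multiplicities would be coprime. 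The integral limit divisor would then be $kf$ with $k\ge 1$, contradicting $c<1$. Hence the fiber is multiple and the limit is $mF$. This is exactly where the paper invokes $c_1(K_X)=\frac16T,\frac14T,\frac13T$.

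Two smaller inaccuracies. In $\Gamma_{7,5}$ the edges from $F_0$ to the $(-3)$-spheres are double, so $F_0$ meets each of them with intersection number $2$ (consistent with $2K_X\cdot S=2$), not $0$ or $1$. Also, your claim that the multiple fibers are smooth elliptic curves is unjustified, though the argument does not need it.
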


\begin{proof}
Let $\omega_0$ be a K\"{a}hler form on the fake projective plane $M$. At each fixed point of the order $7$ automorphism group, we fix a neighborhood $U$. Then in the complement of these neighborhoods we choose a
sequence $J_n^\prime$ of generic $\omega_0$-compatible almost complex structures as in Theorem 1.3, which
converges in $C^\infty$ to the holomorphic structure on $M$. By Theorem 1.3, there is an invariant $J_n^\prime$-holomorphic curve $C_n$, and moreover, by Theorem 3.3, there is a corresponding plumbing of symplectic 
surfaces in $X$. Let $\pi: X\rightarrow M/\Z_p$ be the resolution map. We note that $J_n^\prime$ descends to an almost complex structure on $M/\Z_p$ under the quotient map $M\rightarrow M/\Z_p$, which is the holomorphic structure on each neighborhood $U/\Z_p$ of the singular point, hence can be extended naturally to $X$. 
We denote this almost complex structure on $X$ by $J_n$. It is clear that $J_n$ converges in $C^\infty$ to the holomorphic structure on $X$ and the surfaces in the symplectic plumbings are $J_n$-holomorphic. 

To construct the symplectic structure $\omega$ such that each $J_n$ is $\omega$-tame, we fix a K\"{a}hler form 
$\Omega$ on each $\pi^{-1}(U/\Z_p)$. Let $E\subset \pi^{-1}(U/\Z_p)$ be the exceptional set. Then note that on $\pi^{-1}(U/\Z_p)\setminus E$, $\Omega=d\gamma$ for some $1$-form 
$\gamma$. We fix a cut-off function $\rho$ which equals $1$ in a neighborhood of $E$ and equals zero near the
boundary of $\pi^{-1}(U/\Z_p)$, and define $\omega:=\pi^\ast\omega_0+ \epsilon d(\rho\gamma)$
for a sufficiently small $\epsilon>0$ (cf. \cite{C11}). Then $\omega=\pi^\ast\omega_0+\epsilon \Omega$ near
$E$; in particular, $\omega$ is K\"{a}hler near $E$, and $\omega=\omega_0$ outside each $\pi^{-1}(U/\Z_p)$. Over these regions, $J_n$ is clearly $\omega$-compatible. On the other hand, on each $\pi^{-1}(U/\Z_p)\setminus E$, 
$\omega$ is a small perturbation of $\omega_0$, over which $J_n$ is $\omega$-tame. 

By Gromov compactness, the sequence $F_{0,n}$ of $J_n$-holomorphic curves converges to a Gromov limit,
$\sum_i n_i C_i$, where $C_i$ is an irreducible holomorphic curve in $X$ and $n_i>0$, such that
$\sum_i n_i C_i$ has the same homology class of $F_{0,n}$, which is a multiple of $c_1(K_X)$. It follows easily 
that $\sum_i n_i C_i$ must be a fiber of the elliptic fibration on $X$. By the formula of $c_1(K_X)$ for elliptic surfaces (cf. \cite{FM}, Proposition 3.21), $c_1(K_X)=\frac{1}{6} T$, $\frac{1}{4} T$, and $\frac{1}{3} T$ respectively in the three cases of $X$, where $T$ is a regular fiber of the elliptic fibration. Since by Theorem 3.3(2), $c_1(K_X)=F_{0,n}$ or $\frac{1}{2} F_{0,n}$, it follows easily that $\sum_i n_i C_i=mF$ where  $F$ is a multiple fiber of the elliptic fibration. 

\end{proof}

We observe that in Case (3)(i), $c_1(K_X)=F_0$, where $F_0$ is clearly a primitive class. Hence in this case, we must have $m=1$ in Lemma 3.7. Similarly, in Case (3)(v), we have either $m=1$ or $m=2$. It follows easily that
if an order $7$ automorphism group of a fake projective plane belongs to Case (3)(i), the minimal resolution $X$
must have a multiple fiber representing the canonical class $c_1(K_X)$. In particular, if $X$ has no such a
multiple fiber, the order $7$ automorphism group must belong to Case (3)(v), and in this case, $X$ must have a
multiple fiber representing $2c_1(K_X)$. On the other hand, if $X$ does have a multiple fiber which represents $c_1(K_X)$, then one can easily see a plumbing in $X$ which is according to the graph $\Gamma_{7,1}$ in 
Figure 1. Simply note that the $(-3)$-sphere in each linear chain of the exceptional spheres in $X$ is a multi-section of the elliptic fibration, intersecting the multiple fiber transversely at one point, and the two $(-2)$-spheres must lie in a reduced fiber hence are disjoint from the multiple fiber. However, the following lemma implies that 
the order $7$ automorphism group in this case could belong to either Case (3)(i) or Case (3)(v).

\begin{lemma}
In a symplectic plumbing according to the graph $\Gamma_{7,1}$ in Figure 1, there always exists a symplectic plumbing according to the graph $\Gamma_{7,5}$ in Figure 1. Moreover, the surface $F_0$ in the latter plumbing
can be chosen to be in an arbitrarily small neighborhood of the surface $F_0$ in the former plumbing.
\end{lemma}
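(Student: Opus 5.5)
The plan is to build the new central surface of $\Gamma_{7,5}$ as a connected unramified double cover of the torus $F_0$ of $\Gamma_{7,1}$, pushed off into a small neighborhood of $F_0$. The two exceptional chains are identical, so they can be kept unchanged.

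Recall the two graphs.
\begin{itemize}
\item In $\Gamma_{7,1}$ the central vertex $F_0$ is a torus with $F_0\cdot F_0=0$. It meets, transversely and once each, the $(-3)$-spheres $F_{i,1}$ ($i=1,2,3$) that begin the three linear chains $(-3,-2,-2)$.
\item In $\Gamma_{7,5}$ the central vertex is again a torus of square $0$. It meets each of the three $(-3)$-spheres transversely at two points, and the three chains $(-3,-2,-2)$ are the same.
\end{itemize}
Homologically the new central surface should be $2F_0$. This is consistent: $(2F_0)^2=0$; $2F_0\cdot F_{i,1}=2$; and for the adjunction formula we use $c_1(K)\cdot F_0=F_0^2=0$. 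The last identity holds because in a $\Gamma_{7,1}$ plumbing arising as in Theorem 3.3(3)(i) we have $c_1(K_X)=F_0$. For the purposes of the lemma it also follows purely locally from adjunction, since $F_0$ is a symplectic torus of square $0$, so $K\cdot F_0=0$. Hence a class $2F_0$ represented by an embedded symplectic surface has genus $1$.

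\textbf{Step 1: local normal form.} By the Gompf perturbation already used in Theorem 3.3, we may assume the surfaces of the plumbing meet $\omega$-orthogonally. Since $F_0\cdot F_0=0$, the symplectic neighborhood theorem gives a neighborhood $N\cong F_0\times D^2_r$ of $F_0$ with $\omega=\omega_{F_0}+dx\wedge dy$. We can further arrange that $N\cap F_{i,1}=\{x_i\}\times D^2_r$, where $x_i=F_0\cap F_{i,1}$. This is the standard plumbing model: use a relative version of the neighborhood theorem along the $\omega$-orthogonal union $F_0\cup\bigcup_i F_{i,1}$, as in McDuff--Symington and Gay--Stipsicz. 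The $(-2)$-spheres are disjoint from $N$ once $r$ is small.

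\textbf{Step 2: the double cover.} Write $F_0=S^1_\theta\times S^1_\phi$, with the points $x_i$ in general position. For small $\epsilon>0$ define
$$
F_0'=\{(\theta,\phi,z)\in F_0\times D^2_r \;:\; z^2=\epsilon^2 e^{i\theta}\}.
$$
This is an embedded surface, since the two local branches $z=\pm\epsilon e^{i\theta/2}$ never meet. It is connected, because going once around $\theta$ interchanges the branches, so $F_0'$ is a torus double covering $F_0$. Locally $F_0'$ is a union of graphs of functions $F_0\to D^2$ whose derivatives are $O(\epsilon)$. Hence $\omega|_{F_0'}$ is close to the pullback of $\omega_{F_0}$, and $F_0'$ is symplectic for $\epsilon$ small. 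Moreover $F_0'$ lies in an arbitrarily small neighborhood of $F_0$, which gives the second assertion of the lemma.

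\textbf{Step 3: intersections and the new plumbing.} $F_0'$ meets each fiber $\{x_i\}\times D^2_r=F_{i,1}\cap N$ in exactly two points. Each intersection is transverse and positive, since a graph of small slope is transverse to the vertical disc and both surfaces are symplectic with complementary orientations. $F_0'$ is disjoint from all $(-2)$-spheres. It is homologous to $2F_0$ in $N$, so $F_0'\cdot F_0'=0$ and its genus is $1$. After another Gompf perturbation making the intersections $\omega$-orthogonal, the configuration $F_0'\cup\bigcup_{i,j}F_{i,j}$ is a symplectic divisor with normal crossings whose graph is exactly $\Gamma_{7,5}$. Its regular neighborhood is therefore a symplectic plumbing according to $\Gamma_{7,5}$ \cite{McR, GS}.

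The main care point is Step 1: making the local model simultaneously a product near $F_0$ and have the $F_{i,1}$ be vertical fibers. I would obtain it from the standard construction of plumbings in \cite{GS}, or alternatively by a $C^1$-small isotopy of $F_{i,1}$ near $x_i$, which does not affect symplecticity.
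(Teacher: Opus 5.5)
Your proposal is correct and is essentially the paper's proof. The paper also works in the Weinstein model $T^2\times D^2$, with the $(-3)$-spheres as the fibers $\{x_i\}\times D^2$, and takes the torus $(s,t)\mapsto(s,2t,r_0,t)$; this is exactly your constant-radius double section $z^2=\epsilon^2e^{i\theta}$, so $\omega$ restricts exactly (not just approximately) to the pullback of $\omega_{F_0}$ under the covering projection, and the torus meets each fiber transversely and positively in two points. Your extra care in arranging the fibers to be vertical and in making the new intersections $\omega$-orthogonal fills in details the paper takes for granted.
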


\begin{proof}
By Weinstein neighborhood theorem, the surface $F_0$ in the symplectic plumbing according to $\Gamma_{7,1}$,
which is an embedded torus of self-intersection $0$, has a standard neighborhood given by 
$(T^2\times D^2, \omega_0)$, where $\omega_0:=d\theta_1\wedge d\theta_2+ rdr\wedge d\theta$.
Here $(\theta_1,\theta_2)$ is the standard coordinate on $T^2$ and $(r,\theta)$ is the polar coordinate on
the disc $D^2$. Moreover, the three $(-3)$-spheres in the symplectic configuration are given by $\{x_i\}\times D^2$
in the neighborhood for three distinct points $x_i\in T^2$, $i=1,2,3$.

With the preceding understood, for any $r_0>0$, consider the following embedded torus $T$ of self-intersection $0$, which is the image of the embedding
$$
\varphi: (s,t)\mapsto (\theta_1,\theta_2, r_0,\theta)=(s, 2t, r_0, t).
$$
Note that $\varphi^\ast \omega_0=2 ds\wedge dt>0$ so that $T$ is symplectic, and $T$ intersects each disc 
$\{x_i\}\times D^2$ transversely and positively at two points. It is easy to see that there is a symplectic plumbing 
with $T$ being the surface $F_0$, which is according to the graph $\Gamma_{7,5}$. Furthermore, note that $T$ can be chosen arbitrarily close to the surface $F_0$ in the original plumbing by taking a sufficiently small $r_0>0$. This finishes the proof.

\end{proof}

With the preceding understood, we note that if $X$ has two multiple fibers with multiplicities $(2,3)$, then 
there is no multiple fiber in $X$ representing the canonical class $c_1(K_X)$, but $2c_1(K_X)$ is represented by
the multiple fiber with multiplicity $3$. On the other hand, in the remaining two cases, i.e., $X$ has two multiple 
fibers with multiplicities $(2,4)$ or $(3,3)$, there is a multiple fiber in $X$ representing $c_1(K_X)$, i.e., 
the fiber with multiplicity $4$ and $3$ respectively. We summarize our discussions in this subsection in the 
following theorem. 

\begin{theorem}
(1) Any order $3$ automorphism group of a fake projective plane belongs to Case (1) in Theorem 1.3.

(2) For an order $7$ automorphism group of a fake projective plane, if the minimal resolution $X$ of its quotient space is simply connected, i.e., $X$ is a Dolgachev surface, then the automorphism group must belong to Case (3)(v) in Theorem 1.3. If $\pi_1(X)$ is nontrivial, then $X$ contains two symplectic plumbings, one is according to the graph $\Gamma_{7,1}$ and the other according to $\Gamma_{7,5}$. In this case, the automorphism group could belong to either Case (3)(i) or Case (3)(v) in Theorem 1.3.
\end{theorem}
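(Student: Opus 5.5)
Part (1) is immediate from what is already established. By Theorem 1.3, a symplectic $\Z_3$-action on a symplectic $\Q$-homology $\C\P^2$ with positive canonical class has only one possibility, Case (1). An order $3$ automorphism of a fake projective plane $M$ is holomorphic for the Kähler structure, hence symplectic. It is pseudofree: the fixed point set of a finite order holomorphic automorphism consists of isolated points and smooth curves, and a fixed curve is ruled out by the Lefschetz count (three isolated fixed points) together with the local-representation analysis in \cite{C}, Section 4. Theorem 1.3 therefore applies and places the action in Case (1).

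For part (2), I would first pin down the two possible cases by local data. Keum's classification \cite{Keum2} gives the rotation numbers at the three fixed points, and these coincide with the common local representations $(1,3),(6,2),(4,5)$ of Cases (3)(i) and (3)(v). Since $p=7$, the action cannot belong to Cases (3)(ii)--(iv), which have different rotation numbers. Next, Lemma 3.7 gives a sequence of generic $J_n$ and divisors whose $F_{0,n}$ Gromov-converge to $mF$, with $F$ a multiple fiber of the elliptic fibration on $X$. In Case (3)(i), $c_1(K_X)=F_{0,n}$ is primitive, so $m=1$ and some multiple fiber represents $c_1(K_X)$. In Case (3)(v), $c_1(K_X)=\frac12 F_{0,n}$, so a multiple fiber represents $c_1(K_X)$ or $2c_1(K_X)$.

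The next step compares these with the elliptic surface data. Using $c_1(K_X)=\frac16T,\frac14T,\frac13T$ for multiplicities $(2,3),(2,4),(3,3)$, the multiple fibers represent $\frac12T,\frac13T$, then $\frac12T,\frac14T$, then $\frac13T,\frac13T$. In the Dolgachev case $(2,3)$ no multiple fiber equals $c_1(K_X)=\frac16T$, so Case (3)(i) is impossible and the action lies in Case (3)(v); this is consistent, since the multiplicity-$3$ fiber is $\frac13T=2c_1(K_X)$. When $\pi_1(X)\neq 1$ there is a multiple fiber $F$ representing $c_1(K_X)$, namely the multiplicity-$4$ fiber, respectively a multiplicity-$3$ fiber. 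For this case I would exhibit a $\Gamma_{7,1}$ plumbing directly. Take $F_0=F$, a torus of self-intersection $0$. Each $(-3)$-sphere of the three exceptional chains is a multisection meeting $F$ transversally once, and the $(-2)$-spheres lie in reduced fibers and so are disjoint from $F$. Lemma 3.8 then produces a $\Gamma_{7,5}$ plumbing near it, so $X$ contains both plumbings.

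The main obstacle is this fiber-configuration claim: each $(-3)$-sphere meets $F$ exactly once and transversally, and each $(-2)$-sphere misses $F$. For the $(-3)$-sphere I would compute its intersection with $c_1(K_X)$ by adjunction, giving $1$, and note that it is not contained in a fiber since it is not a $(-2)$-curve, so it meets $F$ positively with total intersection $1$. For a $(-2)$-sphere, adjunction gives $K\cdot S=0$, so it has zero intersection with fibers, lies in a fiber, and can be separated from the multiple fiber $F$. Finally, "could belong to either" is a non-exclusion statement: both plumbings exist, so the invariant-curve data cannot distinguish the two cases, and no further argument is needed.
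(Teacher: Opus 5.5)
Your proposal is correct and follows the paper's argument essentially step for step. For $p=3$ you appeal to Theorem 1.3. For $p=7$ you reduce to Cases (3)(i)/(3)(v) by local data, use the Gromov limit of Lemma 3.7 with the primitivity of $F_0$ in Case (3)(i), compare with $c_1(K_X)=\frac16T,\frac14T,\frac13T$, build the $\Gamma_{7,1}$ configuration directly on a multiple fiber representing $c_1(K_X)$, and apply Lemma 3.8 for $\Gamma_{7,5}$.

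Your adjunction check usefully fills in what the paper only asserts: $K_X\cdot S=1$ for the $(-3)$-spheres, and $K_X\cdot S=0$ forces the $(-2)$-spheres into fibers other than the torus $F$. The one soft spot is the pseudofreeness remark in part (1). The Lefschetz count only gives $\chi(\mathrm{Fix})=3$ and does not by itself exclude a fixed curve. It is cleaner to rely on the known fact, taken for granted in the paper via Keum's work, that these automorphisms have only isolated fixed points.
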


\section{From symplectic fillings to symplectic $4$-manifolds with group actions}

In this section, we describe the reverse-engineering construction which recovers the symplectic 
$\Q$-homology $\C\P^2$ with positive canonical class, equipped with a pseudofree symplectic 
$\Z_p$-action of prime order $p>2$, from a symplectic filling of the corresponding contact plumbed manifold. Under the most general assumption, this procedure actually produces a symplectic $4$-manifold on the Bogomolov-Miyaoka-Yau line, which is of an independent interest. By combining the relevant results from the previous section, we give a proof of Theorem 1.1. 
 
\subsection{The Li-Mak contact structure of a plumbed manifold}
We first recall the relevant results of Li and Mak in \cite{LM}, which will be formulated slightly 
differently here. Let $Y$ be an oriented graph manifold. We consider the set of weighted graphs 
$\Gamma$ which has the following properties:
\begin{itemize}
\item The intersection matrix of $\Gamma$ is non-singular and not negative definite.
\item The plumbing $P_\Gamma$ according to $\Gamma$ is a plumbing of symplectic surfaces with positive normal crossings, and $Y=-\partial P_\Gamma$ as oriented manifolds.
\end{itemize}
Given any such a weighted graph $\Gamma$, Li and Mak \cite{LM} showed that there is a symplectic structure on $P_\Gamma$ such that the plumbing surfaces in $P_\Gamma$ form a symplectic divisor 
$D_\Gamma$ with normal crossings, and $Y$ is a concave contact boundary of $P_\Gamma$, where the contact structure on $Y$ depends only on the weighted graph $\Gamma$ up to a contact isotopy. 

Note that for a given graph manifold $Y$, if such a set of weighted graphs $\Gamma$ is nonempty, the plumbings $P_\Gamma$ are not necessarily uniquely determined by $Y$, but are related to each other via certain standard moves, although if the plumbing $P_\Gamma$ is in a certain normal form,
the graph $\Gamma$ is uniquely determined by the diffeomorphism class of $Y$ (see Neumann \cite{N1}). With this understood, in our consideration here we do not require $P_\Gamma$ to be in a normal form, but rather, in view of the work of Li and Mak \cite{LM}, we shall regard $P_\Gamma$ as an extra structure imposed on $Y$, i.e., a plumbing structure, and call the graph manifold $Y$ a {\bf plumbed manifold} and denote it by $Y_\Gamma$ (if the plumbing is done according to the graph 
$\Gamma$). With this understood, note that $Y_\Gamma$ is endowed with a canonical contact structure by Li and Mak \cite{LM}, i.e., the Li-Mak contact structure $\xi_{LM}$.  We shall call 
$(Y_\Gamma,\xi_{LM})$ a {\bf contact plumbed manifold}. 

In general, the Li-Mak contact structures are not
well understood, particularly regarding its tightness or fillability, because $(Y_\Gamma,\xi_{LM})$ is
a concave contact boundary of $P_\Gamma$ so it does not imply tightness and fillability of the contact structure. With this said, we note that under suitable assumptions, there is an open book decomposition supporting the Li-Mak contact structure, which is canonically constructed from the plumbing (cf. \cite{LMin}, see also \cite{G1,G2, EO, GS}). 

The Li-Mak contact structures are closely connected to the problem of embedding 
certain symplectic divisors with normal crossings into a closed symplectic $4$-manifold. More 
concretely, it is immediately evident that if the contact plumbed manifold $(Y_\Gamma,\xi_{LM})$ 
admits a strong symplectic filling $W_\Gamma$, then $X_\Gamma:=W_\Gamma\cup_{(Y_\Gamma, \xi_{LM})}
P_\Gamma$ is a closed symplectic $4$-manifold containing an embedding of the symplectic divisor 
$D_\Gamma$. On the other hand, if $X$ is a closed symplectic $4$-manifold which contains 
$D_\Gamma$ as a symplectic divisor with normal crossings, then according to Li and Mak \cite{LM},
after deforming the symplectic structure on $X$ if necessary, while keeping the divisor $D_\Gamma$ symplectic, with respect to the new symplectic structure on $X$, the divisor $D_\Gamma$ has a regular neighborhood $V_\Gamma\subset X$ with concave contact boundary 
$(Y_\Gamma,\xi_{LM})$, which can be taken to be arbitrarily small. With this understood, note that
the complement $W_\Gamma:=X\setminus int(V_\Gamma)$ is a strong symplectic filling of 
$(Y_\Gamma,\xi_{LM})$. In summary, the embeddability of the symplectic divisor $D_\Gamma$ in a closed symplectic $4$-manifold is equivalent to the strong symplectic fillability of the contact plumbed 
manifold $(Y_\Gamma,\xi_{LM})$.

With the preceding understood, we come back to the discussions on symplectic $\Z_p$-actions on a $\Q$-homology $\C\P^2$. For each weighted graph $\Gamma$ in Figure 1, from the way it was constructed, it is clear that the corresponding symplectic plumbing $P_\Gamma$ has an intersection matrix which is non-singular and not negative definite. Consequently, for each $\Gamma$ in Figure 1, there is an associated contact plumbed manifold 
$(Y_\Gamma,\xi_{LM})$. Note that here we do not assume that the graph $\Gamma$ is realized by a symplectic 
$\Z_p$-action on a $\Q$-homology $\C\P^2$. In other words, the contact plumbed manifolds 
$(Y_\Gamma,\xi_{LM})$ are those which could potentially arise from a symplectic $\Z_p$-action on a $\Q$-homology $\C\P^2$. 

In view of Theorems 3.3 and 3.9, the following is straightforward. 

\begin{theorem}
(1) For each weighted graph $\Gamma$ in Figure 1, if the corresponding symplectic $\Z_p$-action can be realized,
then the contact plumbed manifold $(Y_\Gamma,\xi_{LM})$ must be strongly symplectically fillable; in particular,
it must be tight. 

(2) For $\Gamma=\Gamma_3, \Gamma_{7,1}$ and $\Gamma_{7,5}$ in Figure 1,  the contact plumbed manifold 
$(Y_\Gamma,\xi_{LM})$ is strongly symplectically fillable.
\end{theorem}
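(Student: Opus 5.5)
For part (1), I would start from a realization of the $\Z_p$-action on a symplectic $\Q$-homology $\C\P^2$ $(M,\omega)$ and take the minimal symplectic resolution $X$ of $M/\Z_p$. By Theorem 3.3, applied with a generic $J\in\J$ as in Theorem 1.3, $X$ contains a symplectic divisor $D_\Gamma$ with normal crossings whose plumbing graph is the given $\Gamma$ of Figure 1. To arrange that the components meet $\omega$-orthogonally, I would perturb slightly (cf. \cite{Gompf}).

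The intersection matrix of $\Gamma$ is nonsingular and not negative definite. The result of Li and Mak \cite{LM}, as recalled above, then lets me deform the symplectic structure on $X$ while keeping $D_\Gamma$ symplectic. After the deformation, $D_\Gamma$ has an arbitrarily small regular neighborhood $V_\Gamma$ with concave contact boundary $(Y_\Gamma,\xi_{LM})$. Its complement $W_\Gamma=X\setminus \mathrm{int}(V_\Gamma)$ is then a strong symplectic filling of $(Y_\Gamma,\xi_{LM})$.

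Tightness follows from the standard fact that strongly (indeed weakly) fillable contact structures are tight, by Gromov--Eliashberg. The main point to check is that the deformation in \cite{LM} only requires the GS-criterion to be satisfied by suitable areas of the components. This is possible precisely because the matrix is not negative definite, so no further compatibility with the global form on $X$ is needed.

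For part (2) I would exhibit, for each of $\Gamma_3,\Gamma_{7,1},\Gamma_{7,5}$, a closed symplectic $4$-manifold containing $D_\Gamma$ as a symplectic divisor with normal crossings, and then invoke the embeddability/fillability equivalence just described. For $\Gamma_3$, take a fake projective plane with a $\Z_3$-automorphism (these exist by \cite{CS}, cf. \cite{PY1}). By Theorem 3.9(1) the action belongs to Case (1), and Theorem 3.3 produces the $\Gamma_3$-plumbing in the resolution $X$. For $\Gamma_{7,5}$, take a fake projective plane with a $\Z_7$-automorphism, e.g. Keum's example whose resolution is a Dolgachev surface. Theorem 3.9(2) forces Case (3)(v), so again Theorem 3.3 gives the $\Gamma_{7,5}$-plumbing.

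For $\Gamma_{7,1}$, I would use a fake projective plane with a $\Z_7$-automorphism whose resolution $X$ has multiple fibers $(2,4)$; this is known to be realized by \cite{CS}. By the discussion preceding Theorem 3.9, the multiplicity-$4$ fiber represents $c_1(K_X)$. The $(-3)$-spheres of the three exceptional chains are multisections meeting that fiber transversely once, and the $(-2)$-spheres lie in other fibers. This gives a (holomorphic, hence symplectic after perturbation) $\Gamma_{7,1}$-configuration.

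The main obstacle I anticipate is making sure the holomorphic configurations are genuine symplectic divisors for some symplectic form, given that the holomorphic structure need not be generic in $\J$. This is handled by Lemma 3.7, which gives a K\"ahler-tame $\omega$ on $X$ together with the Gromov-limit description of $F_0$. Alternatively, since all curves are complex with transverse positive intersections, one can use a K\"ahler form on $X$ directly together with Gompf's perturbation.
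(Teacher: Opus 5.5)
Your proposal is correct and follows the paper's own route. Part (1) is Theorem 3.3 combined with the Li--Mak equivalence between embeddability of $D_\Gamma$ and strong fillability. Part (2) uses the fake projective planes with $\Z_3$- and $\Z_7$-automorphisms as summarized in Theorem 3.9, with the $\Gamma_{7,1}$-configuration coming from the multiplicity-$4$ fiber in the $(2,4)$ case. Your remark that a K\"ahler form on $X$ plus Gompf's perturbation suffices for the holomorphic $\Gamma_{7,1}$-configuration is sound. Lemma 3.7 is not actually needed for that step; the paper uses it only to decide which case a given $\Z_7$-automorphism belongs to.
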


We also note that by Lemma 3.8, any strong symplectic filling of $(Y_\Gamma,\xi_{LM})$, where 
$\Gamma=\Gamma_{7,1}$, canonically gives rise to a strong symplectic filling of $(Y_\Gamma,\xi_{LM})$ for $\Gamma=\Gamma_{7,5}$.

\subsection{A reverse-engineering procedure} 
Let $\Gamma$ be any given weighted graph in Figure 1, and let $P_\Gamma$ be the corresponding 
plumbing, which is a connected, oriented $4$-manifold with boundary.  We note that as oriented manifolds, $Y_\Gamma=-\partial P_\Gamma$. As a plumbing, $P_\Gamma$ is homotopy equivalent to a CW $2$-complex, so that $H_k(P_\Gamma)=H^k(P_\Gamma)=0$ for any $k\geq 3$. On the other hand, it follows easily that $H_1(P_\Gamma)$ and $H_2(P_\Gamma)$ are both free Abelian groups, 
with $H_1(P_\Gamma)$ being the direct sum of $H_1(F_0)$ and the group generated by the loops in the graph $\Gamma$, and $H_2(P_\Gamma)$ is generated by the surfaces $F_0,F_{i,j}$ which form
a basis of $H_2(P_\Gamma)$ (see Theorem 3.3 for the notations $F_0,F_{i,j}$). 

We observe the following consequence of the fact that the intersection matrix of $P_\Gamma$ is non-singular: through Poincare duality $H^2(P_\Gamma,\Q)$ is identified with $H_2(P_\Gamma,Y_\Gamma,\Q)$, so that the intersection matrix of $P_\Gamma$ being non-singular implies that $j_\ast: H_2(P_\Gamma,\Q)\rightarrow H_2(P_\Gamma,Y_\Gamma,\Q)$ is an isomorphism in the long exact sequence associated to the pair $(P_\Gamma,Y_\Gamma)$. 
Consequently, $i_\ast: H_2(Y_\Gamma,\Q)\rightarrow H_2(P_\Gamma,\Q)$ must be the zero map and 
$i_\ast: H_1(Y_\Gamma,\Q)\rightarrow H_1(P_\Gamma,\Q)$ must be injective. With 
$H_1(P_\Gamma,Y_\Gamma,\Q)=H^3(P_\Gamma,\Q)=0$, we conclude that 
$i_\ast: H_1(Y_\Gamma,\Q)\rightarrow H_1(P_\Gamma,\Q)$ is an isomorphism. 

\begin{lemma}
$i_\ast: H_2(Y_\Gamma,\Q)\rightarrow H_2(P_\Gamma,\Q)$ is the zero map and 
$i_\ast: H_1(Y_\Gamma,\Q)\rightarrow H_1(P_\Gamma,\Q)$ is an isomorphism. 
\end{lemma}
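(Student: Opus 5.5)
The plan is to read both claims off the rational long exact sequence of the pair $(P_\Gamma,Y_\Gamma)$, using Poincaré--Lefschetz duality for the compact oriented $4$-manifold $P_\Gamma$ with boundary $\partial P_\Gamma=-Y_\Gamma$. Orientation plays no role in the rational ranks. The relevant segment is
$$
H_2(Y_\Gamma,\Q)\xrightarrow{i_\ast} H_2(P_\Gamma,\Q)\xrightarrow{j_\ast} H_2(P_\Gamma,Y_\Gamma,\Q)\xrightarrow{\partial} H_1(Y_\Gamma,\Q)\xrightarrow{i_\ast} H_1(P_\Gamma,\Q)\rightarrow H_1(P_\Gamma,Y_\Gamma,\Q).
$$

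First we identify $j_\ast$ with the intersection form. Lefschetz duality gives $H_2(P_\Gamma,Y_\Gamma,\Q)\cong H^2(P_\Gamma,\Q)$, and $H^2(P_\Gamma,\Q)\cong \mathrm{Hom}(H_2(P_\Gamma,\Q),\Q)$ by universal coefficients. Under these identifications, $j_\ast$ sends a class $x$ to the functional $y\mapsto x\cdot y$. In the basis $F_0,F_{i,j}$ of $H_2(P_\Gamma)$, this map is therefore the intersection matrix of $\Gamma$. That matrix is non-singular (this is the property of the graphs in Figure 1 already noted), so $j_\ast$ is an isomorphism.

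Exactness at $H_2(P_\Gamma,\Q)$ gives $\mathrm{im}\, i_\ast=\ker j_\ast=0$, so $i_\ast$ on $H_2$ is zero. Surjectivity of $j_\ast$ gives $\partial=0$, and exactness at $H_1(Y_\Gamma,\Q)$ then makes $i_\ast: H_1(Y_\Gamma,\Q)\rightarrow H_1(P_\Gamma,\Q)$ injective.

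For surjectivity of $i_\ast$ on $H_1$, use $H_1(P_\Gamma,Y_\Gamma,\Q)\cong H^3(P_\Gamma,\Q)$. This group vanishes because $P_\Gamma$ deformation retracts onto the $2$-complex formed by the plumbed surfaces. Hence $i_\ast$ on $H_1$ is an isomorphism.

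The one step that needs care is the identification of $j_\ast$ with the intersection matrix. This is standard: for closed surfaces $x,y$ in the interior, the relative class $j_\ast x$ pairs with $y$ to give the algebraic intersection number. Everything else is routine diagram chasing.
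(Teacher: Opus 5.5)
Your proposal is correct and is essentially the paper's own argument. The paper likewise identifies $H_2(P_\Gamma,Y_\Gamma,\Q)$ with $H^2(P_\Gamma,\Q)$ by duality and uses non-singularity of the intersection matrix to make $j_\ast$ an isomorphism; it then reads off $i_\ast=0$ on $H_2$ and injectivity on $H_1$ from exactness, and gets surjectivity from $H_1(P_\Gamma,Y_\Gamma,\Q)=H^3(P_\Gamma,\Q)=0$ — your explicit identification of $j_\ast$ with the intersection matrix in the basis $F_0,F_{i,j}$ simply spells out a step the paper leaves implicit.
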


Next, let $W_\Gamma$ be any connected, oriented $4$-manifold with $Y_\Gamma=
\partial W_\Gamma$, and we set $X_\Gamma:=W_\Gamma\cup_{Y_\Gamma} P_\Gamma$ to be the
closed oriented $4$-manifold. 

\begin{lemma}
$H_1(X_\Gamma,\Q)=0$ if and only if $H_1(W_\Gamma,\Q)=0$, and $i_\ast: H_2(P_\Gamma,\Q)\rightarrow H_2(X_\Gamma,\Q)$ is an isomorphism if and only if $i_\ast: H_2(Y_\Gamma,\Q)\rightarrow H_2(W_\Gamma,\Q)$ is surjective.
\end{lemma}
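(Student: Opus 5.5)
The plan is to run the Mayer--Vietoris sequence with $\Q$-coefficients for $X_\Gamma=W_\Gamma\cup_{Y_\Gamma}P_\Gamma$ and feed in Lemma 4.2, which gives that $H_2(Y_\Gamma,\Q)\rightarrow H_2(P_\Gamma,\Q)$ is zero and $H_1(Y_\Gamma,\Q)\rightarrow H_1(P_\Gamma,\Q)$ is an isomorphism. We use the segment
$$
H_2(Y_\Gamma)\xrightarrow{(i_\ast,i_\ast)} H_2(W_\Gamma)\oplus H_2(P_\Gamma)\rightarrow H_2(X_\Gamma)\xrightarrow{\partial} H_1(Y_\Gamma)\xrightarrow{(i_\ast,i_\ast)} H_1(W_\Gamma)\oplus H_1(P_\Gamma)\rightarrow H_1(X_\Gamma)\rightarrow \tilde H_0(Y_\Gamma)=0,
$$
where $\tilde H_0(Y_\Gamma)=0$ because $Y_\Gamma$ is connected.

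First, the $H_1$ statement. Since $H_1(Y_\Gamma)\rightarrow H_1(P_\Gamma)$ is an isomorphism, the map $H_1(Y_\Gamma)\rightarrow H_1(W_\Gamma)\oplus H_1(P_\Gamma)$ is injective. Hence $\partial=0$ on $H_2(X_\Gamma)$, and $H_1(X_\Gamma)$ is the cokernel of that map. Send $(w,p)$ to $w-i_\ast(i_\ast^{-1}p)$, using the inverse of the isomorphism onto $H_1(P_\Gamma)$. This identifies the cokernel with $H_1(W_\Gamma)$, so $H_1(X_\Gamma,\Q)\cong H_1(W_\Gamma,\Q)$. In particular one vanishes if and only if the other does.

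Second, the $H_2$ statement. Because $\partial=0$, the map $H_2(W_\Gamma)\oplus H_2(P_\Gamma)\rightarrow H_2(X_\Gamma)$ is surjective. Its kernel is the image of $H_2(Y_\Gamma)$, which by Lemma 4.2 is $\{(i_\ast y,0)\}$. So $H_2(X_\Gamma)\cong H_2(W_\Gamma)/i_\ast H_2(Y_\Gamma)\oplus H_2(P_\Gamma)$, with $H_2(P_\Gamma)$ mapping in by $i_\ast$. Injectivity of $i_\ast\colon H_2(P_\Gamma)\rightarrow H_2(X_\Gamma)$ then holds always: if $(0,p)$ lies in the kernel, it equals some $(i_\ast y,0)$, forcing $p=0$. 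Surjectivity holds exactly when the quotient $H_2(W_\Gamma)/i_\ast H_2(Y_\Gamma)$ vanishes, that is, exactly when $H_2(Y_\Gamma,\Q)\rightarrow H_2(W_\Gamma,\Q)$ is surjective. This is the second equivalence.

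There is no real obstacle. The only points needing care are the sign conventions in the Mayer--Vietoris maps, which do not affect kernels or images, and keeping track of which direction of the $H_2$ equivalence needs which exactness step.
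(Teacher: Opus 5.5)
Your proof is correct, and it reaches the $H_2$ statement by a somewhat different route than the paper.

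\textbf{The $H_1$ equivalence.} Here you argue exactly as the paper does: Mayer--Vietoris together with the isomorphism $H_1(Y_\Gamma,\Q)\to H_1(P_\Gamma,\Q)$ from Lemma 4.2. You are just more explicit, and you actually obtain an isomorphism $H_1(X_\Gamma,\Q)\cong H_1(W_\Gamma,\Q)$.

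\textbf{The $H_2$ equivalence, paper's route.} The paper switches to the long exact sequence of the pair $(X_\Gamma,P_\Gamma)$ together with excision $H_\ast(X_\Gamma,P_\Gamma)\cong H_\ast(W_\Gamma,Y_\Gamma)$.
\begin{itemize}
\item Injectivity comes from the connecting map $H_3(X_\Gamma,P_\Gamma)\to H_2(P_\Gamma)$ factoring through the zero map $H_2(Y_\Gamma,\Q)\to H_2(P_\Gamma,\Q)$.
\item Surjectivity is converted into injectivity of $\delta_\ast\colon H_2(W_\Gamma,Y_\Gamma,\Q)\to H_1(Y_\Gamma,\Q)$. The paper then translates this, invoking Poincar\'e duality, into surjectivity of $H_2(Y_\Gamma,\Q)\to H_2(W_\Gamma,\Q)$. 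Exactness of the $(W_\Gamma,Y_\Gamma)$ sequence alone would suffice for that last step.
\end{itemize}

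\textbf{The $H_2$ equivalence, your route.} You stay inside a single Mayer--Vietoris sequence. The vanishing of $\partial$ that you established for $H_1$ is reused to get the splitting $H_2(X_\Gamma,\Q)\cong H_2(W_\Gamma,\Q)/i_\ast H_2(Y_\Gamma,\Q)\oplus H_2(P_\Gamma,\Q)$. Both the automatic injectivity and the surjectivity criterion can be read off from this at once, with no excision or duality.

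\textbf{Comparison.} The two arguments carry the same information, since Mayer--Vietoris is itself built from the pair sequences and excision. Yours is more self-contained and exhibits the cokernel of $i_\ast\colon H_2(P_\Gamma,\Q)\to H_2(X_\Gamma,\Q)$ explicitly. The paper's relative formulation phrases the obstruction purely in terms of the pair $(W_\Gamma,Y_\Gamma)$. That matches the remark following the lemma, which uses $H_3(W_\Gamma,Y_\Gamma,\Q)=H^1(W_\Gamma,\Q)$.
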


We remark that if $H_1(W_\Gamma,\Q)=0$, then $H_3(W_\Gamma,Y_\Gamma,\Q)=H^1(W_\Gamma,\Q)=0$, which implies that $i_\ast: H_2(Y_\Gamma,\Q)\rightarrow H_2(W_\Gamma,\Q)$ is injective.
Hence $H_1(X_\Gamma,\Q)=0$ and $i_\ast: H_2(P_\Gamma,\Q)\rightarrow H_2(X_\Gamma,\Q)$ is an isomorphism if and only if $H_1(W_\Gamma,\Q)=0$ and $i_\ast: H_2(Y_\Gamma,\Q)\rightarrow H_2(W_\Gamma,\Q)$ is an isomorphism. 

\begin{proof}
By the fact that $i_\ast: H_1(Y_\Gamma,\Q)\rightarrow H_1(P_\Gamma,\Q)$ is an isomorphism, it follows easily from the Mayer-Vietoris sequence that $H_1(X_\Gamma,\Q)=0$ if and only if $H_1(W_\Gamma,\Q)=0$.

For the second assertion, we first note that $i_\ast: H_2(P_\Gamma,\Q)\rightarrow H_2(X_\Gamma,\Q)$ is always injective. This is because $\delta_\ast: H_3(X_\Gamma,P_\Gamma,\Q)\rightarrow H_2(P_\Gamma,\Q)$ in the long exact sequence has zero image. To see this, we note that 
by excision, $H_3(X_\Gamma,P_\Gamma,\Q)=H_3(W_\Gamma,Y_\Gamma,\Q)$, so that the image of 
$\delta_\ast: H_3(X_\Gamma,P_\Gamma,\Q)\rightarrow H_2(P_\Gamma,\Q)$ is the same as the image of the composition of maps $\delta_\ast: H_3(W_\Gamma,Y_\Gamma,\Q)\rightarrow H_2(Y_\Gamma,\Q)$ and $i_\ast:  H_2(Y_\Gamma,\Q)\rightarrow H_2(P_\Gamma,\Q)$. Our claim follows immediately from the fact that $i_\ast: H_2(Y_\Gamma,\Q)\rightarrow H_2(P_\Gamma,\Q)$ is the zero map (cf. Lemma 4.2).  

With the preceding understood, we note from the long exact sequence that $i_\ast: H_2(P_\Gamma,\Q)\rightarrow H_2(X_\Gamma,\Q)$ is surjective if and only if $\delta_\ast: 
H_2(X_\Gamma,P_\Gamma,\Q)\rightarrow H_1(P_\Gamma,\Q)$ is injective. By excision and the fact that $i_\ast: H_1(Y_\Gamma,\Q)\rightarrow H_1(P_\Gamma,\Q)$ is an isomorphism, the latter is equivalent to $\delta_\ast: H_2(W_\Gamma,Y_\Gamma,\Q)\rightarrow H_1(Y_\Gamma,\Q)$ is injective. Under Poincare duality, this is equivalent to $i_\ast: H_2(Y_\Gamma,\Q)\rightarrow H_2(W_\Gamma,\Q)$ is surjective.

\end{proof}

To proceed further, we shall next consider a smooth oriented $4$-orbifold with boundary 
$-Y_\Gamma$, to be denoted by $\overline{P}_\Gamma$, which is constructed from the plumbing $P_\Gamma$ as follows. Recall from the proof of Theorem 3.3 that in the configuration of surfaces in
$P_\Gamma$, there are three disjoint linear chains of spheres which correspond to the exceptional divisors in the minimal resolution of the three singularities of the quotient orbifold of the $\Z_p$-action the graph $\Gamma$ is associated to. If we remove a small regular neighborhood of the three linear chains of spheres from $P_\Gamma$, we get a $4$-manifold $\Omega_\Gamma$, which has boundary $-Y_\Gamma$ plus three lens spaces, each with a $\Z_p$-fundamental group. The 
$4$-orbifold $\overline{P}_\Gamma$ is obtained from $\Omega_\Gamma$ by capping off each of the lens spaces with an appropriate cone $B^4/\Z_p$. We remark that if we endow $P_\Gamma$ with a symplectic structure via symplectic plumbing, $\overline{P}_\Gamma$ is naturally a symplectic $4$-orbifold.

\begin{lemma}
There exists an oriented smooth $4$-manifold $\tilde{P}_\Gamma$ with a connected boundary 
$-\tilde{Y}_\Gamma$, which is equipped with a smooth $\Z_p$-action, such that $\overline{P}_\Gamma$ is the quotient orbifold of $\tilde{P}_\Gamma$ modulo the $\Z_p$-action. Moreover,
the homomorphism $i_\ast: \pi_1(\tilde{Y}_\Gamma)\rightarrow \pi_1(\tilde{P}_\Gamma)$ is surjective, and the map $i_\ast: H_1(\tilde{Y}_\Gamma,\Q)\rightarrow H_1(\tilde{P}_\Gamma,\Q)$ is an isomorphism. 
\end{lemma}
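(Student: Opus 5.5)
The plan is to construct $\tilde{P}_\Gamma$ directly as an equivariant regular neighborhood of an abstract model of the invariant curve $C$ of Theorem 1.3, rather than trying to find a $p$-fold cover of $\Omega_\Gamma$ by hand. Recall from the proof of Theorem 3.3 that $\Gamma$ was obtained from two pieces of data. The first is the $G$-equivariant parametrization $f:\Sigma\rightarrow C$ together with the $G$-action on $\Sigma$. The second is the local holomorphic models of $C$ at the three fixed points, with the specified rotation numbers, singularity types and self-intersection $d^2$. The surfaces of $P_\Gamma$ arise from these data by equivariant blow-up, quotient, minimal resolution (Lemmas 3.1, 3.2) and blowing down $(-1)$-spheres. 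All of these data make sense abstractly, without assuming the action on a closed $M$ exists. I will therefore first build a closed surface $\Sigma$ with a $\Z_p$-action having the prescribed fixed points $z_j$ and generators $g_j$. Such an action exists precisely because the Global Quotient Criterion $\sum_j c_j=0 \pmod p$ holds in each case: it is exactly what allows the orbifold fundamental group of the $2$-orbifold of genus $g_{|\Sigma/G|}$ with $k$ cone points to surject onto $\Z_p$, sending the $j$-th cone loop to $g_1^{c_j}$.

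Next I will form $\tilde{P}_\Gamma$ as a plumbing-type neighborhood of the singular curve $C$. Over $\Sigma$ minus small invariant discs around the $z_j$, I take an equivariant $D^2$-bundle, whose Euler number is chosen so that, after all local corrections, the total self-intersection is $d^2$. Near each fixed point $q_i$ I glue in an invariant ball $B^4\subset\C^2$ with the linear action of the given rotation numbers, containing the union of the prescribed equivariant holomorphic germs (nodes, tangent branches, cusps, as listed in Theorem 1.3). Gluing the bundle to these balls along the solid tori neighborhoods of the branch links, equivariantly, gives a $4$-manifold $\tilde{P}_\Gamma$ with a $\Z_p$-action that is free off the three ball centres. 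Its quotient is then a neighborhood of $C/\Z_p$ with three cone points. The key verification is that this quotient is diffeomorphic to $\overline{P}_\Gamma$, which amounts to re-running the computation in the proof of Theorem 3.3. Performing the equivariant blow-ups inside the balls and applying Lemmas 3.1 and 3.2 reproduces exactly the same Seifert invariants and the same $e_0$, because these were computed purely from the local models and $d^2$. Hence the minimal resolution of the quotient is $P_\Gamma$, and collapsing the three chains gives $\overline{P}_\Gamma$. I expect this identification to be the main obstacle. One has to check carefully that the gluing is well defined equivariantly at singular points with several branches, and that no twisting ambiguity changes the resulting plumbing; for the latter I would appeal to the uniqueness of plumbings with given graph.

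For the fundamental group, $\tilde{P}_\Gamma$ deformation retracts to the singular curve $C$, which is $\Sigma$ with the branch points over each $q_i$ identified. So $\pi_1(\tilde{P}_\Gamma)$ is generated by $\pi_1(\Sigma)$ together with one extra loop for each pair of branches meeting at the same $q_i$. The boundary $\tilde{Y}_\Gamma$ contains a circle-bundle section over $\Sigma$ minus discs, which already maps onto $\pi_1(\Sigma)$ since the boundary circles of the removed discs bound discs in $\tilde{P}_\Gamma$. Near a multi-branch point, a path in $\partial B^4=S^3$ from one branch link to another lies in $\tilde{Y}_\Gamma$ and realizes the extra loop. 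Hence $i_\ast$ on $\pi_1$ is surjective, and so $H_1(\tilde{Y}_\Gamma,\Q)\rightarrow H_1(\tilde{P}_\Gamma,\Q)$ is surjective. For injectivity I will use the argument preceding Lemma 4.2. Here $H_2(\tilde{P}_\Gamma,\Q)$ is spanned by the class of $C$, with $C\cdot C=d^2\neq 0$, so the intersection form is nonsingular. Hence $j_\ast: H_2(\tilde{P}_\Gamma,\Q)\rightarrow H_2(\tilde{P}_\Gamma,\tilde{Y}_\Gamma,\Q)$ is an isomorphism, and the exact sequence of the pair shows that $H_1(\tilde{Y}_\Gamma,\Q)\rightarrow H_1(\tilde{P}_\Gamma,\Q)$ is injective. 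Finally, connectedness of $\tilde{Y}_\Gamma$ follows because $C$ is connected, being the image of the connected surface $\Sigma$, and the boundary of a regular neighborhood of a connected curve with nonzero self-intersection is connected.
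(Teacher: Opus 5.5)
Your proof is correct, and the manifold you build ends up being the paper's, but you reach it from the opposite direction.

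\textbf{The paper's route (downstairs first).} It deletes the three chains from $P_\Gamma$ and writes $\Omega_\Gamma$ as $S\times D^2$ (with $S$ the punctured $F_0$) plus collars on three lens spaces. The Global Quotient Criterion, through the covering lemma it quotes, then gives a connected $p$-fold cover $\tilde S\to S$ whose restriction to $\partial S\times D^2$ extends to the standard covers $\s^3\to L(p,q)$. It sets $\tilde P_\Gamma=\tilde S\times D^2\cup 3B^4$, with linear actions on the balls. In effect it only needs a homomorphism $\pi_1(\Omega_\Gamma)\to\Z_p$ that is an isomorphism on each lens space. So ``$\overline P_\Gamma$ is the quotient'' is automatic, and neither holomorphic germs nor framing questions ever arise.

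\textbf{Your route (upstairs first).} You build the $\Z_p$-manifold from $\Sigma$ and the germs of Theorem 1.3, so you must re-run the resolution of Theorem 3.3 to identify the quotient. That is the step you correctly flag as the main work. In exchange you get an explicit invariant singular curve $C$, with its local germs and $C\cdot C=d^2$. The paper invokes such a curve, both in this lemma and in the proof of Theorem 1.1, without spelling it out.

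\textbf{A bookkeeping correction.} The action on $\Sigma^\circ\times D^2$ is free with fixed normal weights. Hence any equivariant regluing is the pullback of a twist of the quotient bundle over $S$, and it changes $C\cdot C$ only by multiples of $p$. The genuinely free parameter is therefore the integer $e_0$ downstairs, and you should set it equal to the weight of $F_0$ in $\Gamma$. Then $C\cdot C=d^2$ is forced by the $e_0$ computation in Theorem 3.3. Your model is then exactly $\tilde S\times D^2\cup 3B^4$ with $\tilde S=\Sigma^\circ$.

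\textbf{The $\pi_1$ and $H_1$ parts.} Your arguments agree with the paper's: retraction onto $C$, and nonsingularity of the form given by $C\cdot C>0$, as in Lemma 4.2. The paper's one-line justification amounts to general position: a loop misses the $2$-complex $C$ inside the $4$-manifold, and $\tilde P_\Gamma\setminus C\cong\tilde Y_\Gamma\times[0,1)$. This yields $\pi_1$-surjectivity and connectedness of $\tilde Y_\Gamma$ at once. It also shows that $C\cdot C\neq 0$ plays no role in connectedness; that hypothesis is needed only for injectivity of $H_1(\tilde Y_\Gamma,\Q)\to H_1(\tilde P_\Gamma,\Q)$.
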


\begin{proof}
After removing a small regular neighborhood of the three linear chains of spheres from $P_\Gamma$, 
the ``central" surface $F_0$ in the configuration of surfaces in $P_\Gamma$ becomes an oriented surface with boundary in $\Omega_\Gamma$, to be denoted by $S$, such that $\Omega_\Gamma$ is given by the union of $S\times D^2$ with a disjoint union of three lens spaces times an interval $I$, under a suitable embedding of $\partial S\times D^2$ into the union of lens spaces. With this understood, recall that in the proof of Theorem 1.3, there is the so-called Global Quotient Criterion
the invariant curve $C$ must pass. By Lemma 3.1 of \cite{C2}, what this means, from the construction in Theorem 3.3, is that there is an oriented surface $\tilde{S}$ and a $p$-fold covering $\tilde{S}\rightarrow S$, such that for a suitable embedding of $\partial \tilde{S}\times D^2$ into a disjoint union of three copies of $\s^3$, the $p$-fold covering $\partial \tilde{S}\times D^2\rightarrow \partial S\times D^2$ extends to the standard $p$-fold coverings of $\s^3$ over the corresponding lens spaces. If we let $\tilde{\Omega}_\Gamma$ be the corresponding union of $\tilde{S}\times D^2$ with the disjoint union of three copies of $\s^3$ times the interval $I$, then there is a natural $p$-fold covering $\tilde{\Omega}_\Gamma\rightarrow \Omega_\Gamma$. With this understood, the $4$-manifold $\tilde{P}_\Gamma$ is simply 
$\tilde{\Omega}_\Gamma$ with the three copies of $\s^3$ in the boundary capped off by $B^4$, and the smooth $\Z_p$-action on $\tilde{P}_\Gamma$ is defined by extending the free $\Z_p$-action on 
$\tilde{\Omega}_\Gamma$ to each of the three copies of $B^4$ by an appropriate linear $\Z_p$-action. It is clear from the construction that $\overline{P}_\Gamma$ is the quotient orbifold of 
$\tilde{P}_\Gamma$ modulo the $\Z_p$-action. Moreover, it is clear that the boundary of 
$\overline{P}_\Gamma$ is connected. 

To see that $i_\ast: \pi_1(\tilde{Y}_\Gamma)\rightarrow \pi_1(\tilde{P}_\Gamma)$ is surjective, we note
that $\tilde{P}_\Gamma$ is a deformation retract to a singular oriented surface $C$ inside it. In particular, any loop in $\tilde{P}_\Gamma$ is homotopic to a loop in the boundary of 
$\tilde{P}_\Gamma$, which implies immediately that $i_\ast: \pi_1(\tilde{Y}_\Gamma)\rightarrow \pi_1(\tilde{P}_\Gamma)$ is surjective. Finally, to see that the map $i_\ast: H_1(\tilde{Y}_\Gamma,\Q)\rightarrow H_1(\tilde{P}_\Gamma,\Q)$ is an isomorphism, we note that $H_2(\tilde{P}_\Gamma)$ is the free Abelian group generated by the singular oriented surface $C$ inside $\tilde{P}_\Gamma$, and from the construction in Theorem 3.3, it is easy to see that the self-intersection $C\cdot C>0$. Then as we argued in the proof of Lemma 4.2, it follows that $H_2(\tilde{P}_\Gamma,\Q)\rightarrow H_2(\tilde{P}_\Gamma,\tilde{Y}_\Gamma,\Q)$ is an isomorphism, which implies that $i_\ast: H_1(\tilde{Y}_\Gamma,\Q)\rightarrow H_1(\tilde{P}_\Gamma,\Q)$ is an isomorphism. 

\end{proof}

We note that there is a natural $p$-fold covering $\tilde{Y}_\Gamma\rightarrow Y_\Gamma$. We denote the corresponding surjective homomorphism from $\pi_1(Y_\Gamma)$ to $\Z_p$ by
$\rho: \pi_1(Y_\Gamma)\rightarrow \Z_p$. 

Now let $W_\Gamma$ be an oriented $4$-manifold with boundary $Y_\Gamma$, such that there
is a homomorphism $\bar{\rho}: \pi_1(W_\Gamma)\rightarrow \Z_p$ satisfying 
$\bar{\rho}\circ i_\ast=\rho$, where $i_\ast: \pi_1(Y_\Gamma)\rightarrow \pi_1(W_\Gamma)$. 
Note that $\bar{\rho}: \pi_1(W_\Gamma)\rightarrow \Z_p$ is surjective. We let $\tilde{W}_\Gamma
\rightarrow W_\Gamma$ be the corresponding $p$-fold covering which extends the $p$-fold 
covering $\tilde{Y}_\Gamma\rightarrow Y_\Gamma$. In particular, $\partial \tilde{W}_\Gamma=
\tilde{Y}_\Gamma$. With this understood, we set $M_\Gamma:=\tilde{W}_\Gamma\cup_{\tilde{Y}_\Gamma} \tilde{P}_\Gamma$. We observe that $M_\Gamma$ comes with a natural smooth
$\Z_p$-action such that the quotient orbifold $M_\Gamma/\Z_p=W_\Gamma \cup_{Y_\Gamma}
\overline{P}_\Gamma$. 

\begin{lemma}
Assume that $H_1(W_\Gamma,\Q)=0$ and the map $i_\ast: H_2(Y_\Gamma,\Q)\rightarrow 
H_2(W_\Gamma,\Q)$ is isomorphic. Then $\chi(M_\Gamma)=3$ and $Sign(M_\Gamma)=1$. Moreover, $M_\Gamma$ is a $\Q$-homology $\C\P^2$ if and only if 
$H_1(\tilde{W}_\Gamma,\Q)=0$. 
\end{lemma}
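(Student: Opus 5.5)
The plan is to compute everything on the quotient orbifold $Q:=M_\Gamma/\Z_p=W_\Gamma\cup_{Y_\Gamma}\overline{P}_\Gamma$ and on the manifold $X_\Gamma=W_\Gamma\cup_{Y_\Gamma}P_\Gamma$, and then lift to $M_\Gamma$ using the orbifold Euler characteristic and the $G$-signature theorem. First, under the hypotheses, Lemma 4.3 and the remark after it give $H_1(X_\Gamma,\Q)=0$, and $i_\ast: H_2(P_\Gamma,\Q)\rightarrow H_2(X_\Gamma,\Q)$ is an isomorphism. Hence $b_2(X_\Gamma)=N+1$, where $N+1$ is the number of vertices of $\Gamma$, and $b_1=b_3=0$. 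Moreover, the intersection form of $X_\Gamma$ is the intersection matrix of $\Gamma$. The signature $\sigma(X_\Gamma)$ can be read off from $\Gamma$: $F_0$ together with the negative definite linear chains should give exactly one positive eigenvalue. I would check this via the determinant sign, or via Theorem 3.3(2) and Remark 3.4(1), which tell us that $K^2=10-b_2$ is consistent with $b_2^+=1$. This yields $\sigma(X_\Gamma)=1-N$ and $\chi(X_\Gamma)=N+3$.

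Next, pass from $X_\Gamma$ to $Q$. Removing the three linear chains, which together contribute $N$ spheres forming trees, and inserting the cones $B^4/\Z_p$ changes the rational homology as follows. $Q$ is a rational homology manifold with $H_\ast(Q,\Q)$ equal to that of $X_\Gamma$ with the $N$ negative definite classes deleted, since each lens space is a $\Q$-homology sphere. This gives $\chi(Q)=3$, $\sigma(Q)=1$, and $b_1(Q)=0$. Then use the branched covering $M_\Gamma\rightarrow Q$, which is free away from three fixed points. For the Euler characteristic, $\chi(M_\Gamma)=p(\chi(Q)-3)+3=3$.

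For the signature I would apply the $G$-signature theorem, $p\,\sigma(Q)=\sum_{g}\sigma(g,M_\Gamma)=\sigma(M_\Gamma)+\sum_{g\neq 1}\sum_{q}\mathrm{def}_q(g)$. Here the defects are cotangent sums determined by the rotation numbers of Theorem 1.3, i.e.\ exactly the data encoded by the chains in $\Gamma$. The cleanest route avoids computing those sums directly. The same local data occurs for a known situation with identical rotation numbers (e.g. the linear action on $\C\P^2$ in case (i) of Lemma 2.1), or one can use the standard fact that for a resolution of a cyclic quotient singularity the defect equals $p$ times the signature defect of the linear chain, plus a term in $e$. The expected outcome is $\sum_{g\neq1}\mathrm{def}=p-1$, giving $\sigma(M_\Gamma)=1$.

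I expect this defect bookkeeping to be the main obstacle. An alternative I would try first is to note that $\sigma$ is invariant under a rational homology cobordism, which lets me compare $\tilde{P}_\Gamma$ alone. Novikov additivity gives $\sigma(M_\Gamma)=\sigma(\tilde{W}_\Gamma)+\sigma(\tilde{P}_\Gamma)$. Here $\sigma(\tilde{P}_\Gamma)=1$ since $H_2(\tilde{P}_\Gamma)$ is generated by $C$ with $C\cdot C>0$ (Lemma 4.4). Also $\sigma(\tilde{W}_\Gamma)=p\,\sigma(W_\Gamma)$ for a free cover with boundary only up to a rho-invariant correction, and $\sigma(W_\Gamma)=0$ because $H_2(Y_\Gamma,\Q)\rightarrow H_2(W_\Gamma,\Q)$ is onto, so the intersection form vanishes. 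The rho-invariant correction is again the hard part, and it matches the $G$-signature defects. Finally, with $\chi=3$ and $\sigma=1$: if $b_1(M_\Gamma)=0$ then $b_3=0$, so $b_2=1$ and $M_\Gamma$ is a $\Q$-homology $\C\P^2$. Lemma 4.4 together with Mayer--Vietoris shows $H_1(M_\Gamma,\Q)=0$ if and only if $H_1(\tilde{W}_\Gamma,\Q)=0$, mirroring Lemma 4.3, which proves the last assertion.
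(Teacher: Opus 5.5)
Your proposal is correct and is essentially the paper's argument. Its steps are: $\chi(Q)=3$ and $\sigma(Q)=1$ from Lemma 4.3 (your passage through $X_\Gamma$ and the negative definite chains spells out what the paper leaves implicit); $\chi(M_\Gamma)=3$ by the branched-cover count, which is equivalent to the paper's Lefschetz computation; $\sigma(M_\Gamma)=1$ from the $G$-signature theorem, by matching the fixed-point data with a linear $\Z_p$-action on $\C\P^2$, whose total defect is $p\,\sigma(\C\P^2/\Z_p)-\sigma(\C\P^2)=p-1$; and the $b_1$ criterion from Lemma 4.4 plus Mayer--Vietoris.

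Two small points need tightening. First, $b_2^+(X_\Gamma)=1$ follows at once, because the three chains span a negative definite subspace of corank one while the matrix of $\Gamma$ is nonsingular and not negative definite. Second, Lemma 2.1 matches the rotation numbers with those of a linear action only up to $(a_k,b_k)\mapsto(-a_k,-b_k)$ at individual fixed points; they are identical only in case (i). This is harmless, since each defect term $\frac{(1+\lambda^{a})(1+\lambda^{b})}{(1-\lambda^{a})(1-\lambda^{b})}$ is invariant under that change. So the defect bookkeeping you flagged as the main obstacle closes immediately, and the Novikov/rho-invariant detour is not needed.
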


\begin{proof}
First, it follows from Lemma 4.3 that the assumption that $H_1(W_\Gamma,\Q)=0$ and the map $i_\ast: H_2(Y_\Gamma,\Q)\rightarrow H_2(W_\Gamma,\Q)$ is isomorphic implies that
the Euler characteristic $\chi(M_\Gamma/\Z_p)=3$ and the signature $Sign(M_\Gamma/\Z_p)=1$,
as $M_\Gamma/\Z_p=W_\Gamma \cup_{Y_\Gamma}\overline{P}_\Gamma$. 

With the preceding understood, let $g: M_\Gamma\rightarrow M_\Gamma$ be a self-diffeomorphism 
which generates the $\Z_p$-action. Then $g$ has three isolated fixed points in $M_\Gamma$ which
correspond to the three singularities in the $4$-orbifold $\overline{P}_\Gamma$. By the Lefschetz fixed point theorem (cf. \cite{AS}), the Lefschetz number $L(g,M_\Gamma)=3$, where 
$$
L(g,M_\Gamma)=\sum_{k=0}^4 (-1)^k tr(g_\ast: H_k(M_\Gamma,\Q)\rightarrow H_k(M_\Gamma,\Q)). 
$$
With this understood, we observe that 
$$
\sum_{l=1}^{p-1} L(g^l,M_\Gamma)=p\cdot \chi(M_\Gamma/\Z_p)-\chi(M_\Gamma).
$$
With $\chi(M_\Gamma/\Z_p)=3$, and $L(g^l,M_\Gamma)=3$ for each $l=1,2,\cdots,p-1$ 
by the Lefschetz fixed point theorem, it follows easily that $\chi(M_\Gamma)=3$.

Similarly, by the $G$-signature theorem (cf. \cite{AS}), one has 
$$
p \cdot Sign(M_\Gamma/\Z_p)-Sign (M_\Gamma)= \sum_{k=1}^3 \sum_{1\neq \lambda\in\C,
\lambda^p=1} \frac{(1+\lambda^{a_k})(1+\lambda^{b_k})}{(1-\lambda^{a_k})(1-\lambda^{b_k})},
$$
where $(a_k,b_k)$, $k=1,2,3$, are the rotation numbers at the three fixed points of the 
$\Z_p$-action on $M_\Gamma$. Since the fixed points correspond to the orbifold points in 
$\overline{P}_\Gamma$, the rotation numbers $(a_k,b_k)$, up to a simultaneous change of sign
of any of the $(a_k,b_k)$'s, are realized by a linear $\Z_p$-action on $\C\P^2$ (cf. Lemma 2.1). It follows easily that 
$$
p \cdot Sign(M_\Gamma/\Z_p)-Sign (M_\Gamma)= p\cdot Sign(\C\P^2/\Z_p)-Sign(\C\P^2)=p-1.
$$
With $Sign(M_\Gamma/\Z_p)=1$, it follows immediately that $Sign (M_\Gamma)=1$. 

Finally, combining $Sign (M_\Gamma)=1$ with $\chi(M_\Gamma)=3$, we arrive easily at
$$
b_2^{+}(M_\Gamma)=1+b_1(M_\Gamma), \;\; b_2^{-}(M_\Gamma)=b_1(M_\Gamma).
$$
It follows immediately that $M_\Gamma$ is a $\Q$-homology $\C\P^2$ if and only if $b_1(M_\Gamma)=0$.
Since $i_\ast: H_1(\tilde{Y}_\Gamma,\Q)\rightarrow H_1(\tilde{P}_\Gamma,\Q)$ is an isomorphism
by Lemma 4.4, $b_1(M_\Gamma)=0$ if and only if $H_1(\tilde{W}_\Gamma,\Q)=0$ by the 
Mayer-Vietoris sequence.

\end{proof}

With these preparations, we now give a proof of Theorem 1.1.

\vspace{2mm}

{\bf Proof of Theorem 1.1:}

\vspace{2mm}

For part (1), assume the strong symplectic filling $W_\Gamma$ of $(Y_\Gamma,\xi_{LM})$ 
obeys (i) and (ii). By the work of Li and Mak \cite{LM}, we can equip $P_\Gamma$ with a symplectic structure via symplectic plumbing such that it has a concave contact boundary 
$(Y_\Gamma,\xi_{LM})$. With this understood, note that $\overline{P}_\Gamma$ is a symplectic 
$4$-orbifold with a concave contact boundary $(Y_\Gamma,\xi_{LM})$, and 
$W_\Gamma \cup_{(Y_\Gamma,\xi_{LM})}\overline{P}_\Gamma$
is a closed symplectic $4$-orbifold.  It follows that $M_\Gamma$ is symplectic equipped with a symplectic $\Z_p$-action such that $M_\Gamma/\Z_p=W_\Gamma \cup_{(Y_\Gamma,\xi_{LM})}\overline{P}_\Gamma$. By Lemma 4.5, one has $\chi(M_\Gamma)=3$ and $Sign(M_\Gamma)=1$,
and $M_\Gamma$ is a $\Q$-homology $\C\P^2$ if and only if $W_\Gamma$ obeys (iii). Finally, we observe that condition (iv) is equivalent to $M_\Gamma$ being simply connected by the
Seifert-Van Kampen theorem, as $i_\ast: \pi_1(\tilde{Y}_\Gamma)\rightarrow \pi_1(\tilde{P}_\Gamma)$ is surjective by Lemma 4.4.

It remains to show that $M_\Gamma$ has a positive canonical class. To see this, we recall that in the proof of Lemma 4.4, we showed that $H_2(\tilde{P}_\Gamma)$ is a free Abelian group generated by a singular oriented surface $C$ in the interior of $\tilde{P}_\Gamma$. In fact, it is easy to see that more is true: $C$ is invariant under the $\Z_p$-action on $\tilde{P}_\Gamma$ and is symplectic with respect to the symplectic structure on $\tilde{P}_\Gamma$. On the other hand, from the formula for the canonical class $c_1(K_X)$ in Theorem 3.3(2), it follows easily that the canonical class $c_1(K_{\overline{P}_\Gamma})$ of the symplectic $4$-orbifold $\overline{P}_\Gamma$ is represented by a positive multiple of the quotient surface $C/\Z_p$. As a consequence, the canonical class of $\overline{P}_\Gamma$ is positive. Furthermore, it also implies that the restriction of $c_1(K_{\overline{P}_\Gamma})$ to the boundary $Y_\Gamma$ is a torsion class. Note that the latter assertion implies that $c_1(K_{W_\Gamma})$ is a torsion class as well 
because $i_\ast: H_2(Y_\Gamma,\Q)\rightarrow H_2(W_\Gamma,\Q)$ is isomorphic by (ii). It follows easily that the symplectic $4$-orbifold $W_\Gamma \cup_{(Y_\Gamma,\xi_{LM})}\overline{P}_\Gamma$ has a positive canonical class, which in turn implies that $M_\Gamma$ has a positive canonical class. 

For part (2), assume $M$ is a symplectic $\Q$-homology $\C\P^2$ with a positive canonical class
which is equipped with a pseudofree symplectic $\Z_p$-action for an odd prime $p$. By Theorem 1.3 
and Theorem 3.3, the $\Z_p$-action must belong to one of the $12$ cases listed therein, with an associated weighted graph $\Gamma$ in Figure 1, where $\Gamma=\Gamma_p$ or 
$\Gamma_{p,i}$. Moreover, the minimal resolution of the symplectic quotient orbifold $X$ contains a symplectic divisor $D$ with normal crossings whose associated plumbing graph is $\Gamma$. 
By Li-Mak \cite{LM}, after deforming the symplectic structure on $X$ if necessary, $D$ has a regular neighborhood $V_\Gamma$ with a concave contact boundary $(Y_\Gamma,\xi_{LM})$, where $V_\Gamma$ is diffeomorphic to the plumbing $P_\Gamma$. Let $W_\Gamma:=X\setminus int(V_\Gamma)$. Then $W_\Gamma$ is a strong symplectic filling of $(Y_\Gamma,\xi_{LM})$. 
It remains to show that $W_\Gamma$ obeys (i)-(iii). Note that with $V_\Gamma$ identified to the plumbing $P_\Gamma$, $M$ can be identified with $M_\Gamma$. (We remark that under this identification, $\tilde{P}_\Gamma$ is simply a regular neighborhood of the invariant curve $C$ from Theorem 1.3, and $\tilde{W}_\Gamma$ is the complement of $\tilde{P}_\Gamma$ in $M$.)

First, (i) follows from the fact that $X$ is the minimal resolution of the quotient orbifold. Secondly, by Lemma 4.3, (ii) follows from the fact that $H_1(X,\Q)=0$ and $i_\ast: H_2(V_\Gamma,\Q)
\rightarrow H_2(X,\Q)$ is isomorphic. Finally, (iii) follows by Lemma 4.5. 
When $M$ is an exotic $\C\P^2$, (iv) is also obeyed by $W_\Gamma$ by the
Seifert-Van Kampen theorem.

The proof of Theorem 1.1 is completed. 

\vspace{2mm}

We note that by Lemma 4.4, the homomorphism $i_\ast: \pi_1(\tilde{Y}_\Gamma)\rightarrow \pi_1(\tilde{P}_\Gamma)$ is surjective. This implies that the condition $\tilde{W}_\Gamma$ being simply connected represents a special case of (iii) and (iv). With this understood, we observe the following corollary, where the assumptions on the symplectic filling $W_\Gamma$ take a simpler form. 

\begin{corollary}
Suppose there is a weighted graph $\Gamma$ in Figure 1, with the plumbed manifold $Y_\Gamma$ being equipped with the canonical surjective homomorphism $\rho: \pi_1(Y_\Gamma)\rightarrow \Z_p$, such that
$(Y_\Gamma,\xi_{LM})$ admits a strong symplectic filling $W_\Gamma$ with the following properties:
$i_\ast: H_2(Y_\Gamma,\Q)\rightarrow H_2(W_\Gamma,\Q)$ is isomorphic and there is an isomorphism $\bar{\rho}: \pi_1(W_\Gamma)\rightarrow \Z_p$ satisfying $\bar{\rho}\circ i_\ast=\rho$, where $i_\ast: \pi_1(Y_\Gamma)\rightarrow \pi_1(W_\Gamma)$. Then there exists a symplectic 
exotic $\C\P^2$ with a nontrivial finite order symplectomorphism. 
\end{corollary}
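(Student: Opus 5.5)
The plan is to apply Theorem 1.1(1) directly and verify that the filling satisfies conditions (i)--(iv).

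\textbf{Conditions (i) and (ii).} Condition (i) holds with the given $\bar\rho$; it is surjective because it is an isomorphism. For (ii), $H_1(W_\Gamma,\Q)=0$ because $\pi_1(W_\Gamma)\cong\Z_p$ is finite, so its abelianization is torsion. The statement about $i_\ast$ on $H_2$ is assumed.

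\textbf{The covering $\tilde W_\Gamma$.} Since $\bar\rho$ is an isomorphism, the $p$-fold covering $\tilde{W}_\Gamma\to W_\Gamma$ associated to $\ker\bar\rho$ is the universal cover. Hence $\tilde W_\Gamma$ is simply connected, and in particular $H_1(\tilde W_\Gamma,\Q)=0$, which is (iii).

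\textbf{Condition (iv).} The first half asks that the image of $\ker(\pi_1(\tilde Y_\Gamma)\to\pi_1(\tilde P_\Gamma))$ normally generate $\pi_1(\tilde W_\Gamma)$; this is vacuous since $\pi_1(\tilde W_\Gamma)=1$. For the second half, $\ker(\pi_1(\tilde Y_\Gamma)\to\pi_1(\tilde W_\Gamma))$ is all of $\pi_1(\tilde Y_\Gamma)$. Its image in $\pi_1(\tilde P_\Gamma)$ is therefore everything, by the surjectivity of $i_\ast:\pi_1(\tilde Y_\Gamma)\to\pi_1(\tilde P_\Gamma)$ from Lemma 4.4.

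\textbf{Conclusion.} By Theorem 1.1(1), $M_\Gamma=\tilde W_\Gamma\cup\tilde P_\Gamma$ is then a symplectic exotic $\C\P^2$. As a cross-check on simple connectivity, Seifert--van Kampen gives $\pi_1(M_\Gamma)$ as the pushout of $1\leftarrow\pi_1(\tilde Y_\Gamma)\twoheadrightarrow\pi_1(\tilde P_\Gamma)$, which is trivial. The exotic structure comes from the positive canonical class together with Taubes' characterization; homeomorphism with $\C\P^2$ follows from Freedman, since $M_\Gamma$ is a simply connected $\Q$-homology $\C\P^2$ and hence has $H_2\cong\Z$ with a unimodular form of signature $1$.

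The nontrivial finite order symplectomorphism is the generator of the $\Z_p$-action on $M_\Gamma$ built into the construction. It is nontrivial because it has isolated fixed points, and it is symplectic by Theorem 1.1(1).

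The main point needing care is that Theorem 1.1's "if and only if (iii) and (iv)" indeed yields exoticness. Everything else reduces to the universal-cover observation, so I expect no real obstacle beyond citing Lemma 4.4 correctly.
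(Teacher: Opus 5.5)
Your proposal is correct and follows the paper's route: the paper notes, just before the corollary, that a simply connected $\tilde W_\Gamma$ (automatic here, since $\bar\rho$ is an isomorphism and so $\tilde W_\Gamma$ is the universal cover) gives (iii) and (iv) via the surjectivity of $i_\ast:\pi_1(\tilde Y_\Gamma)\to\pi_1(\tilde P_\Gamma)$ from Lemma 4.4, and then invokes Theorem 1.1(1). Your Freedman cross-check is redundant given Theorem 1.1; if you keep it, add that the Kirby--Siebenmann invariant vanishes because $M_\Gamma$ is smooth, which is what rules out the fake $*\C\P^2$.
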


\begin{acknowledgments}
During the preparation of this work, no instruments, software, or any AI related tools were used.
Many thanks to R. Inanc Baykur for the careful reading of the paper and for pointing out the errors found in the earlier version. 
\end{acknowledgments}


\vspace{2mm}

{\Small University of Massachusetts, Amherst.\\
{\it E-mail:} wch@umass.edu

\end{document}